\documentclass[11pt,a4paper]{article}
\usepackage{amsmath, amscd, amssymb, amsthm, latexsym}

% THEOREM, PROPOSITION, LEMMA, COROLLARY
\theoremstyle{plain}
\newtheorem{theorem}{Theorem}[section]

\newtheorem{proposition}[theorem]{Proposition}
\newtheorem{corollary}[theorem]{Corollary}
\newtheorem{lemma}[theorem]{Lemma}
% DEFINITION, REMARK, NOTATION, NOTE, EXAMPLE, CONVENTION
\theoremstyle{definition}
\newtheorem{definition}[theorem]{Definition}
\newtheorem{example}[theorem]{Example}

\newtheorem{note}[theorem]{Note}
\theoremstyle{remark}
\newtheorem{notation}[theorem]{\bf Notation}
\newtheorem{remark}[theorem]{\bf Remark}
\newtheorem{fact}[theorem]{\bf Fact}

% N, Z, Q, R  (sets of numbers) and all other similar

\newcommand{\bbbn}{\mathbb{N}}

\newcommand{\bbbs}{\mathbb{S}}\newcommand{\bbbt}{\mathbb{T}}

\newcommand{\bbbx}{\mathbb{X}}
\newcommand{\bbby}{\mathbb{Y}}\newcommand{\bbbz}{\mathbb{Z}}
% lettres en tt
\newcommand{\tta}{{\tt a}}\newcommand{\ttb}{{\tt b}}

\newcommand{\tte}{{\tt e}}

\newcommand{\ttp}{{\tt p}}
\newcommand{\ttq}{{\tt q}}
\newcommand{\tts}{{\tt s}}\newcommand{\ttt}{{\tt t}}
\newcommand{\ttu}{{\tt u}}\newcommand{\ttv}{{\tt v}}
\newcommand{\ttx}{{\tt x}}
\newcommand{\tty}{{\tt y}}\newcommand{\ttz}{{\tt z}}

%-----------------SIZES in LaTeX-----------------
%  \tiny  \scriptsize  \footnotesize  \small
%  \normalsize  \large  \Large  \LARGE  \huge  \Huge
%------------------------------------------------

% NORM  MODULO  COUPLE, RESTRICTION, thin TIRET, JOINS
\newcommand{\couple}[2]{\mbox{$\langle #1,#2 \rangle$}}
\newcommand{\segment}{\!\upharpoonright \!}
\newcommand{\tiret}{\mbox{-}}

% EQUALITY AND ORDER UP TO A CONSTANT
\newcommand{\leqct}{\leq_{\rm ct}}
\newcommand{\geqct}{\geq_{\rm ct}}

\newcommand{\eqct}{=_{\rm ct}}

\newcommand{\infct}{<_{\rm ct}}
\newcommand{\supct}{>_{\rm ct}}

% strong ordering up to a constant

% SPACES
\newcommand{\alphabet}{{\bf 2}} %{\{0,1\}}
\newcommand{\words}{\alphabet^*}

\newcommand{\diff}{\mbox{\em diff}}

%  SIZES  SIZES  SIZES  SIZES  SIZES  SIZES  SIZES
\newcommand{\sizesub}{\footnotesize}

% SUBSCRIPTS

\newcommand{\submax}{{\mbox{{\rm\sizesub max}}}}

\newcommand{\submin}{{\mbox{{\rm\sizesub min}}}}

%
% CLASSES OF FUNCTIONS

%
% KOLMOGOROV COMPLEXITIES
% KOLMOGOROV COMPLEXITIES

\newcommand{\kmaxbiblio}{{K_\submax}}
\newcommand{\kminbiblio}{{K_\submin}}

% CARD, INDEX, CHURCH
\newcommand{\card}{\mathit{card}}
\newcommand{\indexx}{\mathit{index}}
\newcommand{\KleenePC}{\mathit{PC}}
\newcommand{\EffCont}{\mathit{EffCont}}
\newcommand{\Eff}{\mathit{Eff}}
\newcommand{\Church}{\mathit{Church}}
\newcommand{\church}{\mathit{church}}
\begin{document}
%%%%%%%%%%%%%%%%%%%%%%%%%%%%%%%%%%%%%%%%%%%%%%%%%%%%%%%%%%%%%%%
\title{Kolmogorov Complexity\\ and\\
Set theoretical Representations of Integers, I}
%
%%%%%%%%%%%%%%%%%%%%%%%%%%%%%%%%%%%%%%%%%%%%%%%%%%%%%%%%%%%%%%%
\author{\small\sc Marie Ferbus-Zanda\\
{\footnotesize LIAFA, Universit\'e Paris 7}\\
{\footnotesize 2, pl. Jussieu 75251 Paris Cedex 05}\\
{\footnotesize France}\\
{\footnotesize\tt ferbus@logique.jussieu.fr}
\and {\small\sc Serge Grigorieff}\\
{\footnotesize LIAFA, Universit\'e Paris 7}\\
{\footnotesize 2, pl. Jussieu 75251 Paris Cedex 05}\\
{\footnotesize France}\\
{\footnotesize\tt seg@liafa.jussieu.fr}}
\date{\today}
\maketitle
{\footnotesize \textnormal \tableofcontents}
%
%\newpage\noindent
%
%%%%%%%%%%%%%%%%%%%%%%%%%%%%%%%%%%%%%%%%%%%%%%%%%%%%%%%%%%%%%%%
\begin{abstract}
We reconsider some classical natural semantics of integers
(namely iterators of functions,
cardinals of sets,
index of equivalence relations)
%ordinals,
in the perspective of Kolmogorov complexity.
To each such semantics one can attach a simple representation of
integers that we suitably effectivize in order to develop an
associated Kolmogorov theory.
Such effectivizations are particular instances of a general notion
of ``self-enumerated system" that we introduce in this paper.
Our main result asserts that, with such effectivizations,
Kolmogorov theory allows to quantitatively distinguish the
underlying semantics.
We characterize the families obtained by such effectivizations
and prove that the associated Kolmogorov complexities constitute
a hierarchy which coincides with that of Kolmogorov complexities
defined via jump oracles and/or infinite computations
(cf. \cite{ferbusgrigoKmaxKmin}).
This contrasts with the well-known fact that usual Kolmogorov
complexity does not depend (up to a constant) on the chosen
arithmetic representation of integers, let it be in any base
$n\geq 2$ or in unary.
Also, in a conceptual point of view, our result can be seen as
a mean to measure the degree of abstraction of these diverse
semantics.
\end{abstract}
%
%%%%%%%%%%%%%%%%%%%%%%%%%%%%%%%%%%%%%%%%%%%%%%%%%%%%%%%%%%%%%%%%
%%%%%%%%%%%%%%%%%%%%%%%%%%%%%%%%%%%%%%%%%%%%%%%%%%%%%%%%%%%%%%%
%%%%%%%%%%%%%%%%%%%%%%%%%%%%%%%%%%%%%%%%%%%%%%%%%%%%%%%%%%%%%%%
%%%%%%%%%%%%%%%%%%%%%%%%%%%%%%%%%%%%%%%%%%%%%%%%%%%%%%%%%%%%%%%
\section{Introduction}        \label{s:intro}
%%%%%%%%%%%%%%%%%%%%%%%%%%%%%%%%%%%%%%%%%%%%%%%%%%%%%%%%%%%%%%%
%%%%%%%%%%%%%%%%%%%%%%%%%%%%%%%%%%%%%%%%%%%%%%%%%%%%%%%%%%%%%%%
%%%%%%%%%%%%%%%%%%%%%%%%%%%%%%%%%%%%%%%%%%%%%%%%%%%%%%%%%%%%%%%
%
\begin{notation}\label{not:leqct}
Equality, inequality and strict inequality up to a constant
between total functions $D\to\bbbn$, where $D$ is any set,
are denoted as follows:
\begin{eqnarray*}
f\ \leqct\ g &\Leftrightarrow&
\exists c\in\bbbn\ \forall x\in D\ f(x)\leq g(x)+c
\\
f\ \eqct\ g &\Leftrightarrow &
f\leqct g\ \wedge\ g\leqct f\\
&\Leftrightarrow &
\exists c\in\bbbn\ \forall x\in D\ |f(x)-g(x)|\leq c
\\
f\ \infct\ g &\Leftrightarrow &
f \leqct g\ \wedge\ \neg(g \leqct f)\\
&\Leftrightarrow &
f \leqct g\ \wedge\ \forall c\in\bbbn\ \exists x\in D\ g(x)>f(x)+c
\end{eqnarray*}
\end{notation}
As we shall consider $\bbbn$-valued partial functions
with domain $\bbbn$, $\bbbz$, $\words$, $\bbbn^2$,..., the following
definition is convenient.
\begin{definition}\label{def:basic}
A basic set $\bbbx$ is any non empty finite product of sets
among $\bbbn,\bbbz$ or the set $\words$ of finite binary words
or the set $\Sigma^*$ of finite words in some finite or countable
alphabet $\Sigma$.
\end{definition}
Let's also introduce some notations for partial recursive
functions.
\begin{notation}\label{not:PR}
Let $\bbbx,\bbby$ be basic sets.
We denote $\PR[\bbbx\to\bbby]$ (resp. $\PR[A,\bbbx\to\bbby]$)
the family of partial recursive (resp.partial $A$-recursive)
functions $\bbbx\to\bbby$.
In case $\bbbx=\bbby=\bbbn$,we simply write $\PR[]$ and $\PR[A]$.
\end{notation}
%
%
%%%%%%%%%%%%%%%%%%%%%%%%%%%%%%%%%%%%%%%%%%%%%%%%%%%%%%%%%%%%%%%
\subsection{Kolmogorov complexity and representations of
$\bbbn$, $\bbbz$}
\label{ss:mainResults}
%%%%%%%%%%%%%%%%%%%%%%%%%%%%%%%%%%%%%%%%%%%%%%%%%%%%%%%%%%%%%%%
%
Kolmogorov complexity $K:\bbbn\to\bbbn$ maps an integer $n$ onto
the length of any shortest binary program $\ttp\in\words$ which
outputs $n$.
The invariance theorem asserts that, up to an additive constant,
$K$ does not depend on the program semantics $\ \ttp\mapsto n\ $,
provided it is a universal partial recursive function.
\\
As a straightforward corollary of the invariance theorem, $K$ does
not depend (again up to a constant) on the representation of
integers, i.e. whether the program output $n$ is really in $\bbbn$
or is a word in some alphabet $\{1\}$ or $\{0,...,k-1\}$, for some
$k\geq2$, which gives the unary or base $k$ representation of $n$.
A result which is easily extended to all partial recursive
representations of integers, cf. Thm.\ref{thm:recrep}.
\medskip\\
{\em In this paper, we show that this is no more the case when (suitably
effectivized) classical set theoretical representations are
considered.}
We particularly consider representations of integers via
\begin{itemize}
\item
Church iterators (Church \cite{church33}, 1933),
\item
cardinal equivalence classes
(Russell \cite{russell08} \S IX, 1908, cf. \cite{heijenoort} p.178),
\item
index equivalence classes.
\end{itemize}
Following the usual way to define $\bbbz$ from $\bbbn$, we also
consider representations of a relative integer $z\in\bbbz$
as pairs of representations of non negative integers $x,y$
satisfying $z=x-y$.
In the particular case of Church iterators, restricting to injective
functions and considering negative iterations, leads to another
direct way of representing relative integers.
\medskip\\
Programs are at the core of Kolmogorov theory. They do not work
on abstract entities but require formal representations of objects.
Thus, we have to define effectivizations of the above abstract set
theoretical notions in order to allow their elements to be computed
by programs.
To do so, we use computable functions and functionals
and recursively enumerable sets.
\medskip\\
Effectivized representations of integers constitute particular
instances of {\em self-enumerated representation systems}
(cf. Def.\ref{def:self}).
This is a notion of family ${\cal F}$ of partial functions
from $\words$ to some fixed set $D$ for which an invariance theorem
can be proved using straightforward adaptation of original
Kolmogorov's proof.
Which leads to a notion of Kolmogorov complexity
$K_{\cal F}^D:D\to\bbbn$, cf. Def.\ref{def:Kself}.
The ones considered in this paper are
$$K_{\Church}^\bbbn\ ,\ K_{\Church}^\bbbz\ ,
\ K_{\Delta \Church}^\bbbz\ ,\
  K_{\card}^\bbbn\ ,\   K_{\Delta \card}^\bbbz\ ,\
  K_{\indexx}^\bbbn\ ,\  K_{\Delta \indexx}^\bbbz$$
associated to the systems obtained by effectivization of the
Church, cardinal and index representations of $\bbbn$ and
the passage to $\bbbz$ representations as outlined above.
\medskip\\
The main result of this paper states that the above Kolmogorov
complexities coincide (up to an additive constant) with those
obtained via oracles and infinite computations as introduced in
\cite{becherchaitindaicz}, 2001, and
our paper \cite{ferbusgrigoKmaxKmin}, 2004.
\begin{theorem}[Main result]\label{thm:A}$\medskip\\ $
\centerline{$\begin{array}{rclcrcl}
K_{\Church}^\bbbn
&\eqct& K_{\Church}^\bbbz\segment\bbbn
&\eqct& K_{\Delta \Church}^\bbbz\segment\bbbn
&\eqct& K
\medskip\\
K_{\card}^\bbbn &\eqct& \kmax[]
&& K_{\Delta \card}^\bbbz\segment\bbbn &\eqct& K^{\emptyset'}
\medskip\\
K_{\indexx}^\bbbn &\eqct&  \kmax[\emptyset']
&& K_{\Delta \indexx}^\bbbz\segment\bbbn
&\eqct& K^{\emptyset''}
\end{array}$}
\end{theorem}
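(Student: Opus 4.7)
The plan is to establish each of the seven equalities by giving reductions in both directions between the relevant self-enumerated system and the corresponding oracle-or-max system. By the invariance theorem for self-enumerated systems (proved earlier in the paper), any two translations that preserve program length up to an additive constant yield complexities that coincide up to an additive constant; so the real content lies in constructing these translations with the right oracle and the right enumeration mode.

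For the Church row, an effectivized Church iterator for $n \in \bbbn$ is a partial recursive functional $F$ satisfying $F(f)(x) = f^n(x)$. From $F$ one recovers $n$ by the single evaluation $F(\mathrm{succ})(0)$, a total recursive operation; conversely, a program for $n$ yields such an $F$ by explicit iteration. The $\bbbz$ version uses negative iteration on injective functions and is handled in the same way. For $K_{\Delta \Church}^\bbbz$, knowing a pair $(x,y)$ with $z=x-y$ is no shorter than knowing $z$ directly because subtraction is recursive. All three complexities thus collapse to $K$ up to $\eqct$.

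An effective cardinal representation of $n$ is an r.e.\ set $W$ of size $n$: enumerating $W$ and outputting the running count produces the characteristic monotone convergence of $\submax$-computations, giving $K_{\card}^\bbbn \eqct \kmax[]$; conversely, an optimal $\kmax[]$-program whose successive outputs form $m_0 < m_1 < \dots < n$ is converted into an r.e.\ set of size $n$ by injecting fresh witnesses each time the output strictly increases. For indices, $n$ is represented by an r.e.\ equivalence relation with $n$ classes; the class count grows monotonically with the stage, but deciding whether a fresh element forms a genuinely new class is co-r.e., so the same argument with a $\emptyset'$ oracle gives $K_{\indexx}^\bbbn \eqct \kmax[\emptyset']$.

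The passage to $\bbbz$ via $\Delta$ relies on the identity $\kmax[A] \eqct K^{A'}$ from \cite{ferbusgrigoKmaxKmin}: reconstructing $z=x-y$ from $\Delta$-programs for $x$ and $y$ requires the exact values of two $\limsup$-quantities, which is a $\Sigma^0_2$ operation over the ambient oracle, adding one jump to the hierarchy. This yields $K_{\Delta \card}^\bbbz\segment\bbbn \eqct K^{\emptyset'}$ and $K_{\Delta \indexx}^\bbbz\segment\bbbn \eqct K^{\emptyset''}$. The main obstacle is the lower bound in the index row: one has to show that an arbitrary $\kmax[\emptyset']$-program can be compiled, with only constant length overhead, into an r.e.\ equivalence relation whose class count realizes the program's output. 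This forces an explicit encoding of $\emptyset'$-oracle queries and of monotone output updates into fresh equivalence-class splittings, and it is exactly there that the self-enumerated framework is essential: it is what makes the length overhead of such a compilation bounded by an additive constant, independently of the program being translated.
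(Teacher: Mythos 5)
Your Church and cardinal rows follow essentially the paper's route (recovering $n$ as $\Phi(Succ)(0)$, counting an enumeration of an r.e.\ set, injecting a fresh witness at each strict increase of a $\kmax[]$-computation; cf.\ Thms.\ref{thm:Church}, \ref{thm:card}), but the two $\Delta$ rows rest on a false premise, and that is exactly where the substance of the theorem lies. The identity $\kmax[A]\eqct K^{A'}$ that you invoke does not hold: the paper, quoting \cite{becherchaitindaicz,ferbusgrigoKmaxKmin}, records the \emph{strict} inequalities $K^{A}\supct\kmax[A]\supct K^{A'}$ (Prop.\ref{p:degrees}), and the hierarchy theorem Thm.\ref{thm:B} depends on this strictness. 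If your identity were true, your own second row would read $K_{\card}^\bbbn\eqct K^{\emptyset'}\eqct K_{\Delta\card}^\bbbz\segment\bbbn$ and the hierarchy would collapse at that level, contradicting Thm.\ref{thm:B}. So ``adding one jump'' cannot be obtained by citing such an identity; it has to be proved, and for the passage $\bbbn\to\bbbz$ it is proved through Thm.\ref{thm:ADelta} (= Thm.\ref{thm:Deltamax}): $\PR[A',\words\to\bbbz]=\Delta(\maxrA[\words\to\bbbn])$.

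Your remark that recovering $z=x-y$ is a ``$\Sigma^0_2$ operation over the ambient oracle'' only gives the easy inclusion $\Delta(\card\circ{\cal F}^{RE(\bbbn)})\subseteq\PR[\emptyset',\words\to\bbbz]$, hence only the bound $K_{\Delta\card}^\bbbz\segment\bbbn\geqct K^{\emptyset'}$. The missing half, $K_{\Delta\card}^\bbbz\segment\bbbn\leqct K^{\emptyset'}$, needs the converse inclusion: every partial $\emptyset'$-recursive $\bbbz$-valued function must be written as $\max f-\max g$ with $f,g$ total recursive. This is a genuine construction (run the machine against recursive approximations of $\emptyset'$, let $f$ follow the current output while $g$ absorbs every output retracted when an oracle answer is exposed as wrong, and force both maxima to be undefined when the true computation diverges); it is not supplied by the self-enumerated framework, whose invariance theorem only converts a family equality into an $\eqct$ statement once the equality is established. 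The same gap recurs, relativized to $\emptyset'$, for $K_{\Delta\indexx}^\bbbz\segment\bbbn\eqct K^{\emptyset''}$ (Thm.\ref{thm:DeltaIndex}). A secondary point: in your ``main obstacle'' for the index row, compiling a $\kmax[\emptyset']$-program into an r.e.\ equivalence relation cannot work by ``class splittings'' alone, since an r.e.\ relation only grows; classes created on the strength of wrong oracle answers must be destroyed by aggregating them into the class of a fixed element (the paper merges them into the class of $0$), which is also why the family inclusions in Thm.\ref{thm:index} hold only up to the shift $F\mapsto F+1$ — harmless for the Kolmogorov complexities, but it has to be said.
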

Thm.\ref{thm:A} gathers the contents of
Thms.\ref{thm:card}, \ref{thm:DeltaCard},
\ref{thm:index}, \ref{thm:DeltaIndex},
\ref{thm:Church}, \ref{thm:DeltaChurch} and \S\ref{ss:churchZ}.
\\
A preliminary ``light" version of this result was presented in
\cite{ferbusgrigoClermont}, 2002.
\medskip\\
The strict ordering result $K\supct\kmax[]\supct K^{\emptyset'}$
(cf. Notations \ref{not:leqct})
proved in \cite{becherchaitindaicz,ferbusgrigoKmaxKmin}
and its obvious relativization (cf. Prop.\ref{p:degrees})
yield the following hierarchy theorem.
\begin{theorem}\label{thm:B}
$$\log \supct
\begin{array}{c}
    K_{\Church}^\bbbn\\
    \eqct\\
    K_{\Church}^\bbbz\segment\bbbn\\
    \eqct\\
    K_{\Delta \Church}^\bbbz\segment\bbbn
\end{array}
\supct K_{\card}^\bbbn
\supct K_{\Delta \card}^\bbbz\segment\bbbn
\supct K_{\indexx}^\bbbn
\supct K_{\Delta \indexx}^\bbbz\segment\bbbn$$
\end{theorem}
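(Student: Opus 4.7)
The plan is to chain together Theorem \ref{thm:A} with the known strict hierarchy $K\supct\kmax[]\supct K^{\emptyset'}$ from \cite{becherchaitindaicz,ferbusgrigoKmaxKmin} and its relativization to any oracle (Prop.\ref{p:degrees}). First I would apply Theorem \ref{thm:A} termwise to replace each complexity appearing in the statement by its standard oracle-based counterpart. The three Church entries collapse to a single $K$, and the chain to be proved becomes
$$\log\ \supct\ K\ \supct\ \kmax[]\ \supct\ K^{\emptyset'}\ \supct\ \kmax[\emptyset']\ \supct\ K^{\emptyset''}.$$

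Next I would dispatch the endpoints. The leftmost inequality $\log\supct K$ is classical: $K\leqct\log$ because the binary expansion of $n$ (of length $\lceil\log(n+1)\rceil$) is a valid program for any universal machine, and strictness is witnessed by the sparse sequence $m_k=2^k$, for which $K(m_k)\leq K(k)+O(1)\leq \log k + O(1)$ while $\log m_k = k$ grows unboundedly faster. The two central inequalities $K\supct\kmax[]\supct K^{\emptyset'}$ I would quote directly from \cite{becherchaitindaicz,ferbusgrigoKmaxKmin}.

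For the two rightmost inequalities the plan is to invoke the relativization of $K\supct\kmax[]\supct K^{\emptyset'}$ afforded by Prop.\ref{p:degrees}: uniformly replacing ``partial recursive'' by ``partial $A$-recursive'' in the cited proofs yields $K^A\supct\kmax[A]\supct K^{A'}$ for every oracle $A$, and specializing at $A=\emptyset'$ gives $K^{\emptyset'}\supct\kmax[\emptyset']\supct K^{\emptyset''}$. Concatenating everything closes the chain. The argument is essentially bookkeeping once the two inputs (Theorem \ref{thm:A} and the oracle-free hierarchy) are in hand; the only mild point of care is checking that strictness in the cited inequality survives relativization, but since the witnesses for strictness arise from counting/diagonal arguments that treat the oracle as an inert parameter, this is routine and represents the only step that is not purely mechanical.
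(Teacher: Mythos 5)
Your proposal is correct and follows exactly the paper's route: Theorem \ref{thm:A} converts each term into $K$, $\kmax[]$, $K^{\emptyset'}$, $\kmax[\emptyset']$, $K^{\emptyset''}$, and the strict chain then comes from the classical $\log \supct K$, the cited result $K\supct\kmax[]\supct K^{\emptyset'}$, and its relativization to $A=\emptyset'$ as recorded in Prop.\ref{p:degrees}. No substantive difference from the paper's own (one-line) derivation.
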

This hierarchy result for set theoretical representations somewhat
reflects their {\em degrees of abstraction}.
\medskip\\
Though Church representation via iteration functionals can be
considered as somewhat complex, we see that, surprisingly, the
associated Kolmogorov complexities collapse to the simplest
possible one.
\medskip\\
Also, it turns out that, for cardinal and index representations,
the passage from $\bbbn$ to $\bbbz$, i.e. from $K_\card^\bbbn$ to
$K_{\Delta \card}^\bbbz$ and from $K_\indexx^\bbbn$ to
$K_{\Delta \indexx}^\bbbz$  does add complexity.
However, for Church iterators, the passage to $\bbbz$ does not modify
Kolmogorov complexity, let it be via the $\Delta$ operation
(for $K_{\Delta \Church}^\bbbz$) or restricting iterators to
injective functions (for $K_{\Church}^\bbbz$).
\medskip\\
The results about the $\Delta card$ and $\Delta index$ classes
are corollaries of those about the $card$ and $index$ classes and of
the following result (Thm.\ref{thm:Deltamax}) which gives a simple
normal form to functions computable relative to a jump oracle,
and is interesting on its own.
\begin{theorem}\label{thm:ADelta}
Let $A\subseteq\bbbn$.
A function $G:\words\to\bbbz$ is partial $A'$-recursive if and only
if there exist total $A$-recursive functions
$f,g:\words\times\bbbn\to\bbbn$ such that, for all $\ttp$,
$$G(\ttp)=\max\{f(\ttp,t):t\in\bbbn\}-\max\{g(\ttp,t):t\in\bbbn\}$$
(in particular, $G(\ttp)$ is defined if and only if both $\max$'s
are finite).
\end{theorem}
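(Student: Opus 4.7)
The two directions are asymmetric. For ``$\Leftarrow$'', given total $A$-recursive $f, g : \words \times \bbbn \to \bbbn$, observe that the question ``does there exist $t$ with $f(\ttp, t) > N$?'' is $\Sigma^0_1(A)$, hence decidable by the $A'$-oracle. Starting from $N = 0$ and incrementing while the answer is ``yes'' produces $\max_t f(\ttp, t)$ if it is finite, and loops otherwise. Running this twice (once for $f$, once for $g$) and subtracting yields a partial $A'$-recursive procedure for $G(\ttp) = \max_t f(\ttp, t) - \max_t g(\ttp, t)$ that halts precisely when both maxes are finite.

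For ``$\Rightarrow$'' my plan is to first reduce the problem to producing a total $A$-recursive $\phi : \words \times \bbbn \to \bbbz$ such that $\phi(\ttp, \cdot)$ is eventually constant equal to $G(\ttp)$ when $G(\ttp)$ is defined, and not eventually constant otherwise. From such a $\phi$ I set
\begin{align*}
f(\ttp, t) &= \phi(\ttp, 0)^+ + \sum_{s < t} (\phi(\ttp, s+1) - \phi(\ttp, s))^+,\\
g(\ttp, t) &= \phi(\ttp, 0)^- + \sum_{s < t} (\phi(\ttp, s+1) - \phi(\ttp, s))^-,
\end{align*}
with $x^+ = \max(x, 0)$ and $x^- = \max(-x, 0)$. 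These are total $A$-recursive, non-negative, non-decreasing in $t$, and telescope to $f(\ttp, t) - g(\ttp, t) = \phi(\ttp, t)$. Since $\phi$ is $\bbbz$-valued, its eventual constancy is equivalent to both sums $\sum_s (\phi(\ttp, s+1) - \phi(\ttp, s))^{\pm}$ being finite, i.e., to both $\max_t f(\ttp, t)$ and $\max_t g(\ttp, t)$ being finite; in that case their difference equals $\lim_t \phi(\ttp, t) = G(\ttp)$, as required.

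The hard step is the construction of $\phi$. The tempting naive idea of simulating an oracle machine $M^{A'_s}(\ttp)$ with a stage-$s$ approximation $A'_s$ to $A'$ fails, since $M^{A'_s}$ can halt with the same ``wrong'' output at every stage when $G(\ttp)$ is undefined---imagine $M$ querying $q_1, q_2, \ldots$ in sequence and halting at the first ``no'' answer, while all the $q_i$ actually lie in $A'$ but enter at ever-later stages. I plan instead to exploit the $\Sigma^0_2(A)$ normal form of the graph of $G$ directly: choose a total $A$-recursive predicate $R'$ and a decoding $v(\cdot)$ such that $G(\ttp) = v(u)$ iff $\exists u\, \forall t\, R'(\ttp, u, t)$, with the value $v$ coded into the witness $u$. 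Define $u_s(\ttp) = \min\{u \leq s : \forall t \leq s,\ R'(\ttp, u, t)\}$ (undefined if no such $u$ exists), and set $\phi(\ttp, s) = v(u_s(\ttp))$ whenever $s \geq 1$ with $u_s$ defined and $u_s = u_{s-1}$; otherwise set $\phi(\ttp, s) = (-1)^s s$, a fluctuating value chosen precisely to block any would-be limit. If $G(\ttp) = v(u^*)$ with $u^*$ the smallest true witness, every $u < u^*$ is eventually falsified, so $u_s \equiv u^*$ for $s$ large and $\phi(\ttp, s) \to v(u^*) = G(\ttp)$; conversely, eventual stability $u_s \equiv u^*$ would force $R'(\ttp, u^*, t)$ for arbitrarily large $t$, hence for all $t$, so that $G(\ttp) = v(u^*)$ would be defined. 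Thus when $G(\ttp)$ is undefined, $u_s$ changes infinitely often, the value $(-1)^s s$ appears infinitely often in $\phi$, and $\phi$ cannot converge, completing the argument.
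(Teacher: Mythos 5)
Your proposal is correct, but it follows a genuinely different route from the paper's proof of this statement (given in the body as Thm.\ref{thm:Deltamax}). For the hard direction the paper simulates the $A'$-oracle machine computing $G$ against a monotone $A$-recursive approximation of $A'$ (Lemmas \ref{l:oracleCV}, \ref{l:approx}), and builds $f,g$ directly as running totals that are \emph{both} bumped whenever the fake computation fails to halt or its oracle answers are exposed as wrong, so that $f(\ttp,t)-g(\ttp,t)$ is the current output and both maxima diverge together when $G(\ttp)$ is undefined. You instead (i) invoke the standard fact (relativized Post) that the graph of a partial $A'$-recursive function is $\Sigma^{0,A}_2$ and put it in a witness-coded normal form, (ii) build a total $A$-recursive $\bbbz$-valued $\phi$ that stabilizes to $G(\ttp)$ exactly when $G(\ttp)$ is defined --- a limit-lemma-style construction in which the flag value $(-1)^s s$ correctly blocks false stabilization, and your guard $u_s=u_{s-1}$ is genuinely needed, since distinct transient witnesses could code the same value --- and (iii) convert $\phi$ into a difference of two nondecreasing $\bbbn$-valued total $A$-recursive functions by telescoping positive and negative increments, so that boundedness of each part corresponds to definedness of the corresponding $\max$. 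Your route is more modular and dispenses with the fake-oracle machine bookkeeping, at the price of a marginally weaker conclusion in the divergent case: you only guarantee that \emph{at least one} of the two maxima is undefined (infinitely many flag stages make $\phi$ unbounded, but possibly in one direction only), whereas the paper's construction forces both to be undefined; this still gives exactly the stated equivalence ($G(\ttp)$ defined iff both maxima finite, the difference being undefined otherwise) and suffices for the later applications through the $\Delta$ operation. Your easy direction is essentially the paper's (Prop.\ref{p:maxprANDjump} together with Prop.\ref{p:deltaPR}): an $A'$-oracle search for the least bound not exceeded by $f(\ttp,\cdot)$, respectively $g(\ttp,\cdot)$.
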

%
%
%%%%%%%%%%%%%%%%%%%%%%%%%%%%%%%%%%%%%%%%%%%%%%%%%%%%%%%%%%%%%%%
\subsection{Kolmogorov complexities and families of functions}
\label{ss:thmC}
%%%%%%%%%%%%%%%%%%%%%%%%%%%%%%%%%%%%%%%%%%%%%%%%%%%%%%%%%%%%%%%
%
The equalities in Thm.\ref{thm:A} are, in fact, corollaries of
equalities between families of functions $\words\to\bbbn$
(namely, the associated self-enumerated representation systems,
cf. \S\ref{ss:self}) which are interesting on their own.
For instance (cf. Thms.\ref{thm:card}, \ref{thm:DeltaCard},
\ref{thm:index}, \ref{thm:DeltaIndex},
\ref{thm:Church}, \ref{thm:DeltaChurch} and \S\ref{ss:churchZ}),
\begin{theorem}\label{thm:C}
Denote $X\to Y$ the class of {\em partial} functions from $X$ to $Y$.
\\
{\bf 1.}
A function $f:\words\to\bbbn$ is the restriction to a $\Pi^0_2$ set
of a partial recursive function if and only if
it is of the form $f=\Church\circ \Phi$ where
\\ - $\Phi:\words\to (\bbbn\to\bbbn)^{(\bbbn\to\bbbn)}$
is a computable functional,
\\ - $\Church:(\bbbn\to\bbbn)^{(\bbbn\to\bbbn)}\to\bbbn$ is the
functional such that
\begin{eqnarray*}
\Church(\Psi)&=&\left\{
\begin{array}{ll}
n&\mbox{if $\Psi$ is the iterator }f\mapsto f^{(n)}\\
\mbox{undefined}&\mbox{otherwise}
\end{array}\right.
\end{eqnarray*}
{\bf 2.}
A function $f:\words\to\bbbn$ is the $\max$ of a total recursive
(resp. total $\emptyset'$-recursive) sequence of functions
(cf. Def.\ref{not:maxpr})
if and only if it is of the form
$$\ttp\mapsto \card(W_{\varphi(\ttp)}^\bbbn)\ \ \
\mbox{ (resp. }
\ttp\mapsto
\indexx(W_{\varphi(\ttp)}^{\bbbn^2})\mbox{, up to $1$)}$$
for some total recursive $\varphi:\words\to\words$, where
\\ - $W_\ttq^\bbbn$ (resp. $W_\ttq^{\bbbn^2}$) is the r.e.
subset of $\bbbn$ (resp. $\bbbn^2$) with code $\ttq$,
\\ - $\card:P(\bbbn)\to\bbbn$ is the cardinal function
(defined on the sole finite sets),
\\ - $\indexx:P(\bbbn^2)\to\bbbn$ is defined on equivalence relations
with finitely many classes and gives the index (i.e. the number
of equivalence classes).
\medskip\\
{\bf 3.}
A function $f:\words\to\bbbn$ is partial $\emptyset'$-recursive
(resp. $\emptyset''$-recursive) if and only if it is of the form
$$\ttp\mapsto \card(W_{\varphi_1(\ttp)}^\bbbn)
             -\card(W_{\varphi_2(\ttp)}^\bbbn)
\
\mbox{ (resp. } \ttp\mapsto
\indexx(W_{\varphi_1(\ttp)}^{\bbbn^2})
-\indexx(W_{\varphi_2(\ttp)}^{\bbbn^2})\mbox{)}$$
for some total recursive $\varphi_1,\varphi_2:\words\to\words$.
\end{theorem}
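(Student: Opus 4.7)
The plan is to prove Parts~1 and~2 separately, and then derive Part~3 by combining Part~2 with Theorem~\ref{thm:ADelta}. Indeed, that theorem expresses each partial $A'$-recursive function as a difference of two maxima of total $A$-recursive sequences; taking $A = \emptyset$ (resp.\ $A = \emptyset'$) and substituting the $\card$ (resp.\ $\indexx$) characterization of Part~2 converts such a difference directly into the $\card - \card$ (resp.\ $\indexx - \indexx$) form asserted in Part~3.

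For Part~1, the direction $(\Leftarrow)$ is the simpler one: given a computable functional $\Phi$, apply $\Phi(\ttp)$ to the successor function $s$ at argument $0$; if $\Phi(\ttp)$ is the iterator $g \mapsto g^{(n)}$, then $\Phi(\ttp)(s)(0) = s^{(n)}(0) = n$, so the value $\Church(\Phi(\ttp))$ is extracted by a partial recursive operation. The condition that $\Phi(\ttp)$ be a genuine iterator is $\Pi^0_2$: once $n$ is fixed as $\Phi(\ttp)(s)(0)$, it asserts $\forall g\,\forall k\,\Phi(\ttp)(g)(k) = g^{(n)}(k)$, a $\forall\forall$ statement over a $\Sigma^0_1$ matrix. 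For $(\Rightarrow)$, given $f = \psi\segment P$ with $\psi$ partial recursive and $P = \{\ttp : \forall m\,\exists k\, R(\ttp, m, k)\}$ for recursive $R$, define $\Phi(\ttp)(g)(k)$ by dovetailing the computation of $\psi(\ttp)$ with a search for witnesses $k_m$ of $R(\ttp, m, k_m)$ for $m = 0, 1, 2, \ldots$ and outputting $g^{(n)}(k)$ once $\psi(\ttp) = n$ has converged \emph{and} witnesses for all $m \leq K(g, k)$ have been found, where $K$ is a fixed unbounded recursive encoding of $(g, k)$. Then $\Phi(\ttp)$ is a total iterator exactly when $\ttp \in P$ and $\psi(\ttp)$ is defined, with associated value $n = \psi(\ttp)$.

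For Part~2, the $\card$ statement is immediate in both directions: forward, $g(\ttp, t) = $ the number of distinct elements enumerated into $W_{\varphi(\ttp)}^\bbbn$ by stage $t$ is total recursive, nondecreasing, with supremum $\card(W_{\varphi(\ttp)}^\bbbn)$; backward, given total recursive $h$, set $W_{\varphi(\ttp)}^\bbbn = \{k \in \bbbn : \exists t\, h(\ttp, t) > k\}$, an r.e.\ set of cardinality $\max_t h(\ttp, t)$. The $\indexx$ forward direction is analogous once one notes that membership in the r.e.\ equivalence relation $\sim$ is $\Sigma^0_1$, hence decidable with an $\emptyset'$ oracle: take $g(\ttp, t) = $ the number of $\sim$-classes intersecting $\{0, \ldots, t\}$, a nondecreasing total $\emptyset'$-recursive function whose supremum is $\indexx(W_{\varphi(\ttp)}^{\bbbn^2})$.

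The main obstacle is the $\indexx$ converse: from a total $\emptyset'$-recursive sequence $h(\ttp, t)$, construct an r.e.\ equivalence relation whose index equals $\max_t h(\ttp, t)$ up to~$1$. The difficulty is that enumeration of pairs into the relation is irreversible, while the Shoenfield-limit approximation $\hat h(\ttp, t, s)$ of $h$ may oscillate arbitrarily in~$s$. The remedy is a priority-style argument on the universe $\bbbn \times \bbbn$: for each label $i \geq 0$ maintain fresh candidate representatives $e_i^{(0)}, e_i^{(1)}, \ldots$; at each stage, if the current approximation suggests $\sup h \leq i$, merge the currently active copy $e_i^{(m)}$ with a fixed pivot element $e_0^{(0)}$; if later evidence reveals $\sup h > i$, switch to a fresh copy $e_i^{(m+1)}$, leaving the previously merged copy permanently inside the pivot class. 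Each genuine label $i < \max h$ ultimately stabilizes on an uncommitted copy that becomes its own class, while all merged copies collectively contribute at most one extra class (the pivot class), accounting precisely for the ``up to $1$'' discrepancy.
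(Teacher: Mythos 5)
Your overall architecture is the paper's: Part 3 is obtained from Part 2 together with Theorem~\ref{thm:ADelta} (and the ``up to $1$'' disappears under differences, as in the paper), the cardinal half of Part 2 is handled exactly as in the paper, and Part 1 follows the same pattern (evaluate $\Phi(\ttp)$ on the successor function at $0$ for the extension, analyse the domain, build a witness-driven functional for the converse). The genuine gaps are in the index half of Part 2. In the forward direction you tacitly assume that $W^{\bbbn^2}_{\varphi(\ttp)}$ \emph{is} an equivalence relation; for an arbitrary total recursive $\varphi$ it need not be, and then $\indexx(W^{\bbbn^2}_{\varphi(\ttp)})$ is undefined, so your total $\emptyset'$-recursive sequence must be \emph{unbounded} in $t$ at such $\ttp$. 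Counting classes meeting $\{0,\dots,t\}$ does not do this (e.g.\ $W=\{(0,1)\}$ gives a bounded count but an undefined index), and ``$W$ is an equivalence relation'' is $\Pi^{0}_2$, hence not $\emptyset'$-decidable, so you cannot case-split on it; the paper instead increments the counter whenever one of finitely many $\emptyset'$-decidable consistency checks (reflexivity, symmetry, transitivity, new-class detection on the pairs enumerated so far) fails, and these fail cofinally exactly when $W$ is not an equivalence relation. More seriously, the converse marker construction is broken as stated. With a Shoenfield approximation $\hat h(\ttp,t,s)$ convergence is only columnwise, so the trigger ``the current approximation suggests $\sup h\le i$'' may never fire after some stage even when $\sup_t h(\ttp,t)\le i$: take $h\equiv 0$ and diagonal noise $\hat h(\ttp,s,s)=s$. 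Then for \emph{every} label $i$ the refresh condition fires cofinitely and the merge condition only finitely often, each label keeps a permanently uncommitted copy, the constructed relation has infinite index, and $\indexx$ is undefined although $\max_t h(\ttp,t)=0$. The discipline must be keyed to a specific candidate witness: attach the active copy for label $i$ to the current least $t$ with $\hat h(\ttp,t,s)\ge i+1$, merge it into the pivot exactly when that witness is discredited (its column drops to $\le i$), and appoint a fresh copy only when a new candidate appears; since each column settles, exactly the labels $i<\max_t h$ retain a singleton. (The paper avoids the limit lemma altogether: it represents the function by an infinite-computation machine with oracle $\emptyset'$, approximates the oracle \emph{monotonically} as in Lemma~\ref{l:approx}, and annihilates the singletons created under oracle answers later seen to be wrong.)

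Two smaller points on Part 1. In the $(\Leftarrow)$ direction, the condition ``$\forall g\,\forall k\ \Phi(\ttp)(g)(k)=g^{(n)}(k)$'' quantifies over all partial functions $g$, which is not an arithmetic quantifier; to conclude $\Pi^0_2$ you must reduce to finite functions via the r.e.\ functional relation (or oracle machine) presenting $\Phi$ --- this is precisely the content of Prop.~\ref{p:tool1}, and it deserves a proof rather than the phrase ``a $\forall\forall$ statement over a $\Sigma^0_1$ matrix''. In the $(\Rightarrow)$ direction your threshold $K(g,k)$ must not force queries to $g$ outside the orbit needed for $g^{(n)}(k)$ (there is no recursive coding of an arbitrary partial $g$ anyway); if it does, then even for $\ttp$ in the domain $\Phi(\ttp)$ is a proper restriction of the iterator, hence not an iterator, and $\Church(\Phi(\ttp))$ is undefined. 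Taking $K(g,k)=k$ (so the number of required witnesses depends on the argument only) repairs this, and then your truncation argument, combined with the observation underlying Prop.~\ref{p:tool2}, is essentially the paper's construction in Prop.~\ref{p:examples}.
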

%
%
%
%%%%%%%%%%%%%%%%%%%%%%%%%%%%%%%%%%%%%%%%%%%%%%%%%%%%%%%%%%%%%%%
\subsection{Road map of the paper}
\label{ss:road}
%%%%%%%%%%%%%%%%%%%%%%%%%%%%%%%%%%%%%%%%%%%%%%%%%%%%%%%%%%%%%%%
%
\S\ref{s:Kabstract} introduces the notion of self-enumerated
representation system with its associated Kolmogorov complexity.
\\
\S\ref{s:operations} introduce simple operations
on self-enumerated systems.
\\
\S\ref{s:Delta} sets up some connections between self-enumerated
representation systems for $\bbbn$ and $\bbbz$.
\\
\S\ref{s:RE} considers a self-enumerated representation system for
the set of recursively enumerable subsets of $\bbbn$.
\\
\S\ref{s:InfiniteComp} recalls material from
Becher \& Chaitin \& Daicz, 2001 \cite{becherchaitindaicz}
and our paper \cite{ferbusgrigoKmaxKmin}, 2004,
about some extensions of Kolmogorov complexity involving infinite
computations.
This is to make the paper self-contained.
\\
\S\ref{s:abstract} introduces abstract representations and their
effectivizations.
\\
\S\ref{s:card}, \ref{s:index}, \ref{s:church} develop the
set-theoretical representations mentioned in \S\ref{ss:mainResults}
and prove all the mentioned theorems and some more results
related to the associated self-enumerated systems, in particular
the syntactical complexity of universal functions for such systems.
%
%
%%%%%%%%%%%%%%%%%%%%%%%%%%%%%%%%%%%%%%%%%%%%%%%%%%%%%%%%%%%%%%%
%%%%%%%%%%%%%%%%%%%%%%%%%%%%%%%%%%%%%%%%%%%%%%%%%%%%%%%%%%%%%%%
%%%%%%%%%%%%%%%%%%%%%%%%%%%%%%%%%%%%%%%%%%%%%%%%%%%%%%%%%%%%%%%
%%%%%%%%%%%%%%%%%%%%%%%%%%%%%%%%%%%%%%%%%%%%%%%%%%%%%%%%%%%%%%%
\section{An abstract setting for Kolmogorov complexity:
self-enumerated representation systems}
\label{s:Kabstract}
%%%%%%%%%%%%%%%%%%%%%%%%%%%%%%%%%%%%%%%%%%%%%%%%%%%%%%%%%%%%%%%
%%%%%%%%%%%%%%%%%%%%%%%%%%%%%%%%%%%%%%%%%%%%%%%%%%%%%%%%%%%%%%%
%%%%%%%%%%%%%%%%%%%%%%%%%%%%%%%%%%%%%%%%%%%%%%%%%%%%%%%%%%%%%%%
%%%%%%%%%%%%%%%%%%%%%%%%%%%%%%%%%%%%%%%%%%%%%%%%%%%%%%%%%%%%%%%
%
%
%%%%%%%%%%%%%%%%%%%%%%%%%%%%%%%%%%%%%%%%%%%%%%%%%%%%%%%%%%%%%%%
\subsection{Classical Kolmogorov complexity}
\label{ss:K}
%%%%%%%%%%%%%%%%%%%%%%%%%%%%%%%%%%%%%%%%%%%%%%%%%%%%%%%%%%%%%%%
%
Classical Kolmogorov complexity of elements of a basic set $\bbbx$
is defined as follows (cf. Kolmogorov, 1965 \cite{kolmo65}):
\begin{enumerate}
\item
To every $\varphi:\words\to\bbbx$ is associated
$K^\bbbx_\varphi:\bbbx\to\bbbn$
such that
\\\centerline{$K^\bbbx_\varphi(\ttx)
=\min\{|\ttp|:\varphi(\ttp)=\ttx\}$}
i.e. $K^\bbbx_\varphi(\ttx)$ is the shortest length of a ``program"
$\ttp\in\words$ which is mapped onto $\ttx$ by $\varphi$.
\item
Kolmogorov Invariance Theorem asserts that, letting $\varphi$
vary in $\PR[\words\to\bbbx]$ (cf. Notation \ref{not:PR}),
there is a least $K^\bbbx_\varphi$, up to an additive constant:
\\\centerline{$\exists\varphi\in \PR[\words\to\bbbx]\ \
               \forall\psi\in \PR[\words\to\bbbx]\ \
             \ K^\bbbx_\varphi\leqct K^\bbbx_\psi$}
Kolmogorov complexity $\ K_\bbbx:\bbbn\to\bbbn\ $ is such
a least $K^\bbbx_\varphi$, so that it is defined up to an additive
constant.
\end{enumerate}
Let $A\subseteq\bbbn$. The above construction relativizes to oracle
$A$ : replace $\PR[\words\to\bbbx]$ by $\PR[A,\words\to\bbbx]$
to get the oracular Kolmogorov complexity $K_\bbbx^A$.
%
%%%%%%%%%%%%%%%%%%%%%%%%%%%%%%%%%%%%%%%%%%%%%%%%%%%%%%%%%%%%%%%
\subsection{Self-enumerated representation systems}
\label{ss:self}
%%%%%%%%%%%%%%%%%%%%%%%%%%%%%%%%%%%%%%%%%%%%%%%%%%%%%%%%%%%%%%%
%
We introduce an abstract setting for the definition of Kolmogorov
complexity: {\em self-enumerated representation systems}.
As a variety of Kolmogorov complexities is considered, this
allows to unify the multiple variations of the invariance
theorem, the proofs of which repeat, mutatis mutandis, the same
classical proof due to Kolmogorov
(cf. Li \& Vitanyi's textbook \cite{livitanyi} p.97).
\\
This abstract setting also leads to a study of operations on
self-enumerated systems, some of which are presented in
\S\ref{s:Delta},\ref{s:RE} and some more are developed in the
continuation of this paper.
\\
Some intuition for the next definition is given in Note
\ref{note:intuition} and Rk.\ref{rk:self}.
\begin{definition}[Self-enumerated representation systems]
\label{def:self}$\\ $
{\bf 1.} A self-enumerated representation system
(in short ``self-enumerated system") is a pair $(D,{\cal F})$
where $D$ is a set --- the domain of the system ---
and ${\cal F}$ is a family of partial functions $\words\to D$
satisfying the following conditions:
\begin{enumerate}
\item[i.]
${\displaystyle D=\bigcup_{F\in{\cal F}} Range(F)}$,
i.e. every element of $D$ appears in the range of
some function $F\in{\cal F}$.
\item[ii.]
If $\varphi:\words\to\words$ is a recursive total function
and $F\in{\cal F}$ then $F\circ \varphi\in{\cal F}$.
\item[iii.]
There exists $U\in{\cal F}$
(called a universal function for ${\cal F}$) and a total
recursive function $comp_U:\words\times\words\to\words$
such that
$$\forall F\in{\cal F}\ \ \exists \tte\in\words\ \
\forall \ttp\in\words\ \ F(\ttp)=U(comp_U(\tte,\ttp))$$
In other words, letting
$U_\tte(\ttp)=U(comp_U(\tte,\ttp))$,
the sequence of functions $(U_\tte)_{\tte\in\bbbn}$
is an enumeration of ${\cal F}$.
\end{enumerate}
{\bf 2.} {\bf (Full systems)}
In case condition ii holds for all {\em partial
recursive functions} $\varphi$, the system $(D,{\cal F})$
is called a self-enumerated representation {\em full system}.
\medskip\\
{\bf 3.}
{\bf (Good universal functions)} A universal function
$U$ for ${\cal F}$ is good if
its associated $comp$ function satisfies the condition
$$\forall\tte\ \exists c_\tte\ \forall\ttp\
 |comp_U(\tte,\ttp)|) \leq |\ttp| +c_\tte$$
i.e. for all $\tte$, we have
$(\ttp\mapsto |comp_U(\tte,\ttp)|) \leqct |\ttp|$
(cf. Notation \ref{not:leqct}).
\end{definition}
\begin{note}[Intuition]\label{note:intuition}$\\ $
{\em 1.}
The set $\words$ is seen as a family of programs
to get elements of $D$.
The choice of binary programs is a fairness condition in view
of the definition of Kolmogorov complexity (cf. Def.\ref{def:Kself})
based on the length of programs:
larger the alphabet, shorter the programs.
\medskip\\
{\em 2.}
Each $F\in{\cal F}$ is seen as a programming language with
programs in $\words$.
Special restrictions: no input, outputs are elements of $D$.
\medskip\\
{\bf 3.}
Denomination $comp$ stands for ``compiler" since it maps
a program $\ttp$ from ``language" $F$ (with code $\ttp$) to its
$U$-compiled form $comp_U(\tte,\ttp)$ in the ``language" $U$.
\medskip\\
{\bf 4.}
``Compilation" with a good universal function does not
increase the length of programs but for some additive constant
which depends only on the language, namely on the sole code $e$.
\end{note}
\begin{example}\label{ex:self}
If $\bbbx$ is a basic set then $(\bbbx,\PR[\words\to\bbbx])$ is
obviously a self-enumerated representation system.
\end{example}
\begin{remark}\label{rk:self}
In view of the enumerability condition {\em iii} and
since there is no recursive enumeration of total recursive
functions, one would a priori rather require condition {\em ii}
to be true for all partial recursive functions
$\varphi:\words\to\words$, i.e. consider the sole full systems.
\\
However, there are interesting self-enumerated representation
systems which are not full systems.
The simplest one is $\maxr[]$, cf. Prop.\ref{p:maxprmaxr}.
Other examples we shall deal with involve higher order domains
consisting of infinite objects, for instance the domain
$RE(\bbbn)$ of all recursively enumerable subsets of $\bbbn$,
cf. \S\ref{ss:RE}.
{\em The partial character of computability is already inherent to
the objects in the domain or to the particular notion of
computability and an enumeration theorem does hold for a family
${\cal F}$ of total functions.}
\end{remark}
From conditions i and iii of Def.\ref{def:self},
we immediately see that
\begin{proposition} \label{p:onto}
Let $(D,{\cal F})$ be a self-enumerated system.
Then $D$ and ${\cal F}$ are countable and any universal function
for ${\cal F}$ is surjective.
\end{proposition}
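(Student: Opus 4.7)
The plan is to extract the three claims directly from the three defining conditions, using (iii) to enumerate $\mathcal{F}$, then (i) to transfer countability to $D$, and finally (i)+(iii) to prove surjectivity of any universal function.

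First I would handle the countability of $\mathcal{F}$. Condition (iii) hands us a universal $U \in \mathcal{F}$ and a total recursive $comp_U$, and announces that $(U_\tte)_{\tte \in \words}$ enumerates $\mathcal{F}$. The forward inclusion $\mathcal{F} \subseteq \{U_\tte : \tte \in \words\}$ is exactly the displayed quantifier statement in (iii). The reverse inclusion $\{U_\tte : \tte \in \words\} \subseteq \mathcal{F}$ is where one uses (ii): fixing $\tte$, the map $\ttp \mapsto comp_U(\tte,\ttp)$ is a total recursive function $\words\to\words$, so since $U \in \mathcal{F}$, closure under right-composition gives $U_\tte = U \circ (\ttp \mapsto comp_U(\tte,\ttp)) \in \mathcal{F}$. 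Since $\words$ is countable, so is $\mathcal{F}$.

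Next, the countability of $D$ is immediate from (i): each $F \in \mathcal{F}$ has domain $\words$, hence $\mathrm{Range}(F)$ is countable, and $D$ is the countable union (over the countable set $\mathcal{F}$) of these countable ranges.

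Finally, for surjectivity of an arbitrary universal function $V \in \mathcal{F}$, I would take any $d \in D$. By (i), there exist $F \in \mathcal{F}$ and $\ttp \in \words$ with $F(\ttp) = d$. Applying the universality clause of (iii) to $V$ (with its own compiler $comp_V$), there exists $\tte$ with $F(\ttp) = V(comp_V(\tte,\ttp))$, so $d \in \mathrm{Range}(V)$, as required.

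There is no real obstacle: the main subtlety worth flagging is that condition (iii) as written only makes the forward inclusion of the enumeration explicit, and one should spell out that (ii) is what forces $U_\tte \in \mathcal{F}$ for every $\tte$. Everything else is a one-line unpacking of the definitions.
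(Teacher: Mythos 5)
Your proof is correct and is essentially the argument the paper has in mind: the paper states the result as immediate from conditions i and iii of Def.\ref{def:self}, and your write-up is exactly that unpacking (countability of ${\cal F}$ from the enumeration $(U_\tte)_{\tte\in\words}$, countability of $D$ from condition i, and surjectivity of any universal $V$ by combining i with the universality clause for $V$). One tiny remark: for countability of ${\cal F}$ the inclusion ${\cal F}\subseteq\{U_\tte:\tte\in\words\}$ given directly by iii already suffices, so the appeal to ii for the reverse inclusion, while correct, is not needed for the statement being proved.
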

Another consequence of condition iii of Def.\ref{def:self}
is as follows.
\begin{proposition} \label{p:univ}
Let $(\bbbn,{\cal F})$ be a self-enumerated system.
Then all universal functions for ${\cal F}$ are many-one equivalent.
\end{proposition}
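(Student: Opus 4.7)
The plan is to read off the many-one reductions directly from the universality clause (iii) of Definition \ref{def:self}. Let $U$ and $V$ both be universal functions for ${\cal F}$. Because a universal function is itself an element of ${\cal F}$, I can apply clause (iii) for $U$ to the function $V\in{\cal F}$: there is a code $\tte_V\in\words$ such that
\[
V(\ttp)\ =\ U\bigl(comp_U(\tte_V,\ttp)\bigr) \qquad\text{for every }\ttp\in\words.
\]
The map $h_V:\ttp\mapsto comp_U(\tte_V,\ttp)$ is total recursive, since $comp_U$ is total recursive and $\tte_V$ is a fixed parameter. Symmetrically, applying clause (iii) for $V$ to $U\in{\cal F}$ yields a total recursive $h_U:\ttp\mapsto comp_V(\tte_U,\ttp)$ with $U(\ttp)=V(h_U(\ttp))$.

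Next I would spell out that these are exactly many-one reductions. With $D=\bbbn$, each partial function $\words\to\bbbn$ is identified with its graph, a subset of $\words\times\bbbn\cong\bbbn$. The identity $V(\ttp)=U(h_V(\ttp))$ for all $\ttp$ is equivalent to the biconditional
\[
(\ttp,n)\in\mathrm{graph}(V)\ \Longleftrightarrow\ (h_V(\ttp),n)\in\mathrm{graph}(U),
\]
so $(\ttp,n)\mapsto(h_V(\ttp),n)$ is a total recursive many-one reduction of $\mathrm{graph}(V)$ to $\mathrm{graph}(U)$. The symmetric argument with $h_U$ gives the reverse reduction, hence $U$ and $V$ are many-one equivalent.

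I do not foresee an obstacle: the statement is a direct unpacking of the universality condition, and the only mildly delicate point is choosing a sensible interpretation of many-one equivalence for partial functions $\words\to\bbbn$, which the identification with graphs resolves cleanly. It is worth noting that the proof does not actually use $D=\bbbn$ except through this identification; for general $D$ the same argument shows that any two universal functions are interreducible via total recursive substitutions in the input, which is the natural generalisation of many-one equivalence to the setting of self-enumerated systems.
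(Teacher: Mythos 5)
Your proof is correct and follows exactly the route the paper intends: the proposition is stated as an immediate consequence of condition iii of Def.\ref{def:self}, and your two applications of that condition (fixing the code $\tte$ and using totality of $comp_U$, $comp_V$), together with the graph reading of many-one equivalence for partial functions, are precisely the intended unpacking. No gaps.
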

%
%
%%%%%%%%%%%%%%%%%%%%%%%%%%%%%%%%%%%%%%%%%%%%%%%%%%%%%%%%%%%%%%%
\subsection{Good universal functions always exist} \label{ss:good}
%%%%%%%%%%%%%%%%%%%%%%%%%%%%%%%%%%%%%%%%%%%%%%%%%%%%%%%%%%%%%%%
%
Let's recall a classical way to code pairs of words.
\begin{definition}[Coding pairs of words]\label{def:pair}$\\ $
Let $\mu:\words\to\words$ be the morphism
(relative to the monoid structure of concatenation product on words)
such that $\mu(0)=00$ and $\mu(1)=01$.
\\
The function $c:\words\times\words\to\words$ such that
$c(\tte,\ttp)=\mu(\tte)1\ttp$ is a recursive injection
which satisfies equation
\begin{equation}\label{eq:c}
              |c(\tte,\ttp)|=|\ttp|+2|\tte|+1
\end{equation}
Denoting $\lambda$ the empty word, we define
 $\pi_1, \pi_2:\words\to\words$ as follows:
\medskip\\\medskip\centerline{$\pi_1(c(\tte,\ttp))=\tte\ ,\
                               \pi_2(c(\tte,\ttp))=\ttp\ ,\
   \pi_1(w)=\pi_2(w)=\lambda \mbox{ if } w\notin Range(c)$}
\end{definition}
\begin{remark}
If we redefine $c$ as $c(\tte,\ttp)=\mu(Bin(|\tte|))1\tte\ttp$
where $Bin(k)$ is the binary representation of the integer
$k\in\bbbn$ then equation (\ref{eq:c}) can be sharpened to
$$|c(\tte,\ttp)|=|\ttp|+|\tte|+2\lfloor\log(|e|)\rfloor+3$$
For an optimal sharpening with a coding of pairs involving the
function
$$\log(x)+\log\log(x)+\log\log\log(x)+...$$
see Li \& Vitanyi's book \cite{livitanyi}, Example 1.11.13, p.79.
\end{remark}
\begin{proposition}[Existence of good universal functions]
\label{p:good}$\\ $
Every self-enumerated system contains a good universal function
with $c$ as associated $comp$ function.
\end{proposition}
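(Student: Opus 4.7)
Start from any universal function $U_0\in\mathcal{F}$ with its associated (total recursive) compiler $comp_{U_0}:\words\times\words\to\words$ furnished by condition iii of Def.~\ref{def:self}. The idea is to reshape $U_0$ by precomposing with a total recursive transformation that ``unpacks'' the fixed pairing $c$ of Def.~\ref{def:pair}, so that the new universal function will have $c$ itself as its compiler. The length inequality will then come for free from equation~(\ref{eq:c}).

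\textbf{Construction.} Define $\psi:\words\to\words$ by
\[
\psi(w)\ =\ comp_{U_0}\bigl(\pi_1(w),\pi_2(w)\bigr).
\]
Since $\pi_1,\pi_2$ are total recursive (as remarked right after Def.~\ref{def:pair}) and $comp_{U_0}$ is total recursive, $\psi$ is total recursive. Set
\[
V\ =\ U_0\circ\psi.
\]
By condition ii of Def.~\ref{def:self}, $V\in\mathcal{F}$.

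\textbf{Verification that $V$ is universal with $comp_V=c$.} Fix any $F\in\mathcal{F}$. Since $U_0$ is universal there exists $\tte\in\words$ such that $F(\ttp)=U_0(comp_{U_0}(\tte,\ttp))$ for every $\ttp\in\words$. Using the identities $\pi_1(c(\tte,\ttp))=\tte$ and $\pi_2(c(\tte,\ttp))=\ttp$ from Def.~\ref{def:pair}, compute
\[
V(c(\tte,\ttp))\ =\ U_0(\psi(c(\tte,\ttp)))\ =\ U_0\bigl(comp_{U_0}(\tte,\ttp)\bigr)\ =\ F(\ttp).
\]
Thus $V$ is universal for $\mathcal{F}$ with the pairing $c$ itself playing the role of $comp_V$.

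\textbf{Goodness.} By equation~(\ref{eq:c}), $|c(\tte,\ttp)|=|\ttp|+2|\tte|+1$, so setting $c_\tte=2|\tte|+1$ gives the required bound $|c(\tte,\ttp)|\leq|\ttp|+c_\tte$ for every $\ttp$, uniformly in $\tte$. Hence $V$ is a good universal function for $\mathcal{F}$ with compiler $c$. There is essentially no obstacle here: the proof is a one-line ``change of universal machine'' using closure of $\mathcal{F}$ under total recursive precomposition, and the only point that deserves any care is that $\pi_1,\pi_2$ must be total so that $\psi$ be total, which is precisely why they were defined on all of $\words$ (returning $\lambda$ outside $Range(c)$) in Def.~\ref{def:pair}.
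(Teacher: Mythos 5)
Your proof is correct and is essentially the paper's own argument: the paper likewise sets $U_{opt}=U\circ comp_U\circ(\pi_1,\pi_2)$, uses closure under total recursive precomposition (condition ii) to keep it in $\mathcal{F}$, and reads off universality with $c$ as the compiler. Your only addition is to spell out the goodness bound $|c(\tte,\ttp)|=|\ttp|+2|\tte|+1$ from equation~(\ref{eq:c}), which the paper leaves implicit.
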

\begin{proof}
The usual proof works.
Let $U$ and $comp_U$ be as in Def.\ref{def:self} and set
\medskip\\\medskip\centerline{$U_{opt}
=U\circ comp_U\circ (\pi_1,\pi_2)$}
Then $comp_U\circ (\pi_1,\pi_2):\words\to\words$
is total recursive and condition ii of Def.\ref{def:self} insures
that $U_{opt}\in{\cal F}$. Now, we have
\medskip\\\medskip\centerline{$
U_{opt}(c(\tte,\ttp))
= U(comp_U((\pi_1,\pi_2)(c(\tte,\ttp))))
= U(comp_U(\tte,\ttp))$}
so that $U_{opt}$ is universal with $c$ as associated
$comp$ function.
\end{proof}
%
%
%%%%%%%%%%%%%%%%%%%%%%%%%%%%%%%%%%%%%%%%%%%%%%%%%%%%%%%%%%%%%%%
\subsection{Relativization of self-enumerated representation
systems} \label{ss:relativization}
%%%%%%%%%%%%%%%%%%%%%%%%%%%%%%%%%%%%%%%%%%%%%%%%%%%%%%%%%%%%%%%
%
Def.\ref{def:self} can be obviously relativized to any oracle $A$.
However, contrary to what can be a priori expected, this is no
generalization but particularization.
The main reason is Prop.\ref{p:good}: there always exists a
universal function with $c$ as associated $comp$ function.
\begin{definition} \label{def:selfA}
Let $A\subseteq\bbbn$.
A self-enumerated representation $A$-system is a pair $(D,{\cal F})$
where ${\cal F}$ is a family of partial functions $\words\to D$
satisfying condition i of Def.\ref{def:self} and the following
variants of conditions ii and iii :
\begin{enumerate}
\item[$ii^A$.]
If $\varphi:\words\to\words$ is an $A$-recursive total function
and $F\in{\cal F}$ then $F\circ \varphi\in{\cal F}$.
\item[$iii^A$.]
There exists $U\in{\cal F}$ and a total
$A$-recursive function $comp_U:\words\times\words\to\words$
such that
$$\forall F\in{\cal F}\ \exists \tte\in\words\
\forall \ttp\in\words\
F(\ttp)=U(comp_U(\tte,\ttp))$$
\end{enumerate}
\end{definition}
\begin{example}\label{ex:selfA}
If $\bbbx$ is a basic set then $(\bbbx,\PR[A,\words\to\bbbx])$ is
obviously a self-enumerated representation $A$-system.
\end{example}

\begin{proposition} \label{p:selfA}
Every self-enumerated representation $A$-system contains a universal
function with $c$ as associated $comp$ function.
\\
In particular, every such system is also a self-enumerated
representation system.
Thus, $(\bbbx,\PR[A,\words\to\bbbx])$ is a self-enumerated
representation system.
\end{proposition}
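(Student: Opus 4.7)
The plan is to imitate the argument of Proposition~\ref{p:good}, exploiting the fact that the pairing function $c$ of Definition~\ref{def:pair} is \emph{total recursive} (with no oracle), even though the $comp_U$ we start from is only $A$-recursive.

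Start from a $U\in{\cal F}$ and a total $A$-recursive $comp_U:\words\times\words\to\words$ given by condition $iii^A$, and set
\[
U_{opt}\;=\;U\circ comp_U\circ(\pi_1,\pi_2).
\]
The map $comp_U\circ(\pi_1,\pi_2):\words\to\words$ is total $A$-recursive, so by condition $ii^A$ we have $U_{opt}\in{\cal F}$. For every $\tte,\ttp\in\words$,
\[
U_{opt}(c(\tte,\ttp))
\;=\;U\bigl(comp_U(\pi_1(c(\tte,\ttp)),\pi_2(c(\tte,\ttp)))\bigr)
\;=\;U(comp_U(\tte,\ttp)),
\]
so for any $F\in{\cal F}$, picking $\tte$ with $F(\ttp)=U(comp_U(\tte,\ttp))$ yields $F(\ttp)=U_{opt}(c(\tte,\ttp))$. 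Thus $U_{opt}$ is universal for ${\cal F}$ with $c$ as associated $comp$ function, and $c$ is total recursive (independently of $A$).

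For the second assertion, we check that $(D,{\cal F})$ satisfies the unrelativized conditions i, ii, iii of Definition~\ref{def:self}. Condition i is identical in both definitions. Condition ii follows from $ii^A$ because every total recursive $\varphi:\words\to\words$ is in particular total $A$-recursive. Condition iii is witnessed by $U_{opt}$ together with the total recursive $comp$ function $c$ exhibited above. Applying this to Example~\ref{ex:selfA} immediately gives that $(\bbbx,\PR[A,\words\to\bbbx])$ is a self-enumerated representation system (in the unrelativized sense).

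The only conceptual point — which is hardly an obstacle but is the heart of the statement — is to notice that the oracle is entirely absorbed into the universal function $U_{opt}$: the ``compilation'' step $c$ can be made oracle-free because the pairing $c$ itself is a primitive recursive bijection onto its range with oracle-free projections $\pi_1,\pi_2$.
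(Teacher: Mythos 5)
Your proof is correct and follows essentially the same route as the paper: take $U_{opt}=U\circ comp_U\circ(\pi_1,\pi_2)$, use condition $ii^A$ to place it in ${\cal F}$, and observe that its associated $comp$ function is the oracle-free pairing $c$. Your explicit check that conditions i, ii, iii then hold unrelativized (recursive $\Rightarrow$ $A$-recursive) is exactly what the paper leaves implicit.
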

\begin{proof}
We repeat the same easy argument used for Prop.\ref{p:good}.
Let $U$ and $comp_U$ be as in condition $iii^A$ of
Def.\ref{def:selfA} and set
$U_{opt}=U\circ comp_U\circ (\pi_1,\pi_2)$.
Then $comp_U\circ (\pi_1,\pi_2):\words\to\words$ is total
$A$-recursive and condition $ii^A$ insures that
$U_{opt}\in{\cal F}$ and we have
\medskip\\\medskip\centerline{$
U_{opt}(c(\tte,\ttp))
= U(comp_U((\pi_1,\pi_2)(c(\tte,\ttp))))
= U(comp_U(\tte,\ttp))$}
so that $U_{opt}$ is universal with $c$ as associated
$comp$ function.
\end{proof}
%
%
%%%%%%%%%%%%%%%%%%%%%%%%%%%%%%%%%%%%%%%%%%%%%%%%%%%%%%%%%%%%%%%
\subsection{The Invariance Theorem} \label{ss:invariance}
%%%%%%%%%%%%%%%%%%%%%%%%%%%%%%%%%%%%%%%%%%%%%%%%%%%%%%%%%%%%%%%
%
\begin{definition}\label{def:Kphi}
Let $F:\words\to D$ be any partial function.
The Kolmogorov complexity
$K_F^D:D\to\bbbn\cup\{+\infty\}$
associated to $F$ is the function defined as follows:
 $$K_F^D(x) = \min\{|\ttp|:F(\ttp)=x\}$$
(Convention: $\min\,\emptyset=+\infty$)
\end{definition}
\begin{remark}\label{rk:Kphi}$\\ $
{\bf 1.} $K_F^D(x)$ is finite if and only if $x\in Range(F)$.
Hence $K_F^D$ has values in $\bbbn$ (rather than
$\bbbn\cup\{+\infty\}$) if and only if $F$ is surjective.
\medskip\\
{\bf 2.} If $F:\words\to D$ is a restriction of $G:\words\to D$
then $K^D_G\leq K^D_F$.
\end{remark}
\medskip
Thanks to Prop. \ref{p:good}, the usual Invariance Theorem
can be extended to any self-enumerated representation system,
which allows to define Kolmogorov complexity for such a system.
\begin{theorem}[Invariance Theorem,
Kolmogorov, 1965 \cite{kolmo65}]\label{thm:invar}$\\ $
Let $(D,{\cal F})$ be a self-enumerated representation system.
\medskip\\
{\bf 1.}
When $F$ varies in the family ${\cal F}$,
there is a least $K_F^D$, up to an additive constant
(cf. Notation \ref{not:leqct}):
$$\exists F\in {\cal F}\ \ \forall G\in {\cal F}\
\ \ K_F^D \leqct K_G^D$$
Such $F$'s are said to optimal in ${\cal F}$.
\medskip\\
{\bf 2.}
Every good universal function for ${\cal F}$ is optimal.
\end{theorem}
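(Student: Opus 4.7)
The plan is to imitate the classical Kolmogorov invariance argument, using the compiler function to translate any program in the language $G$ into a program in a chosen universal language $U$, with length overhead controlled by $comp_U$.

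First I would handle part 2 directly, since it is really the heart of the matter. Fix a good universal function $U \in \mathcal{F}$ with compiler $comp_U$ satisfying
\[
\forall \tte\ \exists c_\tte\ \forall \ttp\ \ |comp_U(\tte,\ttp)| \leq |\ttp| + c_\tte .
\]
Given any $G \in \mathcal{F}$, condition iii of Definition \ref{def:self} provides a code $\tte_G \in \words$ such that $G(\ttp) = U(comp_U(\tte_G,\ttp))$ for all $\ttp$. Then for every $x \in D$ and every $\ttp$ with $G(\ttp) = x$, the word $\ttq = comp_U(\tte_G,\ttp)$ is a $U$-program for $x$, hence
\[
K_U^D(x) \leq |comp_U(\tte_G,\ttp)| \leq |\ttp| + c_{\tte_G} .
\]
Taking the minimum over all such $\ttp$ yields $K_U^D(x) \leq K_G^D(x) + c_{\tte_G}$, i.e.\ $K_U^D \leqct K_G^D$. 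If $x \notin Range(G)$ then $K_G^D(x) = +\infty$ and the inequality is trivial. This gives optimality of every good universal function.

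Part 1 is then immediate: by Proposition \ref{p:good}, the system $(D,\mathcal{F})$ contains at least one good universal function, and by part 2 any such $F$ witnesses the existential claim. The only subtle point in the whole argument is making sure the constant in the inequality depends only on $G$ (through the code $\tte_G$) and not on $x$ or $\ttp$, which is exactly what the good-universality condition $|comp_U(\tte,\cdot)| \leqct |\cdot|$ is designed to guarantee.

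I do not expect a real obstacle: once Proposition \ref{p:good} is in hand, the argument is essentially a one-line translation of programs between languages, and the partial versus total character of the functions in $\mathcal{F}$ plays no role because we work purely with values on inputs where $G$ is defined. The mild care needed is the usual convention $\min\emptyset = +\infty$ from Definition \ref{def:Kphi}, which I would state once to handle elements $x \notin Range(G)$ uniformly.
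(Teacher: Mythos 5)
Your proposal is correct and takes essentially the same route as the paper: fix a good universal $U$, compile each $G$-program $\ttp$ into $comp_U(\tte_G,\ttp)$ so that $K^D_U(x)\leq K^D_G(x)+c_{\tte_G}$, and obtain part 1 from part 2 via the existence of good universal functions (Prop.\ref{p:good}). The only cosmetic difference is that the paper disposes of the case $x\notin Range(G)$ by noting $K^D_U$ is everywhere finite since $U$ is surjective (Prop.\ref{p:onto}), whereas you invoke the $\min\emptyset=+\infty$ convention, which amounts to the same thing.
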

\begin{proof}
It suffices to prove 2. The usual proof works.
Consider a good universal enumeration $U$ of ${\cal F}$.
Let $F\in {\cal F}$ and let $\tte$ be such that
$$U(comp_U(\tte,\ttp))=F(\ttp)\
\mbox{ for all }p\in\words$$
First, since $U$ is surjective (Prop.\ref{p:onto}),
all values of $K^D_U$ are finite.
Thus, $K^D_U(x) < K^D_F(x)$ for $x\notin Range(F)$
(since then $K^D_F(x)=+\infty$).
\\ For every $x\in Range(F)$, let $\ttp_x$ be a smallest program
such that $F(\ttp_x)=x$, i.e. $K^D_F(x)=|\ttp_x|$.
Then,
\medskip\\\centerline{$x=F(\ttp_x)=U(comp_U(\tte,\ttp_x))$}
and since $U$ is good,
\\\medskip\centerline{$K^D_U(x) \leq
|comp_U(e,\ttp_x)|\leq|\ttp_x|+c_\tte=K^D_F(x)+c_\tte$}
and therefore $K^D_U\leqct K^D_F$.
\end{proof}
As usual, Theorem \ref{thm:invar} allows for an intrinsic
definition of the Kolmogorov complexity associated to the
self-enumerated system $(D,{\cal F})$.
\begin{definition}[Kolmogorov complexity of a self-enumerated
representation system]\label{def:Kself}$\\ $
Let $(D,{\cal F})$ be a self-enumerated representation system.
\\
The Kolmogorov complexity $\ K^D_{\cal F}:D\to\bbbn\ $
is the function $K_U^D$ where $U$ is some fixed
{\em good universal enumeration} in ${\cal F}$.
\\ Up to an additive constant, this definition is independent
of the particular choice of $U$.
\end{definition}
The following straightforward result, based on Examples
\ref{ex:self} and \ref{ex:selfA}, insures that Def.\ref{def:Kself}
is compatible with the usual Kolmogorov complexity and its
relativizations.
\begin{proposition}
Let $A\subseteq\bbbn$ be an oracle and let $D=\bbbx$ be a basic set
(cf. Def.\ref{def:basic}).
The Kolmogorov complexities $K^\bbbx_{\PR[\words\to\bbbx]}$ and
$K^\bbbx_{\PR[A,\words\to\bbbx]}$ defined above are exactly
the usual Kolmogorov complexity
$K_\bbbx:\bbbx\to\bbbn$ and its relativization $K_\bbbx^A$
(cf. \S\ref{ss:K}).
\end{proposition}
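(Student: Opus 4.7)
The plan is to observe that this proposition essentially amounts to unwinding definitions, once we verify that the two candidate notions of ``optimal function'' actually coincide. First, I would note that by Example~\ref{ex:self} the pair $(\bbbx,\PR[\words\to\bbbx])$ is a self-enumerated representation system, and by Example~\ref{ex:selfA} together with Prop.~\ref{p:selfA}, so is $(\bbbx,\PR[A,\words\to\bbbx])$. In particular, Def.~\ref{def:Kself} applies to both families, yielding well-defined complexities $K^\bbbx_{\PR[\words\to\bbbx]}$ and $K^\bbbx_{\PR[A,\words\to\bbbx]}$.

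Next, I would compare the two definitions side by side. The classical complexity $K_\bbbx$ from \S\ref{ss:K} is any $K^\bbbx_\varphi$ for $\varphi\in\PR[\words\to\bbbx]$ enjoying the minimality property $K^\bbbx_\varphi \leqct K^\bbbx_\psi$ for every $\psi\in\PR[\words\to\bbbx]$. On the abstract side, $K^\bbbx_{\PR[\words\to\bbbx]}$ is $K^\bbbx_U$ for a good universal $U$ in $\PR[\words\to\bbbx]$; such a $U$ exists by Prop.~\ref{p:good} and is optimal in the family $\PR[\words\to\bbbx]$ by Thm.~\ref{thm:invar}, which is literally the minimality property above. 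Since any two functions in $\PR[\words\to\bbbx]$ with this minimality property agree up to an additive constant, one concludes $K^\bbbx_{\PR[\words\to\bbbx]} \eqct K_\bbbx$.

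For the relativized version, the same scheme applies verbatim with $A$-recursive functions replacing recursive ones. By Prop.~\ref{p:selfA}, $\PR[A,\words\to\bbbx]$ admits a good universal function (with the pairing $c$ as its $comp$ function), and Thm.~\ref{thm:invar}, applied to the self-enumerated system $(\bbbx,\PR[A,\words\to\bbbx])$, yields its optimality, which is precisely the minimality defining $K^A_\bbbx$ in the standard relativized setting. No substantive obstacle is anticipated: the proposition is a sanity check that the abstract framework of \S\ref{s:Kabstract} specializes correctly to the classical (and oracle) setting, and all the real work has already been done in Thm.~\ref{thm:invar}, Prop.~\ref{p:good} and Prop.~\ref{p:selfA}. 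The only point worth being careful about is to invoke Prop.~\ref{p:selfA} so that the oracle case is handled by the \emph{unrelativized} invariance theorem rather than requiring a separate argument.
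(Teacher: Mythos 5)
Your proof is correct and is essentially the argument the paper has in mind: the paper states this proposition without proof as a "straightforward result" based on Examples \ref{ex:self} and \ref{ex:selfA}, and your unwinding — good universal functions exist (Prop.~\ref{p:good}, Prop.~\ref{p:selfA}) and are optimal (Thm.~\ref{thm:invar}), so $K^\bbbx_{\PR[\words\to\bbbx]}$ and $K^\bbbx_{\PR[A,\words\to\bbbx]}$ satisfy exactly the minimality properties defining $K_\bbbx$ and $K^A_\bbbx$ in \S\ref{ss:K}, hence coincide up to the additive constant inherent in those definitions — is precisely that verification. Your remark that Prop.~\ref{p:selfA} lets the oracle case be absorbed into the unrelativized invariance theorem is also in line with the paper's treatment.
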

%
%
%%%%%%%%%%%%%%%%%%%%%%%%%%%%%%%%%%%%%%%%%%%%%%%%%%%%%%%%%%%%%%%
%%%%%%%%%%%%%%%%%%%%%%%%%%%%%%%%%%%%%%%%%%%%%%%%%%%%%%%%%%%%%%%
%%%%%%%%%%%%%%%%%%%%%%%%%%%%%%%%%%%%%%%%%%%%%%%%%%%%%%%%%%%%%%%
\section{Some operations on self-enumerated systems}
\label{s:operations}
%%%%%%%%%%%%%%%%%%%%%%%%%%%%%%%%%%%%%%%%%%%%%%%%%%%%%%%%%%%%%%%
%%%%%%%%%%%%%%%%%%%%%%%%%%%%%%%%%%%%%%%%%%%%%%%%%%%%%%%%%%%%%%%
%%%%%%%%%%%%%%%%%%%%%%%%%%%%%%%%%%%%%%%%%%%%%%%%%%%%%%%%%%%%%%%
%
%
%%%%%%%%%%%%%%%%%%%%%%%%%%%%%%%%%%%%%%%%%%%%%%%%%%%%%%%%%%%%%%%
\subsection{The composition lemma}\label{ss:subst}
%%%%%%%%%%%%%%%%%%%%%%%%%%%%%%%%%%%%%%%%%%%%%%%%%%%%%%%%%%%%%%%
%
The following easy fact is a convenient tool to
effectivize representations (cf. \S\ref{ss:why}, \ref{ss:effRep}).
We shall also use it in \S\ref{s:Delta} to go from systems
with domain $\bbbn$ to ones with domain $\bbbz$.
\begin{lemma}[The composition lemma]\label{l:circ}$\\ $
Let $(D,{\cal F})$ be a self-enumerated representation system
and $\varphi:D\to E$ be a surjective partial function.
Set $\varphi\circ{\cal F}=\{\varphi\circ F:F\in{\cal F}\}$.
\medskip\\
{\bf 1.}
\ $(E,\varphi\circ{\cal F})$ is also a self-enumerated
representation system.
Moreover, if $U$ is universal or good universal
for ${\cal F}$ then so is $\varphi\circ U$ for
$\varphi\circ{\cal F}$.
\medskip\\
{\bf 2.}
For every $x\in E$,
$$K^E_{\varphi\circ {\cal F}}(x)
\eqct\min\{K^D_{\cal F}(y):\varphi(y)=x\}$$
In particular,
$K^E_{\varphi\circ{\cal F}}\circ \varphi\
                                \leqct\ K^D_{\cal F}$
and if $\varphi:D\to E$ is a total bijection from $D$ to $E$ then
$K^E_{\varphi\circ {\cal F}}\circ \varphi\ \eqct\ K^D_{\cal F}$.
\end{lemma}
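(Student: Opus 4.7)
My plan is to handle the two parts in sequence, with Part 1 doing essentially all the structural work and Part 2 being a direct unpacking of definitions.

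For Part 1, I would verify the three conditions of Def.\ref{def:self} for $(E,\varphi\circ{\cal F})$. Condition (i) is immediate: given $x\in E$, surjectivity of $\varphi$ yields some $y\in D$ with $\varphi(y)=x$, and condition (i) for ${\cal F}$ provides $F\in{\cal F}$ with $y\in Range(F)$, whence $x\in Range(\varphi\circ F)$. Condition (ii) is pure associativity of composition: if $\psi:\words\to\words$ is total recursive, then $(\varphi\circ F)\circ\psi=\varphi\circ(F\circ\psi)\in\varphi\circ{\cal F}$. For condition (iii), I would fix a universal $U\in{\cal F}$ with compiler $comp_U$ and show that $\varphi\circ U$ is universal for $\varphi\circ{\cal F}$ with the \emph{same} compiler: for any $G=\varphi\circ F$ with $F$ coded by $\tte$, one has
$$G(\ttp)=\varphi(F(\ttp))=\varphi(U(comp_U(\tte,\ttp)))=(\varphi\circ U)(comp_U(\tte,\ttp)).$$
Crucially, ``goodness'' is a property of $comp_U$ alone (the bound $|comp_U(\tte,\ttp)|\leqct|\ttp|$), so goodness of $U$ transfers verbatim to $\varphi\circ U$.

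For Part 2, I would exploit Part 1 by picking a good universal $U$ for ${\cal F}$ (which exists by Prop.\ref{p:good}) so that $\varphi\circ U$ is good universal for $\varphi\circ{\cal F}$. The Invariance Theorem (Thm.\ref{thm:invar}) then gives $K^E_{\varphi\circ{\cal F}}\eqct K^E_{\varphi\circ U}$ and $K^D_{\cal F}\eqct K^D_U$. The equality becomes purely combinatorial from here: a program $\ttp$ satisfies $(\varphi\circ U)(\ttp)=x$ iff $U(\ttp)$ is defined, belongs to $\varphi^{-1}(x)$, and $\varphi(U(\ttp))=x$. Partitioning the set $\{\ttp:(\varphi\circ U)(\ttp)=x\}$ according to the value $y=U(\ttp)\in\varphi^{-1}(x)$ gives the exact identity
$$K^E_{\varphi\circ U}(x)=\min_{y\in\varphi^{-1}(x)}\min\{|\ttp|:U(\ttp)=y\}=\min\{K^D_U(y):\varphi(y)=x\},$$
and replacing $K^D_U$ by $K^D_{\cal F}$ costs only an additive constant.

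The two particular cases are then immediate: for any $y\in D$, taking $x=\varphi(y)$ in the min formula gives $K^E_{\varphi\circ{\cal F}}(\varphi(y))\leqct K^D_{\cal F}(y)$; and when $\varphi$ is a total bijection, $\varphi^{-1}(x)$ is a singleton, so the min collapses to $K^D_{\cal F}(\varphi^{-1}(x))$, yielding equality. I do not foresee any real obstacle; the only subtle point is remembering that ``good'' is a condition on $comp_U$ rather than on $U$ itself, which is exactly what makes the composition lemma work so smoothly.
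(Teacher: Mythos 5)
Your proposal is correct and follows essentially the same route as the paper: Part 1 by direct verification that $\varphi\circ U$ is (good) universal with the unchanged compiler, and Part 2 by partitioning the programs $\ttp$ with $\varphi(U(\ttp))=x$ according to the value $y=U(\ttp)\in\varphi^{-1}(x)$ to get the min-over-fibers identity, from which the two particular cases follow. Your explicit remark that passing from $K^D_U$ to $K^D_{\cal F}$ costs only an additive constant just makes precise what the paper leaves implicit.
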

\begin{proof}
Point 1 is straightforward. As for point 2, let $U:\words\to D$ be
some universal function  for ${\cal F}$ and observe that,
for $x\in E$,
\begin{eqnarray*}
K^E_{\varphi\circ {\cal F}}(x)
&=&\min\{|\ttp|:\ttp\mbox{ such that }\varphi(U(\ttp))=x\}\\
&=&\min\{\min\{|\ttp|:\ttp\mbox{ s.t. }U(\ttp)=y\}:
y\mbox{ s.t. }\varphi(y)=x\}\\
&=&\min\{K^D_{\cal F}(y):y\mbox{ s.t. }\varphi(y)=x\}
\end{eqnarray*}
In particular, taking $x=\varphi(z)$, we get
$K^E_{\varphi\circ {\cal F}}(\varphi(z))
                                \leqct\ K^D_{\cal F}(z)$.
\\ Finally, observe that if $\varphi$ is bijective then
$z$ is the unique $y$ such that $\varphi(y)=x$,
so that the above $\min$ reduces to $K^D_{\cal F}(z)$.
\end{proof}
%
%%%%%%%%%%%%%%%%%%%%%%%%%%%%%%%%%%%%%%%%%%%%%%%%%%%%%%%%%%%%%%%
\subsection{Product of self-enumerated representation systems}
           \label{ss:product}
%%%%%%%%%%%%%%%%%%%%%%%%%%%%%%%%%%%%%%%%%%%%%%%%%%%%%%%%%%%%%%%
%
We shall need a notion of product of self-enumerated representation
systems.
\begin{theorem}\label{thm:prod}
Let $(D_1,{\cal F}_1)$ and $(D_2,{\cal F}_2)$ be self-enumerated
representation systems
\\ We identify a pair $(F_1,F_2)\in{\cal F}_1\times{\cal F}_2$
with the function $\words\to D_1\times D_2$ which maps $\ttp$ to
$(F_1(\ttp),F_2(\ttp))$.
\\ Then $(D_1\times D_2, {\cal F}_1\times{\cal F}_2)$ is also a
self-enumerated representation system.
\\ If $(D_1,{\cal F}_1)$ and $(D_2,{\cal F}_2)$ are full systems
then so is $(D_1\times D_2, {\cal F}_1\times{\cal F}_2)$.
\\ If $U_1,U_2$ are universal for ${\cal F}_1,{\cal F}_2$ then
$$U_{1,2}
=(U_1\circ\pi_1,U_2\circ\pi_2)$$
is universal for ${\cal F}_1\times{\cal F}_2$.
\end{theorem}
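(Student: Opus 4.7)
The plan is to verify in turn the three conditions (i), (ii), (iii) of Definition \ref{def:self} for the pair $(D_1\times D_2,\,{\cal F}_1\times{\cal F}_2)$, and then observe that the same argument transports to the full-system setting with no new ingredient. Conditions (i) and (ii) amount to bookkeeping; the substantive content is producing the universal function $U_{1,2}$ together with a total recursive compiler, and this is where the pair-coding $c$ of Definition \ref{def:pair} enters.

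For (i), given $(x_1,x_2)\in D_1\times D_2$, I would first use condition (i) of each factor to pick $F_i\in{\cal F}_i$ and $\ttp_i\in\words$ with $F_i(\ttp_i)=x_i$, then replace each $F_i$ by the constant function $F_i\circ\psi_i\in{\cal F}_i$, where $\psi_i:\words\to\words$ is the total recursive constant function with value $\ttp_i$; condition (ii) of each factor keeps these in ${\cal F}_i$, and their pairing is an element of ${\cal F}_1\times{\cal F}_2$ constant at $(x_1,x_2)$. Condition (ii) for the product is immediate, since composition on the product is by definition componentwise, so closure under total recursive composition is inherited from each factor.

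The main point is condition (iii). I would first check that the proposed universal function $U_{1,2}=(U_1\circ\pi_1,\,U_2\circ\pi_2)$ does belong to ${\cal F}_1\times{\cal F}_2$: because $\pi_1,\pi_2:\words\to\words$ are total recursive, each $U_i\circ\pi_i$ lies in ${\cal F}_i$ by condition (ii). For the compiler, given any $(F_1,F_2)\in{\cal F}_1\times{\cal F}_2$ with $U_i$-codes $\tte_1,\tte_2$, I would bundle them as $\tte=c(\tte_1,\tte_2)$ and set
\[
comp_{U_{1,2}}(\tte,\ttp)
=c\bigl(comp_{U_1}(\pi_1(\tte),\ttp),\;comp_{U_2}(\pi_2(\tte),\ttp)\bigr),
\]
which is total recursive in $(\tte,\ttp)$ by composition. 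Using the identities $\pi_i(c(\tte_1,\tte_2))=\tte_i$ together with the definition of $U_{1,2}$, one unpacks $U_{1,2}(comp_{U_{1,2}}(\tte,\ttp))=(F_1(\ttp),F_2(\ttp))$, and the two sides agree as partial functions. The full-system case needs no new idea: the compositions used in the argument above only involve total recursive functions, while closure of the product under composition with a partial recursive $\varphi$ is again just componentwise closure, inheriting the full-system property from the factors.

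The only step I expect to call for any genuine care is (i): the naive attempt to realize $(x_1,x_2)$ as $(F_1(\ttp),F_2(\ttp))$ for a single common $\ttp$ generally fails, and the fix is the passage through constant functions described above. Beyond that, the proof is a routine verification in the pair-coding calculus $(c,\pi_1,\pi_2)$, exactly parallel to the proof of Proposition \ref{p:good}.
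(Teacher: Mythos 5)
Your proof is correct and follows essentially the same route as the paper: the same universal function $U_{1,2}$ with the compiler $comp_{1,2}(\tte,\ttp)=c(comp_{U_1}(\pi_1(\tte),\ttp),comp_{U_2}(\pi_2(\tte),\ttp))$ and bundled code $c(\tte_1,\tte_2)$. The only cosmetic difference is in condition (i), where you pass through constant functions while the paper realizes $(d_1,d_2)$ as $(F_1\circ\pi_1,F_2\circ\pi_2)(c(\ttp_1,\ttp_2))$; both are instances of the same use of closure under total recursive composition.
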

\begin{proof}
{\em Condition ii} in Def.\ref{def:self} is obvious.
\medskip\\
{\em Condition i.} Let $(d_1,d_2)\in D_1\times D_2$.
Applying condition i to $(D_1,{\cal F}_1)$ and to $(D_2,{\cal F}_2)$,
we get $F_1\in{\cal F}_1$, $F_2\in{\cal F}_2$ and
$\ttp_1,\ttp_2\in\words$ such that
$d_1=F_1(\ttp_1)$ and $d_2=F_2(\ttp_2)$.
Therefore
$(d_1,d_2)=(F_1\circ\pi_1,F_2\circ\pi_2)(c(\ttp_1,\ttp_2))$.
Observe finally that
$(F_1\circ\pi_1,F_2\circ\pi_2)\in{\cal F}_1\times {\cal F}_2$
(condition ii for $(D_1,{\cal F}_1) , (D_2,{\cal F}_2)$).
\medskip\\
{\em Condition iii.} Let $comp_1,comp_2:\words\to\words$ be
the $comp$ functions associated to the universal functions
$U_1, U_2$ and set
\\\centerline{$comp_{1,2}(\tte,\ttp)
     =c(comp_1(\pi_1(\tte),\ttp),comp_2(\pi_2(\tte),\ttp))$}
For every $(F_1,F_2)\in{\cal F}_1\times {\cal F}_2$
there exist $\tta,\ttb\in\words$ such that
$F_1(\ttp)=U_1(comp_1(\tta,\ttp))$ and
$F_2(\ttp)=U_2(comp_2(\ttb,\ttp))$. Therefore
\begin{eqnarray*}
(F_1,F_2)(\ttp)
&=&(U_1(comp_1(\tta,\ttp)),U_2(comp_2\tt(\ttb,\ttp)))\\
&=&(U_1\circ\pi_1,U_2\circ\pi_2)
    (c(comp_1(\tta,\ttp),comp_2(\ttb,\ttp)))\\
&=&U_{1,2}(comp_{1,2}(c(\tta,\ttb),\ttp))
\end{eqnarray*}
which proves that $U_{1,2}$ is universal
for the product system ${\cal F}_1\times {\cal F}_2$.
\end{proof}
\begin{remark}
Observe that, even if $U_1,U_2$ are good,
the above universal function $U_{1,2}$ is not good since
\begin{eqnarray*}
|comp_{1,2}(\tte,\ttp)| & = & 2|comp_1(\pi_1(\tte),\ttp)|
                              +|comp_2(\pi_2(\tte),\ttp)|+1
\end{eqnarray*}
which is $\geq 3|\ttp|$ in general.
\\ To get a good function $\widetilde{U_{1,2}}$, argue as in
the proof of Prop.\ref{p:good}:
\begin{eqnarray*}
\widetilde{U_{1,2}}(\ttp)
&=&U_{1,2}\circ comp_{1,2}\circ(\pi_1,\pi_2)(\ttp)\\
&=&U_{1,2}(comp_{1,2}(\pi_1(\ttp),\pi_2(\ttp)))\\
&=&U_{1,2}(c(comp_1(\pi_1\pi_1(\ttp),\pi_2(\ttp)),
             comp_2(\pi_2\pi_1(\ttp),\pi_2(\ttp))))\\
&=&(U_1\circ\pi_1, U_2\circ\pi_2)
\\ &&(c(comp_1(\pi_1\pi_1(\ttp),\pi_2(\ttp)),
       comp_2(\pi_2\pi_1(\ttp),\pi_2(\ttp))))\\
&=&(U_1(comp_1(\pi_1\pi_1(\ttp),\pi_2(\ttp))),
    U_2(comp_2(\pi_2\pi_1(\ttp),\pi_2(\ttp))))
\end{eqnarray*}
\end{remark}
%
%
%%%%%%%%%%%%%%%%%%%%%%%%%%%%%%%%%%%%%%%%%%%%%%%%%%%%%%%%%%%%%%%
%%%%%%%%%%%%%%%%%%%%%%%%%%%%%%%%%%%%%%%%%%%%%%%%%%%%%%%%%%%%%%%
%%%%%%%%%%%%%%%%%%%%%%%%%%%%%%%%%%%%%%%%%%%%%%%%%%%%%%%%%%%%%%%
\section{From domain $\bbbn$ to domain $\bbbz$}
\label{s:Delta}
%%%%%%%%%%%%%%%%%%%%%%%%%%%%%%%%%%%%%%%%%%%%%%%%%%%%%%%%%%%%%%%
%%%%%%%%%%%%%%%%%%%%%%%%%%%%%%%%%%%%%%%%%%%%%%%%%%%%%%%%%%%%%%%
%%%%%%%%%%%%%%%%%%%%%%%%%%%%%%%%%%%%%%%%%%%%%%%%%%%%%%%%%%%%%%%
%
%
%%%%%%%%%%%%%%%%%%%%%%%%%%%%%%%%%%%%%%%%%%%%%%%%%%%%%%%%%%%%%%
\subsection{The $\Delta$ operation}\label{ss:Delta}
%%%%%%%%%%%%%%%%%%%%%%%%%%%%%%%%%%%%%%%%%%%%%%%%%%%%%%%%%%%%%%
%
Relative integers are classically introduced as equivalence classes
of pairs of natural integers of which they are the differences.
This give a simple way to go from a self-enumerated representation
system with domain $\bbbn$ to some with domain $\bbbz$.

\begin{definition}[The $\Delta$ operation]\label{def:Delta}$\\ $
Let $\diff:\bbbn^2\to\bbbz$ be the function $(m,n)\mapsto m-n$.
\\
If $(\bbbn,{\cal F})$ is a self-enumerated representation system
with domain $\bbbn$, using notations from
Lemma \ref{l:circ} and Thm.\ref{thm:prod}, we let
$(\bbbz,\Delta{\cal F})$ be the system
$$(\bbbz,  \diff\circ ({\cal F}\times{\cal F}))$$
\end{definition}
As a direct corollary of Lemma \ref{l:circ} and
Thm.\ref{thm:prod}, we have
\begin{proposition}
If $(\bbbn,{\cal F})$ is a self-enumerated representation system
(resp. full system)
with domain $\bbbn$ then so is $(\bbbz,\Delta{\cal F})$.
\end{proposition}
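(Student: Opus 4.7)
The plan is to observe that the definition $(\bbbz,\Delta{\cal F}) = (\bbbz,\diff\circ({\cal F}\times{\cal F}))$ exhibits $\Delta{\cal F}$ as the result of applying two operations that have already been shown to preserve the self-enumerated structure: first the product construction from Thm.\ref{thm:prod}, then the composition construction from Lemma \ref{l:circ}. So the whole proposition should follow by chaining these two results, with essentially no new content beyond verifying the surjectivity hypothesis of the composition lemma.

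More concretely, first I would apply Thm.\ref{thm:prod} to the pair $(\bbbn,{\cal F})$ with itself to conclude that $(\bbbn\times\bbbn,{\cal F}\times{\cal F})$ is a self-enumerated representation system, and is a full system whenever $(\bbbn,{\cal F})$ is. Then I would verify that $\diff:\bbbn\times\bbbn\to\bbbz$ is (totally defined and) surjective, which is immediate since for every $z\in\bbbz$ one can write
\[
z \;=\; \max(z,0)-\max(-z,0),
\]
so $\diff(\max(z,0),\max(-z,0))=z$. With surjectivity in hand, Lemma \ref{l:circ} applied to $(\bbbn\times\bbbn,{\cal F}\times{\cal F})$ and the function $\varphi=\diff$ yields directly that $(\bbbz,\diff\circ({\cal F}\times{\cal F}))=(\bbbz,\Delta{\cal F})$ is a self-enumerated representation system.

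For the ``full system'' clause, I would check that condition ii in its full form is preserved by the composition construction: if $F\in\diff\circ({\cal F}\times{\cal F})$, say $F=\diff\circ G$ with $G\in{\cal F}\times{\cal F}$, and if $\psi:\words\to\words$ is partial recursive, then $F\circ\psi=\diff\circ(G\circ\psi)$, and $G\circ\psi\in{\cal F}\times{\cal F}$ by fullness of the product system, whence $F\circ\psi\in\diff\circ({\cal F}\times{\cal F})$. Since no step introduces a new obstruction and both Thm.\ref{thm:prod} and Lemma \ref{l:circ} are already proved, there is no real main obstacle; the entire proof is essentially a bookkeeping verification that the two preservation theorems compose cleanly, and the only non-formal observation needed is the elementary surjectivity of $\diff$.
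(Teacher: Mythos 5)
Your argument is exactly the paper's: the paper states this proposition as a direct corollary of the composition Lemma \ref{l:circ} (applied to the total surjection $\diff$) and the product Theorem \ref{thm:prod}, which is precisely your chain of reasoning. Your extra verification that the composition step preserves the full-system condition (not explicitly stated in Lemma \ref{l:circ}) is a correct and welcome piece of bookkeeping, but does not change the route.
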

%
%
%%%%%%%%%%%%%%%%%%%%%%%%%%%%%%%%%%%%%%%%%%%%%%%%%%%%%%%%%%%%%%
\subsection{$\bbbz$ systems and $\bbbn$ systems}\label{ss:Delta2}
%%%%%%%%%%%%%%%%%%%%%%%%%%%%%%%%%%%%%%%%%%%%%%%%%%%%%%%%%%%%%%
%
The following propositions collect some easy facts about
self-enumerated systems with domain $\bbbz$ and their associated
Kolmogorov complexities.
\begin{proposition}\label{p:delta}
Let $(\bbbz,{\cal G})$ be a self-enumerated system.
\medskip\\
{\bf 1.}
Let ${\cal F}=\{G\segment G^{-1}(\bbbn):G\in{\cal G}\}$.
Then $(\bbbn,{\cal F})$ is also a self-enumerated system and
$K^\bbbn_{\cal F}=K^\bbbz_{\cal G}\segment\bbbn$.
\medskip\\
{\bf 2.}
Denote $opp:\bbbz\to\bbbz$ the function $n\mapsto-n$.
If ${\cal G}\circ opp={\cal G}$ then
$K^\bbbz_{\cal G}\eqct K^\bbbz_{\cal G}\circ opp$.
\end{proposition}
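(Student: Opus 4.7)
My approach splits along the two parts; in both cases the key is to pick a good universal function via Prop.~\ref{p:good} and transport it to the new setting.

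For \textbf{Part 1}, the plan is to verify the three conditions of Def.~\ref{def:self} for $(\bbbn,{\cal F})$, simultaneously getting the complexity equality by exhibiting a compatible universal function. Condition (i) is immediate: given $n \in \bbbn \subseteq \bbbz$, condition (i) for $(\bbbz,{\cal G})$ yields $G \in {\cal G}$ and $\ttp$ with $G(\ttp)=n$, so $F = G\segment G^{-1}(\bbbn) \in {\cal F}$ satisfies $F(\ttp)=n$. Condition (ii) follows from the identity $F \circ \varphi = (G\circ\varphi)\segment(G\circ\varphi)^{-1}(\bbbn)$ for total recursive $\varphi$, combined with condition (ii) for ${\cal G}$. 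For condition (iii), I take a good universal $U$ for ${\cal G}$ and set $V = U\segment U^{-1}(\bbbn)$. For any $F = G\segment G^{-1}(\bbbn)$ with $G$ coded as $G(\ttp) = U(comp_U(\tte,\ttp))$, a routine check gives $F(\ttp) = V(comp_U(\tte,\ttp))$, so $V$ is universal with the same (good) $comp$ function.

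Once $V$ is in hand, the key observation is that for any $n \in \bbbn$, the equivalence $V(\ttp) = n \Leftrightarrow U(\ttp) = n$ holds (since $U(\ttp) \in \bbbn$ is automatic from $U(\ttp)=n$). Taking the minimum over such $\ttp$ yields $K^\bbbn_V(n) = K^\bbbz_U(n)$ for every $n \in \bbbn$, and the invariance theorem upgrades this to $K^\bbbn_{\cal F} \eqct K^\bbbz_{\cal G}\segment\bbbn$, which is the stated equality.

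For \textbf{Part 2}, I read the hypothesis as $opp \circ {\cal G} = {\cal G}$ (the only type-consistent interpretation, since $opp$ acts on the codomain $\bbbz$). By symmetry it suffices to prove $K^\bbbz_{\cal G}(-n) \leq K^\bbbz_{\cal G}(n) + c$ for some constant $c$ independent of $n$. Pick a good universal $U$ for ${\cal G}$; then $opp \circ U \in {\cal G}$, so there is a fixed code $\tte$ with $U(comp_U(\tte,\ttp)) = -U(\ttp)$ for every $\ttp$. Goodness of $U$ provides a constant $c_\tte$ with $|comp_U(\tte,\ttp)| \leq |\ttp|+c_\tte$. Taking $\ttp$ to be a shortest $U$-program for $n$ produces a $U$-program $comp_U(\tte,\ttp)$ for $-n$ of length at most $K^\bbbz_U(n)+c_\tte$, which is the desired bound.

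The only delicate points are essentially notational: the interpretation of ${\cal G}\circ opp$, and the observation that precomposition by a total function commutes with restriction to a preimage. I do not anticipate any genuine obstacle, as the arguments simply transport the universal-function apparatus already established through Props.~\ref{p:onto} and~\ref{p:good}.
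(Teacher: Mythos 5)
Your proof is correct and follows essentially the paper's route: for Part 1 you restrict a universal function $U$ for ${\cal G}$ to $U^{-1}(\bbbn)$, note it is universal for ${\cal F}$ with the same $comp$ function, and read off $K^\bbbn_{U\segment U^{-1}(\bbbn)}=K^\bbbz_U\segment\bbbn$, which is exactly the paper's argument (conditions i--ii being routine). For Part 2 the paper argues slightly more abstractly --- from $K_{opp\circ F}=K_F\circ opp$ it deduces that optimal functions are carried to optimal functions by postcomposition with $opp$ --- whereas you instantiate the same closure hypothesis concretely through a good universal $U$: the code $\tte$ with $U(comp_U(\tte,\ttp))=-U(\ttp)$ and the goodness bound give $K^\bbbz_{\cal G}(-n)\leq K^\bbbz_{\cal G}(n)+c_\tte$, and the substitution $n\mapsto -n$ yields the reverse inequality; this is just a hands-on version of the same invariance argument, and your reading of ${\cal G}\circ opp={\cal G}$ as closure under postcomposition with $opp$ is the intended one.
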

\begin{proof}
1. Conditions i-ii of Def.\ref{def:self} are obvious.
As for iii, observe that if $U\in{\cal G}$ is universal for
${\cal G}$ then $U\segment U^{-1}(\bbbn)$ is in ${\cal F}$ and is
universal for ${\cal F}$ with the same associated $comp$ function.
Now,
$K_{U\segment U^{-1}(\bbbn)}=K_U\segment\bbbn$. Whence
$K^\bbbn_{\cal F}=K^\bbbz_{\cal G}\segment\bbbn$.
\medskip\\
2. Observe that if $\varphi,F\in{\cal G}$ and
$K_\varphi\leqct K_F$ then
$K_{\varphi\circ opp}\leqct K_{F\circ opp}$.
Since ${\cal G}\circ opp={\cal G}$, we see that if
$\varphi$ is optimal then so is $\varphi\circ opp$.
Whence $K_\varphi\eqct K_{\varphi\circ opp}$, and therefore
$K^\bbbz_{\cal G}\eqct K^\bbbz_{\cal G}\circ opp$.
\end{proof}
\begin{proposition}\label{p:deltaPR}
Let $A\subseteq\bbbn$.
\medskip\\
{\bf 1.}\ \
$\PR[A,\words\to\bbbn]=\PR[A,\words\to\bbbz]\ \cap\ (\bbbn\to\bbbn)
=\{G\segment G^{-1}(\bbbn):G\in\PR[A,\words\to\bbbz]\}$.
\\
In particular, $K^{A,\bbbz}\segment\bbbn\eqct K^{A,\bbbn}$.
\medskip\\
{\bf 2.}\ \
$\PR[A,\words\to\bbbz]
=\PR[A,\words\to\bbbz]\circ opp=\Delta \PR[A,\words\to\bbbn]$.
\\
In particular, $K^{A,\bbbz}\eqct K^{A,\bbbz}\circ opp$.
\end{proposition}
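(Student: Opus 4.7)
The plan is to establish both statements by elementary manipulations of partial $A$-recursive functions, then invoke Proposition~\ref{p:delta} for the two Kolmogorov-complexity consequences.

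For point~1, I would first prove the two set equalities. The inclusion $\PR[A,\words\to\bbbn]\subseteq\PR[A,\words\to\bbbz]\cap(\bbbn\to\bbbn)$ is trivial, since a partial $A$-recursive $\bbbn$-valued function is also $\bbbz$-valued. The reverse inclusion holds because, if $F\in\PR[A,\words\to\bbbz]$ happens to take values in $\bbbn$, then the same $A$-algorithm computing $F$ witnesses its membership in $\PR[A,\words\to\bbbn]$. For the second equality, the inclusion ``$\subseteq$'' is immediate: any $F\in\PR[A,\words\to\bbbn]$ equals $F\segment F^{-1}(\bbbn)$ when viewed as a $\bbbz$-valued function (its range is already in $\bbbn$). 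For ``$\supseteq$'', given $G\in\PR[A,\words\to\bbbz]$, the restriction $G\segment G^{-1}(\bbbn)$ is computed by the $A$-algorithm: run $G(\ttp)$, and output $G(\ttp)$ if it is nonnegative, diverging otherwise; this is clearly in $\PR[A,\words\to\bbbn]$. The $\eqct$ consequence then follows by applying Proposition~\ref{p:delta}(1) to the self-enumerated $A$-system $(\bbbz,\PR[A,\words\to\bbbz])$ (cf.\ Example~\ref{ex:selfA} and Proposition~\ref{p:selfA}): the associated family on $\bbbn$ is precisely $\PR[A,\words\to\bbbn]$ by the second set equality, and Proposition~\ref{p:delta}(1) says its Kolmogorov complexity is $K^{A,\bbbz}\segment\bbbn$.

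For point~2, the equality $\PR[A,\words\to\bbbz]\circ opp=\PR[A,\words\to\bbbz]$ is immediate since $opp:\bbbz\to\bbbz$ is a total recursive involution, so composition on the right with $opp$ preserves the class of partial $A$-recursive functions and is its own inverse. For $\Delta\PR[A,\words\to\bbbn]\subseteq\PR[A,\words\to\bbbz]$, observe that $\diff:\bbbn^2\to\bbbz$ is total recursive, and that if $F_1,F_2\in\PR[A,\words\to\bbbn]$ then $(F_1,F_2)\in\PR[A,\words\to\bbbn^2]$ (its value is defined exactly on $\mathrm{dom}(F_1)\cap\mathrm{dom}(F_2)$); composing with $\diff$ yields an element of $\PR[A,\words\to\bbbz]$. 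For the converse, given $G\in\PR[A,\words\to\bbbz]$, define
\[
F_1(\ttp)=\max(G(\ttp),0),\qquad F_2(\ttp)=\max(-G(\ttp),0);
\]
both are partial $A$-recursive with the same domain as $G$, both take values in $\bbbn$, and $G(\ttp)=F_1(\ttp)-F_2(\ttp)=\diff(F_1(\ttp),F_2(\ttp))$. Hence $G=\diff\circ(F_1,F_2)\in\Delta\PR[A,\words\to\bbbn]$. The consequence $K^{A,\bbbz}\eqct K^{A,\bbbz}\circ opp$ then follows by applying Proposition~\ref{p:delta}(2) to ${\cal G}=\PR[A,\words\to\bbbz]$, using the just-proved closure ${\cal G}\circ opp={\cal G}$.

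There is no real obstacle; the argument is a routine verification. The only mild point to be careful about is the domain bookkeeping in part~2: the functions $F_1,F_2$ must have \emph{exactly} the domain of $G$ for $\diff\circ(F_1,F_2)$ to coincide with $G$, and this is automatic from the definition via $\max(\cdot,0)$, which is defined whenever its integer argument is.
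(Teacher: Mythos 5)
The paper states Proposition~\ref{p:deltaPR} without proof, as a collection of easy facts, and your argument supplies exactly the routine verification that is intended: the set identities by direct algorithmic manipulation (restricting a $\bbbz$-valued $G$ to its nonnegative outputs, and the positive/negative part decomposition $G=\max(G,0)-\max(-G,0)$ with matching domains), followed by an application of Proposition~\ref{p:delta} to the system $(\bbbz,\PR[A,\words\to\bbbz])$ for the two $\eqct$ consequences. Your proof is correct, including the domain bookkeeping in point~2, so nothing needs to be added.
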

%
%
%%%%%%%%%%%%%%%%%%%%%%%%%%%%%%%%%%%%%%%%%%%%%%%%%%%%%%%%%%%%%%%
%%%%%%%%%%%%%%%%%%%%%%%%%%%%%%%%%%%%%%%%%%%%%%%%%%%%%%%%%%%%%%%
%%%%%%%%%%%%%%%%%%%%%%%%%%%%%%%%%%%%%%%%%%%%%%%%%%%%%%%%%%%%%%%
\section{Self-enumerated representation systems for r.e. sets}
\label{s:RE}
%%%%%%%%%%%%%%%%%%%%%%%%%%%%%%%%%%%%%%%%%%%%%%%%%%%%%%%%%%%%%%%
%%%%%%%%%%%%%%%%%%%%%%%%%%%%%%%%%%%%%%%%%%%%%%%%%%%%%%%%%%%%%%%
%%%%%%%%%%%%%%%%%%%%%%%%%%%%%%%%%%%%%%%%%%%%%%%%%%%%%%%%%%%%%%%
%
We now come to examples of self-enumerated systems of a somewhat
different kind, which will be used in the effectivization of
set theoretical representations of integers.
%
%%%%%%%%%%%%%%%%%%%%%%%%%%%%%%%%%%%%%%%%%%%%%%%%%%%%%%%%%%%%%%%
\subsection{Acceptable enumerations}\label{ss:acceptable}
%%%%%%%%%%%%%%%%%%%%%%%%%%%%%%%%%%%%%%%%%%%%%%%%%%%%%%%%%%%%%%%
%
Let's recall the notion of acceptable enumeration of partial
recursive functions (cf. Rogers \cite {rogers} Ex. 2.10 p.41,
or Odifrreddi \cite{odifreddi}, p.215)
\begin{definition}\label{def:acceptable}
Let $\bbbx,\bbby$ be some basic sets and $A\subseteq\bbbn$.
\medskip\\
{\bf 1.}
An enumeration $(\phi^A_\tte)_{\tte\in\words}$ of partial
$A$-recursive functions $\bbbx\to\bbby$ is {\em acceptable} if
\begin{enumerate}
\item[\bf i.]
it is partial $A$-recursive as a function
$\words\times\bbbx\to\bbby$
\item[\bf ii.]
and it satisfies the parametrization (also called s-m-n)
property: for every basic set $\bbbz$, there exists a total
$A$-recursive function $s^\bbbz_\bbbx:\words\times\bbbz\to\words$
such that, for all $\tte\in\words$, $\ttz\in\bbbz$, $\ttx\in\bbbx$,
$$\phi^A_\tte(\couple\ttz\ttx)
=\phi^A_{s^\bbbz_\bbbx(\tte,\ttz)}(\ttx)$$
where $\couple\ttz\ttx$ is the image of the pair $(\ttz,\ttx)$
by some fixed total recursive bijection $\bbbz\times\bbbx\to\bbbx$.
\end{enumerate}
{\bf 3.}
An enumeration $(W^A_\tte)_{\tte\in\words}$
of $A$-recursively enumerable subsets of $\bbbx$
is {\em acceptable} if, for all $\tte\in\words$,
$W^A_\tte=domain(\phi^A_\tte)$ for some acceptable enumeration
$(\phi^A_\tte)_{\tte\in\words}$ of partial $A$-recursive functions.
\end{definition}
We shall need Rogers' theorem
(cf. Odifreddi \cite{odifreddi} p.219).
\begin{theorem}[Rogers' theorem]\label{thm:rogers}
If $(\phi^A_\tte)_{\tte\in\words}$ and
   $(\psi^A_\tte)_{\tte\in\words}$ are
two acceptable enumerations of partial $A$-recursive functions
$\bbbx\to\bbby$, then there exists some
$A$-recursive bijection $\theta:\words\to\words$ such that
$\psi^A_\tte=\phi^A_{\theta(\tte)}$ for all $\tte\in\words$.
\end{theorem}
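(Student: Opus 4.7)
The plan is to carry out the classical back-and-forth construction, using acceptability (which furnishes both s-m-n and universality) to turn the two enumerations into mutually translatable objects, and then using padding to repair translations into a genuine bijection.

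First I would extract two total $A$-recursive ``translation'' functions $f,g:\words\to\words$ with
$$\psi^A_\tte=\phi^A_{f(\tte)}\qquad\text{and}\qquad \phi^A_\ttq=\psi^A_{g(\ttq)}.$$
To obtain $f$, observe that condition \textbf{i} of Def.\ \ref{def:acceptable} applied to $(\psi^A_\tte)$ gives a partial $A$-recursive function $\words\times\bbbx\to\bbby$ sending $\couple\tte\ttx$ to $\psi^A_\tte(\ttx)$. Since $(\phi^A_\tte)$ is an acceptable enumeration, it contains this two-variable function, so there is an index $\tte_0$ with $\phi^A_{\tte_0}(\couple\tte\ttx)=\psi^A_\tte(\ttx)$. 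Apply the parametrization function $s^\words_\bbbx$ from condition \textbf{ii} and set $f(\tte)=s^\words_\bbbx(\tte_0,\tte)$; this is total $A$-recursive. The same argument with the roles of $\phi^A$ and $\psi^A$ swapped yields $g$.

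Next I would establish a padding lemma for each enumeration: a total $A$-recursive injection $pad_\phi:\words\times\bbbn\to\words$ with $\phi^A_{pad_\phi(\tte,n)}=\phi^A_\tte$ for every $\tte,n$, and an analogous $pad_\psi$. For $pad_\phi$, start from the partial $A$-recursive function $\bigl(\couple{\couple\tte n}\ttx\bigr)\mapsto\phi^A_\tte(\ttx)$; applying condition \textbf{ii} one more time produces a total $A$-recursive $s$ with $\phi^A_{s(\cdot,\,\couple\tte n)}=\phi^A_\tte$; composing with a fixed index and with an $A$-recursive injective pairing of $(\tte,n)$ gives the desired injection. The crucial consequence is that for each $\tte$ the set of $\phi^A$-indices of $\phi^A_\tte$ is $A$-recursively infinite: given any finite forbidden set $S\subseteq\words$, we can $A$-recursively search for the least $n$ with $pad_\phi(\tte,n)\notin S$.

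I would then build $\theta$ stage by stage. At stage $0$, $\theta_0$ is empty. At stage $2k+1$, let $\tte$ be the least word (in length-lexicographic order) outside $\mathrm{dom}(\theta_{2k})$; using $pad_\phi$, $A$-recursively search for the least $n$ with $pad_\phi\bigl(f(\tte),n\bigr)\notin\mathrm{range}(\theta_{2k})$, and set $\theta_{2k+1}(\tte)=pad_\phi\bigl(f(\tte),n\bigr)$. At stage $2k+2$, let $\ttq$ be the least word outside $\mathrm{range}(\theta_{2k+1})$; search for the least $m$ with $pad_\psi\bigl(g(\ttq),m\bigr)\notin\mathrm{dom}(\theta_{2k+1})$ and set $\theta_{2k+2}\bigl(pad_\psi(g(\ttq),m)\bigr)=\ttq$. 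The definition of $\theta$ at each step is $A$-recursive, and by the back-and-forth discipline every word is eventually in $\mathrm{dom}(\theta)$ and every word is eventually in $\mathrm{range}(\theta)$, so $\theta:=\bigcup_s\theta_s$ is an $A$-recursive bijection of $\words$. The padding identities give $\phi^A_{\theta(\tte)}=\phi^A_{f(\tte)}=\psi^A_\tte$ at odd stages, and $\psi^A_{\theta^{-1}(\ttq)}=\psi^A_{g(\ttq)}=\phi^A_\ttq$ at even stages, so the equation $\psi^A_\tte=\phi^A_{\theta(\tte)}$ holds everywhere.

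The only subtle point, and what I expect to be the main bookkeeping obstacle, is verifying that the unbounded $A$-recursive searches at each stage always terminate; this is guaranteed by the injectivity of $pad_\phi(\cdot,n)$ and $pad_\psi(\cdot,n)$ in $n$ together with the finiteness of the forbidden set at each stage. No deep recursion-theoretic input beyond s-m-n is needed.
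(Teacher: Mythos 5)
Your overall architecture (mutual translations obtained from universality plus s-m-n, then a Myhill-style back-and-forth repaired by padding) is the standard route to this theorem; note that the paper itself does not prove the statement but imports it from Odifreddi, so the comparison can only be with the classical argument. The translation step is fine: composing the two-variable function $(\tte,\ttx)\mapsto\psi^A_\tte(\ttx)$ with the fixed recursive pairing turns it into a one-variable partial $A$-recursive function, which has a $\phi^A$-index, and $s$-m-n then yields the total $A$-recursive $f$ (and symmetrically $g$). The back-and-forth bookkeeping, the preservation of the invariant $\psi^A_\tte=\phi^A_{\theta(\tte)}$, and the bijectivity argument are also correct \emph{provided} the padding functions have the property you claim.

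The genuine gap is precisely there: your construction of $pad_\phi$ does not give an injection, and the injectivity (or at least infiniteness of range) is exactly what makes the searches terminate. From Def.\ref{def:acceptable} you only get \emph{some} total $A$-recursive parametrization function $s$, with no injectivity requirement; setting $pad_\phi(\tte,n)=s(\tte_1,\couple{\tte}{n})$ and observing that the pairing $\couple{\tte}{n}$ is injective proves nothing, since $s(\tte_1,\cdot)$ may collapse arbitrarily --- a priori $\{pad_\phi(\tte,n):n\in\bbbn\}$ could be a finite set, in which case the search ``least $n$ with $pad_\phi(f(\tte),n)\notin\mathrm{range}(\theta_{2k})$'' need never halt and $\theta$ is not total. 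The assertion that ``the set of $\phi^A$-indices of $\phi^A_\tte$ is $A$-recursively infinite'' is exactly the padding lemma for an \emph{arbitrary} acceptable enumeration; it is true (indeed it follows from the theorem being proved, since a recursive isomorphism with a standard enumeration transports the standard padding), but it does not follow from the one-line argument you give, and it is the real content that a non-circular proof must supply --- e.g.\ by first reducing to the case where one of the two enumerations is a standard one (where injective padding is obtained by direct manipulation of programs) and then doing genuine work (one-one $s$-m-n / effective padding, or an injectivized translation) on the other side. As written, the proof is incomplete at this point, and mere infinitude of index sets (which one can get from a Rice-type argument) is not enough, because the construction of $\theta$ needs to \emph{find} a fresh index effectively at each stage.
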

\begin{corollary}\label{cor:rogers}
Let $(W'^A_\tte)_{\tte\in\words}$ and $(W''^A_\tte)_{\tte\in\words}$
be two acceptable enumerations of $A$-r.e. subsets of $\bbbx$.
Then there exists an $A$-recursive bijection
$\theta:\words\to\words$ such that
$W''^A_\tte=W'^A_{\theta(\tte)}$ for all $\tte\in\words$.
\end{corollary}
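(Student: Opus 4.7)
The plan is to reduce the statement immediately to Rogers' theorem (Thm.\ref{thm:rogers}) by passing from the r.e.\ sets to the partial $A$-recursive functions whose domains enumerate them.

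By the definition of acceptability for enumerations of $A$-r.e.\ sets (Def.\ref{def:acceptable}.3), there exist acceptable enumerations $(\phi'^A_\tte)_{\tte\in\words}$ and $(\phi''^A_\tte)_{\tte\in\words}$ of partial $A$-recursive functions $\bbbx\to\bbby$ (for some basic $\bbby$, e.g.\ $\bbby=\bbbn$) such that
\[
W'^A_\tte = domain(\phi'^A_\tte)\ ,\quad
W''^A_\tte = domain(\phi''^A_\tte)\ ,
\]
for every $\tte\in\words$. Apply Rogers' theorem to these two acceptable enumerations of partial $A$-recursive functions to obtain an $A$-recursive bijection $\theta:\words\to\words$ with $\phi''^A_\tte=\phi'^A_{\theta(\tte)}$ for all $\tte$. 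Taking domains on both sides gives
\[
W''^A_\tte
=domain(\phi''^A_\tte)
=domain(\phi'^A_{\theta(\tte)})
=W'^A_{\theta(\tte)},
\]
which is precisely the required identity.

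There is no real obstacle here: the whole content is the translation between the ``domain'' view and the ``graph'' view of an r.e.\ set, which is built into Def.\ref{def:acceptable}.3, plus one invocation of Thm.\ref{thm:rogers}. The only thing to watch is that the $\phi'^A_\tte$ and $\phi''^A_\tte$ be jointly taken from acceptable enumerations of \emph{partial functions}, not merely chosen index-by-index; but this is exactly what the definition of acceptability for r.e.\ enumerations grants, so no further work is needed.
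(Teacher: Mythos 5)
Your proposal is correct and is essentially the paper's own proof: both take the acceptable enumerations of partial $A$-recursive functions whose domains give $(W'^A_\tte)$ and $(W''^A_\tte)$ (as guaranteed by Def.\ref{def:acceptable}), apply Rogers' theorem (Thm.\ref{thm:rogers}) to get the $A$-recursive bijection $\theta$, and conclude by taking domains. No differences worth noting.
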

\begin{proof}
Apply Roger's theorem to acceptable enumerations
$(\phi^A_\tte)_{\tte\in\words},(\psi^A_\tte)_{\tte\in\words}$
of partial $A$-recursive functions such that
$W'^A_\tte=domain(\phi^A_\tte)$ and $W''^A_\tte=domain(\psi^A_\tte)$.
\end{proof}
%
%
%%%%%%%%%%%%%%%%%%%%%%%%%%%%%%%%%%%%%%%%%%%%%%%%%%%%%%%%%%%%%%%
\subsection{Self-enumerated representation systems for r.e. sets}
                               \label{ss:RE}
%%%%%%%%%%%%%%%%%%%%%%%%%%%%%%%%%%%%%%%%%%%%%%%%%%%%%%%%%%%%%%%
%
Cor.\ref{cor:rogers} allows to get a natural intrinsic notion of
``partial $A$-computable" map $\words\to RE^A(\bbbx)$.
\begin{proposition}\label{p:FRE}
Let $RE^A(\bbbx)$ be the family of $A$-recursively enumerable
subsets of $\bbbx$ and let
    $(W'^A_\tte)_{\tte\in\words}$  and
    $(W''^A_\tte)_{\tte\in\words}$ be
two acceptable enumerations of $A$-r.e. subsets of $\bbbx$.
Let $G:\words\to RE^A(\bbbx)$.
\medskip\\
{\bf 1.}
The following conditions are equivalent:
\begin{enumerate}
\item[i.]
There exists a total $A$-recursive function $f:\words\to\words$
such that $G(\ttp)=W'^A_{f(\ttp)}$ for all $\ttp\in\words$
\item[ii.]
There exists a total $A$-recursive function $f:\words\to\words$
such that $G(\ttp)=W''^A_{f(\ttp)}$ for all $\ttp\in\words$
\end{enumerate}
{\bf 2.}
The following conditions are equivalent:
\begin{enumerate}
\item[i.]
There exists a partial $A$-recursive function $f:\words\to\words$
such that, for all $\ttp\in\words$,
$G(\ttp)=
\left\{\begin{array}{ll}
W'^A_{f(\ttp)}&\mbox{if $f(\ttp)$ is defined}\\
\mbox{undefined}&\mbox{otherwise}
\end{array}\right.$
\item[ii.]
There exists a partial $A$-recursive function $f:\words\to\words$
such that, for all $\ttp\in\words$,
$G(\ttp)=
\left\{\begin{array}{ll}
W''^A_{f(\ttp)}&\mbox{if $f(\ttp)$ is defined}\\
\mbox{undefined}&\mbox{otherwise}
\end{array}\right.$
\end{enumerate}
\end{proposition}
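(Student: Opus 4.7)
The plan is to deduce both equivalences directly from Corollary \ref{cor:rogers}, using it to ``translate'' indices between the two acceptable enumerations. First I would invoke Cor.\ref{cor:rogers} to fix an $A$-recursive bijection $\theta:\words\to\words$ such that $W''^A_\tte = W'^A_{\theta(\tte)}$ for every $\tte\in\words$. A total $A$-recursive bijection admits a total $A$-recursive inverse (given $y$, search through $\words$ for the unique $x$ with $\theta(x)=y$), so $\theta^{-1}$ is also total $A$-recursive. This observation is the only technical point; everything else is formal substitution.

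For Part 1, by the symmetric role of the two enumerations it suffices to show (i)$\Rightarrow$(ii). Given a total $A$-recursive $f$ with $G(\ttp)=W'^A_{f(\ttp)}$ for all $\ttp$, set $f'=\theta^{-1}\circ f$. Then $f'$ is total $A$-recursive as a composition of such functions, and the chain
$$G(\ttp) = W'^A_{f(\ttp)} = W'^A_{\theta(\theta^{-1}(f(\ttp)))} = W''^A_{\theta^{-1}(f(\ttp))} = W''^A_{f'(\ttp)}$$
witnesses (ii). The converse direction is identical, replacing $\theta^{-1}$ by $\theta$ (and $W',W''$ by $W'',W'$).

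For Part 2, the same substitution $f' = \theta^{-1}\circ f$ works. Since $\theta^{-1}$ is \emph{total} $A$-recursive, the composition $f'$ is partial $A$-recursive with exactly the same domain as $f$. Therefore $f'(\ttp)$ is defined if and only if $f(\ttp)$ is defined, which by hypothesis is equivalent to $G(\ttp)$ being defined; and when everything is defined the equality $G(\ttp)=W''^A_{f'(\ttp)}$ follows from the same chain of equalities displayed above. Again, the reverse implication uses $\theta$ in place of $\theta^{-1}$.

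There is no genuine obstacle here: the content of the proposition is entirely carried by Cor.\ref{cor:rogers}, and the proof reduces to a routine substitution argument together with the elementary observation that total $A$-recursive bijections have total $A$-recursive inverses, which is what ensures that the translation $f\mapsto \theta^{-1}\circ f$ preserves both totality (in Part 1) and the precise domain of definition (in Part 2).
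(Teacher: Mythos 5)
Your proof is correct and follows essentially the same route as the paper: apply Corollary \ref{cor:rogers} to get the $A$-recursive bijection $\theta$ and translate indices via $\theta\circ f$ and $\theta^{-1}\circ f$, noting these stay total (Part 1) or partial with the same domain (Part 2) $A$-recursive. Your explicit remark that $\theta^{-1}$ is total $A$-recursive (by search, since $\theta$ is a total $A$-recursive bijection) just makes precise a step the paper leaves implicit.
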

\begin{proof}
Applying Cor.\ref{cor:rogers}, we get
    $W''^A_{f(\ttp)}=W'^A_{\theta(f(\ttp))}$
and $W'^A_{f(\ttp)}=W'^A_{\theta^{-1}(f(\ttp))}$.
To conclude, observe that $\theta\circ f$ and $\theta^{-1}\circ f$
are both total (point 1) or partial (point 2) $A$-recursive as
is $f$.
\end{proof}
We can now come to the notion of self-enumerated systems for
r.e. sets.
\begin{definition}[Self-enumerated systems for r.e. sets]
\label{def:RE}$\\ $
Let $RE^A(\bbbx)$ be the class of $A$-r.e. subsets of the basic
set $\bbbx$.
\\
Let $(W^A_\tte)_{\tte\in\words}$ be some fixed acceptable
enumeration of $A$-r.e. subsets of $\bbbx$.
Cor.\ref{cor:rogers} insures that the families defined hereafter
do not depend on the chosen acceptable enumeration.
\medskip\\
{\bf 1.}
We let ${\cal F}^{RE^A(\bbbx)}$ be the family of all {\em total}
functions $\words\to RE^A(\bbbx)$ of the form
$\ttp\mapsto W^A_{f(\ttp)}$ where $f:\words\to\words$ varies over
{\em total} $A$-recursive functions.
\medskip\\
{\bf 2.}
We let ${\cal PF}^{RE^A(\bbbx)}$ be the family of all {\em partial}
functions $\words\to RE^A(\bbbx)$ of the form
$$\ttp\mapsto \left\{\begin{array}{ll}
W^A_{f(\ttp)}&\mbox{if $f(\ttp)$ is defined}\\
\mbox{undefined}&\mbox{otherwise}
\end{array}\right.$$
where $f:\words\to\words$ varies over {\em partial} $A$-recursive
functions.
\end{definition}
The following proposition shows that, in the definition of
${\cal F}^{RE^A(\bbbx)}$,
one can either relax the total ``$A$-recursive"
condition on $f$ to ``partial $A$-recursive" with a special
convention (different from that considered in the definition of
${\cal PF}^{RE^A(\bbbx)}$) or restrict it to some particular
$A$-recursive sequence of total functions.
\begin{proposition}\label{p:RE}
For any acceptable enumeration $(W^A_\tte)_{\tte\in\words}$
of $A$-r.e. subsets of $\bbbx$ there exists
a total $A$-recursive function
$\sigma:\words\times\words\to\words$ such that,
for any total function $G:\words\to RE^A(\bbbx)$,
the following conditions are equivalent:
\begin{enumerate}
\item[a.]
$G$ is of the form $\ \ttp\mapsto W^A_{\sigma(\tte,\ttp)}$\
for some $\tte\in\words$
\item[b.]
$G\in{\cal F}^{RE^A(\bbbx)}$
\item[c.]
For all $\ttp$,
$G(\ttp)=\left\{\begin{array}{ll}
W^A_{g(\ttp)}&\mbox{if $g(\ttp)$ is defined}\\
\emptyset&\mbox{otherwise}
\end{array}\right.$.
\end{enumerate}
\end{proposition}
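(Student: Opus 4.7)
The plan is to construct a single total $A$-recursive function $\sigma$ witnessing (a), and then verify the cycle (a) $\Rightarrow$ (b) $\Rightarrow$ (c) $\Rightarrow$ (a). The heart of the argument is a standard dovetailing that converts the (possible) divergence of a partial $A$-recursive $g$ into the empty set, so that total $A$-recursivity is produced at the level of indices by an appeal to the s-m-n property from Definition~\ref{def:acceptable}.

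To build $\sigma$, I would fix once and for all an acceptable enumeration $(\phi^A_\tte)_{\tte\in\words}$ of partial $A$-recursive functions such that $W^A_\tte=\mathrm{dom}(\phi^A_\tte)$, and use the same enumeration to index the partial functions $g:\words\to\words$ appearing in condition (c). Next I define an auxiliary partial $A$-recursive function $h:\words\times\words\times\bbbx\to\bbbn$ by the following algorithm: on input $(\tte,\ttp,\ttx)$, first try to compute $\phi^A_\tte(\ttp)$; if this converges to a value $\ttq$, then enumerate $W^A_\ttq$ and output $0$ as soon as $\ttx$ appears; otherwise diverge. Then
$$\{\ttx\in\bbbx : h(\tte,\ttp,\ttx)\downarrow\}
\;=\;\begin{cases} W^A_{\phi^A_\tte(\ttp)} & \text{if } \phi^A_\tte(\ttp)\downarrow\\ \emptyset & \text{otherwise.} \end{cases}$$
An application of the s-m-n property to $h$ (viewed as a partial $A$-recursive function in its third argument, parametrized by the first two) then yields a total $A$-recursive $\sigma:\words\times\words\to\words$ with $W^A_{\sigma(\tte,\ttp)}$ equal to the displayed set.

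The three implications are then essentially immediate. For (a) $\Rightarrow$ (b): fixing $\tte$ in $\sigma$ gives a total $A$-recursive function $f(\ttp):=\sigma(\tte,\ttp)$ with $G(\ttp)=W^A_{f(\ttp)}$, which places $G$ in ${\cal F}^{RE^A(\bbbx)}$. For (b) $\Rightarrow$ (c): a total $A$-recursive $f$ is in particular partial, and taking $g:=f$ makes the ``otherwise'' branch of condition (c) never occur. For (c) $\Rightarrow$ (a): given a partial $A$-recursive $g$ as in (c), choose any $\tte$ with $\phi^A_\tte=g$; the defining property of $\sigma$ then gives $W^A_{\sigma(\tte,\ttp)}=G(\ttp)$ for every $\ttp$.

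The main (very mild) obstacle is the dovetailing step in the definition of $h$: it is the technical device that prevents the ``missing'' values of $g$ from producing a partial function at the index level and instead assigns them the empty set. Beyond that, the proof is entirely mechanical, and independence from the particular choice of acceptable enumeration is guaranteed by Corollary~\ref{cor:rogers}, so the resulting $\sigma$ can be presented as attached to any fixed acceptable enumeration $(W^A_\tte)_{\tte\in\words}$.
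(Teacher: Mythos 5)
Your proof is correct and follows essentially the same route as the paper: define the index function by dovetailing the computation of $g(\ttp)$ with the enumeration of the resulting r.e.\ set (so divergence yields $\emptyset$), then obtain the total $A$-recursive $\sigma$ by the s-m-n property, the implications a $\Rightarrow$ b $\Rightarrow$ c being trivial. The only cosmetic point is that the functions $g:\words\to\words$ should be indexed by a (separate) acceptable enumeration of partial $A$-recursive functions $\words\to\words$, as the paper does with $(\psi^A_\tte)_{\tte\in\words}$, rather than literally ``the same'' enumeration attached to the $W^A_\tte$.
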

\begin{proof}
Since $a\Rightarrow b\Rightarrow c$ is trivial whatever be the
total recursive function $\sigma$,
it remains to define $\sigma$ such that $c\Rightarrow a$ holds.
\\
Let $(\phi^A_\tte)_{\tte\in\words}$ be an acceptable enumeration
of partial $A$-recursive functions $\bbbx\to\bbbn$
such that
$W^A_\tte=domain(\phi^A_\tte)$.
\medskip\\
Let $(\psi^A_\tte)_{\tte\in\words}$ be an enumeration
of partial $A$-recursive functions $\words\to\words$
and let $\tta$ be such that
$\phi^A_{\psi^A_\tte(\ttp)}(\ttx)=
\phi^A_\tta(\couple {(\tte,\ttp)} \ttx)$
for all $\tte,\ttp\in\words$, $\ttx\in\bbbx$.
The parameter theorem insures that there exists a total
$A$-recursive function
$s:\words\times(\words\times\words)\to\words$
such that
$$\phi^A_{\psi^A_\tte(\ttp)}(\ttx)
=\phi^A_\tta(\couple {(\tte,\ttp)} \ttx)
=\phi^A_{s(\tta,\tte,\ttp)}(\ttx)
=\phi^A_{\sigma(\tte,\ttp)}(\ttx)$$
where $\sigma(\tte,\ttp)=s(\tta,\tte,\ttp)$.
Whence the equality
$$W^A_{\psi^A_\tte(\ttp)}=W^A_{\sigma(\tte,\ttp)}$$
which is also valid when $\psi^A_\tte(\ttp)$ is undefined,
in the sense that both sets are empty.
\medskip\\
Let $G,g$ be as in c. Since $g:\words\to\words$ is $A$-recursive,
there exists $\tte$ such that
$g(\ttp)=\psi^A_\tte(\ttp)$ for any $\ttp\in\words$.
Thus,
$$W^A_{g(\ttp)}=W^A_{\psi^A_\tte(\ttp)}=W^A_{\sigma(\tte,\ttp)}$$
an equality valid also if $g(\ttp)$ is undefined,
in the sense that all sets are empty.
\\
This proves $c\Rightarrow a$.
\end{proof}
\begin{theorem}\label{thm:RE}
$(RE^A(\bbbx),{\cal F}^{RE^A(\bbbx)})$
and $(RE^A(\bbbx),{\cal PF}^{RE^A(\bbbx)})$
are self-enumerated representation systems.
\end{theorem}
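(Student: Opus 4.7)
The plan is to verify the three conditions of Definition \ref{def:self} for each of the two systems, leaning on Proposition \ref{p:RE} for the first system and on an acceptable enumeration of partial $A$-recursive functions $\words\to\words$ for the second.

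\emph{Condition i} is immediate: every $W\in RE^A(\bbbx)$ equals $W^A_\tte$ for some $\tte\in\words$, so the constant total $A$-recursive function $f(\ttp)=\tte$ yields the member $\ttp\mapsto W^A_\tte$ of both families, and $W$ lies in its range. \emph{Condition ii} follows because total (resp.\ partial) $A$-recursive functions are closed under composition with any total recursive $\varphi:\words\to\words$: if $F(\ttp)=W^A_{f(\ttp)}$ (with the appropriate convention) then $F\circ\varphi$ has the same form with $f\circ\varphi$ in place of $f$.

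For \emph{condition iii} on $(RE^A(\bbbx),{\cal F}^{RE^A(\bbbx)})$, I would use the total $A$-recursive function $\sigma:\words\times\words\to\words$ supplied by Proposition \ref{p:RE}: every $F\in{\cal F}^{RE^A(\bbbx)}$ is of the form $\ttp\mapsto W^A_{\sigma(\tte,\ttp)}$ for some $\tte$. Define
$$U(\ttq)\;=\;W^A_{\sigma(\pi_1(\ttq),\pi_2(\ttq))}.$$
The function $\ttq\mapsto\sigma(\pi_1(\ttq),\pi_2(\ttq))$ is total $A$-recursive, so $U\in{\cal F}^{RE^A(\bbbx)}$. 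Taking $comp_U=c$ (the total recursive pairing from Definition \ref{def:pair}), we get $U(c(\tte,\ttp))=W^A_{\sigma(\tte,\ttp)}=F(\ttp)$ for all $\ttp$, establishing universality.

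For \emph{condition iii} on $(RE^A(\bbbx),{\cal PF}^{RE^A(\bbbx)})$, fix an acceptable enumeration $(\phi^A_\tte)_{\tte\in\words}$ of partial $A$-recursive functions $\words\to\words$ and let $\Psi:\words\to\words$ be the universal partial $A$-recursive function defined by $\Psi(c(\tte,\ttp))=\phi^A_\tte(\ttp)$ (undefined off $Range(c)$ or when $\phi^A_\tte(\ttp)\uparrow$). Set
$$U(\ttq)=\begin{cases}W^A_{\Psi(\ttq)}&\text{if }\Psi(\ttq)\downarrow\\ \text{undefined}&\text{otherwise.}\end{cases}$$
By construction $U\in{\cal PF}^{RE^A(\bbbx)}$. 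Given any $F\in{\cal PF}^{RE^A(\bbbx)}$ with associated partial $A$-recursive $f$, pick $\tte$ with $f=\phi^A_\tte$; then $U(c(\tte,\ttp))=W^A_{\phi^A_\tte(\ttp)}=F(\ttp)$ (both undefined simultaneously), so $comp_U=c$ again works.

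The only delicate point is verifying that the candidate $U$ itself belongs to the correct family, which is precisely why Proposition \ref{p:RE} (for the total case) and the acceptability of $(\phi^A_\tte)$ (for the partial case) are invoked; the rest is bookkeeping with the pairing function $c$.
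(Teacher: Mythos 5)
Your proof is correct and follows essentially the same route as the paper: condition iii for ${\cal F}^{RE^A(\bbbx)}$ via the function $\sigma$ of Proposition \ref{p:RE}, and for ${\cal PF}^{RE^A(\bbbx)}$ via a universal partial $A$-recursive function composed with $\tte\mapsto W^A_\tte$. The only cosmetic difference is that you precompose with $(\pi_1,\pi_2)$ so as to get the recursive pairing $c$ as the $comp$ function directly, whereas the paper first verifies the $A$-system conditions (an $A$-recursive $comp$) and then invokes Proposition \ref{p:selfA}; this is the same argument, merely inlined.
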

\begin{proof}
Conditions $i, ii^A$ of Def.\ref{def:self}, \ref{def:selfA} are
obvious for both systems.
\\
If $U$ satisfies $iii^A$ for $\PR[A,\words\to\bbbx]$ then
$$\ttp\mapsto\left\{
\begin{array}{ll}
W^A_{U(\ttp)}&\mbox{if $U(\ttp)$ is defined}\\
\mbox{undefined}&\mbox{otherwise}
\end{array}\right.$$
satisfies $iii^A$ for ${\cal PF}^{RE^A(\bbbx)}$
with the same associated $comp$ function.
\\
Prop.\ref{p:RE} proves that the function $\ttp\mapsto W^A_\tte$
satisfies condition $iii^A$ with $\sigma$ as $comp$ function.
Thus, $(RE^A(\bbbx),{\cal F}^{RE^A(\bbbx)})$
and $(RE^A(\bbbx),{\cal PF}^{RE^A(\bbbx)})$ are self-enumerated
$A$-systems.
We conclude using Prop.\ref{p:selfA}.
\end{proof}
\begin{remark}\label{rk:RE}
It is possible to improve Prop.\ref{p:RE} so as to get $\sigma$
total recursive (rather than $A$-recursive) in condition $a$.
This will hold for particular acceptable enumerations of $A$-r.e.
sets, with the same total recursive $\sigma$ whatever be $A$.
We sketch how this can be obtained (for more details about this type
of argument, cf. our paper \cite{ferbusgrigoOrder} \S2.3, 2.4.).
\\
Using partial computable functionals $\bbbx\times P(\bbbn)\to\bbbn$,
we can view partial $A$-recursive functions as functions obtained by
freezing the second order argument in such functionals.
We can also also consider $A$-r.e. subsets of $\bbbx$
as obtained from domains of such functionals by freezing the
second order argument.
\\
When freezing the second order argument to $A\subseteq\bbbn$,
acceptable enumerations of partial computable functionals
give acceptable enumerations of partial $A$-recursive functions.
\\
In this way, consider an acceptable enumeration
$(\Phi_\tte)_{\tte\in\words}$ of partial computable functionals
$\bbbx\times P(\bbbn)\to\bbbn$
and let ${\cal W}^A_\tte=\{\ttx:(\ttx,A)\in domain(\Phi_\tte)\}$.
Arguing as in the proof of Prop.\ref{p:RE}
(with an acceptable enumeration $(\Psi_\tte)_{\tte\in\words}$
of partial computable functionals $\words\times P(\bbbn)\to\words$)
we get
$$\Phi_{\Psi_\tte(\ttp,A)}(\ttx,A)
=\Phi_\tta(\couple {(\tte,\ttp)} \ttx,A)
=\Phi_{s(\tta,\tte,\ttp)}(\ttx,A)
=\Phi_{\sigma(\tte,\ttp)}(\ttx,A)$$
where $s$ is the total recursive function involved in the parameter
property for the acceptable enumeration
$(\Phi_\tte)_{\tte\in\words}$ and
$\sigma(\tte,\ttp)=s(\tta,\tte,\ttp)$.
\\
Now, let $G\in{\cal F}^{RE^A(\bbbx)}$ and let $g:\words\to\words$ be
total $A$-recursive such that $G(\ttp)={\cal W}^A_{g(\ttp)}$.
Let $\tte\in\words$ be such that $g=\Psi(\tte,A)$.
Then
$$\Phi_{g(\ttp)}(\ttx,A)=\Phi_{\Psi_\tte(\ttp,A)}(\ttx,A)
=\Phi_{\sigma(\tte,\ttp)}(\ttx,A)\ \ \mbox{ and }\ \
G(\ttp)={\cal W}^A_{g(\ttp)}={\cal W}^A_{\sigma(\tte,\ttp)}$$
\end{remark}
%
%
%%%%%%%%%%%%%%%%%%%%%%%%%%%%%%%%%%%%%%%%%%%%%%%%%%%%%%%%%%%%%%%
%%%%%%%%%%%%%%%%%%%%%%%%%%%%%%%%%%%%%%%%%%%%%%%%%%%%%%%%%%%%%%%
%%%%%%%%%%%%%%%%%%%%%%%%%%%%%%%%%%%%%%%%%%%%%%%%%%%%%%%%%%%%%%%
\section{Infinite computations}
\label{s:InfiniteComp}
%%%%%%%%%%%%%%%%%%%%%%%%%%%%%%%%%%%%%%%%%%%%%%%%%%%%%%%%%%%%%%%
%%%%%%%%%%%%%%%%%%%%%%%%%%%%%%%%%%%%%%%%%%%%%%%%%%%%%%%%%%%%%%%
%%%%%%%%%%%%%%%%%%%%%%%%%%%%%%%%%%%%%%%%%%%%%%%%%%%%%%%%%%%%%%%
%
Chaitin, 1976 \cite{chaitin76}, and Solovay, 1977
\cite{solovay77}, considered infinite computations producing
infinite objects (namely recursively enumerable sets) so as to
define Kolmogorov complexity of such infinite objects.
\\
Following the idea of possibly infinite computations leading to
finite output (i.e. remove the sole halting condition),
Becher \& Chaitin \& Daicz, 2001 \cite{becherchaitindaicz}
introduced a variant $K^\infty$ of Kolmogorov complexity.
\\
In our paper \cite{ferbusgrigoKmaxKmin}, 2004, we introduced two
variants $\kmax[],\kmin[]$ of Kolmogorov complexity and proved
that $K^\infty=\kmax[]$.
These variants are based on two self-enumerated representation
systems, namely the classes of $\max$ and $\min$ of partial
recursive sequences of partial recursive functions.
%
%
%%%%%%%%%%%%%%%%%%%%%%%%%%%%%%%%%%%%%%%%%%%%%%%%%%%%%%%%%%%%%%%
\subsection{Self-enumerated systems of $\max$ of partial recursive
functions}
\label{ss:maxpr}
%%%%%%%%%%%%%%%%%%%%%%%%%%%%%%%%%%%%%%%%%%%%%%%%%%%%%%%%%%%%%%%
%
\begin{notation}\label{not:maxpr}
Let $A\subseteq\bbbn$.
\\
{\bf 1.}
Let $\bbbx$ be a basic set.
Extending Notation \ref{not:PR}, we denote $Rec^{A,\words\to\bbbx}$
the family of total functions $\words\to\bbbx$ which are recursive
in $A$.
\medskip\\
{\bf 2.}
Let $\bbbx$ be $\bbbn$ or $\bbbz$.
If $f:\words\times\bbbn\to\bbbx$, we denote $\max f$
the function $(\max f)(\ttp)=\max\{f(\ttp,t):t\in\bbbn\}$
(with the convention that $\max X$ is undefined if $X$ is empty
or infinite).
\\
We define the families of functions
\begin{eqnarray*}
\maxprA[\words\to\bbbx]
&=&\{\max f:f\in \PR[A,\words\times\bbbn\to\bbbx]\}
\\
\maxrA[\words\to\bbbx]&=&\{\max f:f\in Rec^{A,\words\times\bbbn\to\bbbx}\}
\end{eqnarray*}
In case $A$ is $\ \emptyset\ $, we simply write
$\maxpr[\words\to\bbbx]$ and $\maxr[\words\to\bbbx]$.
\end{notation}
\begin{proposition}\label{p:maxprmaxr}
Let $A\subseteq\bbbn$. Then
$$(\bbbn,\maxprA[\words\to\bbbn])\ \ ,\ \
(\bbbz,\maxprA[\words\to\bbbz])\ \ ,\ \
(\bbbn,\maxrA[\words\to\bbbn])$$
are self-enumerated representation systems.
\end{proposition}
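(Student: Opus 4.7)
The plan is to verify conditions i, ii, iii of Definition \ref{def:self} for each of the three systems in turn; the arguments run in parallel, with the only genuine subtlety arising in the construction of a universal function for $(\bbbn,\maxrA[\words\to\bbbn])$.

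For condition i, every element of the domain equals $\max f$ for the constant total recursive function $f(\ttp,t)=n$, and such $f$ lies in each of the three relevant function classes. For condition ii, if $F=\max f$ and $\varphi:\words\to\words$ is total recursive, then $F\circ\varphi=\max g$ for $g(\ttp,t)=f(\varphi(\ttp),t)$, and $g$ inherits the computability class of $f$, so $F\circ\varphi$ stays in the family.

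The heart of the matter is condition iii. Fix an acceptable enumeration $(\phi^A_\tte)_{\tte\in\words}$ of partial $A$-recursive functions $\words\times\bbbn\to\bbbx$ and let $c$ be the pairing from Definition \ref{def:pair}. For $(\bbbn,\maxprA[\words\to\bbbn])$ and $(\bbbz,\maxprA[\words\to\bbbz])$ I take $U=\max h$ with $h(q,t)=\phi^A_{\pi_1(q)}(\pi_2(q),t)$; this $h$ is partial $A$-recursive, and whenever $F=\max\phi^A_\tte$ one obtains $U(c(\tte,\ttp))=\max_t\phi^A_\tte(\ttp,t)=F(\ttp)$, so taking $comp_U=c$, which is already total recursive, discharges condition iii.

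The main obstacle is condition iii for $(\bbbn,\maxrA[\words\to\bbbn])$: here $U$ must itself be $\max h$ for a \emph{total} $A$-recursive $h$, whereas the $h$ above is only partial. I will dovetail the enumeration by a stage parameter, setting
$$h(q,\langle t,s\rangle)=\begin{cases}\phi^A_{\pi_1(q)}(\pi_2(q),t)&\text{if this halts in at most $s$ steps,}\\0&\text{otherwise,}\end{cases}$$
which is total $A$-recursive. Since values lie in $\bbbn$, padding by $0$ never raises a maximum, while every actual value $\phi^A_\tte(\ttp,t)$ is captured at sufficiently large stages; hence for any $F=\max\phi^A_\tte\in\maxrA[\words\to\bbbn]$ (so $\phi^A_\tte$ is total) the identity $U(c(\tte,\ttp))=F(\ttp)$ persists, and $comp_U=c$ again works. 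This non-negativity trick is precisely what would fail over $\bbbz$, matching the absence of a fourth system $(\bbbz,\maxrA[\words\to\bbbz])$ in the statement.
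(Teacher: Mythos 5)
Your proof is correct, and its two key ingredients are the same ones the paper relies on: a universal function for $\maxprA[\words\to\bbbx]$ extracted from an enumeration of partial $A$-recursive functions of two variables, and the non-negativity padding/dovetailing trick to replace the underlying two-variable function by a total one in the $\maxrA[\words\to\bbbn]$ case --- this is exactly the content of the paper's Fact \ref{fact:rec}. The packaging differs, though. The paper imports the enumeration theorem for $\maxpr[\words\to\bbbx]$ from \cite{ferbusgrigoKmaxKmin}, obtains a universal function for $\maxr[\words\to\bbbn]$ by extending a good universal $U$ of $\maxpr[\words\to\bbbn]$ via Fact \ref{fact:rec}, and treats oracles by checking the relativized conditions $ii^A$, $iii^A$ and then invoking Prop.\ref{p:selfA}; you instead build both universal functions explicitly with $comp_U=c$, which is already total recursive, so conditions ii and iii of Def.\ref{def:self} are verified directly and no relativization detour is needed. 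Your direct construction also checks the enumeration property for \emph{all} members of $\maxrA[\words\to\bbbn]$, including the partial ones (where $\max f$ is undefined because the values are unbounded), a case the paper's displayed verification only carries out for total $F$, leaning on the cited theorem for the general statement; conversely, the paper's extension $V$ of $U$ is reused later (e.g. for Prop.\ref{p:Kinfini}), so that viewpoint buys more than bare universality. The one point you should make explicit is that ``halts in at most $s$ steps'' presupposes a standard machine-based acceptable enumeration (so that the step-bounded halting predicate is $A$-decidable); this is the same implicit convention as in Fact \ref{fact:rec}, and with it your argument is complete.
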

\begin{proof}
First consider the no oracle case (i.e. $A=\emptyset$).
Conditions i-ii in Def.\ref{def:self} are trivial.
The classical enumeration theorem easily extends
to $\maxpr[\words\to\bbbx]$
(cf. \cite{ferbusgrigoKmaxKmin}, Thm.4.1),
proving condition iii for $(\bbbx,\maxpr[\words\to\bbbx])$
where $\bbbx$ is $\bbbn$ or $\bbbz$.
\\
It remains to show condition iii for $\maxr[\words\to\bbbn]$.
We use the following straightforward fact
(cf. \cite{ferbusgrigoKmaxKmin}, Thm.3.6):
\begin{fact}\label{fact:rec}
{\em If $f\in\PR[\words\times\bbbn\to\bbbn]$ and
$$g(\ttp,t)=\max(\{0\}\cup\{f(\ttp,i):i\leq t\ \wedge\
f(\ttp,i) \mbox{converges in at most $t$ steps}\})$$
then
$g\in Rec^{\words\times\bbbn\to\bbbn}$ and $\max g$ is an
extension of $\max f$ with value $0$ on
$domain(\max g)\setminus domain(\max f)$ (which is the set of $n$'s
such that $f(n,t)$ is defined for no $t$).}
\end{fact}
Let $U\in\maxpr[\words\to\bbbn]$ be good universal for
$\maxpr[\words\to\bbbn]$
and let $V$ be an extension of $U$ in $\maxr[\words\to\bbbn]$
given by the above fact.
If $F\in Rec^{\words\to\bbbn}$ then it is in $\PR[\words\to\bbbn]$
and there exists $\tte$ such that $F(\ttp)=U(comp_U(\tte,\ttp))$
for all $\ttp\in\words$.
Since $V$ extends $U$ and $F$ is total, we also have
$F(\ttp)=V(comp_U(\tte,\ttp))$.
Thus, $V$ is good universal for $\maxr[\words\to\bbbn]$ with the
same associated $comp$ function.
\medskip\\
Relativization to oracle $A$ proves conditions $ii^A,iii^A$,
(cf. Def.\ref{def:selfA})
for $(\bbbx,\maxprA[\words\to\bbbx])$
and $(\bbbn,\maxr[\words\to\bbbn])$.
We conclude using Prop.\ref{p:selfA}.
\end{proof}
\begin{remark}\label{rk:minPR}$\\ $
{\bf 1.}
Fact \ref{fact:rec} implies that
$\maxpr[\words\to\bbbx]$ and $\maxr[\words\to\bbbn]$ contain the
same total functions. However, considering partial functions,
the inclusion $\maxr[\words\to\bbbx]\subset\maxpr[\words\to\bbbn]$
is strict (cf. \cite{ferbusgrigoKmaxKmin} Thm.3.6, point 1).
\medskip\\
{\bf 2.}
Let $\bbbx$ be $\bbbn$ or $\bbbz$ and let
$\minprA[\words\to\bbbx],\minrA[\words\to\bbbx]$ be defined
with $\min$ instead of $\max$ as in Point 2 of the above definition
(with the same convention that $\min\emptyset$ is undefined).
Then $(\bbbx,\minprA[\words\to\bbbx])$ is also a
self-enumerated representation system.
\\
We shall not use any $\min$ based system in this paper
because they have no simple set theoretical counterparts.
\medskip\\
{\bf 3.}
None of the systems $(\bbbz,\maxrA[\words\to\bbbz])$,
$(\bbbn,\minr[\words\to\bbbn])$ and
$(\bbbz,\minr[\words\to\bbbz])$ is self-enumerated
(cf. \cite{ferbusgrigoKmaxKmin}, Thm.4.3).
\end{remark}
%
%
%%%%%%%%%%%%%%%%%%%%%%%%%%%%%%%%%%%%%%%%%%%%%%%%%%%%%%%%%%%%%%%
\subsection{Kolmogorov complexities $\kmax[], \kmax[\emptyset'],...$}
\label{ss:Kmax}
%%%%%%%%%%%%%%%%%%%%%%%%%%%%%%%%%%%%%%%%%%%%%%%%%%%%%%%%%%%%%%%

We apply Def.\ref{def:Kself} to the self-enumerated representation
systems considered in \S\ref{ss:maxpr}.
\begin{definition}[Kolmogorov complexities]
\label{def:Kmaxpr}
Let $\bbbx$ be $\bbbn$ or $\bbbz$. We denote
$\ \kmax[A,\bbbx]:\bbbx\to\bbbn\ $ the Kolmogorov
complexity of the self-enumerated representation system
$(\bbbx,\maxprA[\words\to\bbbx])$.
\medskip\\
In case $\bbbx=\bbbn$, we omit the superscript $\bbbn$.
\medskip\\
In case $\bbbx=\bbbn$ and $A$ is $\ \emptyset\ $
we simply write $\ \kmax[]$.
\end{definition}
Using Remark \ref{rk:Kphi}, point 2, and Fact \ref{fact:rec},
it is not hard to prove the following result
(cf. \cite{ferbusgrigoKmaxKmin}, Prop.6.3).
\begin{proposition}\label{p:Kinfini}
Let $A\subseteq\bbbn$.
Then $\kmax[A]$ is also the Kolmogorov complexity of the
self-enumerated system $(\bbbn,\maxrA[\words\to\bbbn])$.
I.e.
$$\ K^\bbbn_{\maxrA[\words\to\bbbn]}
=K^\bbbn_{\maxprA[\words\to\bbbn]}$$
\end{proposition}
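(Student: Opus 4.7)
The plan is to derive $\kmax[A] \eqct K^\bbbn_{\maxrA[\words\to\bbbn]}$ as a pair of one-sided inequalities, each obtained by applying Theorem~\ref{thm:invar} (both systems are self-enumerated by Proposition~\ref{p:maxprmaxr}) together with Fact~\ref{fact:rec}, which relativizes to $A$ since the inner predicate ``$f(\ttp,i)$ converges in at most $t$ steps'' remains $A$-decidable as soon as $f \in \PR[A,\words\times\bbbn\to\bbbn]$.

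First I would observe that $\maxrA[\words\to\bbbn] \subseteq \maxprA[\words\to\bbbn]$, since every total $A$-recursive function $\words\times\bbbn\to\bbbn$ is in particular partial $A$-recursive. Pick a good universal $V$ for $(\bbbn,\maxrA[\words\to\bbbn])$; then $V$ also belongs to $\maxprA[\words\to\bbbn]$, and applying Theorem~\ref{thm:invar} inside the latter system gives $\kmax[A] = K^\bbbn_{\maxprA[\words\to\bbbn]} \leqct K^\bbbn_V \eqct K^\bbbn_{\maxrA[\words\to\bbbn]}$. This is the easy half.

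For the reverse inequality, start with a good universal $U$ for $(\bbbn,\maxprA[\words\to\bbbn])$, necessarily of the form $U = \max f$ for some $f \in \PR[A,\words\times\bbbn\to\bbbn]$. Apply the $A$-relativized version of Fact~\ref{fact:rec} to produce a total $A$-recursive $g:\words\times\bbbn\to\bbbn$ such that $V = \max g$ lies in $\maxrA[\words\to\bbbn]$ and extends $U$ (taking value $0$ on those $\ttp$ where $f(\ttp,\cdot)$ is nowhere defined). Since $V$ extends $U$, Remark~\ref{rk:Kphi}(2) yields $K^\bbbn_V \leq K^\bbbn_U$ pointwise, and combining with Theorem~\ref{thm:invar} applied inside $(\bbbn,\maxrA[\words\to\bbbn])$ gives $K^\bbbn_{\maxrA[\words\to\bbbn]} \leqct K^\bbbn_V \leq K^\bbbn_U \eqct \kmax[A]$, which together with the first half yields the desired equality. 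The only non-bookkeeping ingredient is the time-bounded running-max construction of $g$ from $f$, already performed in Fact~\ref{fact:rec} and routine to relativize, so I do not anticipate any conceptual obstacle beyond this.
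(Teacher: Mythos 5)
Your proof is correct and follows exactly the route the paper intends: the paper only sketches this result by invoking Remark~\ref{rk:Kphi}, point 2, and (the relativization of) Fact~\ref{fact:rec}, which are precisely the two ingredients you combine with the Invariance Theorem for the two one-sided inequalities. Indeed, the extension $V=\max g$ you build is the same one already used in the proof of Prop.~\ref{p:maxprmaxr}, so your argument matches the paper's.
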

\begin{remark}
The above proposition has no analog with $\bbbz$ since
$\maxrA[\words\to\bbbz]$ is not self-enumerated
(cf. Remark \ref{rk:minPR}, point 3).
\end{remark}
%
%%%%%%%%%%%%%%%%%%%%%%%%%%%%%%%%%%%%%%%%%%%%%%%%%%%%%%%%%%%%%%%
\subsection{$\maxr[\words\to\bbbn]$ and $\maxpr[\words\to\bbbn]$
and infinite computations}
\label{ss:InfiniteComp}
%%%%%%%%%%%%%%%%%%%%%%%%%%%%%%%%%%%%%%%%%%%%%%%%%%%%%%%%%%%%%%%
%
The following simple result gives a machine characterization
of functions in $\maxrA[\words\to\bbbn]$
(resp. $\maxprA[\words\to\bbbn]$) which will be used in the proof
of Thm.\ref{thm:index}.
\begin{definition}\label{def:Turing}
Let ${\cal M}$ be an oracle Turing machine such that
\begin{enumerate}
\item
the alphabet of the input tape is $\{0,1\}$,
plus an end-marker to delimitate the input,
\item
the output tape is write-only and has unary alphabet $\{1\}$,
\item
there is no halting state
(resp. but there are some distinguished states).
\end{enumerate}
The partial function $F^A:\words\to\bbbn$ computed by ${\cal M}$
with oracle $A$ through infinite computation
(resp. with distinguished states) is defined as follows:
$F^A(\ttp)$ is defined with value $n$ if and only if the
infinite computation (i.e. which lasts forever) of ${\cal M}$
on input $\ttp$ outputs exactly $n$ letters $1$
(resp. and at some step the current state is a distinguished one).
\end{definition}
\begin{proposition}\label{p:Turing}
Let $A\subseteq\bbbn$ be an oracle.
A function $F:\words\to\bbbn$ is in $\maxrA[\words\to\bbbn]$
(resp. $\maxprA[\words\to\bbbn]$)
if and only if there exists an oracle Turing machine ${\cal M}$
which, with oracle $A$,  computes $F$ through infinite computation
(resp. with distinguished states)
in the sense of Def.\ref{def:Turing}.
\end{proposition}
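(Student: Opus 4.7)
The plan is to prove the equivalence in each of the two versions (infinite computation for $\maxrA[\words\to\bbbn]$; distinguished states for $\maxprA[\words\to\bbbn]$) by turning $A$-recursive functions $f:\words\times\bbbn\to\bbbn$ into machines and conversely.

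For the forward direction in the total-recursive case, given $F=\max f$ with $f\in Rec^{A,\words\times\bbbn\to\bbbn}$, I construct a machine ${\cal M}$ which, on input $\ttp$, loops over $t=0,1,2,\dots$, computes $f(\ttp,t)$ using oracle $A$, and maintains a running maximum $m$ (initially $0$) on an internal tape; whenever a newly computed value $f(\ttp,t)$ exceeds $m$, the machine writes $f(\ttp,t)-m$ additional $1$'s on the write-only output tape and updates $m:=f(\ttp,t)$. Since $f$ is total, each stage terminates and ${\cal M}$ has no halting state, so the computation is infinite; the total number of $1$'s eventually produced equals $\sup_t f(\ttp,t)$ when this is finite, and is infinite otherwise, matching exactly the definition of $\max f$ with the convention that $\max$ of an infinite set is undefined.

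For the converse in the infinite-computation case, given ${\cal M}$ working with oracle $A$, define $f(\ttp,t)$ to be the number of $1$'s present on the output tape of ${\cal M}$ after $t$ computation steps on input $\ttp$; by the standard simulation of oracle Turing machines this $f$ is total $A$-recursive, and since the output tape is write-only, $t\mapsto f(\ttp,t)$ is nondecreasing, so $\max_t f(\ttp,t)$ equals the total number of $1$'s output through the infinite computation, which by Def.\ref{def:Turing} is $F^A(\ttp)$ (with the same ``undefined iff infinite'' convention on both sides).

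For the partial-recursive / distinguished-states version, I adapt both constructions by dovetailing. Given $F=\max f$ with $f\in\PR[A,\words\times\bbbn\to\bbbn]$, the machine ${\cal M}$ dovetails simulations of $f(\ttp,t)$ for all $t\in\bbbn$; each time a new convergence $f(\ttp,t)\downarrow$ is detected and the value strictly exceeds the current running maximum $m$, ${\cal M}$ writes the appropriate additional $1$'s, updates $m$, and enters a distinguished state. If no $f(\ttp,t)$ ever converges, no distinguished state is visited (so $F^A(\ttp)$ is undefined, matching $\max\emptyset$); if convergent values exist but are unbounded, infinitely many $1$'s are written (again undefined on both sides); if the convergent values form a bounded nonempty set, the output stabilizes at $\max f(\ttp)$ and at least one distinguished state has been visited. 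Conversely, given ${\cal M}$ with distinguished states, define $f(\ttp,t)$ to be the number of $1$'s on the output tape after $t$ steps \emph{provided} a distinguished state has occurred within the first $t$ steps, and undefined otherwise; this is partial $A$-recursive by step-by-step simulation, and $\max f(\ttp)$ coincides with $F^A(\ttp)$ in all three cases (never distinguished, distinguished with finite output, distinguished with infinite output).

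The only genuine subtlety, and thus the main thing to get right, is the bookkeeping in the distinguished-states direction: one must align the conventions so that ``$f(\ttp,t)$ undefined for all $t$'' corresponds exactly to ``no distinguished state ever visited'', and so that the boundedness of the $f(\ttp,t)$'s exactly captures the finiteness of the total $1$-output. The monotonicity of the count of $1$'s (because the output tape is write-only) together with the restriction that $f$ is only declared defined after a distinguished state has been seen handles both constraints simultaneously.
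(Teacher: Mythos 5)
Your overall strategy is the same as the paper's: in one direction take $f(\ttp,t)$ to be the number of $1$'s on the output tape after $t$ steps (declared undefined before the first distinguished state in the resp.\ case), and in the other direction realize $\max f$ by a machine that maintains a running maximum in unary on the write-only output tape, dovetailing the computations of $f(\ttp,t)$ in the partial case; the total case and both converse directions are correct as you state them.

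There is, however, one concrete flaw, and it sits exactly at the bookkeeping point you single out as the main subtlety. In the forward direction of the distinguished-states case you have ${\cal M}$ enter a distinguished state only when a newly detected convergent value \emph{strictly exceeds} the running maximum $m$, which is initialized to $0$. If $t\mapsto f(\ttp,t)$ has nonempty domain but every convergent value equals $0$, then $(\max f)(\ttp)=0$ is defined, yet your machine never writes a $1$ and never enters a distinguished state, so the function it computes in the sense of Def.\ref{def:Turing} is undefined at $\ttp$. Thus your claim that ``if the convergent values form a bounded nonempty set \dots at least one distinguished state has been visited'' fails precisely when that maximum is $0$, and the alignment you require (``$f(\ttp,t)$ undefined for all $t$'' iff ``no distinguished state ever visited'') is not achieved by the construction as written. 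The fix is immediate and is what the paper's proof does implicitly: its auxiliary function $\widetilde f(\ttp,t)$ becomes defined with value $0$ as soon as some convergence has been detected (i.e.\ as soon as $X(\ttp,t)\neq\emptyset$), independently of whether the output grows; correspondingly, your machine should enter a distinguished state upon \emph{any} detected convergence of some $f(\ttp,t)$, not only when the running maximum increases. With that one-line correction your case analysis is complete and the proof is essentially the paper's.
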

\begin{proof}
$\Leftarrow$. The function associated to an oracle Turing machine
through infinite computation (resp. with distinguished states)
is clearly $\max f$ where $f(\ttp,t)$ is the current output at step
$t$ (resp. and is undefined while the machine has not been in some
distinguished state).
\medskip\\
$\Rightarrow$. Suppose $f:\words\times\bbbn\to\bbbn$ is total
(resp. partial) $A$-recursive and set
$$X(\ttp,t)=\{f(\ttp,t'):t'<t\ \wedge\ f(\ttp,t')
\mbox{ converges in $\leq t$ steps}\})$$
Observe that $X(\ttp,0)=\emptyset$, so that the following is indeed
an $A$-recursive definition:
\begin{eqnarray*}
\widetilde{f}(\ttp,t)&=&\left
\{\begin{array}{ll}
0\mbox{ (resp. undefined)}&\mbox{if }X(\ttp,t)=\emptyset
\\
\widetilde{f}(\ttp,t-1)+1&
\mbox{if $X(\ttp,t)\neq\emptyset\ \wedge\
\widetilde{f}(\ttp,t-1)<\max X(\ttp,t)$}
\\
\widetilde{f}(\ttp,t-1)&\mbox{otherwise}
\end{array}\right.
\end{eqnarray*}
Then $\max\widetilde{f}=\max f$. Also, the unary representation of
$\widetilde{f}(\ttp,t)$ can be simply interpreted as the current
output at step $t$ of the infinite computation
(resp. with distinguished states)
of an oracle Turing machine with input $\ttp$. So that
$\max\widetilde{f}$ is the function associated to that machine.
\end{proof}
%
%%%%%%%%%%%%%%%%%%%%%%%%%%%%%%%%%%%%%%%%%%%%%%%%%%%%%%%%%%%%%%%
\subsection{$\maxpr[\words\to\bbbn]$ and the jump}
\label{ss:Kmaxversusjump}
%%%%%%%%%%%%%%%%%%%%%%%%%%%%%%%%%%%%%%%%%%%%%%%%%%%%%%%%%%%%%%%
%
The following proposition is easy.
\begin{proposition}\label{p:maxprANDjump}
Let $A\subseteq\bbbn$ and let $\bbbx$ be $\bbbn$ or $\bbbz$.
Then $$\maxprA[\words\to\bbbx]\subset\PR[A',\words\to\bbbx]$$
\end{proposition}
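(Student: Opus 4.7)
The plan is to exhibit, for any $f\in\PR[A,\words\times\bbbn\to\bbbx]$, an algorithm with oracle $A'$ that computes $G=\max f$. Writing $S(\ttp) = \{f(\ttp,t) : t\in\bbbn,\ f(\ttp,t)\!\downarrow\}$, the function $G(\ttp)$ is defined precisely when $S(\ttp)$ is non-empty and finite, and equals $\max S(\ttp)$ in that case.

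The key observation I would use is that two families of predicates about $f$ are $A'$-decidable. First, for fixed $M,N\in\bbbn$, the predicate
$$P_{M,N}(\ttp)\ \equiv\ \forall t\in\bbbn\ \big(f(\ttp,t)\!\uparrow\ \vee\ -M\leq f(\ttp,t)\leq N\big)$$
is $\Pi^A_1$, since its negation ``$\exists t,s$ such that $f(\ttp,t)$ halts in $s$ steps with output outside $[-M,N]$'' is $\Sigma^A_1$. Second, for each $k\in\bbbz$, the predicate ``$\exists t:\ f(\ttp,t)=k$'' is $\Sigma^A_1$. Both are therefore decidable with oracle $A'$.

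The algorithm on input $\ttp$ would run in two phases. Phase one enumerates the pairs $(M,N)\in\bbbn^2$ (reducing to only $N$, with $M=0$ fixed, in the case $\bbbx=\bbbn$) and uses $A'$ to decide $P_{M,N}(\ttp)$; as soon as some $(M_0,N_0)$ satisfying it is found, phase two iterates $k$ from $N_0$ down to $-M_0$, uses $A'$ to decide whether $k\in S(\ttp)$, and outputs the largest such $k$, diverging if none is found. Correctness splits into three cases: if $G(\ttp)$ is defined then $S(\ttp)$ is bounded, so phase one terminates and phase two recovers $\max S(\ttp)$; if $S(\ttp)=\emptyset$ then phase one terminates vacuously at $(0,0)$ but phase two diverges; if $S(\ttp)$ is infinite then it is unbounded in $\bbbz$, so no $P_{M,N}(\ttp)$ holds and phase one runs forever.

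I do not expect any genuine obstacle. The only mild subtlety is the case $\bbbx=\bbbz$, where one must control finiteness of $S(\ttp)$ rather than one-sided boundedness; but this dissolves immediately from the fact that a subset of $\bbbz$ is finite iff it is bounded both above and below, which is exactly what the enumeration of pairs $(M,N)$ tests.
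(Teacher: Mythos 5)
Your algorithm for the inclusion is correct, and it is essentially the paper's argument in a mildly different dress: both reduce the evaluation of $(\max f)(\ttp)$ to finitely (or, in the divergent cases, infinitely) many $\Sigma^{0,A}_1$ and $\Pi^{0,A}_1$ questions about the values $f(\ttp,t)$, all answerable by the oracle $A'$. The paper interleaves the search (at each step it asks $A'$ whether some later value exceeds the current maximum and halts when the answer is negative), whereas you first locate a bound $[-M_0,N_0]$ and then test each candidate value for membership in $\{f(\ttp,t):t\in\bbbn\}$; this is a legitimate variant, and your two-sided bound is in fact the more scrupulous reading of the convention that $\max$ is undefined on infinite sets when $\bbbx=\bbbz$, where boundedness above does not imply finiteness.

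The genuine gap is that the proposition asserts a \emph{strict} inclusion $\maxprA[\words\to\bbbx]\subset\PR[A',\words\to\bbbx]$, and you prove only $\subseteq$; the paper's proof has a second half devoted to strictness which your proposal omits and which does not follow from your construction. The paper's argument is a graph-complexity observation: for $f$ partial $A$-recursive one has $y=(\max f)(\ttp)$ if and only if $(\exists t\ f(\ttp,t)=y)\ \wedge\ \neg(\exists u\ \exists z>y\ f(\ttp,u)=z)$, so the graph of any function in $\maxprA[\words\to\bbbx]$ is the conjunction of a $\Sigma^{0,A}_1$ and a $\Pi^{0,A}_1$ condition, in particular a boolean combination of $\Sigma^{0,A}_1$ sets. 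On the other hand, $\PR[A',\words\to\bbbx]$ contains functions whose graph is $\Sigma^{0,A'}_1$-complete and not $\Delta^{0,A'}_1$ (equivalently, properly $\Sigma^{0,A}_2$) --- for instance the function equal to $1$ on a $\Sigma^{0,A'}_1$ non-$A'$-recursive subset of $\words$ and undefined elsewhere --- and such a graph cannot be a boolean combination of $\Sigma^{0,A}_1$ sets. You need some argument of this kind (or another explicit witness of non-membership) to obtain the proposition as stated.
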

\begin{proof}
1. Let $f:\words\to\bbbx$ be partial $A$-recursive. A partial
$A'$-recursive definition of $(\max f)(\ttp)$ is as follows:
\begin{enumerate}
\item[i.]
First, check whether there exists $t$ such that $f(\ttp,t)$ is
defined.
\\If the check is negative then $(\max f)(\ttp)$ is undefined.
\item[ii.]
If check i is positive then start successive steps of the
following process.
\\- At step $t$, check whether $f(\ttp,t)$ is defined,
\\- if defined, compute its value,
\\- and check whether there exists $u>t$ such that $f(\ttp,u)$
is greater than the maximum value computed up to that step.
\item[iii.]
If at some step the last check in ii is negative then halt and
output the maximum value computed up to now.
\end{enumerate}
Clearly, oracle $A'$ allows for the checks in i and ii.
Also, the above process halts if and only if $f(\ttp,t)$ is
defined for some $t$ and $\{f(\ttp,t):t\in\bbbn\}$ is bounded,
i.e. if and only if $(\max f)(\ttp)$ is defined.
In that case it outputs exactly $(\max f)(\ttp)$.
\medskip\\
2. To see that the inclusion is strict, observe that the graph
of any function in $\maxprA[\words\to\bbbx]$ is
$\Sigma^{0,A}_1\wedge\Pi^{0,A}_1$ since
$$y=(\max f)(\ttp)\ \Leftrightarrow\
((\exists t\ f(\ttp,t)=y)\ \wedge\
\neg(\exists u\ \exists z>y\ f(\ttp,u)=z))$$
Whereas the graph of functions in $\PR[A',\words\to\bbbx]$ can be
$\Sigma^{0,A'}_1$ and not $\Delta^{0,A'}_1$, i.e.
$\Sigma^{0,A}_2$ and not $\Delta^{0,A}_2$.
\end{proof}
In the vein of Prop.\ref{p:maxprANDjump}, let's mention the
following result, cf. \cite{becherchaitindaicz}
(where the proof is for $K^{\infty}$,
cf. start of \S\ref{s:InfiniteComp} above)
and \cite{ferbusgrigoKmaxKmin} Prop.7.2-3 \& Cor.7.7.
\begin{proposition}\label{p:degrees}
Let $A\subseteq\bbbn$.
\medskip\\
{\bf 1.}
$K^A$ and $\kmax[A]$ are recursive in $A'$.
\medskip\\
{\bf 2.}
$K^A \supct \kmax[A] \supct K^{A'}$.
\end{proposition}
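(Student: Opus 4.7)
The plan is to prove part 1 directly, then split part 2 into the non-strict halves (which follow from invariance applied to inclusions of the underlying function classes) and the strict halves (which are relativizations of results from cited earlier work).

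For part 1, fix a good universal $U\in\PR[A,\words\to\bbbn]$ so that $K^A\eqct K_U^\bbbn$. The relation ``$U(\ttp)=x$'' is $\Sigma^{0,A}_1$ (it expresses that the $A$-computation of $U$ on input $\ttp$ converges with value $x$), hence decidable in $A'$. To compute $K^A(x)$ from oracle $A'$, I enumerate binary words in length-lexicographic order and return the length of the first $\ttp$ passing the test ``$U(\ttp)=x$''; termination is guaranteed because $U$ is surjective by Prop.\ref{p:onto}. For $\kmax[A]$, invoke Prop.\ref{p:Kinfini} to pick a good universal $V\in\maxrA[\words\to\bbbn]$, write $V(\ttp)=\max\{f(\ttp,t):t\in\bbbn\}$ for some total $A$-recursive $f$, and use the equivalence
$$V(\ttp)=x\ \Leftrightarrow\ (\exists t\ f(\ttp,t)=x)\ \wedge\ \neg(\exists t\ f(\ttp,t)>x)$$
already noted in the proof of Prop.\ref{p:maxprANDjump}. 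The right-hand side is $\Sigma^{0,A}_1\wedge\Pi^{0,A}_1$, hence decidable in $A'$, and the same search-by-length procedure computes $\kmax[A]$ with oracle $A'$.

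For the non-strict halves of part 2, I show $\PR[A,\words\to\bbbn]\subseteq\maxprA[\words\to\bbbn]\subseteq\PR[A',\words\to\bbbn]$ and apply the Invariance Theorem \ref{thm:invar} to each self-enumerated system. The first inclusion is trivial: any $g\in\PR[A,\words\to\bbbn]$ is the $\max$ of the function $f(\ttp,t)$ equal to $g(\ttp)$ when $t=0$ and undefined otherwise. The second inclusion is Prop.\ref{p:maxprANDjump}. Consequently, an optimal function for $\PR[A]$ lies in $\maxprA$, giving $\kmax[A]\leqct K^A$; and an optimal function for $\maxprA$ lies in $\PR[A']$, giving $K^{A'}\leqct \kmax[A]$.

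For the strict halves $K^A\notleqct\kmax[A]$ and $\kmax[A]\notleqct K^{A'}$, the idea is to relativize the unrelativized statements $K\supct\kmax[]$ and $\kmax[]\supct K^{\emptyset'}$ established in \cite{becherchaitindaicz} (for $K\supct K^\infty$) and \cite{ferbusgrigoKmaxKmin} (Prop.7.2--3 and Cor.7.7). Those proofs use only that $\PR$ is enumerable and that $\maxpr$-functions admit ``infinite computation'' representations whose limits can outrun halting computations --- features that all survive uniformly when the whole setup is carried out relative to oracle $A$, with $\PR[], \PR[\emptyset'], \maxpr$ replaced throughout by $\PR[A], \PR[A'], \maxprA$. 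The main step one has to check is precisely this relativization pass; since the arithmetical characterizations of $\maxprA$ and $\PR[A']$ used in the original arguments remain valid over any base oracle, no essentially new idea is needed, and the strict ordering transfers without modification. I regard this relativization verification as the only delicate step of the proof, and even there it is routine once one has written out the original arguments with oracle dependencies made explicit.
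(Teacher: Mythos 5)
Your proposal is correct and matches the paper's treatment: the paper offers no proof of Prop.~\ref{p:degrees}, simply citing \cite{becherchaitindaicz} and \cite{ferbusgrigoKmaxKmin} (Prop.~7.2--3 and Cor.~7.7) with the understanding that the relativization to an arbitrary oracle $A$ is routine, which is exactly how you dispose of the two strict inequalities. The details you supply for part~1 (deciding the $\Sigma^{0,A}_1$, resp.\ $\Sigma^{0,A}_1\wedge\Pi^{0,A}_1$, graph of a good universal function with oracle $A'$ and searching programs by length, termination by surjectivity) and for the non-strict halves of part~2 (the inclusions $\PR[A,\words\to\bbbn]\subseteq\maxprA[\words\to\bbbn]\subseteq\PR[A',\words\to\bbbn]$ combined with the Invariance Theorem) are the standard arguments and are sound.
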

%
%
%%%%%%%%%%%%%%%%%%%%%%%%%%%%%%%%%%%%%%%%%%%%%%%%%%%%%%%%%%%%%%%
\subsection{The $\Delta$ operation on $\maxpr[\words\to\bbbn]$
and the jump}
\label{ss:DeltaMax}
%%%%%%%%%%%%%%%%%%%%%%%%%%%%%%%%%%%%%%%%%%%%%%%%%%%%%%%%%%%%%%%
%
The following variant of Prop.\ref{p:maxprANDjump} is a normal form
for partial $A'$-recursive $\bbbz$-valued functions.
We shall use it in \S\ref{s:card}-\ref{s:index}.
\begin{theorem}\label{thm:Deltamax}
Let $A\subseteq\bbbn$. Then
$$\PR[A',\words\to\bbbz]=\Delta(\maxprA[\words\to\bbbn])
=\Delta(\maxrA[\words\to\bbbn])$$
Thus, every partial $A'$-recursive function is the difference of
two functions in $\maxrA[]$ (cf. Notation \ref{not:maxpr}).
\end{theorem}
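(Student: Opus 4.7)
The plan is to prove the two inclusions separately. The easy direction, $\Delta(\maxrA[\words\to\bbbn]) \subseteq \Delta(\maxprA[\words\to\bbbn]) \subseteq \PR[A',\words\to\bbbz]$, follows immediately from $Rec^A \subseteq PR^A$ together with Prop.\ref{p:maxprANDjump} (giving $\maxprA[\words\to\bbbn]\subseteq \PR[A',\words\to\bbbn]$) and Prop.\ref{p:deltaPR} applied to the oracle $A'$. The substantive direction is $\PR[A',\words\to\bbbz]\subseteq\Delta(\maxrA[\words\to\bbbn])$, which I will tackle by a stage-by-stage simulation combined with a cumulative-changes decomposition, in the spirit of Shoenfield's Limit Lemma but adapted to handle partiality.

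Given $G$ computed by an $A'$-oracle Turing machine $M$, I use the canonical $A$-recursive nondecreasing approximation $(A'_s)_{s\in\bbbn}$ to $A'$ (namely $q\in A'_s$ iff $\phi^A_q(q)$ halts in at most $s$ steps) and define a total $A$-recursive function $\zeta:\words\times\bbbn\to\bbbz$ recursively by $\zeta(\ttp,0)=0$ and, for $s\geq 1$, $\zeta(\ttp,s)=v$ whenever simulating $M$ on $\ttp$ with oracle $A'_{\lfloor\sqrt{s}\rfloor}$ for $\lfloor\sqrt{s}\rfloor$ steps halts with output $v$ AND every query $q$ that was answered negatively in that simulation still satisfies $q\notin A'_s$ (a ``consistency check''); otherwise I set $\zeta(\ttp,s)=\zeta(\ttp,s-1)+1$. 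The delay between $\lfloor\sqrt{s}\rfloor$ and $s$ is designed to give the consistency check enough room to detect originally incorrect negative oracle answers.

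The heart of the proof will be the claim that $\zeta(\ttp,\cdot)$ is eventually constant iff $G(\ttp)$ is defined, with limit $G(\ttp)$ in that case. The forward direction is routine: once $\lfloor\sqrt{s}\rfloor$ exceeds the real halting time and all halting times $T(q)$ for the positive-answered queries, the simulation follows the real computation path exactly and halts with $G(\ttp)$, while negative queries stay outside $A'$ permanently. The converse is the main obstacle and is precisely what the consistency check is designed to handle. If $\zeta(\ttp,s)=v$ stably from some $s_0$, then for every $s\geq s_0+1$ the simulation halts with $v$ and each of its negative queries $q$ satisfies $T(q)>s$. Were $M^{A'}(\ttp)$ not to halt, each simulation would have to diverge from the real path at some query $q^r\in A'$ with $T(q^r)>\lfloor\sqrt{s}\rfloor$; a plateau argument on the intervals of $s$ on which the diverging query stays fixed then locates values $s\geq T(q^r)$, where $T(q^r)\leq s$ violates consistency and contradicts stability. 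Hence $M^{A'}(\ttp)$ must halt with $v$, giving $G(\ttp)=v$ defined.

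Granted the claim, the rest is routine: decompose $\zeta(\ttp,s)=P(\ttp,s)-N(\ttp,s)$ via the cumulative positive and negative increments $P(\ttp,s)=\sum_{t<s}\max(\zeta(\ttp,t+1)-\zeta(\ttp,t),0)$ and analogously $N$; both are total $A$-recursive, nondecreasing, and $\bbbn$-valued. Integer-valuedness of $\zeta$ forces $\zeta(\ttp,\cdot)$ to stabilize iff both $\max_s P(\ttp,s)$ and $\max_s N(\ttp,s)$ are finite. Hence $\max P,\max N\in\maxrA[\words\to\bbbn]$, and $\max P-\max N=\lim_s\zeta(\ttp,s)=G(\ttp)$ on the common domain of definedness, placing $G$ in $\Delta(\maxrA[\words\to\bbbn])$ as required.
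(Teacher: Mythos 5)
Your proposal is correct, and while the underlying strategy is the same as the paper's --- simulate the $A'$-oracle machine $A$-recursively against the canonical nondecreasing approximations of $A'$ (the paper's Lemma \ref{l:approx}), and package the stage-by-stage outputs as a difference of two functions in $\maxrA[\words\to\bbbn]$ which cannot both have finite $\max$ unless the true computation halts --- your implementation is genuinely different. The paper first reduces to $\bbbn$-valued functions (closure of $\maxrA[]$ under sums, so $\Delta\Delta=\Delta$), then constructs the two monotone components $f,g$ simultaneously, maintaining the invariant $f-g=$ current simulated output; its detector of ``bad'' stages is a change of $Approx(A',t)$ on the queried segment $\{0,\dots,t\}$ (after normalizing the machine to query $t$ at step $t$), and it strictly increases \emph{both} $f$ and $g$ at every non-halting or inconsistent stage, which directly yields that both maxima are infinite on divergence. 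You instead keep the $\bbbz$-valued target (so no reduction step is needed), build a single limit-style approximation $\zeta$ with an unconditional $+1$ punishment, and only afterwards split it into cumulative positive and negative increments $P,N$ (a Jordan-type decomposition); your detector is the lagged consistency check, and on divergence it suffices that at least one of $\max P,\max N$ is infinite, which the punishment rule guarantees. What your route buys is a cleaner two-stage structure (a Shoenfield-style limit approximation with built-in divergence detection, then a generic monotone decomposition); what the paper's buys is that no quantitative timing argument is needed, since any wrong negative answer is caught the moment the approximation changes on the queried segment. The one point a full write-up must make explicit is the one your sketch gestures at: the check is worthless without the gap between $\lfloor\sqrt s\rfloor$ and $s$ (a machine that halts as soon as some query is answered ``no'' would otherwise let $\zeta$ stabilize while $M^{A'}(\ttp)$ diverges), and with the gap the argument is exactly your plateau: by monotonicity of the approximations the first diverging true-path query $q$ stays the diverging query for every level $r<T(q)$, and the stage $s=(T(q)-1)^2\geq T(q)$ has $\lfloor\sqrt s\rfloor=T(q)-1<T(q)$, so the consistency check fails there, contradicting stability. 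With that spelled out, the claim, the decomposition, and the easy inclusions (via Prop.\ref{p:maxprANDjump} and Prop.\ref{p:deltaPR}) are all sound, so your proposal does establish the theorem.
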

Before entering the proof of Thm.\ref{thm:Deltamax}, let's recall
two well-known facts about oracular computation and approximation
of the jump.
\begin{lemma}\label{l:oracleCV}
Let $(B_t)_{t\in\bbbn}$ be a sequence of subsets of $\bbbn$ which
converges pointwise to $B\subseteq\bbbn$, i.e.
$$\forall n\ \ \exists t_n\ \ \forall t\geq t_n\ \ \
B_t\cap\{0,1,...,n\}=B\cap\{0,1,...,n\}$$
Let $\bbbx,\bbby$ be basic sets and let $\psi:\bbbx\to\bbby$ be a
partial $B$-recursive function computed by some oracle Turing
machine ${\cal M}$ with oracle $B$. Let $\ttx\in\bbbx$.
\\
Then, $\psi(\ttx)$ is defined if and only if there exists $t_\ttx$
such that
\begin{enumerate}
\item[i.]
the computation of ${\cal M}$ on input $\ttx$ with oracle
$B_{t_\ttx}$ halts in at most $t_\ttx$ steps,
\item[ii.]
for all $t\geq t_\ttx$ the computation of ${\cal M}$ on input $\ttx$
with oracle $B_t$ is step by step exactly the same as that with
oracle $B_{t_\ttx}$ (in particular, it asks the same questions to
the oracle, gets the same answers and halts at the same computation
step $\leq t_\ttx$).
\end{enumerate}
\end{lemma}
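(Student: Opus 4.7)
The plan is to exploit the \emph{use principle} for oracle Turing machines: a halting computation can only ask finitely many questions to the oracle, so only finitely many bits of the oracle actually matter, and pointwise convergence of $(B_t)_{t\in\bbbn}$ to $B$ suffices to make $B_t$ and $B$ indistinguishable for such a computation from some stage on.

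For the direction ($\Rightarrow$), I would start from the assumption that $\psi(\ttx)$ is defined, so the computation of ${\cal M}$ on input $\ttx$ with oracle $B$ halts in some number of steps $s$. This computation asks only finitely many oracle questions, all about integers bounded by some $N$ (depending on $\ttx$). Apply the convergence hypothesis to the finite set $\{0,1,\dots,N\}$: there exists $t_0$ such that for all $t\geq t_0$, $B_t\cap\{0,\dots,N\}=B\cap\{0,\dots,N\}$. Take $t_\ttx=\max(s,t_0,N)$. Since the oracle answers during the first $s$ steps depend only on $B\cap\{0,\dots,N\}$, running ${\cal M}$ on input $\ttx$ with oracle $B_{t_\ttx}$ yields the same step-by-step computation as with oracle $B$, which halts in $\leq s\leq t_\ttx$ steps, giving (i). For (ii), any $t\geq t_\ttx$ also satisfies $B_t\cap\{0,\dots,N\}=B\cap\{0,\dots,N\}$, so the same argument shows the computations with oracles $B_t$ and $B_{t_\ttx}$ agree step by step.

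For the direction ($\Leftarrow$), suppose a witness $t_\ttx$ satisfying (i) and (ii) exists. The computation with oracle $B_{t_\ttx}$ halts in at most $t_\ttx$ steps and queries only a finite set of integers, say bounded by $N'$. By pointwise convergence, there exists $t^*\geq t_\ttx$ such that $B_{t^*}\cap\{0,\dots,N'\}=B\cap\{0,\dots,N'\}$. Condition (ii) ensures that the computation of ${\cal M}$ on $\ttx$ with oracle $B_{t^*}$ is step by step identical to that with oracle $B_{t_\ttx}$ (and in particular queries only elements of $\{0,\dots,N'\}$). Since $B$ agrees with $B_{t^*}$ on these queries, the computation with oracle $B$ is also identical and halts in the same number of steps, so $\psi(\ttx)$ is defined (with the same value).

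The only subtle point—and the one I would be most careful with—is the bookkeeping around condition (ii): one must remember that the set of queries asked in the first $t_\ttx$ steps is a priori \emph{oracle-dependent}, so one cannot directly compare the query sets of the $B_{t_\ttx}$-computation and the $B$-computation without first arguing that they in fact ask the same questions. The clean way is to argue by induction on the computation step: assuming the two computations agree through step $k$, they arrive at the same state and tape configuration at step $k$, hence ask the same question (if any) at step $k+1$; since the queried integer is bounded by $N$ and the two oracles agree on $\{0,\dots,N\}$, they get the same answer and therefore agree at step $k+1$. This inductive pattern is exactly what makes both directions go through uniformly.
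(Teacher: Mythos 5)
Your argument is correct: both directions are the standard use-principle argument (a halting oracle computation queries only finitely many integers, pointwise convergence freezes those bits from some stage on, and the step-by-step induction handles the oracle-dependence of the query set), and your closing remark addresses the only real subtlety. The paper states Lemma \ref{l:oracleCV} as a recalled well-known fact and gives no proof, so there is nothing to diverge from; your write-up supplies exactly the justification the paper implicitly relies on, with the witness $t_\ttx$ chosen just as one would expect (large enough to cover both the halting time and the stabilization stage of the queried bits).
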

\begin{lemma}\label{l:approx}
Let $A\subseteq\bbbn$ and let $A'\subseteq\bbbn$ be the jump of $A$.
There exists a total $A$-recursive sequence
$(Approx(A',t))_{t\in\bbbn}$
of subsets of $\bbbn$ which is monotone increasing with respect to
set inclusion and which has union $A'$.
In particular, this sequence converges pointwise to $A'$.
\end{lemma}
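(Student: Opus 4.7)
The plan is to construct the approximation explicitly from the standard characterization of the jump as the $A$-halting problem. Recall that $A' = \{\tte \in \words : \phi^A_\tte(\tte) \text{ halts}\}$, where $(\phi^A_\tte)_{\tte\in\words}$ is a fixed acceptable enumeration of partial $A$-recursive functions. The key observation is that with oracle $A$, one can recursively simulate bounded-time computations, so the set of $\tte$ for which halting has been witnessed within some budget of steps is $A$-recursive.

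Concretely, I would define
\[
Approx(A',t) \;=\; \{\tte \in \words : |\tte| \leq t \ \wedge\ \phi^A_\tte(\tte) \text{ halts in at most } t \text{ steps with oracle } A\}.
\]
The first step is to verify that the map $t \mapsto Approx(A',t)$ is total $A$-recursive. This reduces to observing that there are only finitely many $\tte$ with $|\tte| \leq t$, and for each one the predicate ``$\phi^A_\tte(\tte)$ halts within $t$ steps'' is uniformly decidable relative to $A$ by running the universal simulation for $t$ steps (any oracle queries produced during these $t$ steps concern at most finitely many elements of $\bbbn$, which can be answered directly by $A$).

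Next I would verify the two remaining properties. Monotonicity is immediate: if $\tte \in Approx(A',t)$, then $|\tte| \leq t \leq t+1$ and a halting computation of length $\leq t$ is \emph{a fortiori} a halting computation of length $\leq t+1$, so $\tte \in Approx(A',t+1)$. For the union, suppose $\tte \in A'$. Then $\phi^A_\tte(\tte)$ halts in some finite number $s$ of steps, so taking any $t \geq \max(|\tte|,s)$ gives $\tte \in Approx(A',t)$. Conversely, if $\tte \in Approx(A',t)$ for some $t$, then $\phi^A_\tte(\tte)$ halts, whence $\tte \in A'$. Thus $\bigcup_{t \in \bbbn} Approx(A',t) = A'$.

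Finally, pointwise convergence follows automatically from monotonicity together with the fact that the union equals $A'$: for each $n$, the finite set $A' \cap \{0,1,\dots,n\}$ must be entirely contained in some $Approx(A',t_n)$ (take $t_n$ large enough to capture all witnesses for the finitely many elements of $A'$ below $n$), and by monotonicity the two sets then agree on $\{0,\dots,n\}$ for all $t \geq t_n$. There is no real obstacle here; the only subtle point is ensuring the bound $|\tte| \leq t$ in the definition so that $Approx(A',t)$ is itself finite and the sequence is $A$-recursive as a function into finite subsets of $\words$ (equivalently, the characteristic function $(t,\tte) \mapsto \chi_{Approx(A',t)}(\tte)$ is total $A$-recursive).
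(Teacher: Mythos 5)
Your proof is correct. The paper states this lemma without proof, as a recalled well-known fact about approximating the jump, and your construction (the finite, time-and-length-bounded stages of the diagonal $A$-halting set, which are uniformly $A$-decidable, monotone, and exhaust $A'$) is exactly the standard argument it is implicitly invoking; the only cosmetic point is the routine identification of indices in $\words$ with elements of $\bbbn$.
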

We can now prove Thm.\ref{thm:Deltamax}.
\medskip\\
{\em Proof of Thm.\ref{thm:Deltamax}.}\\
Using Prop.\ref{p:maxprANDjump} and Prop.\ref{p:deltaPR}, we get
$$\Delta(\maxrA[\words\to\bbbn])
\subseteq\Delta(\maxprA[\words\to\bbbn])
\subseteq\Delta(\PR[A',\words\to\bbbn])=\PR[A',\words\to\bbbz]$$
Since $\maxrA[\words\to\bbbn]$ is closed by sums, we have
$\Delta(\Delta(\maxrA[\words\to\bbbn])
=\Delta(\maxrA[\words\to\bbbn])$.
Thus, to get the wanted equality, it suffices to prove inclusion
$$\PR[A',\words\to\bbbn]
\subseteq\Delta(\maxrA[\words\to\bbbn])$$
Let ${\cal M}$ be an oracle Turing machine with inputs in $\words$,
which, with oracle $A'$, computes the partial $A'$-recursive function
$\varphi^{A'}:\words\to\bbbn$.
\\
To prove that $\varphi^{A'}$ is in $\Delta(\maxrA[\words\to\bbbn])$,
we define total $A$-recursive functions
$f,g:\words\times\bbbn\to\bbbn$ which are (non strictly)
monotone increasing and such that $\varphi^{A'}=\max f-\max g$.
\medskip\\
The idea to get $f,g$ is as follows.
We consider $A$-recursive approximations of oracle $A'$ (as given
by Lemma \ref{l:approx}) and use them as fake oracles.
Function $f$ is obtained by letting ${\cal M}$ run with the fake
oracles and restart its computation each time some better
approximation of $A'$ shows the previous fake oracle has given
an incorrect answer.
Function $g$ collects all the outputs of the computations which
have been recognized as incorrect in the computing process for $f$.
\medskip\\
We now formally define $f,g$.
\\
First, since we do not care about computation time and space,
we can suppose without loss of generality, that, at any step $t$,
${\cal M}$ asks to the oracle about the integer $t$ and writes down
the oracle answer on the $t$-th cell of some dedicated tape.
\\
Consider $t+1$ steps of the computation of ${\cal M}$ on input
$\ttp$ with oracle $Approx(A',t)$ (cf. Lemma \ref{l:approx}).
We denote ${\cal C}_{\ttp,t+1}$ this limited computation.
We say that ${\cal C}_{\ttp,t+1}$ halts if ${\cal M}$ (with that
fake oracle) halts in at most $t+1$ steps.
\\
We denote $output({\cal C}_{\ttp,t})$ the current value (which is
in $\bbbz$) of the output tape after step $t$.
The $A$-recursive definition of $f,g$ is as follows.
\begin{enumerate}
\item[i.]
$f(\ttp,0)=g(\ttp,t)=0$
\item[ii.]
Suppose
$Approx(A',t+1)\cap\{0,...,t\}= Approx(A',t)\cap\{0,...,t\}$.
Then, up to the halting step of ${\cal C}_{\ttp,t}$ or up to
step $t$ in case ${\cal C}_{\ttp,t}$ does not halt,
both computations ${\cal C}_{\ttp,t},{\cal C}_{\ttp,t+1}$ are
stepwise identical.
\begin{enumerate}
\item
If ${\cal C}_{\ttp,t}$ halts then so does ${\cal C}_{\ttp,t+1}$
at the same step. And both computations have the same output.
\\In that case, we set
$f(\ttp,t+1)=f(\ttp,t)\ ,\ g(\ttp,t+1)=g(\ttp,t)$.
\item
If ${\cal C}_{\ttp,t}$ does not halt then let
$\delta_{t+1}=output({\cal C}_{\ttp,t+1})-output({\cal C}_{\ttp,t})$,
and set
\medskip\\
$\begin{array}{rcl}
f(\ttp,t+1)&=&f(\ttp,t)+1+\max(0,\delta_{t+1})\\
g(\ttp,t+1)&=&g(\ttp,t)+1+\max(0,-\delta_{t+1})
\end{array}$
\medskip\\
i.e. we add $|\delta_{t+1}|$ to $f$ or $g$ according to the sign
of $\delta_{t+1}$.
\end{enumerate}
\item[iii.]
Suppose
$Approx(A',t+1)\cap\{0,...,t\}\neq Approx(A',t)\cap\{0,...,t\}$.
Since these approximations are monotone increasing, we necessarily
have $Approx(A',t)\cap\{0,...,t\}\neq A'\cap\{0,...,t+1\}$.
\\
Thus, the fake oracle in ${\cal C}_{\ttp,t}$ has given answers
which are not compatible with $A'$. In that case, we set
\medskip\\
$\begin{array}{rcl}
f(\ttp,t+1)&=&
f(\ttp,t)+g(\ttp,t)+1+\max(0,output({\cal C}_{\ttp,t+1}))
\\
g(\ttp,t+1)&=&
f(\ttp,t)+g(\ttp,t)+1+\max(0,-output({\cal C}_{\ttp,t+1}))
\end{array}$
\medskip\\
i.e. we uprise $f,g$ to a common value
(namely $f(\ttp,t)+g(\ttp,t)$) and then add
$|output({\cal C}_{\ttp,t+1})|$ to $f$ or $g$ according to the sign
of $output({\cal C}_{\ttp,t+1})$.
\end{enumerate}
From the above inductive definition, we see that,
for each $t>0$,
$$f(\ttp,t)-g(\ttp,t)=output({\cal C}_{\ttp,t})$$
{\em Suppose $\varphi^{A'}(\ttp)$ is defined.}\\
Applying Lemmas \ref{l:oracleCV}, \ref{l:approx}, we see that
there exist $s_\ttp\leq t_\ttp$ such that
\\- ${\cal M}$, on input $\ttp$, with oracle $A'$, halts in
$s_\ttp$ steps,
\\-
$Approx(A',t_\ttp)\cap\{0,...,t_\ttp\}=A'\cap\{0,...,t_\ttp\}$.
\\
Thus, for all $t\geq t_\ttp$, $f_{\ttp,t}=f_{\ttp,t_\ttp}$
and $g_{\ttp,t}=g_{\ttp,t_\ttp}$ and
$f_{\ttp,t}-g_{\ttp,t}=\varphi^{A'}(\ttp)$.
\medskip\\
{\em Suppose $\varphi^{A'}(\ttp)$ is not defined.}\\
Observe that, each time the ``fake" computation ${\cal C}_{\ttp,t}$
with oracle $Approx(A',t)$ does not halt or appears not to be the
``right" one with oracle $A'$
(because $Approx(A',t+1)\cap\{0,...,t\}$ differs from
$Approx(A',t)\cap\{0,...,t\}$),
we strictly increase both $f,g$
(this is why we put $+1$ in the equations of iib and iii).
\\
Applying Lemmas \ref{l:oracleCV}, \ref{l:approx}, we see that,
if $\varphi^{A'}(\ttp)$ is not defined then ${\cal C}_{\ttp,t}$
does not halt for infinitely many $t$'s, so that
$f(\ttp,t)$ and $g(\ttp,t)$ increase infinitely often.
Therefore, $(\max f)(\ttp)$ and $(\max g)(\ttp)$ are both undefined,
and so is their difference.
\medskip\\
This proves that $\varphi^{A'}=\max f-\max g$.
Since the sequence $(Approx(A',t))_{t\in\bbbn}$ is $A$-recursive,
so are $f,g$. Thus, $\max f,\max g$ are in $\maxrA[\words\to\bbbn]$
and their difference $\varphi^{A'}$ is in
$\Delta(\maxrA[\words\to\bbbn])$.
\hfill{$\Box$}
%
%
%%%%%%%%%%%%%%%%%%%%%%%%%%%%%%%%%%%%%%%%%%%%%%%%%%%%%%%%%%%%%%%
%%%%%%%%%%%%%%%%%%%%%%%%%%%%%%%%%%%%%%%%%%%%%%%%%%%%%%%%%%%%%%%
%%%%%%%%%%%%%%%%%%%%%%%%%%%%%%%%%%%%%%%%%%%%%%%%%%%%%%%%%%%%%%%
\section{Abstract representations and effectivizations}
\label{s:abstract}
%%%%%%%%%%%%%%%%%%%%%%%%%%%%%%%%%%%%%%%%%%%%%%%%%%%%%%%%%%%%%%%
%%%%%%%%%%%%%%%%%%%%%%%%%%%%%%%%%%%%%%%%%%%%%%%%%%%%%%%%%%%%%%%
%%%%%%%%%%%%%%%%%%%%%%%%%%%%%%%%%%%%%%%%%%%%%%%%%%%%%%%%%%%%%%%
%
%%%%%%%%%%%%%%%%%%%%%%%%%%%%%%%%%%%%%%%%%%%%%%%%%%%%%%%%%%%%%%%
\subsection{Some arithmetical representations of $\bbbn$}
\label{ss:arithm}
%%%%%%%%%%%%%%%%%%%%%%%%%%%%%%%%%%%%%%%%%%%%%%%%%%%%%%%%%%%%%%%
%
As pointed in \S\ref{ss:mainResults}, abstract entities such as
numbers can be represented in many different ways.
In fact, each representation illuminates some particular role
and/or property, i.e. some possible semantics chosen in order to
efficiently access special operations or stress special properties
of integers.
\medskip\\
Usual arithmetical representations of $\bbbn$ using words on a
digit alphabet can be looked at as a (total) surjective
(non necessarily injective) function $R:C\rightarrow\bbbn$ where
$C$ is some simple free algebra or a quotient of some free algebra.
\\
Such representations are the ``degree zero" of abstraction
for representations and, as expected, their associated Kolmogorov
complexities all coincide (cf. Thm.\ref{thm:recrep} below).
\begin{example}[Base $k$ representations]\label{ex:base}$\\ $
{\bf 1.}
Integers in unary representation correspond to elements of the
free algebra built up from one generator and one unary function,
namely $0$ and the successor function $x\mapsto x+1$.
The associated function $R:{1}^*\to\bbbn$ is simply the length
function.
\medskip\\
{\bf 2.}
The various base $k$ (with $k\geq 2$) representations of
integers also involve term algebras, not necessarily free.
They differ by the set $A\subset\bbbn$ of digits they use but all
are based on the usual interpretation $R:A^*\to\bbbn$ such that
     $R(a_n \ldots a_1 a_0)=\sum_{i=0,\ldots,n} a_i k^i$.
Which, written \`a la H\"orner,\\
$$k(k(\ldots k (k a_n + a_{n-1}) + a_{n-2})\ldots) + a_1) + a_0$$
is a composition of applications
    $S_{a_0} \circ S_{a_1} \circ \ldots \circ S_{a_n}(0)$
where $S_a : x \mapsto kx+a$.
If a representation uses digits $a\in A$ then it corresponds to
the algebra generated by $0$ and the $S_a$'s where $a\in A$.
\begin{enumerate}
\item[i.] The $k$-adic representation uses digits $1,2,\ldots,k$
      and corresponds to a free algebra built up from one
      generator and $k$ unary functions.
\item[ii.] The usual $k$-ary representation uses digits
      $0,1,\ldots,k-1$ and corresponds to the quotient
      of a free algebra built up from one generator and
      $k$ unary functions,
      namely $0$ and the $S_a$'s where $a=0,2,\ldots, k-1$,
      by the relation $S_0(0)=0$.
\item[iii.] Avizienis base $k$ representation uses digits
      $-k+1,\ldots,-1,0,1,\ldots,k-1$
      (it is a much redundant representation used in computers
      to perform additions without carry propagation) and
      corresponds to the quotient of the free algebra built up from
      one generator and $2k-1$ unary functions,
      namely $0$ and the $S_a$'s where
               $a=-k+1,\ldots,-1,0,1,\ldots, k-1$,
      by the relations
      $\ \forall x\ (S_{-k+i}\circ S_{j+1}(x)=S_i\circ S_j(x))\ $
      where $-k<j<k-1$ and $0<i<k$.
\end{enumerate}
\end{example}
\noindent
Somewhat exotic representations of integers can also be associated
to deep results in number theory.
\begin{example}\label{ex:exotic}$\\ $
{\bf 1.}
$R: \bbbn^4\to \bbbn$ such that $R(x,y,z,t)=x^2+y^2+z^2+t^2$
is a representation based on Lagrange's four squares
theorem.  %(comte Louis de Lagrange, 1736-1813)
\medskip\\
{\bf 2.}
$R:(Prime\cup\{0\})^7\to \bbbn$ such that
         $R(x_1,\ldots ,x_i)=x_1+\ldots+x_i$
is a representation based on Schnirelman's theorem (1931) in its
last improved version obtained by Ramar\'e, 1995 \cite{ramare},
which insures that every even number is the sum of at most 6 prime
numbers (hence every number is the sum of at most 7 primes).
\end{example}
\noindent
Such representations appear in the study of the expressional
power of some weak arithmetics.
For instance, the representation as sums of 7 primes allows for
a very simple proof of the definability of multiplication with
addition and the divisibility predicate
(a result valid in fact with successor and divisibility,
(Julia Robinson, 1948 \cite{robinson})).
%
%
%%%%%%%%%%%%%%%%%%%%%%%%%%%%%%%%%%%%%%%%%%%%%%%%%%%%%%%%%%%%%%%
\subsection{Abstract representations}     \label{ss:abstract}
%%%%%%%%%%%%%%%%%%%%%%%%%%%%%%%%%%%%%%%%%%%%%%%%%%%%%%%%%%%%%%%
%
Foundational questions, going back to Russell, \cite{russell08} 1908,
and Church, \cite{church33} 1933, lead to
quite different representations of $\bbbn$ : set theoretical
representations involving abstract sets and functionals much more
complex than the integers they represent.
\medskip\\
We shall consider the following simple and general notion.
\begin{definition}[Abstract representations]\label{def:rep}$\\ $
A representation of an infinite set $E$ is a pair $(C,R)$
where $C$ is some (necessarily infinite) set and
$R : C \rightarrow E$ is a {\em surjective partial} function.
\end{definition}
\begin{remark}$\\ $
{\bf 1.}
Though $R$ really operates on the sole subset $domain(R)$,
the underlying set $C$ is quite significant in the
effectivization process which is necessary to get a self-enumerated
systen and then an associated Kolmogorov complexity.
\medskip\\
{\bf 2.}
We shall consider representations with arbitrarily complex
domains in the Post hierarchy
(cf. Prop.\ref{p:complexEffCard}, \ref{p:complexEffIndex},
\ref{p:complexEffChurch}, and coming papers).
In fact, the sole cases in this paper where $R$
is a total function are the usual recursive representations.
\medskip\\
{\bf 3.}
Representations can also involve a proper class $C$
(cf. Rk. \ref{rk:cardRep}).
However, we shall stick to the case $C$ is a set.
\end{remark}
%
%%%%%%%%%%%%%%%%%%%%%%%%%%%%%%%%%%%%%%%%%%%%%%%%%%%%%%%%%%%%%%%%
\subsection{Effectivizing representations: why?} \label{ss:why}
%%%%%%%%%%%%%%%%%%%%%%%%%%%%%%%%%%%%%%%%%%%%%%%%%%%%%%%%%%%%%%%
%
Turning to a computer science (or recursion theoretic) point of
view, there are some objections to the consideration of abstract
sets, functions and functionals as we did in \S\ref{ss:mainResults}
and \ref{ss:abstract}:
\begin{itemize}
\item   We cannot apprehend abstract sets, functions and
        functionals but solely programs to compute them
        (if they are computable in some sense).
\item  Moreover, programs dealing with sets, functions and
       functionals have to go through some intensional
       representation of these objects in order to be able to
       compute with such objects.
\end{itemize}
To get effectiveness, we turn from set theory to computability
theory. We shall do that in a somewhat abstract way using
self-enumerated representation systems (cf. Def.\ref{def:self}).
\\
We shall consider higher order representations and shall
``effectivize" abstract sets, functions and functionals via
recursively enumerable sets, partial recursive functions or
$\max$ of total or partial recursive functions,
and partial computable functionals.
%
%%%%%%%%%%%%%%%%%%%%%%%%%%%%%%%%%%%%%%%%%%%%%%%%%%%%%%%%%%%%%%%
\subsection{Effectivizations of representations and associated
            Kolmogorov complexities}        \label{ss:effRep}
%%%%%%%%%%%%%%%%%%%%%%%%%%%%%%%%%%%%%%%%%%%%%%%%%%%%%%%%%%%%%%%
%
A formal representation of an integer $n$ is a finite object
(in general a word) which describes some characteristic property
of $n$ or of some abstract object which characterizes $n$.
To effectivize a representation $\ R:C\to E\ $, we shall process
as follows:
\begin{enumerate}
\item
Restrict the set $C$ to a subfamily $D$ of elements which,
in some sense, are computable or partial computable.
Of course, we want the restriction of $R$ to $D$ to be still
surjective.
\item
Consider a self-enumerated representation system for $D$.
\end{enumerate}
This leads to the following definition.
\begin{definition}\label{def:effRep}$\\ $
{\bf 1.}
A set $D$ is adapted to the representation
$\ R:C\to E\ $ if $D\subseteq C$ and the partial function
$\ R\segment D:D\to E\ $ is still surjective.
\medskip\\
{\bf 2.}
{\bf [Effectivization]}
An effectivization of the representation $\ R:C\to E\ $ of the set
$E$ is any self-enumerated representation system $(D,{\cal F})$ for
a domain $D$ adapted to the representation $\ R:C\to E\ $.
\end{definition}
Using the Composition Lemma \ref{l:circ}, we immediately get
\begin{proposition}
Let $\ R:C\to E$ be a representation of $E$ and $(D,{\cal F})$ be
some effectivization of $R$.
Then $(E,(R\segment D)\circ{\cal F})$ is a self-enumerated
representation system and the associated Kolmogorov complexity
$K^E_{(R\segment D)\circ{\cal F}}$ (cf. Def.\ref{def:Kself})
satisfies
$$K^E_{(R\segment D)\circ{\cal F}}(x)
=\min\{K^D_{\cal F}(y):R(y)=x\}\ \mbox{ for all }x\in E$$
\end{proposition}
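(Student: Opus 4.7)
The plan is to recognize this proposition as an immediate application of the Composition Lemma \ref{l:circ} with the partial function $\varphi = R\segment D : D \to E$. First, by Def.\ref{def:effRep}, the set $D$ is adapted to the representation $R:C\to E$, which by definition means precisely that $D\subseteq C$ and that $R\segment D : D\to E$ is a surjective partial function. This is exactly the hypothesis Lemma \ref{l:circ} requires on $\varphi$.

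Next, I invoke point 1 of Lemma \ref{l:circ} with this choice of $\varphi$: since $(D,{\cal F})$ is a self-enumerated representation system (this is what ``effectivization'' provides), the pair $(E,(R\segment D)\circ {\cal F})$ is also a self-enumerated representation system, and moreover, if $U$ is a (good) universal function for ${\cal F}$ then $(R\segment D)\circ U$ is a (good) universal function for $(R\segment D)\circ{\cal F}$. This takes care of the first assertion of the proposition.

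Finally, for the Kolmogorov complexity identity, I apply point 2 of Lemma \ref{l:circ}: for every $x\in E$ one has
$$K^E_{(R\segment D)\circ{\cal F}}(x)\ \eqct\ \min\{K^D_{\cal F}(y) : (R\segment D)(y)=x\}\ =\ \min\{K^D_{\cal F}(y) : y\in D,\ R(y)=x\}.$$
Since we agreed that $K^D_{\cal F}$ takes value $+\infty$ outside $Range$ of any representative of ${\cal F}$, and in particular outside $D$ (where it is not even defined), restricting the $\min$ to $y\in D$ is harmless and coincides with the $\min$ written in the statement over all $y$ with $R(y)=x$ (implicitly in $D$ as required by $R\segment D$). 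Since both Kolmogorov complexities are defined only up to an additive constant (Def.\ref{def:Kself}), the ``$=$'' in the statement is to be read as $\eqct$ and is exactly what Lemma \ref{l:circ}(2) delivers.

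There is no real obstacle here; the proposition is a direct repackaging of Lemma \ref{l:circ} using the adaptedness condition built into Def.\ref{def:effRep}. The only point to be careful about is the reading of the min as ranging over $y\in D$, which is forced by the domain of $R\segment D$ and by the convention $\min\emptyset=+\infty$ of Def.\ref{def:Kphi}.
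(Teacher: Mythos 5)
Your proposal is correct and matches the paper exactly: the paper derives this proposition as an immediate application of the Composition Lemma \ref{l:circ} with $\varphi=R\segment D$, whose surjectivity is precisely the adaptedness condition of Def.\ref{def:effRep}. Your additional remarks on reading the $\min$ over $y\in D$ and the equality up to an additive constant are sensible clarifications but do not change the argument.
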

\begin{remark}
Whereas abstract representations are quite natural and
conceptually simple, the functions $\ (R\segment D)\circ F\ $,
for $F\in{\cal F}$, in the self-enumerated representation families
of their effectivized versions may be quite complex.
In the examples we shall consider, their domains involve levels
$2$ or $3$ of the arithmetical hierarchy.
In particular, such representations are not Turing
reducible one to the other.
\end{remark}
%
%
%%%%%%%%%%%%%%%%%%%%%%%%%%%%%%%%%%%%%%%%%%%%%%%%%%%%%%%%%%%%%%%
\subsection{Partial recursive representations}\label{ss:recrep}
%%%%%%%%%%%%%%%%%%%%%%%%%%%%%%%%%%%%%%%%%%%%%%%%%%%%%%%%%%%%%%%
%
We already mentioned in \S\ref{ss:arithm} that all usual arithmetic
representations lead to the same Kolmogorov complexity (up to an
additive constant).
The following result extends this assertion to all partial recursive
representations.
\begin{theorem}\label{thm:recrep}
We keep the notations of Notations \ref{not:PR} and
Def.\ref{def:Kself}.
\\
Let $A\subseteq\bbbn$ be an oracle.
If $C,E$ are basic sets and $R:C\to E$ is partial recursive
(resp. partial $A$-recursive) then
\medskip\\
$\begin{array}{rcllrcll}
R\circ \PR[\words\to C]&=&\PR[\words\to E]
&\mbox{(resp. }
&R\circ \PR[A,\words\to C]&=&\PR[A,\words\to E]
&\mbox{)}
\medskip\\
K^E_{R\circ \PR[\words\to C]}&=&K_E
&\mbox{(resp. }
&K^E_{R\circ \PR[A,\words\to C]}&=&K^A_E
&\mbox{)}
\end{array}$
\medskip\\
Thus, all Kolmogorov complexities associated to partial recursive
(resp. partial $A$-recursive) representations of $E$ coincide with
the usual (resp. $A$-oracular) Kolmogorov complexity on $E$.
\end{theorem}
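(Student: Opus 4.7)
The plan is first to establish the equality of function families $R\circ \PR[\words\to C]=\PR[\words\to E]$ (and its oracle-$A$ variant), and then to deduce the Kolmogorov-complexity equality as an immediate corollary, using Examples~\ref{ex:self} and \ref{ex:selfA} (which identify $K_E$ and $K_E^A$ with the Kolmogorov complexities of the self-enumerated systems $(E,\PR[\words\to E])$ and $(E,\PR[A,\words\to E])$) together with the fact that $K^E_{\cal G}$ depends only on the family ${\cal G}$. For the easy inclusion $R\circ\PR[\words\to C]\subseteq \PR[\words\to E]$, it suffices to invoke closure of partial recursive (resp.\ partial $A$-recursive) functions under composition: $R\circ \varphi$ is partial recursive (resp.\ partial $A$-recursive) whenever both $\varphi$ and $R$ are.

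The main content is the converse inclusion $\PR[\words\to E]\subseteq R\circ\PR[\words\to C]$. Given $\psi\in\PR[\words\to E]$, I would build $\varphi\in\PR[\words\to C]$ with $R\circ\varphi=\psi$ by the following procedure on input $\ttp$: first run the computation of $\psi(\ttp)$; if and when it converges to some $e\in E$, dovetail the computations $R(c)$ as $c$ ranges over an effective enumeration of the basic set $C$, and output the first $c$ for which $R(c)$ converges to $e$. This is effective because $C$ is a basic set (hence effectively enumerable), $E$ is a basic set (hence equality on $E$ is decidable, so the test $R(c)=e$ is effective), and $R$ is partial recursive (resp.\ partial $A$-recursive). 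The search terminates for every $e\in Range(\psi)$ precisely because $R$ is surjective, as a representation in the sense of Def.~\ref{def:rep}. By construction $\varphi(\ttp)$ is defined exactly when $\psi(\ttp)$ is, and $R(\varphi(\ttp))=\psi(\ttp)$ in that case. The argument relativizes verbatim to oracle $A$.

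The only substantive subtlety is recognizing that surjectivity of $R$ (implicit in the word \emph{representation}) is indispensable in this direction: without it $\psi$ could output values outside $R(C)$, leaving no preimage available, and the inclusion would fail. Once the function-family equality is in hand, the Kolmogorov-complexity equality $K^E_{R\circ \PR[\words\to C]}=K_E$ (resp.\ $K^E_{R\circ \PR[A,\words\to C]}=K_E^A$) is immediate from the observation that Kolmogorov complexity is an invariant of the underlying function family, via the Invariance Theorem~\ref{thm:invar} and Def.~\ref{def:Kself}.
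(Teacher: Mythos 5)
Your proposal is correct and is essentially the paper's proof: the only difference is presentational, in that the paper packages your dovetailed search for a preimage as a partial ($A$-)recursive right inverse $S:E\to C$ of $R$ (the first preimage appearing in an effective enumeration of the graph of $R$) and concludes from $\PR[A,\words\to E]=R\circ S\circ \PR[A,\words\to E]\subseteq R\circ \PR[A,\words\to C]$, whereas you inline the same search into the construction of $\varphi$ for each given $\psi$. Both arguments rest on exactly the two points you identify: closure under composition for the easy inclusion, and surjectivity of $R$ to guarantee the search terminates.
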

\begin{proof}
It suffices to prove that
$$R\circ \PR[A,\words\to C] = \PR[A,\words\to E]$$
Inclusion
$R\circ \PR[A,\words\to C] \subseteq \PR[A,\words\to E]$
is trivial.
For the other inclusion, we use the fact that $R:C\to E$ is
surjective partial $A$-recursive.
\\
First, define a partial $A$-recursive $S:E\to C$ such that,
for $x\in E$, $S(\ttx)$ is the element $\tty\in C$ satisfying
$R(\tty)=\ttx$ which appears first in an $A$-recursive enumeration
of the graph of $R$.
Clearly, $S$ is a right inverse of $R$,
i.e. $R\circ S=Id_E$ where $Id_E$ is the identity on $E$.
\\
Using the trivial inclusion
$S\circ \PR[A,\words\to E]\subseteq \PR[A,\words\to C]$
we get
$$\PR[A,\words\to E]
=R\circ S\circ \PR[A,\words\to E]
\subseteq R\circ \PR[A,\words\to C]$$
\end{proof}
%
%
%%%%%%%%%%%%%%%%%%%%%%%%%%%%%%%%%%%%%%%%%%%%%%%%%%%%%%%%%%%%%%%
%%%%%%%%%%%%%%%%%%%%%%%%%%%%%%%%%%%%%%%%%%%%%%%%%%%%%%%%%%%%%%%
%%%%%%%%%%%%%%%%%%%%%%%%%%%%%%%%%%%%%%%%%%%%%%%%%%%%%%%%%%%%%%%
%%%%%%%%%%%%%%%%%%%%%%%%%%%%%%%%%%%%%%%%%%%%%%%%%%%%%%%%%%%%%%%
%
\section{Cardinal representations of $\bbbn$}
\label{s:card}
%%%%%%%%%%%%%%%%%%%%%%%%%%%%%%%%%%%%%%%%%%%%%%%%%%%%%%%%%%%%%%%
%%%%%%%%%%%%%%%%%%%%%%%%%%%%%%%%%%%%%%%%%%%%%%%%%%%%%%%%%%%%%%%
%%%%%%%%%%%%%%%%%%%%%%%%%%%%%%%%%%%%%%%%%%%%%%%%%%%%%%%%%%%%%%%
%
%
%%%%%%%%%%%%%%%%%%%%%%%%%%%%%%%%%%%%%%%%%%%%%%%%%%%%%%%%%%%%%%%
\subsection{Basic cardinal representation and its effectivizations}
\label{ss:effCard}
%%%%%%%%%%%%%%%%%%%%%%%%%%%%%%%%%%%%%%%%%%%%%%%%%%%%%%%%%%%%%%%
%
Among the conceptual representations of integers, the most basic
one goes back to Russell,
\cite{russell08} 1908 (cf. \cite{heijenoort} p.178), and considers
non negative integers as equivalence classes of sets relative
to cardinal comparison.
\begin{definition}[Cardinal representation of $\bbbn$]
\label{def:card}
Let $\card(Y)$ denote the cardinal of $Y$, i.e. the number
of its elements.
\\
The cardinal representation of $\bbbn$ relative to an infinite
set $X$ is the partial function $$\card_X:P(X)\to\bbbn$$
with domain $P^{<\omega}(X)$, such that
$$\card_X(Y)=\left\{
\begin{array}{ll}
         \card(Y)&\mbox{if $Y$ is finite}\\
\mbox{undefined}&\mbox{otherwise}
\end{array}\right.$$
\end{definition}
\begin{definition}[Effectivizations of the cardinal representation
of $\bbbn$]\label{def:effCard}
We effectivize the cardinal representation by replacing
$P(X)$ by $RE(\bbbx)$ or $RE^A(\bbbx)$ where $\bbbx$ is some
basic set and $A\subseteq\bbbn$ is some oracle.
\\
Two kinds of self-enumerated representation systems can be
naturally associated to these domains
(cf. \S\ref{ss:RE} and the Composition Lemma \ref{l:circ}):%
\begin{eqnarray*}
(RE(\bbbx),\card\circ{\cal F}^{RE(\bbbx)})&\mbox{or}&
(RE^A(\bbbx),\card\circ{\cal F}^{RE^A(\bbbx)})
\\
(RE(\bbbx),\card\circ{\cal PF}^{RE(\bbbx)})&\mbox{or}&
(RE^A(\bbbx),\card\circ{\cal PF}^{RE^A(\bbbx)})
\end{eqnarray*}
\end{definition}
\begin{remark}\label{rk:cardRep}$\\ $
{\bf 1.}
Historically, the cardinal representation of $\bbbn$ considered
the whole class of sets rather than some $P(X)$.
However, the above effectivization makes such an extension
unsignificant for our study.
\medskip\\
{\bf 2.}
One can also consider the total representation obtained by
restriction to the set $P_{<\omega}(X)$ of all finite subsets
of $X$. But this amounts to a partial recursive representation
and is relevant to \S\ref{ss:recrep}.
\end{remark}
%
%
%%%%%%%%%%%%%%%%%%%%%%%%%%%%%%%%%%%%%%%%%%%%%%%%%%%%%%%%%%%%%%%
\subsection{Syntactical complexity of cardinal representations}
\label{ss:syntaxcard}
%%%%%%%%%%%%%%%%%%%%%%%%%%%%%%%%%%%%%%%%%%%%%%%%%%%%%%%%%%%%%%%
%
%
The following proposition gives the syntactical complexity of
the above effectivizations of the cardinal representations.
\begin{proposition}[Syntactical complexity]
\label{p:complexEffCard}
The family
$$\{domain(\varphi):\varphi\in \card\circ{\cal F}^{RE^A(\bbbx)}\}$$
is exactly the family of $\Sigma^{0,A}_2$ subsets of $\words$.
Idem with $\card\circ{\cal PF}^{RE^A(\bbbx)}$.
\medskip\\
In particular, any universal function for
$\card\circ{\cal F}^{RE^A(\bbbx)}$ or for
$\card\circ{\cal PF}^{RE^A(\bbbx)}$
is $\Sigma^{0,A}_2$-complete.
\end{proposition}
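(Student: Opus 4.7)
The plan is to prove two inclusions and then derive the completeness statement as a formal consequence.

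\medskip\noindent\textbf{Inclusion into $\Sigma^{0,A}_2$.}
Any $\varphi\in\card\circ{\cal F}^{RE^A(\bbbx)}$ is of the form $\varphi(\ttp)=\card(W^A_{f(\ttp)})$ for some total $A$-recursive $f:\words\to\words$ (Def.\ref{def:RE}, point 1). Therefore
$$domain(\varphi)=\{\ttp:W^A_{f(\ttp)}\text{ is finite}\}.$$
The predicate ``$W^A_e$ is finite" admits the $\Sigma^{0,A}_2$-formulation $\exists k\ \forall x\ (x\in W^A_e\Rightarrow x\leq k)$, and composition with the total $A$-recursive $f$ preserves the class $\Sigma^{0,A}_2$. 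For $\varphi\in\card\circ{\cal PF}^{RE^A(\bbbx)}$, the domain is additionally intersected with $\{\ttp:f(\ttp)\downarrow\}$, which is $\Sigma^{0,A}_1$, hence the intersection remains $\Sigma^{0,A}_2$.

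\medskip\noindent\textbf{Every $\Sigma^{0,A}_2$ set is so obtained.}
Let $S\subseteq\words$ be $\Sigma^{0,A}_2$, written as
$$S=\{\ttp:\exists n\ \forall m\ R(\ttp,n,m)\}$$
with $R$ an $A$-recursive predicate. Using the parametrization property, define a total $A$-recursive $f$ such that
$$W^A_{f(\ttp)}=\bigl\{k\in\bbbn:\forall n\leq k\ \exists m\leq k\ \neg R(\ttp,n,m)\bigr\}.$$
If $\ttp\in S$, pick $n_0$ with $\forall m\ R(\ttp,n_0,m)$; then for every $k\geq n_0$ the witness fails at $n=n_0$, so $W^A_{f(\ttp)}\subseteq\{0,\dots,n_0-1\}$ is finite. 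If $\ttp\notin S$, then for every $n$ there is $m_n$ with $\neg R(\ttp,n,m_n)$; for any $K$, setting $k=\max(K,m_0,\dots,m_K)$ yields $k\in W^A_{f(\ttp)}$, so $W^A_{f(\ttp)}$ is infinite. Hence $S=domain(\card\circ(\ttp\mapsto W^A_{f(\ttp)}))$, which belongs to $\card\circ{\cal F}^{RE^A(\bbbx)}$. Since total $A$-recursive $f$ are in particular partial, the same witness lies in $\card\circ{\cal PF}^{RE^A(\bbbx)}$, giving both equalities at once. (For basic sets $\bbbx\neq\bbbn$ one composes $f$ with a recursive bijection $\bbbn\to\bbbx$.)

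\medskip\noindent\textbf{$\Sigma^{0,A}_2$-completeness of universal functions.}
Let $U$ be universal for the system. Its domain is $\Sigma^{0,A}_2$ by the first inclusion. Conversely, given a $\Sigma^{0,A}_2$ set $S$, the construction above produces $\varphi\in\card\circ{\cal F}^{RE^A(\bbbx)}$ with $domain(\varphi)=S$. By universality there is $\tte\in\words$ such that $\varphi(\ttp)=U(comp_U(\tte,\ttp))$ for all $\ttp$, so the total $A$-recursive map $\ttp\mapsto comp_U(\tte,\ttp)$ realizes a many-one reduction of $S$ to $domain(U)$. The same argument applies to ${\cal PF}^{RE^A(\bbbx)}$.

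\medskip\noindent\textbf{Main obstacle.}
The only non-routine step is the second one: engineering the r.e.\ set $W^A_{f(\ttp)}$ so that its finiteness precisely captures the outer $\exists n$ of the $\Sigma^{0,A}_2$ formula. The ``diagonal truncation" $\forall n\leq k\ \exists m\leq k$ is the key device; it forces a single stabilizing $n_0$ to cap the set when $\ttp\in S$ while ensuring unbounded witnesses when $\ttp\notin S$.
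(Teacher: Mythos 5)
Your $\Sigma^{0,A}_2$ upper bound, the treatment of ${\cal PF}^{RE^A(\bbbx)}$, and the completeness-via-universality argument are all fine, but the central construction in the hard direction is broken. With $W^A_{f(\ttp)}=\{k:\forall n\leq k\ \exists m\leq k\ \neg R(\ttp,n,m)\}$, your verification for $\ttp\notin S$ fails: choosing $k=\max(K,m_0,\dots,m_K)$ only provides bounded witnesses for $n\leq K$, whereas membership of $k$ requires a witness $m\leq k$ for \emph{every} $n\leq k$, including those $n$ with $K<n\leq k$, whose least witnesses may exceed $k$. This is not just a gap in the verification; the construction itself is wrong. Take $R(\ttp,n,m)$ to be the recursive predicate $m\leq n$ (for the relevant $\ttp$). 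Then no $n$ satisfies $\forall m\,R(\ttp,n,m)$, so $\ttp\notin S$; yet for $n=k$ a witness $m\leq k$ with $\neg R(\ttp,k,m)$, i.e.\ $m>k$, cannot exist, so your set is empty, hence finite, and $\ttp$ wrongly enters the domain. The culprit is precisely the truncation $\exists m\leq k$: it makes the set $A$-recursive, but a fast-growing witness function defeats the intended equivalence ``finite iff $\ttp\in S$''.

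The repair is to leave the inner existential unbounded: the set $\{k:\forall n\leq k\ \exists m\ \neg R(\ttp,n,m)\}$ is still uniformly $A$-r.e.\ (a finite conjunction of $\Sigma^{0,A}_1$ conditions), it is an initial segment of $\bbbn$, and it is finite exactly when some $n$ admits no counterexample, i.e.\ exactly when $\ttp\in S$; the parametrization property then gives the total $A$-recursive $f$, and composing with a recursive bijection handles general $\bbbx$. This is essentially what the paper does, phrased as a sequential (dovetailing) enumeration that outputs the current candidate $u$ and advances to $u+1$ only once a counterexample $v$ to $\forall v\,R(\ttp,u,v)$ has actually been found, so the enumerated set is $\{u':u'<u_0\}$ for the least good $u_0$ when $\ttp\in S$, and all of $\bbbn$ otherwise.
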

\begin{proof}
Let $(W^A_\tte)_{\tte\in\words}$ be an acceptable enumeration of
$RE^A(\bbbx)$.
\\
1. If $g:\words\to\words$ is partial $A$-recursive then
$$domain(\ttp\mapsto\card(W^A_{g(\ttp)})
=\{\ttp:W^A_{g(\ttp)}\mbox{ is finite}\}$$
is clearly $\Sigma^{0,A}_2$.
\medskip\\
2. Let $X\subseteq\words$ be a $\Sigma^{0,A}_2$ set of the form
$X=\{\ttp:\exists u\ \forall v\ R(\ttp,u,v)\}$ where
$R\subseteq\words\times\bbbn^2$ is $A$-recursive.
Set
\begin{eqnarray*}
\sigma_\ttp&=&\left\{\begin{array}{ll}
\{u':u'<u\}&\mbox{if $u$ is least such that }\forall v\ R(\ttp,u,v)
\\
\bbbn&\mbox{if there is no $u$ such that $\forall v\ R(\ttp,u,v)$}
\end{array}\right.
\end{eqnarray*}
It is easy to check that $\sigma_\ttp\subseteq\bbbn$ is an $A$-r.e. set
which can be defined by the following enumeration process described in
Pascal-like instructions:
\medskip\\\centerline{\tt\begin{tabular}{lll}
\{Initialization\}&$u:=0$; $v:=0$;\\
\{Loop\}&DO FOREVER&BEGIN\\
&&WHILE $R(\ttp,u,v)$ DO $v:=v+1$;\\
&&output $u$ in $\sigma_\ttp$;\\
&&$u:=u+1$; $v:=0$;\\
&&END;
\end{tabular}}
\\
Clearly, $card(\sigma_\ttp)$ is finite if and only if $\ttp\in X$.
\medskip\\
Now, the set $\{(\ttp,n):n\in\sigma_\ttp\}$ is also $A$-r.e.,
hence of the form $W_\tta^{\words\times\bbbn}$ for some $\tta$.
The parameter property yields a total $A$-recursive function
$g:\words\to\words$ such that $\sigma_\ttp=W_{g(\tta,\ttp)}$.
Finally, the function $\ttp\mapsto\card(W_{g(\tta,\ttp)})$ is in
$\card\circ{\cal F}^{RE^A(\bbbx)}$ and has domain $X$.
\end{proof}
%
%
%%%%%%%%%%%%%%%%%%%%%%%%%%%%%%%%%%%%%%%%%%%%%%%%%%%%%%%%%%%%%%%
\subsection{Characterization of the $card$ self-enumerated systems}
\label{ss:characterizeCard}
%%%%%%%%%%%%%%%%%%%%%%%%%%%%%%%%%%%%%%%%%%%%%%%%%%%%%%%%%%%%%%%

\begin{theorem}\label{thm:card}
For any basic set $\bbbx$ and any oracle $A\subseteq\bbbn$,
\medskip\\
$\begin{array}{lrcl}
\mbox{\bf 1i.}&
\card\circ{\cal F}^{RE^A(\bbbx)}&=&\maxrA[\words\to\bbbn]
\medskip\\
{\bf \ ii.}&\card\circ{\cal PF}^{RE^A(\bbbx)}
&=&\maxprA[\words\to\bbbn]
\end{array}$
\medskip\\
$\begin{array}{lrclcl}
\mbox{\bf 2.}&
K^\bbbn_{\card\circ{\cal F}^{RE^A(\bbbx)}}&\eqct&
K^\bbbn_{\card\circ{\cal PF}^{RE^A(\bbbx)}}&\eqct&\kmax[A]
\end{array}$
\medskip\\
We shall simply write $K^{\bbbn,A}_{\card}$
in place of $K^\bbbn_{\card\circ{\cal F}^{RE^A(\bbbn)}}$.
\\
When $A=\emptyset$ we simply write $K^{\bbbn}_{\card}$.
\end{theorem}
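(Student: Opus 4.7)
The plan is to establish the two family equalities in Part 1; Part 2 then follows because equal families have equal Kolmogorov complexities, combined with the definition $\kmax[A] = K^\bbbn_{\maxprA[\words \to \bbbn]}$ (Def.\ref{def:Kmaxpr}) and the identity $K^\bbbn_{\maxrA[\words \to \bbbn]} \eqct \kmax[A]$ of Prop.\ref{p:Kinfini}.

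For Part 1i, take $F = \max h$ with $h : \words \times \bbbn \to \bbbn$ total $A$-recursive, and set $V_\ttp = \{k \in \bbbn : \exists t,\ h(\ttp,t) > k\}$. This equals $\{0,1,\ldots,\max h(\ttp)-1\}$ when $\max h(\ttp)$ is finite, and equals $\bbbn$ when $\max h(\ttp)$ is undefined (totality of $h$ rules out the empty case), so $\card(V_\ttp) = \max h(\ttp)$ with matching domain of definition. The set $\{(\ttp,k) : \exists t,\ h(\ttp,t) > k\}$ is $A$-r.e., so the parametrization property delivers a total $A$-recursive $g : \words \to \words$ with $V_\ttp = W^A_{g(\ttp)}$, placing $F$ in $\card \circ {\cal F}^{RE^A(\bbbx)}$. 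Conversely, given $F(\ttp) = \card(W^A_{g(\ttp)})$ for total $A$-recursive $g$, let $h(\ttp,t)$ be the number of distinct elements enumerated into $W^A_{g(\ttp)}$ within $t$ steps of a fixed enumeration procedure. Then $h$ is total $A$-recursive and nondecreasing in $t$, and $\max h(\ttp) = \card(W^A_{g(\ttp)})$ whenever the latter is finite (both being undefined otherwise).

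Part 1ii follows the same template, with care in matching the two partiality conventions. In $\card \circ {\cal PF}^{RE^A(\bbbx)}$ the value at $\ttp$ is undefined when either the partial index $g(\ttp)$ is undefined or $W^A_{g(\ttp)}$ is infinite; in $\maxprA[\words \to \bbbn]$ the value $\max h(\ttp)$ is undefined when $X_\ttp = \{h(\ttp,t) : h(\ttp,t) \text{ defined}\}$ is empty or unbounded. For $\supseteq$, construct $g(\ttp)$ so that it first searches for the least $t$ with $h(\ttp,t)$ defined and, upon success, emits via s-m-n an index for $V_\ttp = \{k : \exists s,\ h(\ttp,s) > k\}$. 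Then $g(\ttp)$ is undefined exactly when $X_\ttp = \emptyset$; otherwise $V_\ttp$ is $\{0,\ldots,\max h(\ttp)-1\}$ or all of $\bbbn$ according as $X_\ttp$ is bounded or unbounded, so the three cases of definedness align on both sides. For $\subseteq$, let $h(\ttp,t)$ first run $g(\ttp)$ and, only once $g(\ttp)$ has been seen to be defined, return the count of distinct elements enumerated from $W^A_{g(\ttp)}$ in $t$ steps. Then $h(\ttp,\cdot)$ is defined iff $g(\ttp)$ is, and the identity $\max h(\ttp) = \card(W^A_{g(\ttp)})$ holds on finite values. The only real obstacle is this bookkeeping of partial behaviours in 1ii; the ``wait for $h(\ttp,\cdot)$ to first be defined before issuing the s-m-n index'' device is the single trick that routes the empty-$X_\ttp$ case to the undefined-index case.
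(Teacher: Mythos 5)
Your proposal is correct and follows essentially the same route as the paper: the same witness set $\{k:\exists t\ h(\ttp,t)>k\}$ (the paper realizes it as the domain of a partial function $\psi_\ttp$, transferred to $\bbbx$ by a fixed recursive bijection) for the inclusion of $\maxrA[\words\to\bbbn]$ into the cardinal family, the same step-counting function for the converse, the same ``wait until some $h(\ttp,t)$ converges before emitting the s-m-n index'' device to align the partiality conventions in 1ii, and Part 2 deduced from Part 1 via Prop.\ref{p:Kinfini} exactly as in the paper.
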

\begin{proof}
Point 2 is a direct corollary of Point 1 and Prop.\ref{p:Kinfini}.
Let's prove point 1.
\medskip\\
{\bf 1i. }{\em Inclusion $\subseteq$.}\\
Let $g:\words\to\words$ be total $A$-recursive.
We define a total $A$-recursive function
$u:\words\times\bbbn\to\bbbn$ such that
$$(*)\ \ \ \{u(\ttp,t):t\in\bbbn\}=
\left\{\begin{array}{ll}
\{0,...,n\}&\mbox{if $W^A_{g(\ttp)}$ contains exactly $n$ points}
\\
\bbbn&\mbox{if $W^A_{g(\ttp)}$ is infinite}
\end{array}\right.$$
\noindent
The definition is as follows.
First, set $u(\ttp,0)=0$ for all $\ttp$.
Consider an $A$-recursive enumeration of $W^A_{g(\ttp)}$.
If at step $t$, some new point is enumerated then set
$u(\ttp,t+1)=u(\ttp,t)+1$, else set $u(\ttp,t+1)=u(\ttp,t)$.
\medskip\\
From $(*)$ we get $\card(W_\ttp)=(\max f)(\ttp)$, so that
$\ttp\mapsto\card(W^A_{g(\ttp)})$ is in
$\maxrA[\words\to\bbbn]$.
\medskip\\
{\bf 1ii. }{\em Inclusion $\subseteq$.}\\
Now $g:\words\to\words$ is partial $A$-recursive and we define
$u:\words\times\bbbn\to\bbbn$ as a partial $A$-recursive function
such that
$$\{u(\ttp,t):t\in\bbbn\}=
\left\{\begin{array}{ll}
\emptyset&\mbox{if $g(\ttp)$ is undefined}
\\
\{0,...,n\}&\mbox{if $W^A_{g(\ttp)}$ contains exactly $n$ points}
\\
\bbbn&\mbox{if $W^A_{g(\ttp)}$ is infinite}
\end{array}\right.$$
\noindent
The definition of $u$ is as above except that, for any $t$,
we require that $u(\ttp,t)$ is defined if and only if $g(\ttp)$ is.
\medskip\\
{\bf 1i. }{\em Inclusion $\supseteq$.}\\
Any function in $\maxrA[\words\to\bbbn]$ is of the form
$\max f:\words\to\bbbn$ where $f:\words\times\bbbn\to\bbbn$ is
total $A$-recursive.
\\The idea to prove that $\max f$ is in
$\card\circ{\cal F}^{RE^A(\bbbx)}$ is quite simple.
For every $\ttp$, we define an $A$-r.e. subset of $\bbbx$ which
collects some new elements each time $f(\ttp,t)$ gets greater than
$\max\{f(\ttp,t'):t'<t\}$.
\\
Formally, let $\psi:\words\times\bbbn\to\bbbn$ be the partial
$A$-recursive function such that
$$\psi(\ttp,t)=
\left\{\begin{array}{ll}
0&\mbox{if $\exists u\ f(\ttp,u)>t$}\\
\mbox{undefined}&\mbox{otherwise}
\end{array}\right.$$
Clearly,
$$domain(\psi_\ttp)
=\left\{\begin{array}{ll}
\{t:0\leq t<(\max f)(\ttp)\}&\mbox{if $(\max f)(\ttp)$ is defined}\\
\bbbn&\mbox{otherwise}
\end{array}\right.$$
We define $\varphi:\words\times\bbbx\to\bbbn$ such that
$\varphi(\ttp,\ttx)=\psi(\ttp,\theta(\ttx))$ where
$\theta:\bbbx\to\bbbn$ is some fixed total recursive bijection.
Let's denote $\psi_\ttp$ and $\varphi_\ttp$ the functions
$t\mapsto\psi(\ttp,t)$ and $\ttx\mapsto\varphi(\ttp,\ttx)$.
Let $\tte$ be such that
$W^A_\tte=\{\couple\ttp\ttx:(\ttp,\ttx)\in domain(\varphi)\}$
(where $\couple\,\,$ is a bijection $\words\times\bbbx\to\bbbx$).
The parameter property yields an $A$-recursive function
$s:\words\times\words\to\words$ such that
$W^A_{s(\tte,\ttp)}=domain(\varphi_\ttp)$ for all $\ttp$.
Thus, letting $g:\words\to\words$ be the $A$-recursive function
such that $g(\ttp)=s(\tte,\ttp)$, we have
$$\card(W^A_{g(\ttp)})
=\card(domain(\varphi_\ttp))
=\card(domain(\psi_\ttp))=(\max f)(\ttp)$$
Which proves that $\max f$ is in
$\card\circ{\cal F}^{RE^A(\bbbx)}$.
\medskip\\
{\bf 1ii. }{\em Inclusion $\supseteq$.}\\
We argue as in the above proof of {\bf i. $\supseteq$.}
However, $f:\words\times\bbbn\to\bbbn$ is now partial $A$-recursive
and there are two reasons for which $(\max f)(\ttp)$ may be
undefined: first, if $t\mapsto f(\ttp,t)$ is unbounded, second if it
has empty domain.
Keeping $\psi$ and $\varphi$ as defined as above, we now have,
$$domain(\psi_\ttp)
=\left\{\begin{array}{ll}
\{v:0\leq v<(\max f)(\ttp)\}&\mbox{if $(\max f)(\ttp)$ is defined}\\
\bbbn&\mbox{if $range(t\mapsto f(\ttp,t))$ is infinite}\\
\emptyset&\mbox{if $f(\ttp,t)$ is defined for no $t$}
\end{array}\right.$$
We let $\tte,s,g$ be as above and define $h:\words\to\words$
such that
$$h(\ttp)
=\left\{\begin{array}{ll}
g(\ttp)&\mbox{if $f(\ttp,t)$ is defined for some $t$}\\
\mbox{undefined}&\mbox{otherwise}
\end{array}\right.$$
Observe that
\\\indent- if $t\mapsto f(\ttp,t)$ has empty domain then $h(\ttp)$
is undefined,
\\\indent- if $t\mapsto f(\ttp,t)$ is unbounded then
$card(W^A_{h(\ttp)})=card(W^A_{g(\ttp)})$ is infinite,
\\\indent- otherwise
$card(W^A_{h(\ttp)})=card(W^A_{g(\ttp)})=(\max f)(\ttp)$.
\\
Which proves that $\max f$ is in $card\circ{\cal PF}^{RE^A(\bbbx)}$.
\end{proof}
%
%
%%%%%%%%%%%%%%%%%%%%%%%%%%%%%%%%%%%%%%%%%%%%%%%%%%%%%%%%%%%%%%%
\subsection{Characterization of the $\Delta\card$
representation system}      \label{ss:Deltacard}
%%%%%%%%%%%%%%%%%%%%%%%%%%%%%%%%%%%%%%%%%%%%%%%%%%%%%%%%%%%%%%%

We now look at the self-delimited system with domain $\bbbz$
obtained from $card\circ{\cal F}^{RE^A(\bbbx)}$ by the operation
$\Delta$introduced in \S\ref{ss:Delta}.
\begin{theorem}\label{thm:DeltaCard}
Let $A\subseteq\bbbn$ and let $A'$ be the jump of $A$.
Let $\bbbx$ be a basic set. Then
$$\Delta(card\circ{\cal F}^{RE^A(\bbbx)})
=\Delta(card\circ{\cal PF}^{RE^A(\bbbx)})=\PR[A',\words\to\bbbz]$$
Hence $K^\bbbz_{\Delta(card\circ{\cal F}^{RE^A(\bbbx)})}
\eqct K^{A',\bbbz}$.
\medskip\\
We shall simply write $K^{\bbbn,A}_{\Delta card}$
in place of
$K^\bbbz_{\Delta(card\circ{\cal F}^{RE^A(\bbbn)})}\segment\bbbn$.
\\
When $A=\emptyset$ we simply write $K^{\bbbz}_{\Delta card}$.
\end{theorem}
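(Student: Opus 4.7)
The plan is to derive this theorem as an essentially immediate consequence of two earlier results, namely Theorem \ref{thm:card} (the characterization of the $\card$ self-enumerated systems in terms of $\max$-families) and Theorem \ref{thm:Deltamax} (the normal form theorem for partial $A'$-recursive $\bbbz$-valued functions as differences of two $\max$-families). The role of $\Delta$ here is purely formal: by Definition \ref{def:Delta}, $\Delta{\cal H}=\diff\circ({\cal H}\times{\cal H})$, so whenever ${\cal H}={\cal H}'$ as families of partial functions $\words\to\bbbn$ we automatically have $\Delta{\cal H}=\Delta{\cal H}'$.

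First, I would apply Theorem \ref{thm:card}.1, which gives the two equalities of families of functions $\words\to\bbbn$:
$$\card\circ{\cal F}^{RE^A(\bbbx)}=\maxrA[\words\to\bbbn],\qquad \card\circ{\cal PF}^{RE^A(\bbbx)}=\maxprA[\words\to\bbbn].$$
Applying $\Delta$ to both sides of each equality (which, as just noted, is a legitimate formal operation on families), one obtains
$$\Delta(\card\circ{\cal F}^{RE^A(\bbbx)})=\Delta(\maxrA[\words\to\bbbn]),\qquad \Delta(\card\circ{\cal PF}^{RE^A(\bbbx)})=\Delta(\maxprA[\words\to\bbbn]).$$
Next, I invoke Theorem \ref{thm:Deltamax}, which asserts precisely that the two right-hand sides coincide with $\PR[A',\words\to\bbbz]$. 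Chaining the four equalities yields the claimed identity
$$\Delta(\card\circ{\cal F}^{RE^A(\bbbx)})=\Delta(\card\circ{\cal PF}^{RE^A(\bbbx)})=\PR[A',\words\to\bbbz].$$

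For the Kolmogorov complexity assertion, I would observe that once two self-enumerated representation systems with the same domain have identical function families, their associated Kolmogorov complexities are literally equal (not merely up to a constant) by the intrinsic definition in Def.\ref{def:Kself}. Combining this with Prop.\ref{p:deltaPR} (which identifies $K^\bbbz_{\PR[A',\words\to\bbbz]}$ with the standard oracular Kolmogorov complexity $K^{A',\bbbz}$) immediately gives $K^\bbbz_{\Delta(\card\circ{\cal F}^{RE^A(\bbbx)})}\eqct K^{A',\bbbz}$, as required.

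There is essentially no obstacle here: the work has already been done in Theorems \ref{thm:card} and \ref{thm:Deltamax}, and the only thing to check is that the $\Delta$ operation respects equality of families, which is transparent from its definition. The one minor point worth stating explicitly in the write-up is that, even though $\maxrA[\words\to\bbbz]$ is \emph{not} itself self-enumerated (cf.\ Remark \ref{rk:minPR}), the family $\Delta(\maxrA[\words\to\bbbn])$ obtained by taking differences of $\bbbn$-valued elements is perfectly well-behaved and coincides with $\PR[A',\words\to\bbbz]$ by Theorem \ref{thm:Deltamax}; this is what makes the $\card$ side of the story pass cleanly to the $\Delta\card$ side.
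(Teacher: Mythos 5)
Your proposal is correct and follows exactly the paper's own route: the paper proves Theorem~\ref{thm:DeltaCard} as a direct corollary of Theorem~\ref{thm:card} and Theorem~\ref{thm:Deltamax}, with the Kolmogorov complexity claim then falling out of the equality of the underlying families. Your write-up simply makes the same chain of equalities explicit (including the harmless observation that $\Delta$ respects equality of families), so there is nothing to correct.
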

\begin{proof}
The equalities about the self-enumerated systems is a direct
corollary of Thm.\ref{thm:card} and Thm.\ref{thm:Deltamax}.
The equalities about Kolmogorov complexities are trivial corollaries
of those about self-enumerated systems.
\end{proof}
%
%
%%%%%%%%%%%%%%%%%%%%%%%%%%%%%%%%%%%%%%%%%%%%%%%%%%%%%%%%%%%%%%%
%%%%%%%%%%%%%%%%%%%%%%%%%%%%%%%%%%%%%%%%%%%%%%%%%%%%%%%%%%%%%%%
%%%%%%%%%%%%%%%%%%%%%%%%%%%%%%%%%%%%%%%%%%%%%%%%%%%%%%%%%%%%%%%
%%%%%%%%%%%%%%%%%%%%%%%%%%%%%%%%%%%%%%%%%%%%%%%%%%%%%%%%%%%%%%%
\section{Index representations of $\bbbn$}
\label{s:index}
%%%%%%%%%%%%%%%%%%%%%%%%%%%%%%%%%%%%%%%%%%%%%%%%%%%%%%%%%%%%%%%
%%%%%%%%%%%%%%%%%%%%%%%%%%%%%%%%%%%%%%%%%%%%%%%%%%%%%%%%%%%%%%%
%%%%%%%%%%%%%%%%%%%%%%%%%%%%%%%%%%%%%%%%%%%%%%%%%%%%%%%%%%%%%%%
%
%
%%%%%%%%%%%%%%%%%%%%%%%%%%%%%%%%%%%%%%%%%%%%%%%%%%%%%%%%%%%%%%%
\subsection{Basic index representation and its effectivizations}
\label{ss:index}
%%%%%%%%%%%%%%%%%%%%%%%%%%%%%%%%%%%%%%%%%%%%%%%%%%%%%%%%%%%%%%%
%
A variant of the cardinal representation considers indexes of
equivalence relations. More precisely, it views an integer as
an equivalence class of equivalence relations relative to index
comparison.
\begin{definition}[Index representation]\label{def:index}$\\ $
The index representation of $\bbbn$ relative to an infinite set
$X$ is the partial function
         $$index^{\bbbn}_{P(X^2)}:P(X^2)\to\bbbn$$
with domain the family of equivalence relations on subsets of $X$
which have finite index, such that
\begin{eqnarray*}
index^{\bbbn}_{P(X^2)}(R)&=&
\left\{
\begin{array}{ll}
index(R)&\mbox{if $R$ is an equivalence relation}
     \\ &\mbox{with finite index}
\\
\mbox{undefined}&\mbox{otherwise}
\end{array}\right.
\end{eqnarray*}
(where $index(R)$ denotes the number of equivalence classes of $R$).
\end{definition}
%
%
%%%%%%%%%%%%%%%%%%%%%%%%%%%%%%%%%%%%%%%%%%%%%%%%%%%%%%%%%%%%%%%
\subsection{Syntactical complexity of index representations}
\label{ss:syntaxindex}
%%%%%%%%%%%%%%%%%%%%%%%%%%%%%%%%%%%%%%%%%%%%%%%%%%%%%%%%%%%%%%%
%
\begin{definition}[Effectivization of the index representation
of $\bbbn$]\label{def:effIndex}
We effectivize the index representation by replacing
$P(X^2)$ by $RE(\bbbx^2)$ or $RE^A(\bbbx^2)$ where $\bbbx$ is
some basic set and $A\subseteq\bbbn$ is some oracle.
\\
Two kinds of self-enumerated representation systems can be
naturally associated
(cf. \S\ref{ss:RE} and the Composition Lemma \ref{l:circ}):%
\begin{eqnarray*}
(RE(\bbbx^2),index\circ{\cal F}^{RE(\bbbx^2)})&\mbox{or}&
(RE^A(\bbbx^2),index\circ{\cal F}^{RE^A(\bbbx^2)})
\\
(RE(\bbbx^2),index\circ{\cal PF}^{RE(\bbbx^2)})&\mbox{or}&
(RE^A(\bbbx^2),index\circ{\cal PF}^{RE^A(\bbbx^2)})
\end{eqnarray*}
\end{definition}
The following proposition gives the syntactical complexity of
the above effectivizations of the index representations.
\begin{proposition}[Syntactical complexity]
\label{p:complexEffIndex}
The family
$$\{domain(\varphi):\varphi\in \indexx\circ{\cal F}^{RE^A(\bbbx)}\}$$
is exactly the family of $\Sigma^{0,A}_3$ subsets of $\words$.
\\
Idem with $\indexx\circ{\cal PF}^{RE^A(\bbbx)}$.
\medskip\\
In particular, any universal function for
$\indexx\circ{\cal F}^{RE^A(\bbbx)}$ or for
$\indexx\circ{\cal PF}^{RE^A(\bbbx)}$
is $\Sigma^{0,A}_3$-complete.
\end{proposition}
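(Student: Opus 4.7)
The proof mirrors Prop.\ref{p:complexEffCard}, upgrading by one quantifier in the arithmetical hierarchy.

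For the \emph{upper bound}, any $\varphi\in\indexx\circ\mathcal{F}^{RE^A(\bbbx)}$ (resp.\ $\indexx\circ\mathcal{PF}^{RE^A(\bbbx)}$) is of the form $\ttp\mapsto\indexx(W^A_{g(\ttp)})$ for some total (resp.\ partial) $A$-recursive $g:\words\to\words$. I will express the condition $\ttp\in\mathrm{dom}(\varphi)$ as the conjunction of: $g(\ttp){\downarrow}$ (a $\Sigma^{0,A}_1$ clause, only relevant in the $\mathcal{PF}$ case); $W^A_{g(\ttp)}$ is reflexive on its field, symmetric, and transitive (three $\Pi^{0,A}_2$ clauses, each a $\forall$-statement over $\Sigma^{0,A}_1$ membership); and $W^A_{g(\ttp)}$ has finite index, which I will write as $\exists N\,\exists x_0,\dots,x_N\,\forall y\,((y,y)\in W^A_{g(\ttp)}\Rightarrow\bigvee_{i\leq N}(y,x_i)\in W^A_{g(\ttp)})$, of shape $\exists\forall\Sigma^{0,A}_1$, i.e., $\Sigma^{0,A}_3$. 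The conjunction lies in $\Sigma^{0,A}_3$.

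For the \emph{lower bound}, given $X\in\Sigma^{0,A}_3$ of the form $\{\ttp:\exists u\,P_u(\ttp)\}$ with $P_u(\ttp)=\forall v\,\exists w\,R(\ttp,u,v,w)$ and $R$ an $A$-recursive predicate, I will construct a total $A$-recursive $g:\words\to\words$ such that $W^A_{g(\ttp)}$ is an equivalence relation of finite index iff $\ttp\in X$; by Prop.\ref{p:RE} this yields a function in $\indexx\circ\mathcal{F}^{RE^A(\bbbx)}$ with domain exactly $X$. Identifying $\bbbx$ with $\bbbn\times\bbbn$, I partition elements into ``columns'' $C_u=\{(u,k):k\geq 0\}$. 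Inside each $C_u$, I run the cardinal construction of Prop.\ref{p:complexEffCard}: enumerate $((u,0),(u,k))$ together with its symmetric and transitive consequences at the $k$-th successful verification of $\exists w\,R(\ttp,u,k-1,w)$, making the class of $(u,0)$ have size $1+v_u^*$ with $v_u^*=\infty$ iff $P_u(\ttp)$. In parallel, a priority strategy driven by the $A$-recursive approximation $u(s)=\min\{u\leq s:\forall v<s\,\exists w<s\,R(\ttp,u,v,w)\}$ enumerates inter-column merges; in the positive case $\ttp\in X$, $u(s)$ stabilizes at the least witness $u^*$ and the induced infinite stream of merges collapses columns $u\geq u^*$ into one class, leaving at most $u^*+1$ classes total.

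The delicate point is the negative case $\ttp\notin X$, where one must prevent transitive closure of the scattered inter-column merges from accidentally yielding finite index. My plan is to exploit that $\indexx$ is undefined on non-equivalence relations: each candidate inter-column merge is enumerated as an asymmetric pair $(a,b)$, its symmetric complement $(b,a)$ being enumerated only upon a subsequent confirmation from the same priority stream (e.g., when the same value of $u(s)$ recurs after a fresh increment of $v_{u(s)}$). In the positive case the confirmation stream is infinite, every pending pair is eventually completed, symmetry is restored, and the collapse above yields finite index. In the negative case $u(s)$ diverges and some pending pair is never confirmed, so $W^A_{g(\ttp)}$ fails to be symmetric and $\ttp\notin\mathrm{dom}(\indexx\circ W^A_{g(\cdot)})$. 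The main obstacle throughout is tuning this confirmation scheme so that the asymmetry-repairing stream and the column-collapsing stream cooperate correctly; the priority design must keep the ``pending'' queue processed in synchrony with $u(s)$-stabilization. Once the lower bound is proved, $\Sigma^{0,A}_3$-completeness of any universal function $U$ is immediate: for every $\varphi$ in the family with code $\tte$, the $A$-recursive map $\ttp\mapsto comp_U(\tte,\ttp)$ is an $A$-many-one reduction $\mathrm{dom}(\varphi)\leq_m^A\mathrm{dom}(U)$, and the lower bound ensures every $\Sigma^{0,A}_3$ set arises as some $\mathrm{dom}(\varphi)$.
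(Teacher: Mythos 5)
Your upper bound is correct and is essentially the paper's: split the domain condition into convergence of $g(\ttp)$, the $\Pi^{0,A}_2$ equivalence-relation clauses, and a $\Sigma^{0,A}_3$ finite-index clause. The final completeness remark via $\ttp\mapsto comp_U(\tte,\ttp)$ is also fine. The problem is the lower bound, where your route (columns, asymmetric ``pending'' pairs exploited to make $\indexx$ undefined) genuinely differs from the paper's, and where there are real gaps. First, the driving approximation $u(s)=\min\{u\leq s:\forall v<s\,\exists w<s\,R(\ttp,u,v,w)\}$ need \emph{not} stabilize at the least witness $u^*$ when $\ttp\in X$: the least witness $w$ for $v=s-1$ can exceed $s$ at every stage (e.g.\ least witnesses growing like $2^v$), so $u^*$ may fail your test at every $s$ and $u(s)$ can oscillate among larger values forever. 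Since this signal is supposed to drive both the column collapse and the confirmations, the construction as stated does not do what you claim even in the positive case. The correct monotone quantity is the one the paper uses, $\theta(\ttp,u,t)=$ the largest $v\leq t$ with $\forall v'\leq v\,\exists w\leq t\,R(\ttp,u,v',w)$; membership $\ttp\in X$ is equivalent to unboundedness of $t\mapsto\theta(\ttp,u,t)$ for some (least) $u$, and the usable trigger is ``$\theta$ increases at the least such $u$ at stage $t$'', not a stabilizing min.

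Second, even with a corrected signal, the confirmation scheme for the asymmetric inter-column merges is exactly where the proof lives, and you leave it unresolved. In the positive case the signal still takes transient values $u'\neq u^*$ finitely often; merges enumerated at those stages have pending symmetric complements whose trigger (``the same value recurs with a fresh increment'') never fires, so the relation fails symmetry in the \emph{positive} case as well, and $\ttp\in X$ falls out of the domain --- precisely the failure mode you engineered for the negative case. You flag this synchronization issue as ``the main obstacle'' but do not solve it, and without it the construction does not establish that the domain is exactly $X$. For comparison, the paper avoids all symmetry bookkeeping by never leaving the class of equivalence relations: it builds a monotone union of equivalence relations on initial intervals of $\bbbn$ whose classes are consecutive intervals, and an increase of $\theta(\ttp,u,\cdot)$ at the least $u$ makes class number $u$ absorb every class to its right. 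Then in the positive case exactly $u^*+1$ classes survive, and in the negative case every class stabilizes, so the index is infinite; undefinedness comes only from infinite index, never from non-symmetry, and the accidental-collapse danger you worry about cannot arise because absorption always happens at a single least index.
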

\begin{proof}
We trivially reduce to the case $\bbbx=\bbbn$ and only consider
the case $A=\emptyset$, relativization being straightforward.
\medskip\\
1. Let $(W^{\bbbn^2}_\tte)_{\tte\in\words}$ be an acceptable
enumeration of $RE(\bbbn^2)$ and $g:\words\to\words$ be a partial
recursive function and $\psi:\words\to\bbbn$ be such that
$\psi(\ttp)=\indexx(W^{\bbbn^2}_{g(\ttp)})$.
\\
To see that $domain(\psi)$ is $\Sigma^0_3$,
observe that $\ttp\in domain(\psi)$ if and only if
\begin{enumerate}
\item[i.]
$g(\ttp)$ is defined. Which is a $\Sigma^0_1$ condition.
\item[ii.]
$W^{\bbbn^2}_{g(\ttp)}$ is an equivalence relation on its domain,
i.e.
\medskip\\
$\forall \ttx\ \forall \tty\
((\ttx,\tty)\in W^{\bbbn^2}_{g(\ttp)}\
\Rightarrow\ ((\ttx,\ttx)\in W^{\bbbn^2}_{g(\ttp)}\ \wedge\
(\tty,\ttx)\in W^{\bbbn^2}_{g(\ttp)}))$

\hfill{$\wedge\ \forall \ttx\ \forall \tty\ \forall \ttz\
(((\ttx,\tty)\in W^{\bbbn^2}_{g(\ttp)}\ \wedge\
(\tty,\ttz)\in W^{\bbbn^2}_{g(\ttp)})\
\Rightarrow\ (\ttx,\ttz)\in W^{\bbbn^2}_{g(\ttp)})$}
\\
Which is a $\Pi^0_2$ formula
(since $(\ttu,\ttv)\in W^{\bbbn^2}_{g(\ttp)}$ is $\Sigma^0_1$).
\item[iii.]
$W^{\bbbn^2}_{g(\ttp)}$ has finitely many classes, i.e.
$\exists n\ \forall k\ \exists m\leq n\
(k,m)\in W^{\bbbn^2}_{g(\ttp)}$.
Which is a $\Sigma^0_3$ formula.
\end{enumerate}
2. Let $X\subseteq\words$ be $\Sigma^0_3$.
We construct a total recursive function $g:\words\to\words$ such
that $X=\{\ttp: \indexx(W^{\bbbn^2}_{g(\ttp)})\mbox{ is finite}\}$.
\medskip\\
A. Suppose
$X=\{\ttp:\exists u\ \forall v\ \exists w\ R(\ttp,u,v,w)\}$
where $R\subseteq\words\times\bbbn^3$ is recursive.
Let $\theta:\words\times\bbbn^2\to\bbbn$ be the total recursive
function such that
\begin{eqnarray*}
\theta(\ttp,u,t)&=&\mbox{largest $v\leq t$
such that $\forall v'\leq v\ \exists w\leq t\ R(\ttp,u,v',w)\}$}
\end{eqnarray*}
Observe that $\theta$ is monotone increasing with respect to $t$.
Also,
\begin{itemize}
\item[$(*)$]
if $\ttp\notin X$ then, for all $u$,
$\max_{t\in\bbbn}\theta(\ttp,u,t)$ is finite,
\item[$(**)$]
if $\ttp\in X$ and $u$ is least such that
$\forall v\ \exists w\ R(\ttp,u,v,w)$ then
$$\left\{\begin{array}{rcll}
\max_{t\in\bbbn}\theta(\ttp,u,t)&=&+\infty&\\
\max_{t\in\bbbn}\theta(\ttp,u',t)&&\mbox{is finite}&\mbox{for all }u'<u
\end{array}\right.$$
\end{itemize}
Following this observation, given $\ttp\in\words$, we define
a monotone increasing sequence of equivalence relations
$\rho^t_\ttp$ on finite initial intervals of $\bbbn$ such that
$\rho^t_\ttp$ has $t+1$ equivalence classes
$$I^t_{\ttp,0}\ ,\ I^t_{\ttp,1}\ ,\ ...\ ,\ I^t_{\ttp,t}$$
which are successive finite intervals
$$[0,n^t_{\ttp,0}]\ ,\ [n^t_{\ttp,0}+1,n^t_{\ttp,1}]\ ,\
[n^t_{\ttp,1}+1,n^t_{\ttp,2}]\ ,\ \ldots\ ,\
[n^t_{\ttp,t-1}+1,n^t_{\ttp,t}]$$
where
$n^t_{\ttp,1}<n^t_{\ttp,2}<\ldots<n^t_{\ttp,t-1}<n^t_{\ttp,t}$.
\\
The intuition is as follows:
\begin{enumerate}
\item[i.]
the class $I^t_{\ttp,u}$ is related to $\theta(\ttp,u,t)$,
i.e. to the best we can say at step $t$ about the truth value of
$\forall v\ \exists w\ R(\ttp,u,v,w)$.
\item[ii.]
if and when $\theta(\ttp,u,t)$ increases,
i.e. $\theta(\ttp,u,t+1)>\theta(\ttp,u,t)$ for some $u$,
then we increase the class $I^t_{\ttp,u}$ for the least such $u$.
\end{enumerate}
Of course, an equivalence class which grows and remains
an interval either is the rightmost one
or has to aggregate some of its neighbor class(es).
Whence the following inductive definition of the $\rho^t_\ttp$'s
and $n^t_{\ttp,u}$'s, $u\leq t$:
\begin{enumerate}
\item[i.]{\em (Base case).}
$\rho^0_\ttp$ is the equivalence relation with one class $\{0\}$,
i.e. $n^0_\ttp,0=0$.
\item[ii.]{\em (Inductive case. Subcase 1).}
Suppose $\theta(\ttp,u,t+1)=\theta(\ttp,u,t)$ for all $u\leq t$.
Then $\rho^{t+1}_\ttp$ is obtained from $\rho^t_\ttp$ by adding
a new singleton class on the right:
\begin{enumerate}
\item
For all $u\leq t$ we let $n^{t+1}_{\ttp,u}=n^t_{\ttp,u}$,
hence $I^{t+1}_{\ttp,u}=I^t_{\ttp,u}$.
\item
$n^{t+1}_{\ttp,t+1}=n^t_{\ttp,t}+1$, hence
$I^{t+1}_{\ttp,t+1}=\{n^t_{\ttp,t}+1\}$.
\end{enumerate}
\item[ii.]{\em (Inductive case. Subcase 2).}
Suppose $\theta(\ttp,u,t+1)>\theta(\ttp,u,t)$ for some $u\leq t$.
Let $u$ be least such. Then,
\begin{enumerate}
\item
for $u'<u$, classes $I^t_{\ttp,u'}$ are left unchanged:
$n^{t+1}_{\ttp,u'}=n^t_{\ttp,u'}$ and
$I^{t+1}_{\ttp,u'}=I^t_{\ttp,u'}$\ ,
\item
class $I^{t+1}_{\ttp,u}$ aggregates all classes $I^t_{\ttp,u''}$
for $u\leq u''\leq t$,
\item
$t+1-u$ singleton classes are added:
$I^{t+1}_{\ttp,u+i}=\{n^t_{\ttp,t}+i\}$ where $i=1,...,t+1-u$.
I.e.
\medskip\\\centerline{$\begin{array}{rcll}
n^{t+1}_{\ttp,u'}&=&n^t_{\ttp,u}&\mbox{for all }u'\leq u\\
n^{t+1}_{\ttp,u+i}&=&n^t_{\ttp,t}+i
&\mbox{for all $s\in\{i,...,t+1-u\}$}
\end{array}$}
\end{enumerate}
\end{enumerate}
B. Let $\rho_\ttp=\bigcup_{t\in\bbbn}\rho_{\ttp,t}$.
\medskip\\
\fbox{Case $\ttp\in X$}. Let $u$ be least such that
$\forall v\ \exists w\ R(\ttp,u,v,w)$. For $u'<u$, let
\begin{eqnarray*}
V_{u'}&=&\max\{v:\forall v'\leq v\ \exists w\ R(\ttp,u',v',w)\}
\\
t&=&\min\{t':\forall u'<u\ (V_{u'}\leq t'\ \wedge\
\forall v'\leq V_{u'}\ \exists w\leq t'\ R(\ttp,u',v',w)\}
\end{eqnarray*}
Then
\begin{itemize}
\item
$\forall u'<u\ \forall v\
(\forall v'\leq v\ \exists w\ R(\ttp,u',v',w)\ \Rightarrow$

\hfill{$(v\leq t\ \wedge\
\forall v'\leq v\ \exists w'\leq t\ R(\ttp,u',v',w')))$}
\item
$n^{t'}_{\ttp,u',}=n^t_{\ttp,u'}$
and $I^{t'}_{\ttp,u'}=I^t_{\ttp,u'}$ for all $u'<u$ and $t'\geq t$.
\item
$n^{t'}_{\ttp,u}$ tends to $+\infty$ with $t'$ and
$I^{t'}_{\ttp,u}=[n^{t'}_{\ttp,u-1}+1,n^{t'}_{\ttp,u}]$ tends to the
cofinite interval $[n^t_{\ttp,u-1}+1,+\infty[$.
\item
for $u''>u$, classes $I^{t'}_{\ttp,u''}$ are intervals the left
endpoints of which tend to $+\infty$ with $t'$, hence they vanish
at infinity.
\end{itemize}
Thus, $\rho_\ttp$, which is the limit of the $\rho^t_\ttp$'s,
has $u+1$ classes, hence has finite index.
\medskip\\
\fbox{Case $\ttp\notin X$}. For every $u\in\bbbn$,
the class $I^t_{\ttp,u}$ stabilizes as $t$ tends to $+\infty$.
Thus, $\rho_\ttp$ has infinite index.
\medskip\\
C. Clearly, the sequence $(\rho^t_\ttp)_{\ttp\in\words,t\in\bbbn}$
is recursive.
Thus,
$$\rho=\{(\ttp,m,n):\exists t\ (m,n)\in\rho^t_\ttp\}$$
is r.e.
Let $\tta\in\words$ be such that
$\rho=W^{\words\times\bbbn^2}_\tta$.
Applying the parametrization property, let
$s:\words\times\words\to\words$ be a total recursive function
such that
$$\rho_\ttp
=\{(m,n)\in\bbbn^2:(\ttp,m,n)\in W^{\words\times\bbbn^2}_\tta\}
=W^{\bbbn^2}_{s(\tta,\ttp)}$$
Let $g:\words\to\words$ be total recursive such that
$g(\ttp)=s(\tta,\ttp)$.
Using point B, we see that $\ttp\in X$ if and only if
$\indexx(W^{\bbbn^2}_{g(\ttp)})$ is finite.
\end{proof}
%
%%%%%%%%%%%%%%%%%%%%%%%%%%%%%%%%%%%%%%%%%%%%%%%%%%%%%%%%%%%%%%%
\subsection{Characterization of the $\indexx$ self-enumerated
systems}  \label{ss:characterizeIndex}
%%%%%%%%%%%%%%%%%%%%%%%%%%%%%%%%%%%%%%%%%%%%%%%%%%%%%%%%%%%%%%%

%
We now come to the characterization of the index self-enumerated
families.
It turns out that these families are almost equal to
$\maxrAj[\words\to\bbbn]$, almost meaning here ``up to 1".
\begin{notation}
If ${\cal G}$ is a family of functions
$\words\to\bbbn$, we let
$${\cal G}+1=\{f+1:f\in{\cal G}\}$$
\end{notation}
\begin{theorem}\label{thm:index}$\\ $
{\bf 1.}
For any basic set $\bbbx$ and any oracle $A\subseteq\bbbn$,
the following {\em strict} inclusions hold:
$$\maxrAj[\words\to\bbbn]+1
\subset
index\circ{\cal F}^{RE^A(\bbbx^2)}
\subset
index\circ{\cal PF}^{RE^A(\bbbx^2)}
\subset
\maxrAj[\words\to\bbbn]$$
{\bf 2.}\ \ $K^\bbbn_{index\circ{\cal F}^{RE^A(\bbbx^2)}}\eqct
K^\bbbn_{index\circ{\cal PF}^{RE^A(\bbbx^2)}}\eqct\kmax[A']$.
\medskip\\
We shall simply write $K^{\bbbn,A}_{index}$
in place of $K^\bbbn_{index\circ{\cal F}^{RE^A(\bbbn)}}$.
\\
When $A=\emptyset$ we simply write $K^{\bbbn}_{index}$.
\end{theorem}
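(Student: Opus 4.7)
The middle inclusion is immediate from ${\cal F}^{RE^A(\bbbx^2)} \subseteq {\cal PF}^{RE^A(\bbbx^2)}$. The Kolmogorov complexity equalities in item~2 will follow by monotonicity of $K^\bbbn_{(\cdot)}$ under family inclusion (larger family yields smaller $K$ up to an additive constant), combined with Lemma~\ref{l:circ} applied to the total recursive shift $n\mapsto n+1$: the four families in the chain of item~1 will all yield Kolmogorov complexities $\eqct \kmax[A']$.

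For the left inclusion $\maxrAj[\words\to\bbbn]+1 \subseteq \indexx\circ{\cal F}^{RE^A(\bbbx^2)}$, I plan to directly invoke the construction in the proof of Proposition~\ref{p:complexEffIndex}, item~2, relativized to $A$. That construction, given any $\Sigma^{0,A}_3$ predicate written as $\exists u\ \forall v\ \exists w\ R(\ttp,u,v,w)$ with $R$ being $A$-recursive, produces a uniformly $A$-r.e.\ equivalence relation $\rho_\ttp$ whose index equals $u_\ttp + 1$ where $u_\ttp$ is the least witness $u$ (and is infinite otherwise). Given $f = \max g \in \maxrAj[\words\to\bbbn]$ with $g$ total $A'$-recursive, the condition ``$f(\ttp)\leq u$'' is $\Pi^{0,A'}_1 = \Pi^{0,A}_2$ and hence can be written as $\forall v\ \exists w\ R(\ttp,u,v,w)$ with $R$ being $A$-recursive. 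For this choice of $R$, the least $u$ satisfying the condition is precisely $f(\ttp)$, so that the construction yields $\indexx(\rho_\ttp) = f(\ttp)+1$ when $f(\ttp)$ is defined and $\indexx(\rho_\ttp)$ undefined otherwise, as required.

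For the right inclusion $\indexx\circ{\cal PF}^{RE^A(\bbbx^2)} \subseteq \maxrAj[\words\to\bbbn]$, I plan to construct a total $A'$-recursive function $g:\words\times\bbbn\to\bbbn$ with $\max g = G$, where $G(\ttp) = \indexx(W^A_{f(\ttp)})$ for $f$ partial $A$-recursive. The key is that oracle $A'$ decides both the $\Sigma^{0,A}_1$ question ``$f(\ttp)\!\downarrow$'' and the relatedness queries ``$(\ttx,\tty)\in W^A_{f(\ttp)}$'' once $f(\ttp)$ is known. The algorithm defining $g(\ttp,t)$ dovetails three subtasks: (i) if $A'$ reports $f(\ttp)\!\uparrow$, set $g(\ttp,t) := t$ to force divergence of $\max g$; (ii) otherwise compute $w = f(\ttp)$, enumerate pairs of $W^A_w$ for $t$ steps, and maintain a list $L$ of pairwise-unrelated representatives by adding each new element only when $A'$ confirms it is not $W^A_w$-related to any current member of $L$, outputting $g(\ttp,t):=|L|$; (iii) in parallel, use $A'$ to search for a witness $(\ttx,\tty,\ttz)$ of failure of reflexivity, symmetry, or transitivity of $W^A_w$ on elements already enumerated---each such witness is a $\Sigma^{0,A}_1\wedge\Pi^{0,A}_1$ condition decidable by $A'$---and if one is ever found, override $g$ with a strictly increasing output to force divergence. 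This ensures $\max g$ equals $\indexx(W^A_w)$ precisely when $W^A_w$ is a valid finite-index equivalence relation, and is infinite otherwise.

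Regarding strictness of the three inclusions: the leftmost strict inclusion holds because every function in $\maxrAj[\words\to\bbbn]+1$ takes values $\geq 1$, whereas the constant $0$ function (obtained via the empty relation) lies in $\indexx\circ{\cal F}^{RE^A(\bbbx^2)}$; the remaining two strict inclusions follow by arguments parallel to the strict inclusion $\maxr[\words\to\bbbx]\subsetneq\maxpr[\words\to\bbbx]$ of Remark~\ref{rk:minPR}(1), exploiting partial $A$-recursive $f$ with domain of proper $\Sigma^{0,A}_2$-complexity to separate total from partial behavior. I expect the main obstacle to lie in step~(iii) of the right inclusion: keeping $g$ genuinely total $A'$-recursive (as opposed to partial) forces us to certify invalidity of $W^A_w$ through $A'$-decidable $\Sigma^{0,A}_1\wedge\Pi^{0,A}_1$ witnesses rather than through the full $\Pi^{0,A}_2$ validity condition, which $A'$ cannot directly decide; the dovetailing with the counting in step~(ii) must therefore be arranged so that neither premature incrementing (in the valid case) nor failure to diverge (in the invalid case) occurs.
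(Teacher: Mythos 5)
Your inclusions are essentially right. The rightmost one is, up to bookkeeping, the paper's own argument: the paper increments a counter whenever one of four finite, $A'$-decidable tests fails (reflexivity, symmetry, transitivity restricted to the pairs enumerated so far, plus a test detecting a new class), which is the same mechanism as your representative list together with the violation override; the dovetailing issue you flag is harmless, because a violation of the equivalence axioms, once its positive part has been enumerated, is a permanent certificate decidable by $A'$ (so it is eventually detected and forces divergence), while in the good case $A'$ answers about the true $W^{A,\bbbx^2}_{f(\ttp)}$, so no spurious representative is ever added and no override occurs. Your left inclusion is a genuinely different, more economical route than the paper's: the paper gives a second ad hoc construction (running an oracle machine for $F\in\maxrAj[\words\to\bbbn]$ against the approximations $Approx(A',t)$ and aggregating singleton classes into the class of $0$ when the fake oracle is caught lying), whereas you recycle the $\Sigma^{0,A}_3$-completeness construction of Prop.\ref{p:complexEffIndex}; this does work, because that construction, applied to $\exists u\,\forall v\,\exists w\,R$, yields $\rho_\ttp$ with exactly $u+1$ classes for the \emph{least} witness $u$ (infinite index if there is none), and for $f=\max g$ with $g$ total $A'$-recursive the predicate $\forall t\ g(\ttp,t)\leq u$ is $\Pi^{0,A}_2$ with least witness exactly $f(\ttp)$. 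For point 2 the sandwich is also the paper's route; you additionally need $K^\bbbn_{\maxrAj[\words\to\bbbn]}\eqct\kmax[A']$ (Prop.\ref{p:Kinfini} relativized to $A'$) and a word about the value $0$, where $K^\bbbn_{{\cal G}+1}$ is infinite --- both routine.

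The genuine gap is the strictness of the middle and right inclusions, which the statement asserts. Your proposed argument --- ``parallel to Remark \ref{rk:minPR}, exploiting a partial $A$-recursive $f$ with domain of proper $\Sigma^{0,A}_2$-complexity'' --- cannot work as written: a partial $A$-recursive function has $\Sigma^{0,A}_1$ domain, and even reading ``partial $A'$-recursive'', no domain-complexity argument can separate $\indexx\circ{\cal PF}^{RE^A(\bbbx^2)}$ from $\maxrAj[\words\to\bbbn]$, since by (the relativization of) Prop.\ref{p:complexEffIndex} the domains occurring in $\indexx\circ{\cal PF}^{RE^A(\bbbx^2)}$ are exactly the $\Sigma^{0,A}_3$ sets, and the domains of functions in $\maxrAj[\words\to\bbbn]$ are also exactly the $\Sigma^{0,A}_3$ sets (e.g.\ by Thm.\ref{thm:card} and Prop.\ref{p:complexEffCard} relativized to $A'$). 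The separation must use values, not domains. The paper's argument (part D of its proof): for every $G\in\indexx\circ{\cal PF}^{RE^A(\bbbx^2)}$, the set $G^{-1}(0)$ is $\Sigma^{0,A}_1\wedge\Pi^{0,A}_1$, hence a boolean combination of $\Sigma^{0,A}_1$ sets, whereas $\maxrAj[\words\to\bbbn]$ contains the characteristic function of any $\Delta^{0,A}_2$ set, which can be chosen not to be such a boolean combination; this gives strictness on the right. An analogous value argument is needed for the middle inclusion, which your proposal leaves untouched: for total $h$, $G^{-1}(0)=\{\ttp:W^{A,\bbbx^2}_{h(\ttp)}=\emptyset\}$ is $\Pi^{0,A}_1$ for every $G\in\indexx\circ{\cal F}^{RE^A(\bbbx^2)}$, while taking $f$ partial $A$-recursive with $\Sigma^{0,A}_1$-complete domain and constant value a code of the empty relation gives $G\in\indexx\circ{\cal PF}^{RE^A(\bbbx^2)}$ with $G^{-1}(0)$ $\Sigma^{0,A}_1$-complete, hence not $\Pi^{0,A}_1$. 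Without arguments of this kind your proposal establishes the inclusions but not their strictness.
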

\begin{proof}
Observe that if ${\cal F}$ is a self-enumerated system with domain
$D$ and with $U$ as a good universal function, then ${\cal F}+1$
is also a self-enumerated system with $U+1$ as a good universal
function.
In particular $K^D_{\cal F}=K^D_{{\cal F}+1}$.
\\
Point 2 is a direct corollary of Point 1 and Prop.\ref{p:Kinfini}
and the previous observation.
\medskip\\
Let's prove point 1.
\\The central inclusion
$index\circ{\cal F}^{RE^A(\bbbx^2)}
\subset index\circ{\cal PF}^{RE^A(\bbbx^2)}$
is trivial.
\medskip\\
A. {\em Non strict inclusion\ \
$index\circ{\cal PF}^{RE^A(\bbbx^2)}
\subseteq\maxrAj[\words\to\bbbn]$.}
\\
Let $G\in index\circ{\cal PF}^{RE^A(\bbbx^2)}$ and let
$g:\words\to\words$ be partial $A$-recursive such that
$$G(\ttp)=\left\{\begin{array}{ll}
index(W^{A,\bbbx^2}_{g(\ttp)})&
\mbox{if $g(\ttp)$ is defined and $W^{A,\bbbx^2}_{g(\ttp)}$ is an}
\\&\mbox{equivalence relation with finite index}
\\\mbox{undefined}&\mbox{otherwise}\end{array}\right.$$
We define a total $A'$-recursive function
$u:\words\times\bbbn\to\bbbn$ such that
$$(*)\ \ \ \{u(\ttp,t):t\in\bbbn\}=
\left\{\begin{array}{ll}
\{0,...,n\}&\mbox{if $G(\ttp)$ is defined and $G(\ttp)=n$}\\
\bbbn&\mbox{if $G(\ttp)$ is undefined}
\end{array}\right.$$
\noindent
The definition is as follows.
Since $g$ is partial $A$-recursive and we look for an $A'$-recursive
definition of $u(\ttp,t)$, we can use oracle $A'$ to check
if $g(\ttp)$ is defined.
\\
If $g(\ttp)$ is undefined then we let $u(\ttp,t)=t$ for all $t$.
Which insures $(*)$.
\\
Suppose now that $g(\ttp)$ is defined.
First, set $u(\ttp,0)=0$.
\\
Consider an $A$-recursive enumeration of $W^{A,\bbbx^2}_{g(\ttp)}$.
Let $R_t$ be the set of pairs enumerated at steps $<t$
and $D_t$ be the set of $\ttx\in\bbbx$ which appear in pairs
in $R_t$ (so that $R_0$ and $D_0$ are empty).
Since at most one new pair is enumerated at each step, the set
$R_t$ contains at most $t$ pairs and $D_t$ contains at most $2\,t$
points.
\\
At step $t+1$, use oracle $A'$ to check the following properties:
\begin{enumerate}
\item[$\alpha_t$.]
For every $\ttx\in D_{t+1}$ the pair $(\ttx,\ttx)$ is in
$W^{A,\bbbx^2}_{g(\ttp)}$.
\item[$\beta_t$.]
For every pair $(\ttx,\tty)\in R_{t+1}$ the pair $(\tty,\ttx)$
is in $W^{A,\bbbx^2}_{g(\ttp)}$.
\item[$\gamma_t$.]
For every pairs $(\ttx,\tty),(\tty,\ttz)\in R_{t+1}$ the pair
$(\ttx,\ttz)$ is in $W^{A,\bbbx^2}_{g(\ttp)}$.
\item[$\delta_t$.]
For every $\ttx\in D_{t+1}$ there exists $\tty\in D_t$ such that
the pair $(\ttx,\tty)$ is in $W^{A,\bbbx^2}_{g(\ttp)}$.
\end{enumerate}
Since $R_{t+1},D_{t+1}$ are finite, all these properties
$\alpha_t\tiret\delta_t$ are {\em finite} boolean combinations of
$\Sigma^{0,A}_1$ statements.
Hence oracle $A'$ can decide them all.
\medskip\\
Observe that if $W^{A,\bbbx^2}_{g(\ttp)}$ is an equivalence relation
then answers to $\alpha_t\tiret\gamma_t$ are positive for all $t$.
And if $W^{A,\bbbx^2}_{g(\ttp)}$ is not an equivalence relation
then, for some $\pi\in\{\alpha,\beta,\gamma\}$, answers to $\pi_t$
are negative for all $t$ large enough .
\\
Also, if $W^{A,\bbbx^2}_{g(\ttp)}$ is an equivalence relation then
a new equivalence class is revealed each time $\delta_t$ is false.
And every equivalence class is so revealed.
\medskip\\
Thus, in case $g(\ttp)$ is defined, we insure $(*)$ by letting
$$u(\ttp,t+1)=\left\{
\begin{array}{ll}
u(\ttp,t)
&\mbox{if all answers to $\alpha_t\tiret\delta_t$ are positive}
\\
u(\ttp,t)+1&\mbox{otherwise}
\end{array}\right.$$
From $(*)$, we get $G=\max u$. Since $u$ is total $A'$-recursive,
this proves that $G$ is in $\maxrAj[\words\to\bbbx]$.
\medskip\\
B. {\em Non strict inclusion\ \
$\maxrAj[\words\to\bbbn]+1
\subseteq index\circ{\cal F}^{RE^A(\bbbx^2)}$.}\\
We reduce to the case $\bbbx=\bbbn$.
\\Let $F\in\maxprAj[\words\to\bbbn]$. Using Prop.\ref{p:Turing},
let ${\cal M}$ be an oracle Turing machine which on input $\ttp$
and oracle $A'$ computes $F(\ttp)$ through an infinite computation.
\\
The idea to prove that $F$ is in
$index\circ{\cal F}^{RE^A(\bbbn^2)}$ is as follows.
We consider $A$-recursive approximations of oracle $A'$ and use
them as fake oracles.
For each $\ttp$ we build an $A$-r.e. equivalence relation
$\rho_\ttp\subseteq\bbbn^2$ with domain $\bbbn$ which consists of
one big class containing $0$ and some singleton classes.
Each time the computation with the fake oracle outputs a new digit
$1$, we put some new singleton class in $\rho_\ttp$.
When, with a better approximation of $A'$, we see that the fake
oracle has given an incorrect answer, all singleton classes
which were put in $\rho_\ttp$ because of the oracle incorrect answer
are annihilated: they are aggregated to the class of $0$.
Since we are going to consider $index(\rho_\ttp)$, this process
will lead to the correct value $F(\ttp)+1$.
\medskip\\
Formally, we consider an $A$-recursive monotone increasing sequence
$(Approx(A',t))_{t\in\bbbn}$ such that
$A'=\bigcup_{t\in\bbbn}Approx(A',t)$ (cf. Lemma \ref{l:approx}).
Though all oracles $Approx(A',t)$ are false approximations of oracle
$A'$, they are nevertheless ``less and less false" as $t$ increases.
\medskip\\
Without loss of generality, we can suppose that at each computation
step of ${\cal M}$ there is a question to the oracle
(possibly the same one many times).
\medskip\\
Let ${\cal C}_{\ttp,t}$ be the computation of ${\cal M}$ on input
$\ttp$ with oracle $Approx(A',t)$, reduced to the sole $t$ first
steps.
\\
Increasing parts of oracle $Approx(A',t)$ are questioned during
${\cal C}_{\ttp,t}$.
Let $\Omega_{\ttp,t}:\{1,...,t\}\to P_{fin}(\bbbn)$
(where $P_{fin}(\bbbn)$ is the set of finite subsets of $\bbbn$)
be such that $\Omega_{\ttp,t}(t')$ is the set of $k$ such that the
oracle has been questioned about $k$ during the $t'$ first steps,
$1\leq t'\leq t$.
Clearly, $\Omega_{\ttp,t}$ is (non strictly) monotone increasing
with respect to set inclusion.
\\
Let $1^{n_{\ttp,t}}$ be the output of ${\cal C}_{\ttp,t}$
(recall that ${\cal M}$ outputs a finite or infinite sequence of
digits $1$'s).
\\
The successive digits of this output are written down at increasing
times (all $\leq t$).
Let $OT_{\ttp,t}:\{0,...,n_{\ttp,t}\}\to\{0,...,t\}$ be such that
$OT_{\ttp,t}(n)$ is the least step at which the current output is
$1^n$ ($OT$ stands for output time). Clearly, $OT_{\ttp,t}(0)=0$.
\medskip\\
We construct $A$-recursive sequences
$(\rho_{\ttp,t})_{\ttp\in\words, t\in\bbbn}$ and
$(w_{\ttp,t})_{\ttp\in\words, t\in\bbbn}$
(where $w$ stands for witness) such that
\begin{enumerate}
\item[$i_t$.]
$\rho_{\ttp,t}$ is an equivalence relation on $\{0,...,2^t-1\}$
with index equal to $1+n_{\ttp,t}$
(there is nothing essential with $2^t$, it is merely a large enough
bound convenient for the construction),
\item[$ii_t$.]
all equivalence classes of $\rho_{\ttp,t}$ are singleton sets
except possibly the equivalence class of $0$.
\item[$iii_t$.]
if $t>0$ then $\rho_{\ttp,t}$ contains $\rho_{\ttp,t-1}$.
\item[$iv_t$.]
$w_{\ttp,t}$ is a bijection between $\{1,...,n_{\ttp,t}\}$
and the set of point $s\in\{1,...,2^t-1\}$ such that $\{s\}$ is a
singleton class of $\rho_{\ttp,t}$
(in case $n_{\ttp,t}=0$ then $w_{\ttp,t}$ is the empty map).
\end{enumerate}
First, $w_{\ttp,0}$ is the empty map and $\rho_{\ttp,0}=\{(0,0)\}$,
i.e. the trivial equivalence relation on $\{0\}$.
\medskip\\
The inductive construction of the $\rho_{\ttp,t}$'s uses the
above conditions $i_t\tiret iv_t$ as an induction hypothesis.
\medskip\\
{\em Case $Approx(A',t+1)\cap\Omega_{\ttp,t}(t)
=Approx(A',t)\cap\Omega_{\ttp,t}(t)$.}
\\
Then the computation ${\cal C}_{\ttp,t}$ is totally compatible with
${\cal C}_{\ttp,t+1}$.
Now, that last computation may possibly output one more digit $1$,
i.e. $n_{\ttp,t+1}=n_{\ttp,t}$ or $n_{\ttp,t+1}=n_{\ttp,t}+1$.
Hence the two following subcases.
\medskip\\
{\em Subcase $n_{\ttp,t+1}=n_{\ttp,t}$.}
Then $\rho_{\ttp,t+1}$ is obtained from $\rho_{\ttp,t}$ by putting
$2^t,2^t+1,...,2^{t+1}-1$ as new points in the class of $0$.
In particular,
$\rho_{\ttp,t+1}$ and $\rho_{\ttp,t}$ have the same index.
We also set $w_{\ttp,t+1}=w_{\ttp,t}$.
\medskip\\
{\em Subcase $n_{\ttp,t+1}=n_{\ttp,t}+1$.}
Then $\rho_{\ttp,t+1}$ is obtained from $\rho_{\ttp,t}$ as follows:
\begin{itemize}
\item
Add a new singleton class $\{2^t\}$.
\item
Put $2^t+1,...,2^{t+1}-1$ as new points in the class of $0$.
\end{itemize}
We also set $w_{\ttp,t+1}=w_{\ttp,t}\cup\{(n_{\ttp,t+1},2^t)\}$.
\medskip\\
In both subcases, conditions $i_{t+1}\tiret iv_{t+1}$ are
clearly satisfied.
\medskip\\
{\em Case $Approx(A',t+1)\cap\Omega_{\ttp,t}(t)
\neq Approx(A',t)\cap\Omega_{\ttp,t}(t)$.}
\\
Let $\tau\leq t$ be least such that
$Approx(A',t+1)\cap\Omega_{\ttp,t}(\tau)
\neq Approx(A',t)\cap\Omega_{\ttp,t}(\tau)$.
Though the computation ${\cal C}_{\ttp,t}$ is not entirely
compatible with ${\cal C}_{\ttp,t+1}$, it is compatible up to
step $\tau-1$.
\\
Let $n\leq n_{\ttp,t}$ be greatest such that $OT_{\ttp,t}(n)<\tau$.
Then the $n$ first digits output by ${\cal C}_{\ttp,t}$ are also
output by ${\cal C}_{\ttp,t+1}$ at the same computation steps.
In particular, $n_{\ttp,t+1}\geq n$.
\\
Then $\rho_{\ttp,t+1},w_{\ttp,t+1}$ are obtained from
$\rho_{\ttp,t},w_{\ttp,t}$ as follows:
\begin{itemize}
\item
Put all $w_{\ttp,t}(m)$, where $n<m\leq n_{\ttp,t}$, as new points
in the class of $0$. This annihilates the singleton classes of
$\rho_{\ttp,t}$ corresponding (via $w_{\ttp,t}(m)$) to the part of
the output which was created by answers of oracle $Approx(A',t)$
which are known to be false at step $t+1$.
\item
Add a new singleton class $\{2^t-1+i\}$ for each $i>0$ such that
$n+i\leq n_{\ttp,t+1}$.
Together with the singleton classes of $\rho_{\ttp,t}$ which have
not been aggragated by the above point, this allows to get exactly
$n_{\ttp,t+1}$ singleton classes in $\rho_{\ttp,t+1}$
\\
Accordingly, set
$$w_{\ttp,t+1}=(w_{\ttp,t}\segment \{1,...,n\})\
\cup\ \{(n+i,2^t-1+i):0<i\leq n_{\ttp,t+1}-n\}$$
\item
Put the $2^t-1+j$'s, where $j\geq\max(1,n_{\ttp,t+1}-n)$,
as new points in the class of $0$.
\end{itemize}
Again, conditions $i_{t+1}\tiret iv_{t+1}$ are clearly satisfied.
\medskip\\
Let $\rho_\ttp=\bigcup_{t\in\bbbn}\rho_{\ttp,t}$.
Condition $iii_t$ insures that $\rho_\ttp$ is also an equivalence
relation.
Condition $ii_t$ goes through the limit when $t\to+\infty$,
so that all classes of $\rho_\ttp$ are singleton sets except
the class of $0$.
\medskip\\
The computation we are really interesting in is that which gives
$F(\ttp)$, i.e. the infinite computation of ${\cal M}$
on input $\ttp$ with oracle $A'$.
Let denote it ${\cal C}_\ttp$.
When $t$ increases, the common part of ${\cal C}_\ttp$ with
computation ${\cal C}_{\ttp,t}$ gets larger and larger
(though not monotonously).
\medskip\\
We now prove the equality
$$(\dagger)\ \ \ index(\rho_\ttp)=\left\{\begin{array}{ll}
1+F(\ttp)&\mbox{if $F(\ttp)$ is defined}\\
+\infty&\mbox{otherwise}
\end{array}\right.$$
\medskip\\
{\em Case $F(\ttp)$ is defined and $F(\ttp)=z$.}
\\Let $\tau$ be the computation time at which ${\cal C}_\ttp$ has
output $z$. Let $\Omega_\ttp$ be the set of $k$ such that oracle
$A'$ has been questioned about during the first $\tau$ steps of
${\cal C}_\ttp$.
For $t$ large enough, say $t\geq t_z$, we have
$Approx(A',t)\cap\Omega_\ttp=A'\cap\Omega_\ttp$.
In particular, the $\tau$ first steps of ${\cal C}_{\ttp,t}$
and ${\cal C}_\ttp$ will be exactly the same and both computations
output $z$.
The same with the $\tau$ first steps of ${\cal C}_{\ttp,t}$
and ${\cal C}_{\ttp,t+1}$.
\\Thus,
$w_{\ttp,t+1}\segment\{1,...,z\}=w_{\ttp,t}\segment\{1,...,z\}$.
\\
Let $w_\ttp=w_{\ttp,t+1}\segment\{1,...,z\}$.
Then all singleton sets $\{w_\ttp(i)\}$, where $1\leq i\leq z$,
are equivalence classes for the $\rho_{\ttp,t}$'s, hence for
$\rho_\ttp$.
\medskip\\
Now, if $n_{\ttp,t}>z$ then oracle $Approx(A',t)$ has been
questioned on $\Omega_{\ttp,t}(n_{\ttp,t})$ and differs from $A'$
on that set.
Let $u>t$ be first such that $Approx(A',u)$ agrees with $A'$
on $\Omega_{\ttp,t}(z+1)$.
Then the singleton class $\{w_{\ttp,t}(z+1)\}$ of
$\rho_{\ttp,t}$ is aggregated at step $u$ to the class of $0$
in $\rho_{\ttp,t+1}$, hence also in $\rho_\ttp$.
\medskip\\
Thus, the $\{w_\ttp(i)\}$'s, where $1\leq i\leq z$,
are the sole singleton equivalence classes of $\rho_\ttp$.
And the class of $0$ contains all other points in $\bbbn$.
\\
In particular, $index(\rho_\ttp)=1+F(\ttp)$.
\medskip\\
{\em Case $F(\ttp)$ is undefined because the output of ${\cal M}$
on input $\ttp$ with oracle $A'$ is infinite.}
\\
As in the above case, we see that there are more and more singleton
set classes of $\rho_{\ttp,t}$ which are never annihilated.
Thus, the index of $\rho_\ttp$ is infinite.
\medskip\\
This proves $(\dagger)$.
\medskip\\
Observing that all the construction of the $\rho_{\ttp,t}$'s is
$A$-recursive, we see that
$$\rho=\bigcup_{\ttp\in\words}\rho_\ttp$$
is $A$-r.e.
Thus, $\rho=W^{A,\words\times\bbbn^2}_\tta$ for some $\tta$.
The parameter property gives a total $A$-recursive function
$s:\words\times\words\to\words$ such that
$$\rho_\ttp=W^{A,\bbbn^2}_{s(\tta,\ttp)}$$
Thus, $p\mapsto index(\rho_\ttp)$ is indeed in
$index\circ{\cal F}^{RE^A(\bbbx^2)}$.
Thanks to $(\dagger)$, the same is true of $1+F$.
\medskip\\
C. {\em Inclusion\ \ $\maxrAj[\words\to\bbbn]+1
\subseteq index\circ{\cal F}^{RE^A(\bbbx^2)}$ is strict.}\\
The constant $0$ function is an obvious counterexample
to equality.
\medskip\\
D. {\em Inclusion\ \ $index\circ{\cal PF}^{RE^A(\bbbx^2)}
\subseteq\maxrAj[\words\to\bbbn]$ is strict.}\\
We exhibit a function $\kappa_X$ in
$\PR[A']\setminus index\circ{\cal PF}^{RE^A(\bbbx^2)}\neq\emptyset.$
\\
Let $X\subset\words$ be $A'$-recursive, i.e. $\Delta^{0,A}_2$,
but not a boolean combination of $\Sigma^{0,A}_1$ sets.
Let $\kappa_X:\words\to\bbbn$ be the $\{0,1\}$-valued
characteristic function of $X$.
Then $\kappa_X$ is $A'$-recursive
(hence in $\maxrAj[\words\to\bbbn]$)
and $\kappa_X^{-1}(0)=X$ is a $\Delta^{0,A}_2$ set which is not
a boolean combination of $\Sigma^{0,A}_1$ sets.
\medskip\\
Now, suppose $G$ is in $index\circ{\cal PF}^{RE^A(\bbbx^2)}$ and
$G=index(W^{A,\bbbx^2}_{g(\ttp)})$ where $g:\words\to\words$ is
in $\PR[A]$. Then
\begin{eqnarray*}
G(\ttp)=0&\Leftrightarrow&(\mbox{$g(\ttp)$ is defined}\
\wedge\ W^{A,\bbbx^2}_{g(\ttp)}=\emptyset)
\\
&\Leftrightarrow&(\mbox{$g(\ttp)$ is defined}\\
&&\wedge\ \forall t\ \forall \tte\
(g(\ttp) \mbox{ converges to $\tte$ in $t$ steps }
\Rightarrow\ W^{A,\bbbx^2}_\tte=\emptyset)
\end{eqnarray*}
so that $G^{-1}(0)$ is $\Sigma^{0,A}_1\wedge\Pi^{0,A}_1$.
\medskip\\
This shows that no $G\in index\circ{\cal PF}^{RE^A(\bbbx^2)}$ can be
equal to the above $\kappa_X$.
Therefore, the considered inclusion cannot be an equality.
\end{proof}
Let's finally observe a simple fact contrasting inclusions in
Thm.\ref{thm:index}.
\begin{proposition}\label{p:indexnoninclusion}
$1+\PR[A',\words\to\words]$ (a fortiori $1+\maxprAj[\words\to\bbbn]$)
is not included in $index\circ{\cal PF}^{RE^A(\bbbx^2)}$.
\end{proposition}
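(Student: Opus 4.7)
The plan is to leverage the inclusion $\indexx\circ{\cal PF}^{RE^A(\bbbx^2)}\subseteq\maxrAj[\words\to\bbbn]$ furnished by Thm.\ref{thm:index}(1) and exhibit a function in $1+\PR[A',\words\to\bbbn]$ whose graph is too arithmetically complex to lie in $\maxrAj[\words\to\bbbn]$. The key preliminary observation, obtained by a straightforward relativization to $A'$ of the graph-complexity argument in the proof of Prop.\ref{p:maxprANDjump}, is that every $F\in\maxrAj[\words\to\bbbn]$ has graph in $\Sigma^{0,A'}_1\wedge\Pi^{0,A'}_1$; indeed, if $F=\max f$ with $f$ total $A'$-recursive, then
$$F(\ttp)=y\ \Leftrightarrow\ (\exists t\ f(\ttp,t)=y)\ \wedge\ (\forall u\ \forall z>y\ f(\ttp,u)\neq z).$$

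I would then take $\varphi\in\PR[A',\words\to\bbbn]$ defined by $\varphi(\ttp)=0$ if $\ttp\in K^{A'}$ (the $A'$-halting problem) and undefined otherwise; the graph of $1+\varphi$ is $\{(\ttp,1):\ttp\in K^{A'}\}$. Were it $\Pi^{0,A'}_1$, its horizontal section at $y=1$ would make $K^{A'}$ a $\Pi^{0,A'}_1$ set, contradicting the $\Sigma^{0,A'}_1$-completeness of $K^{A'}$. Hence $1+\varphi\notin\maxrAj[\words\to\bbbn]$, so by the above inclusion $1+\varphi\notin\indexx\circ{\cal PF}^{RE^A(\bbbx^2)}$, which settles the $\PR[A',\words\to\bbbn]$ version of the claim (interpreting $\PR[A',\words\to\words]$ in the statement via an implicit $\bbbn$-valued identification).

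For the \emph{a fortiori} assertion about $1+\maxprAj[\words\to\bbbn]$, I would invoke Remark \ref{rk:minPR}(1) to pick $F\in\maxprAj[\words\to\bbbn]\setminus\maxrAj[\words\to\bbbn]$ and check that $1+F\notin\maxrAj[\words\to\bbbn]$ by the following shift argument: if $1+F=\max g$ with $g:\words\times\bbbn\to\bbbn$ total $A'$-recursive, define $g'(\ttp,t)=\max(g(\ttp,t)-1,0)$; this $g'$ is total $A'$-recursive, and since $1+F\geq 1$ on its domain one verifies $\max g'=F$, contradicting the choice of $F$. A final application of Thm.\ref{thm:index}(1) yields $1+F\notin\indexx\circ{\cal PF}^{RE^A(\bbbx^2)}$. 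The only delicate ingredients are the graph-complexity bound for $\maxrAj[\words\to\bbbn]$ (a routine relativization of Prop.\ref{p:maxprANDjump}) and the $\max(g-1,0)$ trick transferring $1+F\in\maxrAj$ back to $F\in\maxrAj$; neither is expected to pose a real obstacle.
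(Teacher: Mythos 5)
Your reduction via the inclusion $\indexx\circ{\cal PF}^{RE^A(\bbbx^2)}\subseteq\maxrAj[\words\to\bbbn]$ of Thm.~\ref{thm:index} is a legitimate strategy (and genuinely different from the paper's, which bounds $G^{-1}(1)$ by $\Pi^{0,A}_2$ directly for $G$ in the index class and then uses a $\Sigma^{0,A}_2$, non-$\Pi^{0,A}_2$ section), but your key step fails. Writing $H$ for the $A'$-halting problem (your ``$K^{A'}$''), the graph of $1+\varphi$ is $\{(\ttp,1):\ttp\in H\}$, which is $\Sigma^{0,A'}_1$, and \emph{every} $\Sigma^{0,A'}_1$ set is trivially of the form $\Sigma^{0,A'}_1\wedge\Pi^{0,A'}_1$ (intersect with the whole space); indeed every member of $1+\PR[A',\words\to\bbbn]$ has $\Sigma^{0,A'}_1$ graph, so your graph-complexity invariant can never separate this family from $\maxrAj[\words\to\bbbn]$. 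The sentence ``were it $\Pi^{0,A'}_1$'' silently strengthens your own lemma from $\Sigma^{0,A'}_1\wedge\Pi^{0,A'}_1$ to $\Pi^{0,A'}_1$, which is a non sequitur, so no contradiction is obtained. What really excludes $1+\varphi$ from $\maxrAj[\words\to\bbbn]$ is the behaviour off the domain: if $1+\varphi=\max f$ with $f$ \emph{total} $A'$-recursive, undefinedness of $(\max f)(\ttp)$ forces $\{f(\ttp,t):t\in\bbbn\}$ to be unbounded, so $\{\ttp:\exists t\ f(\ttp,t)\geq 2\}$ would equal the complement of $H$, contradicting that this complement is not $\Sigma^{0,A'}_1$; equivalently, $G^{-1}(\{0,1\})$ is $\Pi^{0,A'}_1$ for every $G\in\maxrAj[\words\to\bbbn]$, whereas $(1+\varphi)^{-1}(\{0,1\})=H$ is not. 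With this substitute invariant (a preimage bound, in the spirit of the paper's $G^{-1}(1)$ argument, rather than a graph bound) your plan does go through.

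On the ``a fortiori'' clause you have the logic backwards: since $\PR[A',\words\to\bbbn]\subseteq\maxprAj[\words\to\bbbn]$ (take $f(\ttp,t)$ independent of $t$), non-inclusion of the smaller family $1+\PR[A',\words\to\bbbn]$ immediately gives non-inclusion of the larger $1+\maxprAj[\words\to\bbbn]$; no separate argument is needed, and the paper provides none. Your $\max(g-1,0)$ shift argument is correct as far as it goes, but it only yields the weaker statement about $1+\maxprAj[\words\to\bbbn]$ and does nothing to repair the main claim about $1+\PR[A',\words\to\bbbn]$, which is the one carrying the content of the proposition.
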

\begin{proof}
The proof is analog to that of point D in the proof of
Thm.\ref{thm:index}.
\medskip\\
1. We show that $G^{-1}(1)$ is $\Pi^{0,A}_2$ for every
$G\in index\circ{\cal PF}^{RE^A(\bbbx^2)}$.
\\
Suppose $G=index(W^{A,\bbbx^2}_{g(\ttp)})$ where $g:\words\to\words$
is partial $A$-recursive.
\\
Let's denote $W^{A,\bbbx^2}_{\tte,t}$ the finite part of
$W^{A,\bbbx^2}_\tte$ obtained after $t$ steps of its enumeration.
Let's also denote $CV_g(\ttp,\tte,t)$ the $A$-recursive relation
stating that $g(\ttp)$ converges to $\tte$ in $\leq t$ steps.
Then
\begin{eqnarray*}
G(\ttp)=1&\Leftrightarrow&(\mbox{$g(\ttp)$ is defined}\
\wedge\ W^{A,\bbbx^2}_{g(\ttp)}\neq\emptyset\
\\&&\wedge\ W^{A,\bbbx^2}_{g(\ttp)}
\mbox{ is an equivalence relation with index $1$})
\\
&\Leftrightarrow&(\mbox{$g(\ttp)$ is defined}\
\wedge\ W^{A,\bbbx^2}_{g(\ttp)}\neq\emptyset\
\\
&&\wedge\ \forall t\ \forall \tte\
(CV_g(\ttp,\tte,t)\ \Rightarrow
\\&&\hspace{2cm}W^{A,\bbbx^2}_\tte
\mbox{ is an equivalence relation with index $1$})
\end{eqnarray*}
The first two conjuncts are clearly $\Sigma^{0,A}_1$.
As for the last one, observe that $W^{A,\bbbx^2}_\tte$
is an equivalence relation if and only if
\begin{eqnarray*}
&\forall \ttx,\tty\in\bbbx\
((\ttx,\tty)\in W^{A,\bbbx^2}_\tte\ \Rightarrow\
(\ttx,\ttx)\in W^{A,\bbbx^2}_\tte\ \wedge\
(\tty,\ttx)\in W^{A,\bbbx^2}_\tte)&
\\
&\wedge\ \forall \ttx,\tty,\ttz\in\bbbx\
((\ttx,\tty)\in W^{A,\bbbx^2}_\tte\ \wedge\
(\tty,\ttz)\in W^{A,\bbbx^2}_\tte)\ \Rightarrow\
(\ttx,\ttz)\in W^{A,\bbbx^2}_\tte)&
\end{eqnarray*}
Which is $\Pi^{0,A}_2$ since $W^{A,\bbbx^2}_\tte$ is
$\Sigma^{0,A}_1$.
\\
Also, if $W^{A,\bbbx^2}_\tte$ is a non empty equivalence relation
then it has index $1$ if and only if
\begin{eqnarray*}
&\forall \ttx,\tty,\ttx',\tty'\in\bbbx\
((\ttx,\ttx')\in W^{A,\bbbx^2}_\tte\ \wedge\
(\tty,\tty')\in W^{A,\bbbx^2}_\tte,)\ \Rightarrow\
(\ttx,\tty)\in W^{A,\bbbx^2}_\tte)&
\end{eqnarray*}
Which is again $\Pi^{0,A}_2$.
\medskip\\
This proves that $G^{-1}(1)$ is indeed $\Pi^{0,A}_2$.
\medskip\\
2. Now, let $X\subset\bbbx$ be $\Sigma^{0,A'}_1$ and not
$A'$-recursive.
Thus, $X$ is $\Sigma^{0,A}_2$ and not $\Pi^{0,A}_2$.
Let $\pi_X:\words\to\bbbn$ be such that
$$\pi_X(\ttp)=\left\{\begin{array}{ll}
1&\mbox{if $\ttp\in X$}\\
\mbox{undefined}&\mbox{otherwise}\end{array}\right.$$
Then $\pi_X\in1+\PR[A',\words\to\bbbn]$.
\medskip\\
Since $\pi_X^{-1}(1)=X$ is not $\Pi^{0,A}_2$, $\pi_X$ cannot be in
$index\circ{\cal PF}^{RE^A(\bbbx^2)}$.
\end{proof}
%
%
%%%%%%%%%%%%%%%%%%%%%%%%%%%%%%%%%%%%%%%%%%%%%%%%%%%%%%%%%%%%%%%
\subsection{Characterization of the $\Delta\indexx$ self-enumerated
systems}  \label{ss:DeltaIndex}
%%%%%%%%%%%%%%%%%%%%%%%%%%%%%%%%%%%%%%%%%%%%%%%%%%%%%%%%%%%%%%%

\begin{theorem}\label{thm:DeltaIndex}$\\ $
Let $A\subseteq\bbbn$ and let $A''$ be the second jump of $A$.
Let $\bbbx$ be a basic set.
\medskip\\
{\bf 1.}\ \
$\Delta(index\circ{\cal F}^{RE^A(\bbbx)}))
=\Delta(index\circ{\cal PF}^{RE^A(\bbbx)}))
=\PR[A'',\words\to\bbbz]$
\medskip\\
{\bf 2.}
$K^\bbbz_{\Delta(index\circ{\cal F}^{RE^A(\bbbx)})}
\eqct K^{A'',\bbbz}$.
\medskip\\
We shall simply write $K^{\bbbn,A}_{\Delta index}$
in place of
$K^\bbbz_{\Delta(index\circ{\cal F}^{RE^A(\bbbn)})}\segment\bbbn$.
\\
When $A=\emptyset$ we simply write $K^{\bbbz}_{\Delta index}$.
\end{theorem}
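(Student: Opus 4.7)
The plan is to mimic the short proof of Theorem~\ref{thm:DeltaCard}, substituting Theorem~\ref{thm:index} for Theorem~\ref{thm:card} and relativizing Theorem~\ref{thm:Deltamax} to the oracle $A'$ (whose jump is $A''$). Part~2 will then follow immediately from Part~1, since equal self-enumerated systems share the same Kolmogorov complexity up to an additive constant, and the system $(\bbbz,\PR[A'',\words\to\bbbz])$ has Kolmogorov complexity $K^{A'',\bbbz}$ by Example~\ref{ex:selfA} and Definition~\ref{def:Kself}.

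The first step is the elementary observation that the $\Delta$ operation absorbs a uniform shift by $1$: for any family ${\cal F}$ of functions $\words\to\bbbn$, one has
\[
\Delta({\cal F}+1)=\{(f+1)-(g+1):f,g\in{\cal F}\}=\{f-g:f,g\in{\cal F}\}=\Delta({\cal F}).
\]
Applying the $\Delta$ operation to the strict chain of inclusions furnished by Theorem~\ref{thm:index}, namely
\[
\maxrAj[\words\to\bbbn]+1\ \subset\ \indexx\circ{\cal F}^{RE^A(\bbbx)}\ \subset\ \indexx\circ{\cal PF}^{RE^A(\bbbx)}\ \subset\ \maxrAj[\words\to\bbbn],
\]
and invoking the above absorption on the leftmost term, I would obtain
\[
\Delta(\maxrAj[\words\to\bbbn])\ \subseteq\ \Delta(\indexx\circ{\cal F}^{RE^A(\bbbx)})\ \subseteq\ \Delta(\indexx\circ{\cal PF}^{RE^A(\bbbx)})\ \subseteq\ \Delta(\maxrAj[\words\to\bbbn]),
\]
forcing all four families to coincide; thus the strict inclusions of Theorem~\ref{thm:index} collapse to equalities once $\Delta$ is applied.

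Next I would relativize Theorem~\ref{thm:Deltamax} to the oracle $A'$: since $(A')'=A''$, it asserts $\PR[A'',\words\to\bbbz]=\Delta(\maxrAj[\words\to\bbbn])$. Combining this with the previous paragraph yields Part~1, and Part~2 follows at once by the reduction to Part~1 explained above.

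I do not foresee any essential obstacle in this derivation, since the substantive content has already been carried by Theorems~\ref{thm:index} and~\ref{thm:Deltamax}; the present theorem is a genuine corollary. The only delicate point requiring care is the $+1$ absorption used in the second paragraph: without it, applying $\Delta$ to the chain of Theorem~\ref{thm:index} would only give $\Delta(\maxrAj[\words\to\bbbn]+1)\subseteq\cdots\subseteq\Delta(\maxrAj[\words\to\bbbn])$ with no immediate reason for the two ends to agree, and the whole collapse argument would fail.
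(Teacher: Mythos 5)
Your proposal is correct and follows essentially the same route as the paper: apply $\Delta$ to the chain of inclusions from Theorem~\ref{thm:index}, use the trivial identity $\Delta({\cal G}+1)=\Delta({\cal G})$ to collapse the chain to equalities, and conclude with Theorem~\ref{thm:Deltamax} taken at the oracle $A'$ (so that $(A')'=A''$), Part~2 being an immediate corollary of Part~1.
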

\begin{proof}
Point 2 is a direct corollary of Point 1.
Let's prove point 1.
Using Thm.\ref{thm:index}, and applying the $\Delta$ operator,
we get
\medskip\\
$\Delta(\maxrAj[\words\to\bbbn]+1)
\subseteq
\Delta(index\circ{\cal F}^{RE^A(\bbbx^2)})$

\hfill{$\subseteq
\Delta(index\circ{\cal PF}^{RE^A(\bbbx^2)})
\subseteq
\Delta(\maxrAj[\words\to\bbbn])$}
\medskip\\
But, for any family ${\cal G}$ of functions $\words\to\bbbn$,
we trivially have $\Delta({\cal G}+1)=\Delta({\cal G})$.
This proves that the above inclusions are, in fact, equalities.
We conclude with Thm.\ref{thm:Deltamax}.
\end{proof}
%
%
%
%%%%%%%%%%%%%%%%%%%%%%%%%%%%%%%%%%%%%%%%%%%%%%%%%%%%%%%%%%%%%%%
%%%%%%%%%%%%%%%%%%%%%%%%%%%%%%%%%%%%%%%%%%%%%%%%%%%%%%%%%%%%%%%
%%%%%%%%%%%%%%%%%%%%%%%%%%%%%%%%%%%%%%%%%%%%%%%%%%%%%%%%%%%%%%%
%%%%%%%%%%%%%%%%%%%%%%%%%%%%%%%%%%%%%%%%%%%%%%%%%%%%%%%%%%%%%%%
\section{Functional representations of $\bbbn$} \label{s:church}
%%%%%%%%%%%%%%%%%%%%%%%%%%%%%%%%%%%%%%%%%%%%%%%%%%%%%%%%%%%%%%%
%%%%%%%%%%%%%%%%%%%%%%%%%%%%%%%%%%%%%%%%%%%%%%%%%%%%%%%%%%%%%%%
%%%%%%%%%%%%%%%%%%%%%%%%%%%%%%%%%%%%%%%%%%%%%%%%%%%%%%%%%%%%%%
%
\begin{notation}[Functions sets]\label{not:general2}
We denote
\\\indent -\ \ $Y^X$ the set of total functions from $X$ into $Y$.
\\\indent -\ \ $X\to Y$ the set of partial functions from $X$
               into $Y$.
\\\indent -\ \ $X\stackrel{1-1}{\to}X$ the set of injective
               partial functions from $X$ into $X$.
\\\indent -\ \ $Id_X$ the identity function over $X$.
\end{notation}
%
%
%%%%%%%%%%%%%%%%%%%%%%%%%%%%%%%%%%%%%%%%%%%%%%%%%%%%%%%%%%%%%%%
\subsection{Basic Church representation of $\bbbn$}
\label{ss:church}
%%%%%%%%%%%%%%%%%%%%%%%%%%%%%%%%%%%%%%%%%%%%%%%%%%%%%%%%%%%%%%%
%
First, let's introduce some simple notations related to function
iteration.
\begin{definition}[Iteration]\label{def:it}$\\ $
1) If $f:X\to X$ is a partial function, we inductively define
for $n\in\bbbn$ the $n$-th iterate $f^{(n)}:X\to X$ of $f$
as the partial function such that:
     $$f^{(0)}=Id_X \ ,\ f^{(n+1)}=f^{(n)}\circ f$$
\medskip
2) $It^{(n)}_X:(X\to X)\to(X\to X)$
is the total functional $f\mapsto f^{(n)}$.
\\
$It_X^\bbbn:\bbbn \to (X\to X)^{(X\to X)}$
is the total functional $n\mapsto It_X^n$.
\end{definition}
The following Proposition is easy.
\begin{proposition}\label{p:injectiveIt}
The total functional $It^\bbbn_X:\bbbn \to (X\to X)^{(X\to X)}$
is injective (hence admits a left inverse) if and only if $X$ is
an infinite set.
\end{proposition}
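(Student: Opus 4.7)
The plan is to prove both directions of the equivalence directly. For the implication ``$X$ infinite $\Rightarrow\ It^\bbbn_X$ injective'', I will show the contrapositive in a constructive form: given distinct $n, m \in \bbbn$ (say $n < m$), I will exhibit a single partial function $f : X \to X$ such that $f^{(n)} \neq f^{(m)}$. Since $X$ is infinite, pick any injection $\sigma : \bbbn \to X$ and define $f$ by setting $f(\sigma(k)) = \sigma(k+1)$ for all $k \in \bbbn$, and leaving $f$ undefined off the range of $\sigma$. A trivial induction on $j$ yields $f^{(j)}(\sigma(0)) = \sigma(j)$, so that $f^{(n)}(\sigma(0)) = \sigma(n) \neq \sigma(m) = f^{(m)}(\sigma(0))$ by injectivity of $\sigma$. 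Hence $It^{(n)}_X \neq It^{(m)}_X$, i.e.\ $It^\bbbn_X(n) \neq It^\bbbn_X(m)$. This also covers the edge case $n = 0$, since $Id_X(\sigma(0)) = \sigma(0) \neq \sigma(m)$ for $m > 0$.

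For the converse, ``$X$ finite $\Rightarrow\ It^\bbbn_X$ not injective'', I will produce $n \neq m$ such that $f^{(n)} = f^{(m)}$ \emph{uniformly} over all partial $f : X \to X$. Let $N$ be the (finite) cardinality of the set of partial functions $X \to X$, which equals $(|X|+1)^{|X|}$. For each fixed $f$, the sequence $(f^{(j)})_{j \ge 0}$ lies in this $N$-element set and is therefore eventually periodic, with both pre-period and period at most $N$. Setting $L = \mathrm{lcm}(1,2,\ldots,N)$, the period of every $f$ divides $L$, so $f^{(N+L)} = f^{(N)}$ for every partial $f : X \to X$. This gives $It^{(N)}_X = It^{(N+L)}_X$ with $N \neq N+L$, so $It^\bbbn_X$ is not injective.

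The parenthetical ``admits a left inverse'' is then immediate: once $It^\bbbn_X$ is injective and its domain $\bbbn$ is nonempty, define a left inverse $\ell : (X\to X)^{(X\to X)} \to \bbbn$ by sending each functional $\Psi$ in the image of $It^\bbbn_X$ to the unique $n$ with $It^{(n)}_X = \Psi$, and every other functional to, say, $0$.

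The only nontrivial point is the uniformity required in the finite case: a naive pigeonhole gives $n_f < m_f \leq N$ depending on $f$, but this does not suffice to collapse $It^{(n)}_X$ and $It^{(m)}_X$ as functionals. The remedy above, bounding the tail by $N$ and aligning all possible periods by taking an lcm, is the standard workaround and raises no essential difficulty.
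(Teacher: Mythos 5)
Your proof is correct; the paper itself offers no argument for this proposition (it is dismissed as ``easy''), and your two directions are exactly the natural ones: a shift map along an injection $\bbbn\to X$ separates the iterators when $X$ is infinite, and eventual periodicity of $(f^{(j)})_j$ in the finite function space, made uniform in $f$ via the $\mathrm{lcm}$ of all possible periods and a common bound on pre-periods, collapses two distinct iterators when $X$ is finite. Your explicit attention to the uniformity issue (a pigeonhole pair $n_f<m_f$ depending on $f$ would not suffice) is precisely the one point where care is needed, and you handle it correctly.
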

We can now come to the functional representation of integers
introduced by Church, 1933 \cite{church33}.
\begin{definition}[Church representation of $\bbbn$]
\label{def:Church}$\\ $
If $X$ is an infinite set, the Church representation of $\bbbn$
relative to $X$ is the function
$$\Church^{\bbbn}_X:(X\to X)^{(X\to X)}\to\bbbn$$
which is the unique left inverse of $It_X^\bbbn$ with
domain $Range(It_X^\bbbn)=\{It_X^n:n\in\bbbn\}$, i.e.
\begin{eqnarray*}
\Church^{\bbbn}_X\circ It_X^\bbbn&=&Id_\bbbn\\
\Church^{\bbbn}_X(F)&=&\left\{
\begin{array}{ll}
               n&\mbox{if $F=It_X^n$}\\
\mbox{undefined}&\mbox{if $\forall n\in\bbbn\ F\neq It_X^n$}
\end{array}\right.
\end{eqnarray*}
\end{definition}
For future use in Def.\ref{def:effChurch}, let's introduce the following
variant of $\Church^{\bbbn}_X$.
\begin{definition}\label{def:church}
We denote
$church^{\bbbn,A}_X:(\PR[A,\bbbx\to\bbbx])^{\PR[A,\bbbx\to\bbbn]}$
the functional which is the unique left inverse of the restriction
of $It^\bbbn_X$ to $(\PR[A,\bbbx\to\bbbx])^{\PR[A,\bbbx\to\bbbx]}$,
i.e.
\begin{eqnarray*}
\church^{\bbbn,A}_X(F)&=&\left\{
\begin{array}{ll}
n&\mbox{if }
F=It_X^n\segment(\PR[A,\bbbx\to\bbbx])^{\PR[A,\bbbx\to\bbbx]}\\
\mbox{undefined}&\mbox{if }
\forall n\in\bbbn\ F\neq
It_X^n\segment(\PR[A,\bbbx\to\bbbx])^{\PR[A,\bbbx\to\bbbx]}
\end{array}\right.
\end{eqnarray*}
\end{definition}
%
%%%%%%%%%%%%%%%%%%%%%%%%%%%%%%%%%%%%%%%%%%%%%%%%%%%%%%%%%%%%%%%
\subsection{Computable and effectively continuous functionals}
\label{ss:computfunctionals}
%%%%%%%%%%%%%%%%%%%%%%%%%%%%%%%%%%%%%%%%%%%%%%%%%%%%%%%%%%%%%%%
%
We recall the two classical notions of partial computability
for functionals,
cf. Odifreddi's book \cite{odifreddi} p.178, 188, 197.
\begin{definition}[Kleene partial computable functionals]
\label{def:Kleene}$\\ $
{\bf 1.}
Let $\bbbx,\bbby,\bbbs,\bbbt$ be some basic space and fix some
suitable representations of their elements by words.
An $(\bbbx\to\bbby)$-oracle Turing machine with inputs and outputs
respectively in $\bbbs,\bbbt$ is a Turing machine
${\cal M}$ which has a special oracle tape and is allowed at
certain states to ask an oracle $f\in(\bbbx\to\bbbx)$ what are
the successive digits of the value of $f(\ttq)$ where $\ttq$
is the element of $\bbbx$ currently written on the oracle tape.
\\
The functional $\Phi_{\cal M}:((\bbbx\to\bbby)\times\bbbs)\to\bbbt$
associated to ${\cal M}$ maps the pair $(f,\tts)$ on the output
(when defined) computed by ${\cal M}$ when $f$ is given as the partial
function oracle and $\tts$ as the input.
\\
If on input $\ttx$ and oracle $f$ the computation asks the oracle
its value on an element on which $f$ is undefined then ${\cal M}$
gets stuck, so that $\Phi_{\cal M}(f,\ttx)$ is undefined.
\medskip\\
{\bf 2.}
A functional $\Phi:((\bbbx\to\bbby)\times\bbbs)\to\bbbt$ is partial
computable (also called partial recursive) if $\Phi=\Phi_{\cal M}$
for some ${\cal M}$.
\\
A functional obtained via curryfications from such a functional is
also called partial computable.
\medskip\\
We denote $\KleenePC^\tau$ the family of partial computable
functionals with type $\tau$.
\\
If $A\subseteq\bbbn$, we denote $A\tiret\KleenePC^\tau$ the analog
family with the extra oracle $A$.
\end{definition}
\begin{definition}[Uspenskii (effectively) continuous functionals]
\label{def:Uspenskii}
Denote $Fin(\bbbx\to\bbby)$ the class of partial functions
$\bbbx\to\bbby$ with finite domains.
Observe that, for $\alpha,\beta\in Fin(\bbbx\to\bbby)$ are
compatible if and only if $\alpha\cup\beta\in Fin(\bbbx\to\bbby)$.
\medskip\\
{\bf 1.}
Let's say that the relation
$R\subseteq Fin(\bbbx\to\bbby)\times\bbbs\times\bbbt$
is functional if
$$\alpha\cup\beta\in Fin(\bbbx\to\bbby)\ \wedge\
(\alpha,\tts,\ttt)\in R\ \wedge\ (\beta,\tts,\ttt')\in R
\ \Rightarrow\ \ttt=\ttt'$$
To such a functional relation $R$ can be associated a functional
$$\Phi_R:((\bbbx\to\bbby)\times\bbbs)\to\bbbt$$
such that, for every $f,\tts,\ttt$,
\medskip\\\medskip\centerline{
$\begin{array}{crcl}
(\dagger)\indent\indent&\Phi(f,\tts)=\ttt&\Leftrightarrow&
\exists u\subseteq f\ R(u,\tts,\ttt)
\end{array}$}
{\bf 2.} {\bf (Uspenskii \cite{uspenskii55}, Nerode \cite{nerode57})}
A functional $\Phi:((\bbbx\to\bbby)\times\bbbs)\to\bbbt$ is
{\bf continuous} if it is of the form $\Phi_R$ for some functional
relation $R$.
\medskip\\
$\Phi$ is {\bf effectively continuous
(resp. ($A$-effectively continuous)}
if $R$ is r.e. (resp. $A$-r.e.).
Effectively continuous functionals are also called recursive
operators (cf. Rogers \cite{rogers}, Odifreddi \cite{odifreddi}).
\\
A functional obtained via curryfications from such a functional is
also called effectively continuous.
\\
We denote $\EffCont^\tau$ the family of effectively continuous
functionals with type $\tau$.
\\
If $A\subseteq\bbbn$, we denote $A\tiret\EffCont^\tau$ the analog
family with the extra oracle $A$.
\end{definition}
Effective continuity is more general than partial computability
(cf. \cite{odifreddi} p.188).
\begin{theorem}\label{thm:KleeneUspenskii}
Let $A\subseteq\bbbn$.\\
{\bf 1.} {\bf (Uspenskii \cite{uspenskii55}, Nerode \cite{nerode57})}
Partial $A$-computable functionals are $A$-effectively continuous.
\medskip\\
{\bf 2.} {\bf (Sasso \cite{sasso71,sasso75})}
There are $A$-effectively continuous functionals which are not
partial $A$-computable.
\end{theorem}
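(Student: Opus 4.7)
The plan is to unwind the Turing machine definition. Given $\Phi = \Phi_{\cal M}$ partial $A$-computable, I declare $R(u,\tts,\ttt)$ to hold iff there is a halting computation of ${\cal M}$ on input $\tts$ with oracle $A$ in which every oracle query $\ttq$ is answered by reading $u(\ttq)$ (so in particular $\ttq \in domain(u)$), producing output $\ttt$. This relation is $A$-r.e.: enumerating triples $(u,\tts,n)$ with $u$ a finite partial function and $n$ a step bound, and simulating ${\cal M}$ for $n$ steps with its oracle queries diverted to $u$, is an $A$-recursive process. Functionality follows by determinism: if $R(\alpha,\tts,\ttt)$ and $R(\beta,\tts,\ttt')$ hold with $\alpha \cup \beta$ still a partial function, the two simulations produce the same computation step by step (receiving consistent answers throughout), so $\ttt=\ttt'$. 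Finally $\Phi = \Phi_R$ follows from the standard observation that any halting computation of ${\cal M}$ queries the oracle at only finitely many points, so the finite restriction $u$ of $f$ to queried values witnesses $R(u,\tts,\ttt)$ whenever $\Phi(f,\tts)=\ttt$, and conversely any $u\subseteq f$ in $R$ forces ${\cal M}$ on $f$ to run exactly the same simulation.

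\textbf{Part 2 (Sasso).} I would exhibit a non-sequential witness. Fix $A = \emptyset$ (the general case is routinely identical after relativisation), set $\bbbx = \bbby = \bbbn$, and take $\bbbs$ to be a singleton $\{*\}$ so that the input can be suppressed. Define $\Phi(f) = 0$ if $f \neq \emptyset$ and $\Phi(\emptyset)$ undefined. Effective continuity is witnessed by the relation $R = \{(u,*,0) : u \in Fin(\bbbn\to\bbbn),\ u \neq \emptyset\}$, which is r.e., plainly functional (every admissible triple outputs $0$), and satisfies $\Phi = \Phi_R$ since $\exists u\subseteq f$ with $u\neq\emptyset$ iff $f\neq\emptyset$. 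To rule out partial computability, suppose for contradiction $\Phi = \Phi_{\cal M}$. Since ${\cal M}$ is deterministic on a fixed input, its behaviour up to the first oracle query is entirely prescribed: either ${\cal M}$ halts without ever querying the oracle, in which case $\Phi(\emptyset)$ would be defined, a contradiction; or ${\cal M}$ makes a first query at some fixed $n_0 \in \bbbn$, in which case I take $f = \{(n_0+1,0)\}$, so that $n_0 \notin domain(f)$ and ${\cal M}$ hangs on its very first query, making $\Phi(f)$ undefined---but $f$ is non-empty, so $\Phi(f) = 0$ by construction, again a contradiction.

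\textbf{Main obstacle.} Part 1 is essentially bookkeeping: once $R$ is written down, the three required properties (r.e., functional, $\Phi_R = \Phi$) are immediate. The real content sits in Part 2, which hinges on the inherently sequential nature of Turing machine oracle access --- the effectively continuous framework permits an arbitrary r.e.\ family of finite witnesses acting in parallel, whereas an oracle Turing machine must probe the oracle one query at a time and hangs as soon as it queries an undefined value (recall Def.~\ref{def:Kleene}). The subtle point to get right is precisely this ``hang on undefined'' semantics: it is what forces any candidate machine to commit to a single first query $n_0$, and hence to be defeated by the single-point oracle $\{(n_0+1,0)\}$. Identifying a natural functional (a non-emptiness test) whose r.e.\ witness relation is compatible with \emph{every} non-empty finite oracle, yet whose computation by a Turing machine is defeated by that carefully chosen bad first query, is the core insight of Sasso's construction.
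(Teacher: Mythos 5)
The paper offers no proof of Thm.~\ref{thm:KleeneUspenskii}: both points are quoted from the literature (Uspenskii--Nerode for point 1, Sasso for point 2), so there is no in-paper argument to compare yours against; judged on its own, your proof is correct. Part 1 is the standard unwinding of Def.~\ref{def:Kleene}: your relation $R$ is $A$-r.e., functionality follows from determinism together with the fact that compatible finite oracles answer every query identically (so the two halting computations coincide step by step), and the finite-use property gives $\Phi_{\cal M}=\Phi_R$ in both directions. Part 2 is a correct instance of Sasso's phenomenon: the non-emptiness functional is given by an r.e.\ functional relation, yet no sequential machine under the ``gets stuck on an undefined query'' semantics of Def.~\ref{def:Kleene} can compute it, because its first query is fixed before any oracle information arrives. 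Two small repairs are needed: (i) your case split on ${\cal M}$ omits the possibility that ${\cal M}$ neither halts nor ever queries the oracle; in that case $\Phi_{\cal M}$ is nowhere defined, contradicting $\Phi(f)=0$ for $f\neq\emptyset$, so the argument survives, but the case should be stated; (ii) a one-point input set $\{*\}$ is not a basic set in the sense of Def.~\ref{def:basic}, so instead take $\bbbs=\bbbn$ and let $\Phi(f,\tts)$ ignore $\tts$ --- nothing else changes. With these cosmetic fixes your write-up is a complete, self-contained proof of the cited theorem.
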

However, restricted to total functions, both notions coincide.
\begin{proposition}
A functional $\Phi:(\bbby^{\bbbx})\times\bbbs\to\bbbt$
is the restriction of a partial $A$-computable functional
$((\bbbx\to{\bbby})\times\bbbs)\to\bbbt$ if and only if it is
the restriction of an $A$-effectively continuous functional.
\end{proposition}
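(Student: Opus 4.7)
The plan is to establish the two directions separately, noting that only the nontrivial direction requires real work.

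For the easy direction, if $\Phi$ is the restriction to $\bbby^\bbbx\times\bbbs$ of a partial $A$-computable functional $\Psi:(\bbbx\to\bbby)\times\bbbs\to\bbbt$, then by Thm.~\ref{thm:KleeneUspenskii}.1, $\Psi$ is itself $A$-effectively continuous, so $\Phi$ is the restriction of an $A$-effectively continuous functional, namely $\Psi$.

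For the nontrivial direction, suppose $\Phi$ is the restriction to $\bbby^\bbbx\times\bbbs$ of an $A$-effectively continuous functional $\Psi=\Phi_R$, where $R\subseteq Fin(\bbbx\to\bbby)\times\bbbs\times\bbbt$ is functional and $A$-r.e.\ (cf.\ Def.~\ref{def:Uspenskii}). I would build a partial $A$-computable functional $\widetilde{\Psi}$ that agrees with $\Psi$ on all \emph{total} function oracles, as follows. Fix an $A$-recursive enumeration $(u_n,\tts_n,\ttt_n)_{n\in\bbbn}$ of $R$. Define an $(\bbbx\to\bbby)$-oracle Turing machine ${\cal M}$ which, on input $\tts$ and partial function oracle $f$, proceeds in stages $n=0,1,2,\ldots$: at stage $n$, if $\tts_n=\tts$ it queries the oracle at each point of $domain(u_n)$ (a finite set) and checks whether $f$ agrees with $u_n$ there; if so, it outputs $\ttt_n$ and halts, otherwise it goes to stage $n+1$. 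Let $\widetilde{\Psi}=\Phi_{\cal M}$.

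It remains to verify that $\widetilde{\Psi}$ restricted to $\bbby^\bbbx\times\bbbs$ equals $\Phi$. Given a \emph{total} $f\in\bbby^\bbbx$, each oracle query at a point of $domain(u_n)$ terminates, so the machine never gets stuck and faithfully tests whether $u_n\subseteq f$. Hence $\widetilde{\Psi}(f,\tts)=\ttt$ iff some stage $n$ succeeds, iff there exists $u\subseteq f$ with $(u,\tts,\ttt)\in R$, iff $\Psi(f,\tts)=\ttt$ by definition of $\Phi_R$ via $(\dagger)$; functionality of $R$ ensures that the value output by the first successful stage is the unique correct one. Thus $\widetilde{\Psi}\segment(\bbby^\bbbx\times\bbbs)=\Psi\segment(\bbby^\bbbx\times\bbbs)=\Phi$.

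The main obstacle — really the only point where partial computability and effective continuity diverge in general — is that on a partial oracle $f$ the machine may block when it queries $f$ at a point outside $domain(f)$, whereas $\Phi_R$ is defined purely in terms of existence of a compatible $u\subseteq f$. But this divergence is harmless here: the hypothesis restricts attention to total oracles, and on those the blocking phenomenon never occurs, so the simulation of $\Phi_R$ by ${\cal M}$ is faithful. This is precisely why the two notions collapse under the totality restriction even though, by Thm.~\ref{thm:KleeneUspenskii}.2, they are genuinely distinct on partial oracles.
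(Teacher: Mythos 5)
Your proof is correct. The paper actually states this proposition without proof (it is the classical Uspenskii--Nerode/Myhill--Shepherdson-style fact, cf.\ Odifreddi), so there is nothing to compare against; your argument is the standard one: the easy direction is exactly Thm.~\ref{thm:KleeneUspenskii}.1, and for the converse your machine that dovetails through an $A$-recursive enumeration of the functional relation $R$, checks the finite condition $u_n\subseteq f$ by oracle queries, and outputs the associated value works, since for a \emph{total} oracle no query blocks and functionality of $R$ guarantees the first successful stage gives the unique value $\Phi_R(f,\tts)$.
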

%
%
%%%%%%%%%%%%%%%%%%%%%%%%%%%%%%%%%%%%%%%%%%%%%%%%%%%%%%%%%%%%%%%
\subsection{Effectiveness of the {\em Apply} functional}
\label{ss:apply}
%%%%%%%%%%%%%%%%%%%%%%%%%%%%%%%%%%%%%%%%%%%%%%%%%%%%%%%%%%%%%%%
%%%%%%%%%%%%%%%%%%%%%%%%%%%%%%%%%%%%%%%%%%%%%%%%%%%%%%%%%%%%%%%

The following result will be used in
\S\ref{ss:syntaxChurch}-\ref{ss:effectiveChurch}.
\begin{proposition}\label{p:apply}
Let $\phi:\words\to \PR[A,\bbbx\to\bbbx]$ be partial $A$-recursive
(as a function $\words\times\bbbx\to\bbbx$)
and $\Phi:\words\to
A\tiret\EffCont^{\words\to((\bbbx\to\bbbx)\to(\bbbx\to\bbbx))}$
be effectively continuous.
There exists a partial $A$-recursive function
$g:\words\times\words\times\bbbx$ such that,
for all $\tte,\ttp\in\words$ and $\ttx\in\bbbx$,
$$(*)\indent\indent\indent
g(\ttp,\tte,\ttx)=(\Phi(\tte)(\phi(\ttp)))(\ttx)$$
\end{proposition}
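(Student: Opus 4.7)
The plan is to unpack effective continuity of $\Phi$ into an $A$-r.e. functional relation, then dovetail it with the $A$-r.e. graph of $\phi$ to compute $g$.

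First I would apply Def.\ref{def:Uspenskii} (together with its curryfication convention) to $\Phi$. Unfolding the type, $\Phi$ can be viewed as an $A$-effectively continuous functional $\words\times(\bbbx\to\bbbx)\times\bbbx \to \bbbx$, so there exists an $A$-r.e. functional relation
\[
R\ \subseteq\ \words\times Fin(\bbbx\to\bbbx)\times\bbbx\times\bbbx
\]
satisfying, for all $\tte\in\words$, $f:\bbbx\to\bbbx$, $\ttx\in\bbbx$,
\[
(\Phi(\tte)(f))(\ttx)=\ttt\ \Longleftrightarrow\ \exists u\in Fin(\bbbx\to\bbbx)\ \big(u\subseteq f\ \wedge\ (\tte,u,\ttx,\ttt)\in R\big).
\]
On the other hand, since $\phi$ is partial $A$-recursive as a function $\words\times\bbbx\to\bbbx$, its graph
\[
G_\phi\ =\ \{(\ttp,\ttx',\tty'):\phi(\ttp)(\ttx')=\tty'\}
\]
is $A$-r.e. in $\words\times\bbbx\times\bbbx$.

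Next I would describe the algorithm for $g(\ttp,\tte,\ttx)$: dovetail the $A$-enumeration of $R$ (restricted to first coordinate $\tte$ and third coordinate $\ttx$) together with the $A$-enumeration of the slice $\{(\ttx',\tty'):(\ttp,\ttx',\tty')\in G_\phi\}$. Whenever a pair $(u,\ttt)$ is enumerated from $R$, launch an auxiliary semidecision procedure that, for each of the finitely many $(\ttx',\tty')\in u$, waits for the triple $(\ttp,\ttx',\tty')$ to appear in the enumeration of $G_\phi$. As soon as all $|u|$ checks succeed for some enumerated $(u,\ttt)$, output $\ttt$ and halt. This is a partial $A$-recursive procedure in $(\ttp,\tte,\ttx)$, hence defines the desired $g\in\PR[A,\words\times\words\times\bbbx\to\bbbx]$.

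For correctness, observe that a pair $(u,\ttt)$ survives the auxiliary check iff $u\subseteq\phi(\ttp)$, so the equivalence above shows that the procedure halts on input $(\ttp,\tte,\ttx)$ with output $\ttt$ iff $(\Phi(\tte)(\phi(\ttp)))(\ttx)=\ttt$; uniqueness of $\ttt$ is guaranteed by the functional character of $R$, which precludes conflicting outputs from two pairs $(u_1,\ttt_1),(u_2,\ttt_2)$ both sub-functions of $\phi(\ttp)$.

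The only delicate point is the first step: interpreting the effective continuity of $\Phi$ uniformly in $\tte$, so as to obtain a \emph{single} $A$-r.e. relation $R$ parametrised by $\tte$, rather than a family of relations $R_\tte$ indexed by $\tte$ with no uniform enumeration. This is precisely what the convention at the end of Def.\ref{def:Uspenskii} (effective continuity is preserved under curryfication) buys; once that is granted, the rest of the argument is a routine dovetailing, and no further computability ingredient is needed.
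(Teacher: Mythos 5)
Your proposal is correct and follows essentially the same route as the paper's proof: extract a single $A$-r.e. functional relation $R$ parametrised by $\tte$ from the effective continuity of $\Phi$, enumerate it in dovetailed fashion with the $A$-r.e. graph of $\phi(\ttp)$, and output $\ttt$ once some $(u,\ttt)$ with $u\subseteq\phi(\ttp)$ is confirmed, with functionality of the sections $R^{(\tte)}$ ensuring consistency. The uniformity point you flag is handled in the paper exactly as you suggest, by taking $R$ to be a single $A$-r.e. subset of $\words\times Fin(\bbbx\to\bbbx)\times\bbbx\times\bbbx$ whose $\tte$-sections realize $\Phi(\tte)$.
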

\begin{proof}
Let $R\subseteq
\words\times Fin(\bbbx\to\bbbx)\times\bbbx\times\bbbx$
be an $A$-r.e. set such that, for all $\tte$,
$R^{(\tte)}=\{(\alpha,\ttx,\tty):(\tte,\alpha,\ttx,\tty)\in R\}$
is functional and $\Phi(\tte)=\Phi_{R^{(\tte)}}$.
We define $g(\ttp,\tte,\ttx)$ as follows:
\begin{enumerate}
\item[i.]
$A$-effectively enumerate $R^{(\tte)}$ and the graph of
$\phi(\ttp)$ up to the moment we get
$(\alpha,\ttx,\tty)\in R^{(\tte)}$ and a finite part
$\gamma$ of $\phi(\ttp)$ such that $\alpha\subseteq\gamma$.
\item[ii.]
If and when i halts then output $\tty$.
\end{enumerate}
It is clear that $g$ is partial $A$-recursive and satisfies $(*)$.
\end{proof}

%
%%%%%%%%%%%%%%%%%%%%%%%%%%%%%%%%%%%%%%%%%%%%%%%%%%%%%%%%%%%%%%%
\subsection{Functionals over $\PR[\bbbx\to\bbby]$
            and computability}
\label{ss:computfunctionalsPR}
%%%%%%%%%%%%%%%%%%%%%%%%%%%%%%%%%%%%%%%%%%%%%%%%%%%%%%%%%%%%%%%
%%%%%%%%%%%%%%%%%%%%%%%%%%%%%%%%%%%%%%%%%%%%%%%%%%%%%%%%%%%%%%%

Using indexes, one can also consider computability for functionals
operating on the sole partial recursive or $A$-recursive functions.
\begin{definition}
Let $A\subseteq\bbbn$ and let
$(\varphi^{\bbbx\to\bbby,A}_\tte)_{\tte\in\words}$ denote
some acceptable enumeration of $\PR[A,\bbbx\to\bbby]$
(cf. Def.\ref{def:acceptable}).
\medskip\\
{\bf 1.}
A functional $\Phi:\PR[A,\bbbx\to\bbby]\times\bbbs\to\bbbt$
is an $A$-effective functional on partial $A$-recursive functions if
there exists some partial $A$-recursive function
$f:\words\to\words$ such that, for all $\tts\in\bbbs,\tte\in\words$,
$$\Phi(\varphi^{\bbbx\to\bbby,A}_\tte)=f(\tte)$$
We denote
$A\tiret\Eff^{\PR[A,\bbbx\to\bbby]\times\bbbs\to\bbbt}$
the family of such functionals.
\medskip\\
{\bf 2.}
We denote
$A\tiret\Eff^{\PR[A,\bbbx\to\bbby]\times\bbbs_1
\to \PR[A,\bbbs_2\to\bbbt]}$
the family of functionals obtained by curryfication of the above
class with $\bbbs=\bbbs_1\times\bbbs_2$.
\\
An easy application of the parameter property shows that these
functionals are exactly those for which there exists some partial
$A$-recursive function $g:\words\times\bbbs_1\to\words$ such that,
for all $\tts_1\in\bbbs_1,\tte\in\words$,
$$\Phi(\varphi^{\bbbx\to\bbby,A}_\tte,\tts_1)
=\varphi^{\bbbs_2\to\bbbt,A}_{g(\tte,\tts_1)}$$
\end{definition}
\begin{note}$\\ $
{\bf 1.}
Thanks to Rogers' theorem (cf. Thm.\ref{thm:rogers}), the above
definition does not depend on the chosen acceptable enumerations.
\medskip\\
{\bf 2.}
The above functions $f,g$ should have the following properties:
\begin{eqnarray*}
\varphi^{\bbbx\to\bbby,A}_\tte=\varphi^{\bbbx\to\bbby,A}_{\tte'}
&\Rightarrow&f(\tte,\tts)=f(\tte',\tts)
\\
\varphi^{\bbbx\to\bbby,A}_\tte=\varphi^{\bbbx\to\bbby,A}_{\tte'}
&\Rightarrow&
\varphi^{\bbbs_2\to\bbbt,A}_{g(\tte,\tts_1)}
=\varphi^{\bbbs_2\to\bbbt,A}_{g(\tte',\tts_1)}
\end{eqnarray*}
\end{note}
As shown by the following remarkable result, such functionals
essentially reduce to those of Def.\ref{def:Uspenskii}
(cf. Odifreddi's book \cite{odifreddi} p.206--208).
\begin{theorem}[Uspenskii \cite{uspenskii55}, Myhill \& Shepherdson
\cite{myhillshepherdson}]\label{thm:uspenskii}$\\ $
Let $A\subseteq\bbbn$.
The $A$-effective functionals
$\PR[A,\bbbx\to\bbby]\to \PR[A,\bbbs\to\bbbt]$
are exactly the restrictions to $\PR[A,\bbbx\to\bbby]$ of
$A$-effectively continuous functionals 
$(\bbbx\to\bbby)\to(\bbbs\to\bbbt)$.
\end{theorem}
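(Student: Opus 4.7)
This is a two-direction equivalence, and I attack the easy direction first. Suppose $\Phi:(\bbbx\to\bbby)\to(\bbbs\to\bbbt)$ is $A$-effectively continuous. Uncurrying $\Phi$ into a functional of two arguments and taking $\phi$ in Proposition \ref{p:apply} to be a universal partial $A$-recursive function coming from a fixed acceptable enumeration $(\varphi^A_\tte)_{\tte\in\words}$ of $\PR[A,\bbbx\to\bbby]$, that proposition supplies a partial $A$-recursive function $g:\words\times\bbbs\to\bbbt$ satisfying $g(\tte,\tts)=\Phi(\varphi^A_\tte)(\tts)$. The $s$-$m$-$n$ theorem then yields a total $A$-recursive $h:\words\to\words$ such that $\varphi^A_{h(\tte)}=\Phi(\varphi^A_\tte)$, which is precisely the $A$-effectiveness condition for the restriction of $\Phi$ to $\PR[A,\bbbx\to\bbby]$.

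\textbf{For the converse} --- the substantive direction --- let $\Phi:\PR[A,\bbbx\to\bbby]\to\PR[A,\bbbs\to\bbbt]$ be $A$-effective, witnessed by a partial $A$-recursive $h$ with $\varphi^A_{h(\tte)}=\Phi(\varphi^A_\tte)$. Every finite partial function $\alpha$ is in $\PR[A,\bbbx\to\bbby]$, and a $\PR$-index for $\alpha$ is uniformly computable from any word-encoding of its finite graph, so the following definition makes sense:
\[
R\ =\ \{(\alpha,\tts,\ttt)\in Fin(\bbbx\to\bbby)\times\bbbs\times\bbbt:\Phi(\alpha)(\tts)=\ttt\}.
\]
This set is $A$-r.e.\ by dovetailing the computations of $\varphi^A_{h(\mathrm{code}(\alpha))}(\tts)$ over all finite $\alpha$ and all $\tts$. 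The proof then reduces to two tasks: first, verify that $R$ is functional; second, verify that $\Phi$ and $\Phi_R$ agree on $\PR[A,\bbbx\to\bbby]$. Together these yield the desired $A$-effectively continuous extension of $\Phi$.

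\textbf{Both reductions} depend on two classical Myhill--Shepherdson properties of $\Phi$, and establishing these is the main obstacle: \emph{monotonicity} --- if $\alpha\subseteq\beta$ are partial $A$-recursive, then $\Phi(\alpha)\subseteq\Phi(\beta)$ --- and \emph{compactness} --- if $f\in\PR[A,\bbbx\to\bbby]$ and $\Phi(f)(\tts)=\ttt$, then $\Phi(\alpha)(\tts)=\ttt$ for some finite $\alpha\subseteq f$. Each is proved by contradiction using Kleene's recursion theorem: given a supposed counterexample, one constructs a partial $A$-recursive $\gamma$ whose self-referential index $\tte^*$ is baked into its definition, so that $\gamma$ oscillates between two prescribed partial functions (in the monotonicity case, between $\alpha$ and $\beta$; in the compactness case, between a growing finite restriction of $f$ and $f$ itself) according to whether the observed computation of $\varphi^A_{h(\tte^*)}(\tts)$ has yet produced a designated value. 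The extensionality of $\Phi$ --- the fact that $h$ respects the equivalence $\varphi^A_\tte=\varphi^A_{\tte'}$ --- then forces incompatible behaviors in both branches of the dichotomy, contradicting the supposed failure. Once monotonicity and compactness are in hand, functionality of $R$ follows from monotonicity applied to the union of two compatible finite partial functions, the inclusion $\Phi_R\subseteq\Phi$ on $\PR$-inputs follows from monotonicity, and the reverse inclusion follows directly from compactness.
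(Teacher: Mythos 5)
The paper itself contains no proof of this statement: it is quoted as a classical theorem of Uspenskii and Myhill--Shepherdson, with a pointer to Odifreddi's book, so there is no in-paper argument to compare yours against. What you have written is a correct reconstruction of the standard Myhill--Shepherdson proof, suitably relativized to the oracle $A$. Your easy direction is right: uniform evaluation of an $A$-effectively continuous functional on partial $A$-recursive arguments given by indices (the content of Prop.~\ref{p:apply}, which you may invoke with a constant family of functionals and with the types $\bbbx\to\bbby$, $\bbbs\to\bbbt$ in place of $\bbbx\to\bbbx$) followed by s-m-n yields the witnessing index function, and it also shows that the restriction does land in $\PR[A,\bbbs\to\bbbt]$. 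Your converse is the classical route: define $R$ from the values of $\Phi$ on finite functions (uniformly $A$-r.e.\ via canonical indices of finite graphs and the witness $h$), then reduce functionality of $R$ and the identity $\Phi=\Phi_R$ on $\PR[A,\bbbx\to\bbby]$ to monotonicity and compactness of $\Phi$, each obtained from a recursion-theorem dichotomy that exploits extensionality of $h$; the dichotomies you describe (the self-referential $\gamma$ equals the larger function if $\varphi^A_{h(\tte^*)}(\tts)$ converges to the designated value, and the smaller one otherwise) are exactly the right constructions, and they go through verbatim with the relativized s-m-n and recursion theorems for an acceptable enumeration of $\PR[A,\bbbx\to\bbby]$. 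The only thing separating your proposal from a complete proof is that these two self-referential constructions are sketched rather than written out, but the plan as stated is sound and I see no gap in it.
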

%
%
%%%%%%%%%%%%%%%%%%%%%%%%%%%%%%%%%%%%%%%%%%%%%%%%%%%%%%%%%%%%%%%
\subsection{Effectivizations of Church representation of $\bbbn$}
\label{ss:effectiveChurch}
%%%%%%%%%%%%%%%%%%%%%%%%%%%%%%%%%%%%%%%%%%%%%%%%%%%%%%%%%%%%%%%
%
Observe the following trivial fact (which uses notations from
Def.\ref{def:Kleene},\ref{def:Uspenskii}).
\begin{proposition}\label{prop:chuchsystems}
Let $A\subseteq\bbbn$ and $\tau$ be any 2d order type.
\\
Functionals in $A\tiret\KleenePC^{\words\to\tau}$
(resp. $A\tiret\EffCont^{\words\to\tau}$)
are total maps $\words\to A\tiret\KleenePC^{\tau}$
(resp. $\words\to A\tiret\EffCont^{\tau}$).
\end{proposition}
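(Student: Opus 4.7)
The plan is to treat the two cases separately, by essentially the same curryfication argument, checking in each that (i) for every $\ttp \in \words$ the functional $\Phi(\ttp)$ belongs to the advertised effective class, and (ii) the assignment $\ttp \mapsto \Phi(\ttp)$ is total, i.e.\ defined for every word $\ttp$ rather than only on some partial recursive or r.e.\ subset of $\words$.

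For the Kleene case, I would unwind the curryfication in Definition~\ref{def:Kleene}: a functional of type $\words \to \tau$ arises from a partial $A$-computable functional whose argument list contains a $\words$-coordinate. Given $\ttp \in \words$, I would simulate the associated oracle Turing machine with $\ttp$ pre-written on the relevant input tape; the resulting machine witnesses $\Phi(\ttp) \in A\tiret\KleenePC^{\tau}$ (formally this is just an application of the parameter property at the level of machine indices). Since $\ttp$ is a finite word, this preparation step always succeeds, so the map $\ttp \mapsto \Phi(\ttp)$ is total on $\words$.

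For the Uspenskii case, I would argue directly on the defining $A$-r.e.\ functional relation $R$ given by Definition~\ref{def:Uspenskii}. For each $\ttp$, the section $R^{(\ttp)}$ obtained by fixing the first coordinate to $\ttp$ is again $A$-r.e.\ (since $\{\ttp\}$ is recursive) and trivially still functional; hence $\Phi(\ttp) = \Phi_{R^{(\ttp)}}$ belongs to $A\tiret\EffCont^{\tau}$. Again, totality is immediate because the substitution is defined for every $\ttp \in \words$.

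There is no real obstacle here --- the proposition is trivial in the sense flagged by the paper. The only point that deserves explicit mention is the totality clause: although $\Phi(\ttp)$ is itself a (possibly partial) functional of type $\tau$, the outer map $\ttp \mapsto \Phi(\ttp)$ cannot ``fail to produce'' such a functional at any $\ttp$, because supplying a first-order input $\ttp \in \words$ to a computable or effectively continuous functional is a purely syntactic parameter substitution and never requires a convergence step that could diverge.
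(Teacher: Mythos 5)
Your proof is correct and is exactly the argument the paper has in mind: Proposition~\ref{prop:chuchsystems} is stated there as a trivial observation without proof, and your two-case treatment (hard-coding $\ttp$ into the oracle machine, i.e.\ the parameter property, for the Kleene case; passing to the $A$-r.e.\ functional section $R^{(\ttp)}$ for the Uspenskii case) together with the remark that substituting a first-order parameter always yields a well-defined, possibly nowhere-defined, functional of type $\tau$ is precisely the intended justification, including the totality clause. Nothing is missing.
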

\begin{theorem}\label{thm:chuchsystems}
Let $\tau$ be any 2d order type.
The systems
$$(A\tiret\KleenePC^\tau,A\tiret\KleenePC^{\words\to\tau})
\ \ \ ,
\ \ \ (A\tiret\EffCont^\tau,A\tiret\EffCont^{\words\to\tau})$$
are self-enumerated representation $A$-systems.
\end{theorem}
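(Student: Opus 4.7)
The plan is to verify, for each of the two pairs, the three conditions $i$, $ii^A$, $iii^A$ of Def.\ref{def:selfA}. All three are straightforward once we recall the usual closure properties of partial $A$-computable and $A$-effectively continuous functionals, and the existence of acceptable enumerations (with the parameter property) for each such family.

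For condition $i$, observe that every $\Psi$ in $A\tiret\KleenePC^\tau$ (resp. $A\tiret\EffCont^\tau$) is the constant value of the constant functional $\ttp \mapsto \Psi$; this constant functional lies in $A\tiret\KleenePC^{\words\to\tau}$ (resp. $A\tiret\EffCont^{\words\to\tau}$) since the oracle machine (resp.\ the $A$-r.e. functional relation) describing $\Psi$ extends trivially to ignore its extra $\words$-argument. Thus each $\Psi$ is reached by some $F$ in the family.

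For condition $ii^A$, if $F \in A\tiret\KleenePC^{\words\to\tau}$ and $\varphi:\words\to\words$ is total $A$-recursive, an oracle machine computing $F\circ\varphi$ with oracle $A$ is obtained by first running an $A$-recursive algorithm for $\varphi$ on the $\words$-input and then handing the result to the machine for $F$; the same composition works at the level of the defining $A$-r.e. functional relation in the effectively continuous case, using the recursive graph of $\varphi$.

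The heart is condition $iii^A$. For the Kleene case, acceptable enumerations of partial $A$-computable functionals extend to second-order types via the standard coding of oracle Turing machines (cf.\ Odifreddi, cited in the paper), and the parameter property gives an enumeration $(\Phi_e)_{e\in\words}$ of $A\tiret\KleenePC^{\words\to\tau}$ together with a single $U \in A\tiret\KleenePC^{\words\to\tau}$ satisfying $U(c(e,\ttp)) = \Phi_e(\ttp)$ for all $e,\ttp$, where $c$ is the pairing of Def.\ref{def:pair}. Taking $comp_U = c$ (total recursive, a fortiori total $A$-recursive) gives $iii^A$. For the effectively continuous case, the same construction works: the $A$-effectively continuous functionals of a given type are parametrized by $A$-r.e. functional relations $R \subseteq \words \times \mathit{Fin}(\bbbx\to\bbby) \times \bbbs \times \bbbt$, which themselves admit a canonical enumeration $(R_e)_{e\in\words}$ through an acceptable enumeration of $A$-r.e. sets; combining the $R_e$'s into a single $A$-r.e. relation indexed by $e$ yields a universal $U \in A\tiret\EffCont^{\words\to\tau}$ with the same pairing $c$ as $comp_U$. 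The main obstacle is purely notational: one must check that acceptable enumerations and the parameter property extend uniformly to the higher types considered. Once this is granted, Prop.\ref{p:selfA} (which is invoked implicitly via $comp_U = c$) finishes the proof.
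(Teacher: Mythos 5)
Your proposal is correct and follows essentially the same route as the paper: conditions $i$ and $ii^A$ are dispatched as routine, and condition $iii^A$ is obtained from the classical enumeration theorem for partial $A$-computable (resp.\ $A$-effectively continuous) functionals together with the pairing $c$ of Def.\ref{def:pair}, exactly as the paper does by taking an enumerating functional $V$ and setting $U(c(\tte,\ttp))=V(\tte)(\ttp)$. The only detail you leave implicit (as does the paper, via its appeal to the classical theorem) is that in the effectively continuous case the enumeration of $A$-r.e.\ relations must be pruned to functional ones, the trick made explicit later in Prop.\ref{p:tool3}.
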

\begin{proof}
Points i-ii of Def.\ref{def:self} are trivial.
As for point iii, we use the classical enumeration theorem for
partial computable (resp. effectively continuous) functionals:
consider a function
$V\in A\tiret\KleenePC^{\words\to(\words\to\tau)}$ which
enumerates $A\tiret\KleenePC^{\words\to\tau}$ and set
$U(c(\tte,\ttp))=V(\tte)(\ttp)$.
Idem with $A\tiret\EffCont$.
\end{proof}
As an easy corollary of  Thms.\ref{thm:chuchsystems} and
\ref{thm:uspenskii}, we get the following result.

\begin{theorem}\label{thm:chuchsystems2}
Let $A\subseteq\bbbn$. Let
$A\tiret
\Eff^{\words\to (\PR[A,\bbbx\to\bbby]\times\bbbs\to\bbbt)}$
be obtained by curryfication from
$A\tiret \Eff^{(\PR[A,\bbbx\to\bbby]\times\bbbs\times\words)
\to\bbbt}$.
The systems
\begin{eqnarray*}
(A\tiret \Eff^{\PR[A,\bbbx\to\bbby]\times\bbbs\to\bbbt}&,&
A\tiret
\Eff^{\words\to (\PR[A,\bbbx\to\bbby]\times\bbbs\to\bbbt)})
\\
(A\tiret \Eff^{\PR[A,\bbbx\to\bbby]\to \PR[A,\bbbx\to\bbby]}&,&
A\tiret
\Eff^{\words\to (\PR[A,\bbbx\to\bbby]\to \PR[A,\bbbs\to\bbbt])})
\end{eqnarray*}
are self-enumerated representation $A$-systems.
\end{theorem}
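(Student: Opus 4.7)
The plan is to verify the three conditions i, ii$^A$, iii$^A$ of Definition \ref{def:selfA} for each of the two systems, leveraging the Uspenskii--Myhill--Shepherdson correspondence (Theorem \ref{thm:uspenskii}) to transfer the enumeration structure already established for $A\tiret\EffCont$ in Theorem \ref{thm:chuchsystems} to the present $\Eff$ setting.

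For Condition i, I would observe that given any $\Phi$ in the domain, the constant functional $\tte\mapsto\Phi$ is witnessed by a partial $A$-recursive function (the witness of $\Phi$ itself, ignoring the word argument $\tte$), so it lies in $\mathcal F$ and covers $\Phi$. For Condition ii$^A$, if $F\in \mathcal F$ is witnessed by a partial $A$-recursive index-manipulation $h$, and $\varphi:\words\to\words$ is total $A$-recursive, then $F\circ\varphi$ is witnessed by $h\circ(\varphi\times\mathrm{id})$, which is again partial $A$-recursive; hence $F\circ\varphi\in\mathcal F$. Both of these are immediate because effectiveness in the sense of \S\ref{ss:computfunctionalsPR} is defined through an $A$-recursive function on indices, and the class of such functions is closed under $A$-recursive substitution.

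The substantive step is Condition iii$^A$. Fix an acceptable enumeration $(\psi^A_\tte)_{\tte\in\words}$ of partial $A$-recursive functions $\words\to\words$ (for the second system) or of the appropriate output type $\words\times\bbbs\to\bbbt$ (for the first system). By Theorem \ref{thm:uspenskii}, every $A$-effective functional on $\PR[A,\bbbx\to\bbby]$ is fully determined by a partial $A$-recursive index-to-index function, and the parametrization property of acceptable enumerations shows that every member of $\mathcal F$ is indexed by some $\tte\in\words$. Let $U$ be obtained by uncurryfication: $U(c(\tte,\ttp))$ is defined to be the functional whose index-manipulation at parameter $\ttp$ is $\psi^A_\tte(\ttp)$ (resp., the direct value $\psi^A_\tte(\ttp)$ in the first system). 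Then $U$ itself belongs to $\mathcal F$ by the parameter property, and $comp_U=c$ is total recursive (in fact independent of $A$), witnessing iii$^A$. An appeal to Proposition \ref{p:selfA} then upgrades each self-enumerated $A$-system to a bona fide self-enumerated representation system, with $c$ as the $comp$ function of a good universal function.

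The main obstacle is making sure the correspondence through Uspenskii--Myhill--Shepherdson is applied correctly to the second system, where both input and output involve partial $A$-recursive functions: here an $A$-effective $\Phi:\PR[A,\bbbx\to\bbby]\to\PR[A,\bbbs\to\bbbt]$ corresponds to a partial $A$-recursive $g:\words\to\words$ sending an input index to an output index, and the parameter property for the enumeration $(\phi^A_\tte)$ of partial $A$-recursive functions $\bbbs\to\bbbt$ is what makes the uncurryfication $U(c(\tte,\ttp))=\phi^A_{g_\tte(\ttp)}$ well-defined as an element of the domain. Beyond this bookkeeping, all three conditions reduce to standard features of acceptable enumerations, so the proof is essentially a direct imitation of the proof of Theorem \ref{thm:chuchsystems} with an additional application of Theorem \ref{thm:uspenskii} to pass between the $\Eff$ and $\EffCont$ presentations.
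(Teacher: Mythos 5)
Your verification of conditions i and ii$^A$ is fine, and so is the final appeal to Prop.\ref{p:selfA}; the gap is in your construction of the universal function for condition iii$^A$. You take an acceptable enumeration $(\psi^A_\tte)_{\tte\in\words}$ of \emph{all} partial $A$-recursive index functions and declare $U(c(\tte,\ttp))$ to be ``the functional whose index-manipulation at parameter $\ttp$ is $\psi^A_\tte(\ttp)$''. But an arbitrary $\psi^A_\tte$ is not extensional: it need not satisfy $\varphi^{\bbbx\to\bbby,A}_\ttq=\varphi^{\bbbx\to\bbby,A}_{\ttq'}\Rightarrow\psi^A_\tte(\ttp,\ttq)$ and $\psi^A_\tte(\ttp,\ttq')$ coding the same function (cf.\ the Note following the definition of $A\tiret\Eff$). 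For such $\tte$ the value $U(c(\tte,\ttp))$ is not a well-defined element of the domain $A\tiret\Eff^{\PR[A,\bbbx\to\bbby]\to \PR[A,\bbbs\to\bbbt]}$, and, more importantly, the two-argument index map witnessing ``$U\in{\cal F}$'' is exactly this non-extensional map, so the claim ``$U$ itself belongs to ${\cal F}$ by the parameter property'' is unjustified: the parameter property manufactures indices, it does not restore extensionality, and there is no effective way to enumerate only the extensional index functions (extensionality is not a decidable or even r.e.\ property of indices). This is precisely the obstruction that the Myhill--Shepherdson/Uspenskii theorem is designed to circumvent; your citation of Thm.\ref{thm:uspenskii} uses it in the trivial direction (``an $A$-effective functional is determined by an index function'', which is just the definition), whereas the needed direction is the identification of $A$-effective functionals with \emph{restrictions to $\PR[A,\bbbx\to\bbby]$ of $A$-effectively continuous functionals}.

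The paper's (short) argument runs through that identification: by Thm.\ref{thm:chuchsystems} the systems built on $A\tiret\EffCont$ are self-enumerated $A$-systems --- enumeration is possible there because every r.e.\ set of triples can be effectively pruned to a functional one (the function $\xi$ of Prop.\ref{p:tool3}), so no extensionality problem arises --- and then Thm.\ref{thm:uspenskii} transfers the universal function to the $\Eff$ setting: restricting the $\EffCont$ universal function to $\PR[A,\bbbx\to\bbby]$ stays inside $A\tiret\Eff$, and every $F\in A\tiret\Eff^{\words\to(\PR[A,\bbbx\to\bbby]\to \PR[A,\bbbs\to\bbbt])}$ is such a restriction, hence is captured by some code $\tte$ with the same $comp$ function. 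Your proof becomes correct if you replace your direct enumeration by this route (which is also what your final sentence gestures at, but the body of your iii$^A$ argument does not actually do it).
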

\begin{definition}
[Effectivizations of Church representation of $\bbbn$]
\label{def:effChurch}
We effectivize the Church representation by replacing
$(X\to X)\to(X\to X)$ by one of the following
classes:
$$A\tiret\KleenePC^{(\bbbx\to\bbbx)\to(\bbbx\to\bbbx)}
\ ,\
A\tiret\EffCont^{(\bbbx\to\bbbx)\to(\bbbx\to\bbbx)}
\ ,\
A\tiret\Eff^{\PR[A,\bbbx\to\bbby]\to \PR[A,\bbbx\to\bbby]}$$
where $\bbbx$ is some basic set.
and $A\subseteq\bbbn$ is some oracle.
Using Def.\ref{def:church}, this leads to three self-enumerated
systems with domain $\bbbn$ :
\begin{eqnarray*}
{\cal F}_1&=&(\bbbn\ ,\
\Church^\bbbn_\bbbx\circ
A\tiret\KleenePC^{\words\to((\bbbx\to\bbbx)\to(\bbbx\to\bbbx))})
\\
{\cal F}_2&=&(\bbbn\ ,\
\Church^\bbbn_\bbbx\circ
A\tiret\EffCont^{\words\to((\bbbx\to\bbbx)\to(\bbbx\to\bbbx))})
\\
{\cal F}_3&=&(\bbbn\ ,\
\church^{\bbbn,A}_\bbbx\circ
A\tiret\Eff^{\words\to(\PR[A,\bbbx\to\bbby]
                   \to \PR[A,\bbbx\to\bbby])})
\end{eqnarray*}
\end{definition}
The following result greatly simplifies
the landscape.
\begin{theorem}\label{thm:allchurchequal}
The three systems ${\cal F}_1$, ${\cal F}_2$, ${\cal F}_3$ of
Def.\ref{def:effChurch} coincide.
\end{theorem}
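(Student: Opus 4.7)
The plan is to establish the cycle of inclusions ${\cal F}_1 \subseteq {\cal F}_2 \subseteq {\cal F}_3 \subseteq {\cal F}_1$. The first inclusion is immediate from Thm.\ref{thm:KleeneUspenskii}.1, which gives $A\tiret\KleenePC^\tau \subseteq A\tiret\EffCont^\tau$ for every 2d-order type $\tau$; composing with $\Church^\bbbn_\bbbx$ on the left preserves this inclusion of function families.

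For ${\cal F}_2 \subseteq {\cal F}_3$, I plan to take the pointwise PR-restriction $\Phi'(\ttp) := \Phi(\ttp)\segment\PR[A,\bbbx\to\bbbx]$. By Thm.\ref{thm:uspenskii} (Myhill--Shepherdson--Uspenskii), applied uniformly in $\ttp$, $\Phi'$ belongs to $A\tiret\Eff^{\words\to(\PR[A,\bbbx\to\bbbx]\to\PR[A,\bbbx\to\bbbx])}$. The equality $\Church^\bbbn_\bbbx\circ\Phi = \church^{\bbbn,A}_\bbbx\circ\Phi'$ then reduces to the key observation that an effectively continuous functional is determined by its values on finite partial functions, which are PR: so two effectively continuous functionals agreeing on PR inputs must be equal. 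Since $\Phi(\ttp)$ and $It^n_\bbbx$ are both effectively continuous, $\Phi(\ttp)=It^n_\bbbx$ is equivalent to $\Phi'(\ttp) = It^n_\bbbx\segment\PR[A,\bbbx\to\bbbx]$, matching the definitions of $\Church^\bbbn_\bbbx$ and $\church^{\bbbn,A}_\bbbx$.

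The main inclusion is ${\cal F}_3\subseteq{\cal F}_1$. Given an effective $\Phi'$ with partial $A$-recursive index translator $g:\words\times\words\to\words$ satisfying $\Phi'(\ttp)(\varphi^A_\tte) = \varphi^A_{g(\ttp,\tte)}$, I fix a single index $\tte_*$ for the simple ``oracle query'' program, so that for every function oracle $B$ the relativized function $\varphi^{A\oplus B}_{\tte_*}$ coincides with $B$ as a partial function $\bbbx\to\bbbx$. I then define
\[
\Psi(\ttp,f,x) \;:=\; \varphi^{A\oplus f}_{g(\ttp,\tte_*)}(x),
\]
which is Kleene partial $A$-computable in the function oracle $f$: the outer machine first computes $g(\ttp,\tte_*)$ using only the side oracle $A$, obtains an index $\tti$, and simulates the $\tti$-th machine on input $x$ with oracles $A\oplus f$. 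Uniform relativization of the Myhill--Shepherdson construction ensures this realizes the natural extension of $\Phi'(\ttp)$ applied to the partial function $f$.

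It remains to verify $\Church^\bbbn_\bbbx\circ\Psi = \church^{\bbbn,A}_\bbbx\circ\Phi'$ pointwise. When $\Phi'(\ttp)=It^n_\bbbx\segment\PR[A,\bbbx\to\bbbx]$, the translator $g(\ttp,\cdot)$ sends every index $\tte$ to an index of the $n$-fold iterate of $\varphi^A_\tte$; applied to $\tte_*$, it yields a program whose run with oracle $A\oplus f$ computes exactly $f^{(n)}$, so $\Psi(\ttp) = It^n_\bbbx$ on \emph{every} partial function $f$ and $\Church(\Psi(\ttp)) = n = \church(\Phi'(\ttp))$. Otherwise, the relativized extensionality of effective functionals on PR together with the identity $\varphi^{A\oplus f}_{\tte_*} = f$ (for PR $f$) forces $\Psi(\ttp)(\varphi^A_\tte) = \Phi'(\ttp)(\varphi^A_\tte)$ for every $\tte$; hence $\Psi(\ttp)\segment\PR[A,\bbbx\to\bbbx] = \Phi'(\ttp)$ is no iterator's PR-restriction, so $\Psi(\ttp)\neq It^n_\bbbx$ for every $n$ and $\Church(\Psi(\ttp))$ is undefined. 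The main obstacle is precisely this last case: one must rule out that the Kleene-PC $\Psi(\ttp)$ accidentally behaves as some $It^m_\bbbx$ on partial inputs when $\Phi'(\ttp)$ is not an iterator; the extensional agreement on the dense class of PR inputs is exactly what closes this gap.
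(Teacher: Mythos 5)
Your inclusions ${\cal F}_1\subseteq{\cal F}_2$ and ${\cal F}_2\subseteq{\cal F}_3$ are fine and essentially follow the paper (the latter is its use of Prop.\ref{p:tool1}, condition ii, together with Thm.\ref{thm:uspenskii}), but the decisive step ${\cal F}_3\subseteq{\cal F}_1$ has a genuine gap. The index translator $g$ attached to an effective functional carries only an \emph{extensional} guarantee with respect to the fixed oracle $A$: it promises $\Phi'(\ttp)(\varphi^A_\tte)=\varphi^A_{g(\ttp,\tte)}$ when the index $g(\ttp,\tte)$ is interpreted with oracle $A$. Nothing constrains what the machine with code $g(\ttp,\tte_*)$ does when you run it with the different oracle $A\oplus f$: $g(\ttp,\tte_*)$ may be an arbitrary $A$-index of the right function, not a ``naturally relativizing'' program, so $\varphi^{A\oplus f}_{g(\ttp,\tte_*)}$ need not compute $f^{(n)}$, nor agree with $\Phi'(\ttp)$ on partial $A$-recursive $f$, nor anything related to $\Phi'$ at all. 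The ``uniform relativization of the Myhill--Shepherdson construction'' you invoke is not available: Thm.\ref{thm:uspenskii} extends an effective functional to an \emph{effectively continuous} one, and by Sasso's theorem (Thm.\ref{thm:KleeneUspenskii}, point 2) effectively continuous functionals are in general not Kleene partial computable, so there is no general recipe turning such an extension into an oracle Turing machine as your $\Psi$ pretends. Both halves of your verification rest on this unjustified step; note also that in the iterator case you genuinely need $\Psi(\ttp)=It^{(n)}_\bbbx$ on \emph{all} partial functions (not just on the partial $A$-recursive ones), which your construction cannot deliver.

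The paper avoids exactly this obstacle by never trying to extend $\Phi'(\ttp)$: it works with the r.e.\ functional relation $R^{(\tte)}$ coming from effective continuity. Prop.\ref{p:tool3} first extracts a candidate iteration count $\lambda(\tte)$ by searching $R^{(\tte)}$ for the canonical triple $(\alpha_n,0,n)$, then builds an oracle machine which, on oracle $f$ and input $\ttx$, computes $f^{(\lambda(\tte))}(\ttx)$ directly and accepts only after verifying that the corresponding triple appears in $R^{(\tte)}$. By Prop.\ref{p:tool1} and Prop.\ref{p:tool2}, this machine computes exactly $It^{(\lambda(\tte))}_\bbbx$ when $\Phi_{R^{(\tte)}}$ is an iterator and a non-iterator otherwise; so only the value of $\Church$ is preserved, which is all the theorem requires. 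To repair your argument you would have to replace the ``relativize $g$'' step by a construction of this kind, going back to the continuity data (the r.e.\ relation) rather than to indices.
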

Before proving the theorem (cf. the end of this subsection),
we state some convenient tools in the next three propositions,
the first of which will also be used in \S\ref{ss:syntaxChurch}.
\begin{proposition}\label{p:tool1}
Suppose $R\subset Fin(\bbbx\to\bbbx)\times\bbbx\times\bbbx$ is
functional (cf. Def.\ref{def:Uspenskii}).
The following conditions are equivalent
\begin{enumerate}
\item[i.]
$\Phi_R=It^{(n)}_\bbbx$
\item[ii.]
$\Phi_R\segment Fin(\bbbx\to\bbbx)
=It^{(n)}_\bbbx\segment Fin(\bbbx\to\bbbx)$
\item[iii.]
$\forall\alpha\in Fin(\bbbx\to\bbbx)\ \forall\ttx\
(\alpha^{(n)}(\ttx)\mbox{ is defined }\Rightarrow$

\hfill{$(\alpha\segment\{\alpha^{(i)}(\ttx):0\leq i<n\},
\ttx,\alpha^{(n)}(\ttx))\in R)$}
\\
and\\
$\forall\alpha\in Fin(\bbbx\to\bbbx)\ \forall\ttx\ \forall\tty$

\hfill{$((\alpha,\ttx,\tty)\in R\ \Rightarrow\
(\alpha^{(n)}(\ttx)\mbox{ is defined }\wedge\
\tty=\alpha^{(n)}(\ttx)))$}
\end{enumerate}
\end{proposition}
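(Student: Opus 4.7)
The plan is to prove the cyclic chain $(i) \Rightarrow (ii) \Rightarrow (iii) \Rightarrow (i)$. The implication $(i) \Rightarrow (ii)$ is immediate, since $(ii)$ is nothing more than the restriction of the functional equality in $(i)$ to finite arguments.

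For $(ii) \Rightarrow (iii)$, I will first establish the second clause. Given $(\alpha, \ttx, \tty) \in R$, note that $\alpha$ is automatically finite; by the defining equivalence $(\dagger)$ of $\Phi_R$, $\Phi_R(\alpha, \ttx) = \tty$, and applying hypothesis $(ii)$ at $\alpha$ forces $\alpha^{(n)}(\ttx) = \tty$. For the first clause, fix a finite $\alpha$ with $\alpha^{(n)}(\ttx)$ defined, and set $\beta = \alpha \segment \{\alpha^{(i)}(\ttx) : 0 \leq i < n\}$. Since the $n$-th iterate from $\ttx$ consults $\alpha$ only at the orbit points, one has $\beta^{(n)}(\ttx) = \alpha^{(n)}(\ttx)$. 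Applying $(ii)$ to $\beta$ yields some $u \subseteq \beta$ with $(u, \ttx, \alpha^{(n)}(\ttx)) \in R$. The already-proved second clause, applied to $u$, then forces $u^{(n)}(\ttx) = \alpha^{(n)}(\ttx)$ to be defined; but iterating $n$ times from $\ttx$ requires the domain of $u$ to contain every orbit point $\alpha^{(i)}(\ttx)$, $0 \leq i < n$. Since the domain of $u$ is already contained in that orbit set, we must have $u = \beta$, which is precisely what the first clause of $(iii)$ asserts.

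For $(iii) \Rightarrow (i)$, it will suffice to verify $\Phi_R(f, \ttx) = f^{(n)}(\ttx)$ for every (possibly infinite) partial $f$ and every $\ttx \in \bbbx$. If $f^{(n)}(\ttx)$ is defined, the orbit restriction $\beta = f \segment \{f^{(i)}(\ttx) : 0 \leq i < n\}$ is finite with $\beta^{(n)}(\ttx) = f^{(n)}(\ttx)$, so the first clause of $(iii)$ puts $(\beta, \ttx, f^{(n)}(\ttx))$ in $R$, whence $\Phi_R(f, \ttx) = f^{(n)}(\ttx)$ by $(\dagger)$ and the functionality of $R$. Conversely, if $\Phi_R(f, \ttx) = \tty$ via some finite $u \subseteq f$ with $(u, \ttx, \tty) \in R$, the second clause of $(iii)$ gives $u^{(n)}(\ttx) = \tty$, and since $u \subseteq f$, this defined iteration lifts to $f^{(n)}(\ttx) = \tty$ as well. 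The expected main obstacle is the first clause of $(ii) \Rightarrow (iii)$: passing from the mere existence of \emph{some} $u \subseteq \beta$ with $(u, \ttx, \alpha^{(n)}(\ttx)) \in R$ to the assertion that this $u$ is exactly $\beta$ requires the orbit-minimality argument sketched above together with the functionality of $R$; all other implications are essentially bookkeeping around the definition of $\Phi_R$.
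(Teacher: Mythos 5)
Your proposal is correct and follows essentially the same route as the paper: the cycle $i\Rightarrow ii\Rightarrow iii\Rightarrow i$, with the real work in $ii\Rightarrow iii$, where the witness $u\subseteq\beta$ produced by $(\dagger)$ is forced to equal the full orbit restriction because its $n$-th iterate at $\ttx$ must be defined. The only cosmetic difference is that you spell out $iii\Rightarrow i$, which the paper dismisses as trivial.
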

\begin{proof}
$iii\Rightarrow i$ and $i\Rightarrow ii$ are trivial.
\\
$ii\Rightarrow iii.$
Assume $ii$.
Suppose $(\alpha,\ttx,\tty)\in R$ then $\Phi_R(\alpha)(\ttx)=\tty$.
Since $\alpha\in Fin(\bbbx\to\bbbx)$, $ii$ insures that
$\alpha^{(n)}(\ttx)$ is defined and $\alpha^{(n)}(\ttx)=\tty$.
This proves the second part of $iii$.
\\
Suppose $\alpha^{(n)}(\ttx)$ is defined and let
$\alpha^{(n)}(\ttx)=\tty$. Then
\begin{eqnarray*}
\Phi_R(\alpha\segment\{\alpha^{(i)}(\ttx):0\leq i<n\})(\ttx)
&=&
It^{(n)}_\bbbx(\alpha\segment\{\alpha^{(i)}(\ttx):0\leq i<n\})(\ttx)
\\&=&It^{(n)}_\bbbx(\alpha)(\ttx)
\\&=&\tty
\end{eqnarray*}
So that there exists a restriction $\beta$ of
$\alpha\segment\{\alpha^{(i)}(\ttx):0\leq i<n\}$ such that
$(\beta,\ttx,\tty)\in R$. Thus, $\Phi_R(\beta)(\ttx)=\tty$.
Applying $ii$, this yields that $\beta^{(n)}(\ttx)$ is defined and
$\beta^{(n)}(\ttx)=\tty$.
Since $\beta$ is a restriction of
$\alpha\segment\{\alpha^{(i)}(\ttx):0\leq i<n\}$, this insures that
$\beta=\alpha\segment\{\alpha^{(i)}(\ttx):0\leq i<n\}$.
This proves the first part of $iii$.
\end{proof}
\begin{proposition}\label{p:tool2}
Let $n\in\bbbn$.
If $\Phi_R(f)$ is a restriction of $f^{(n)}$ for every
$f:\bbbx\to\bbbx$ then either $\Phi_R=It^{(n)}_\bbbx$ or $\Phi_R$
is not an iterator.
\end{proposition}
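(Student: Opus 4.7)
The proposition has the form ``either $P$ or $\neg Q$'', so the plan is to prove the contrapositive: assuming $\Phi_R$ is an iterator $It^{(m)}_\bbbx$ for some $m\in\bbbn$, I will deduce $m=n$. Under this assumption, the hypothesis of the proposition translates to: for every $f:\bbbx\to\bbbx$, the partial function $f^{(m)}$ is a restriction of $f^{(n)}$. So the task reduces to forcing $m=n$ from this single containment by producing one well-chosen witness function on $\bbbx$.

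The witness I would use is a total function on $\bbbx$ with an infinite forward orbit. Since $\bbbx$ is infinite (the standing assumption of \S\ref{s:church}, cf. Prop.\ref{p:injectiveIt}), fix an injection $i:\bbbn\to\bbbx$ and define $g:\bbbx\to\bbbx$ by $g(i(k))=i(k+1)$ for every $k\in\bbbn$ and $g(y)=y$ for $y\notin i(\bbbn)$. Then $g$ is total, each iterate $g^{(j)}$ is also total, and a trivial induction gives $g^{(j)}(i(0))=i(j)$ for all $j\in\bbbn$. Applied to $g$, the hypothesis $g^{(m)}\subseteq g^{(n)}$ compares two total functions, hence forces the honest equality $g^{(m)}=g^{(n)}$; evaluating at $i(0)$ yields $i(m)=i(n)$, and injectivity of $i$ gives $m=n$, as desired.

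There is no subtle obstacle here: the whole point is to exploit the infiniteness of $\bbbx$ to distinguish distinct iterates of some explicit $f$. An alternative witness, if one prefers to stay inside $Fin(\bbbx\to\bbbx)$ in the spirit of Prop.\ref{p:tool1}, is the finite chain $\alpha=\{(x_0,x_1),\ldots,(x_{k-1},x_k)\}$ with $k=\max(m,n)$ and $x_0,\ldots,x_k$ pairwise distinct in $\bbbx$; the same argument then compares $\alpha^{(m)}(x_0)=x_m$ with $\alpha^{(n)}(x_0)=x_n$ and concludes $m=n$ by distinctness of the $x_j$'s.
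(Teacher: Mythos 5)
Your proof is correct and follows essentially the paper's own route: the paper reduces to $\bbbx=\bbbn$ and evaluates $\Phi_R(Succ)$ at $0$, noting that a restriction of $Succ^{(n)}$ cannot take the value $p\neq n$ there, which is exactly your successor-like witness $g$ (an embedded copy of $Succ$ extended by the identity) evaluated at the base point of its orbit. The only cosmetic differences are that you phrase the argument contrapositively and carry out the reduction to $\bbbn$ explicitly via the injection $i$, so no further comment is needed.
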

\begin{proof}
We reduce to the case $\bbbx=\bbbn$. Let $Succ:\bbbn\to\bbbn$ be
the successor function. Since $\Phi_R(Succ)$ is a restriction of
$Succ^{(n)}$, either $\Phi_R(Succ)(0)$ is undefined or
$\Phi_R(Succ)(0)=n$.
In both cases it is different from $Succ^{(p)}(0)$ for any
$p\neq n$. Which proves that $\Phi_R\neq It^{(p)}_\bbbn$ for
every $p\neq n$. Hence the proposition.
\end{proof}
\begin{proposition}\label{p:tool3}$\\ $
{\bf 1.}
Let $(W_\tte)_{\tte\in\words}$
be an acceptable enumeration of r.e. subsets of
$Fin(\bbbx\to\bbbx)\times\bbbx\times\bbbx$.
There exists a total recursive function $\xi:\words\to\words$
such that, for all $\tte$,
\begin{enumerate}
\item[a.]
$W_{\xi(\tte)}\subseteq W_\tte$ and $W_{\xi(\tte)}$ is functional
(cf. Def.\ref{def:Uspenskii}, point 1),
\item[b.]
$W_{\xi(\tte)}=W_\tte$ whenever $W_\tte$ is functional.
\end{enumerate}
{\bf 2.}
There exists a partial recursive function $\lambda:\words\to\bbbn$
such that
if $R_\tte$ is functional and $\Phi_{R_\tte}$ is an iterator
then $\lambda(\tte)$ is defined and
$\Phi_{R_\tte}=It^{(\lambda(\tte))}_\bbbx$.
(However, $\lambda(\tte)$ may be defined even if $R_\tte$ is not
functional or $\Phi_{R_\tte}$ is not an iterator).
\medskip\\
{\bf 3.}
There exists a total recursive function $\theta:\words\to\words$
such that, for all $\tte\in\words$,
\begin{enumerate}
\item[a.]
if $\Phi_{R_\tte}$ is an iterator then the $(\bbbx\to\bbbx)$-oracle
Turing machine ${\cal M}_{\theta(\tte)}$ with code $\theta(\tte)$
(cf. Def.\ref{def:Kleene}) computes the functional $\Phi_{R_\tte}$,
\item[b.]
if $\Phi_{R_\tte}$ is not an iterator then neither is the functional
computed by the $(\bbbx\to\bbbx)$-oracle Turing machine
${\cal M}_{\theta(\tte)}$ with code $\theta(\tte)$.
\end{enumerate}
In other words,
$Church(\Phi_{R_\tte})=Church(\Phi_{{\cal M}_{\theta(\tte)}})$
\medskip\\
{\bf 4.}
The above points relativize to any oracle $A\subseteq\bbbn$.
\end{proposition}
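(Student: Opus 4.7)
The plan is to treat the four parts in order, using the parameter (s-m-n) theorem to package each uniform procedure as a total recursive index function, and relying on Propositions~\ref{p:tool1} and \ref{p:tool2} to analyse the functionals induced by r.e.\ relations. Part 4 is an immediate relativisation, so I will concentrate on parts 1--3.

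For part 1, I would define $\xi$ by greedy thinning: on input $\tte$ enumerate $W_\tte$, maintaining a finite list $L$ of already-accepted triples, and accept a newly enumerated $(\alpha,\ttx,\tty)$ iff every $(\beta,\ttx,\tty')\in L$ with $\alpha\cup\beta\in Fin(\bbbx\to\bbbx)$ satisfies $\tty=\tty'$. The check is decidable at each stage, so $W_{\xi(\tte)}$ is r.e.\ and functional by construction; if $W_\tte$ was already functional no triple is ever rejected, hence $W_{\xi(\tte)}=W_\tte$. The s-m-n theorem then yields the total recursive $\xi$. For part 2, reducing to $\bbbx=\bbbn$ and writing $\sigma:\ttk\mapsto\ttk+1$ for the successor, I would let $\lambda(\tte)$ be the $\tty$ of the first enumerated triple of the form $(\alpha,0,\tty)$ in $R_\tte$ with $\alpha\subseteq\sigma$. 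Prop.~\ref{p:tool1}.iii applied to $\alpha=\sigma\segment\{0,\dots,n-1\}$ shows such a triple exists with $\tty=n$ whenever $R_\tte$ is functional and $\Phi_{R_\tte}=It^{(n)}_\bbbn$, while its second clause forces $\tty=\sigma^{(n)}(0)=n$ for any candidate triple, so the search converges to the correct value.

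For part 3, I would have ${\cal M}_{\theta(\tte)}$ dovetail three processes on oracle $f$ and input $\ttx$: \emph{(i)} compute $\lambda(\tte)$, call the result $m$, and then $v:=f^{(m)}(\ttx)$ via $m$ successive oracle queries; \emph{(ii)} enumerate $R_{\xi(\tte)}$ and look for $(\alpha,\ttx,\tty)$ with $\alpha$ compatible with $f$, verified by querying the oracle on $\mathrm{domain}(\alpha)$; \emph{(iii)} search $R_\tte$ itself for a functionality conflict, i.e., two triples $(\alpha_0,\ttx_0,\tty_0),(\beta_0,\ttx_0,\tty_1)\in R_\tte$ with $\alpha_0\cup\beta_0\in Fin(\bbbx\to\bbbx)$ and $\tty_0\neq\tty_1$. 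If \emph{(iii)} ever halts the machine loops forever; otherwise, once both \emph{(i)} and \emph{(ii)} deliver, it outputs $v$ when $\tty=v$ and loops when $\tty\neq v$. The index $\theta(\tte)$ is extracted effectively from $\tte$ by the s-m-n theorem.

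The verification of part 3 splits by cases. If $\Phi_{R_\tte}=It^{(n)}_\bbbx$ then $R_\tte$ is functional so \emph{(iii)} never halts, $R_{\xi(\tte)}=R_\tte$, $\lambda(\tte)=n$ by part 2, \emph{(i)} gives $v=f^{(n)}(\ttx)$, and \emph{(ii)} gives $\tty=f^{(n)}(\ttx)=v$, so ${\cal M}_{\theta(\tte)}$ computes $It^{(n)}_\bbbx=\Phi_{R_\tte}$. Otherwise, let $\Psi$ denote the functional computed by the machine. If $\lambda(\tte)$ is undefined then $\Psi$ is empty. If $R_\tte$ is functional and $\lambda(\tte)=m$, then $\Psi(f)$ is a restriction of $f^{(m)}$ for every $f$, and any witness of $\Phi_{R_\tte}\neq It^{(m)}_\bbbx$ produces some $(f_0,\ttx_0)$ at which $\Psi(f_0)(\ttx_0)$ is undefined while $f_0^{(m)}(\ttx_0)$ is defined, so $\Psi\neq It^{(m)}_\bbbx$ and Prop.~\ref{p:tool2} forbids $\Psi$ from being an iterator. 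If $R_\tte$ is non-functional, \emph{(iii)} halts after a fixed number of enumeration steps $T^*(\tte)$ independent of $f,\ttx$; for every input $\ttx$ of length exceeding $T^*(\tte)$ the sabotage triggers before \emph{(i)}-\emph{(ii)} can complete, making $\Psi(f)$ a finite partial function for every total $f$, which cannot coincide with any total $f^{(n)}$. Part 4 is then immediate since every ingredient relativises uniformly to $A$. The main obstacle is this non-functional sub-case: without the sabotage process \emph{(iii)}, $\Phi_{R_{\xi(\tte)}}$ could accidentally equal some $It^{(n)}_\bbbx$ while $\Phi_{R_\tte}$ is not even well-defined as a functional.
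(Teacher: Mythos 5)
Your parts 1 and 2 are fine and essentially identical to the paper's argument (greedy thinning for $\xi$; for $\lambda$, searching for a triple $(\alpha,0,\tty)$ with $\alpha$ a finite restriction of the successor and invoking Prop.\ref{p:tool1}.iii to show the search succeeds and any hit returns the right $n$). The problem is in part 3, and it is a genuine gap rather than a presentational one. In the Kleene oracle model of Def.\ref{def:Kleene}, asking the oracle $f$ for its value at a point where $f$ is undefined makes the \emph{whole} machine diverge; there is no way to time out or thread such a query. Your process \emph{(ii)} verifies arbitrary enumerated triples $(\alpha,\ttx,\tty)$ of $R_{\xi(\tte)}$ by querying the oracle on all of $\mathrm{domain}(\alpha)$. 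Nothing prevents a functional $R_\tte$ with $\Phi_{R_\tte}=It^{(n)}_\bbbx$ from containing triples whose first component is defined at points outside $\mathrm{domain}(f)$ (condition iii of Prop.\ref{p:tool1} forces certain triples to be present but does not exclude larger ones). If such a triple with second component $\ttx$ is enumerated before a confirming one, your machine hangs, so $\Phi_{{\cal M}_{\theta(\tte)}}(f)(\ttx)$ is undefined even though $f^{(n)}(\ttx)$ is defined: since the output must wait for \emph{both} (i) and (ii), the machine then computes only a strict restriction of $It^{(n)}_\bbbx$, which by your own appeal to Prop.\ref{p:tool2} is not an iterator. Thus clause 3a and the equality $Church(\Phi_{R_\tte})=Church(\Phi_{{\cal M}_{\theta(\tte)}})$ both fail for your $\theta$.

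The paper's machine is designed precisely to avoid this hazard: after computing $\lambda(\tte)$ it queries the oracle only along the orbit $\ttx, f(\ttx),\dots,f^{(\lambda(\tte))}(\ttx)$ (if one of these queries hangs, then $It^{(\lambda(\tte))}_\bbbx(f)(\ttx)$ is undefined anyway), and then it merely enumerates $R_\tte$ waiting for the one specific triple $(f\segment\{f^{(i)}(\ttx):i\leq\lambda(\tte)\},\ttx,f^{(\lambda(\tte))}(\ttx))$ built from the values it already holds — no further oracle interaction at all. Your proof would be repaired by replacing (ii) with this membership test. Two secondary remarks: the sabotage process \emph{(iii)} addresses a case the statement does not really pose, since $\Phi_{R_\tte}$ is only well-defined when $R_\tte$ is functional (and that is how the proposition is applied later), so the paper simply assumes functionality there; and in any case your argument that for non-functional $R_\tte$ the computed functional is ``finite for every total $f$ because inputs longer than $T^*(\tte)$ cannot finish first'' is not rigorous as stated — the race between (i)--(ii) and the sabotage clock is not controlled by input length alone.
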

\begin{proof}
1. This is the classical fact underlying the enumeration theorem
for effectively continuous functionals.
To get $W_{\xi(\tte)}$, enumerate $W_\tte$ and retain a triple
if and only if, together with the already retained ones, it does not
contradict functionality
(cf. Odifreddi's book \cite{odifreddi} p.197).
\medskip\\
2. We reduce to the case $\bbbx=\bbbn$.
Let $\alpha_n:\bbbn\to\bbbn$ be such that
$$domain(\alpha_n)=\{0,...,n\}\ \ ,\ \ \alpha_n(i)=i+1\mbox{ for }
i=0,...,n$$
Suppose $R$ is functional and $\Phi_R=It^{(n)}_\bbbn$.
Prop.\ref{p:tool1} insures $(\alpha_n,0,n)\in R$.
\\
Also, for $m\neq n$, since $\alpha_m$ and $\alpha_n$ are
compatible and $R$ is functional, $R$ cannot contain
$(\alpha_m,0,m)$.
Thus, if $\Phi_R=It^{(n)}_\bbbn$ then $n$ is the unique integer
such that $R$ contains $(\alpha_n,0,n)$.
\medskip\\
This leads to the following definition of the wanted partial
recursive function $\lambda:\words\to\bbbn$ :
\medskip\\\indent- enumerate $R\tte$,
\\\indent- if and when some triple $(\alpha_n,0,n)$ appears,
halt and output $\lambda(\tte)=n$.
\medskip\\
3. Given a code $\tte$ of a functional relation $R_\tte$, we let
$\theta$ be the total recursive function which gives a code for the
oracle Turing machine ${\cal M}$ which acts as follows:
\begin{enumerate}
\item[i.]
First, it computes $\lambda(\tte)$.
\item[ii.]
If $\lambda(\tte)$ is defined then, on input $\ttx$ and oracle $f$,
${\cal M}$ tries to compute $It^{(\lambda(\tte))}_\bbbx(f)(\ttx)$
in the obvious way: ask the oracle the values of $f^{(i)}(\ttx)$ for
$i\leq\lambda(\tte)$.
\item[iii.]
Finally, in case i and ii halt, ${\cal M}$ enumerates $R\tte$ and
halts and accepts (with the output computed at phase ii)
if and only if
$(f\segment\{f^{(i)}(\ttx):i\leq\lambda(\tte)\},
\ttx,f^{(\lambda(\tte))}(\ttx))$ appears in $R_\tte$.
I.e. if and only if $f^{(\lambda(\tte))}(\ttx)=\Phi_R(f)(\ttx)$
\end{enumerate}
Clearly, the functional $\Phi_{\cal M}$ computed by ${\cal M}$
is such that $\Phi_{\cal M}(f)$ is equal to or is a restriction of
$It^{(\lambda(\tte))}_\bbbx(f)$.
\\
If $\Phi_{R_\tte}$ is an iterator then point 2 insures that
$\Phi_{R_\tte}=It^{(\lambda(\tte))}_\bbbx$ and Prop.\ref{p:tool1}
insures that phase iii is no problem, so that ${\cal M}$ computes
exactly $\Phi_{R_\tte}$.
\\
Suppose $\Phi_{R_\tte}$ is not an iterator.
\\
If $\lambda(\tte)$ is undefined then ${\cal M}$ computes the constant
functional with value the nowhere defined function. Thus, ${\cal M}$
does not compute an iterator.
\\
If $\lambda(\tte)$ is defined then, on input $\ttx$, ${\cal M}$ computes
$f^{(\lambda(\tte))}(\ttx)$ and halt and accepts if and only
$f^{(\lambda(\tte))}(\ttx)=\Phi_R(f)(\ttx)$.
Since $\Phi_R$ is not an iterator, there exists $f$ and $\ttx$
such that $f^{(\lambda(\tte))}(\ttx)$ is defined and
$\Phi_R(f)(\ttx)\neq f^{(\lambda(\tte))}(\ttx)$.
Hence $\Phi_{\cal M}(f)$ is a strict restriction of
$It^{(\lambda(\tte))}_\bbbx(f)$, so that
$\Phi_{\cal M}\neq It^{(\lambda(\tte))}_\bbbx$.
Finally, Prop.\ref{p:tool2} insures that $\Phi_{R_\tte}$ cannot be
an iterator.
\end{proof}
%
% PROOF OF THM  PROOF OF THM  PROOF OF THM  PROOF OF THM  
% PROOF OF THM  PROOF OF THM  PROOF OF THM  PROOF OF THM  
% PROOF OF THM  PROOF OF THM  PROOF OF THM  PROOF OF THM  
%
\noindent{\bf\em Proof of Theorem \ref{thm:allchurchequal}.}\\
1. Since $Fin(\bbbx\to\bbbx)\subset \PR[A,\bbbx\to\bbby]$,
condition $ii$ of Prop.\ref{p:tool1} and Thm.\ref{thm:uspenskii}
prove equality ${\cal F}_2={\cal F}_3$.
\medskip\\
2. Inclusion ${\cal F}_1\subseteq{\cal F}_2$ is a corollary of
Thm.\ref{thm:KleeneUspenskii}, point 1.
Let's prove the converse inclusion.
Suppose $\Phi:(\words\times(\bbbx\to\bbbx))\to(\bbbx\to\bbbx)$
is effectively continuous and let
$R\subseteq \words\times Fin(\bbbx\to\bbbx)\times\bbbx\times\bbbx$
be a functional r.e. set such that $\Phi=\Phi_R$.
Using the parameter property, let $h:\words\to\words$ be a total
recursive function such that $h(\tte)$ is an r.e. code for
$R^{(\tte)}=\{(\alpha,\ttx,\tty):(\tte,\alpha,\ttx,\tty)\in R\}$.
Prop.\ref{p:tool3}, point 3, gives a total recursive
$\theta:\words\to\words$ such that
$Church(\Phi_{R^{(\tte)}})=Church(\Phi_{{\cal M}_{\theta(\tte)}})$.
Thus, $\tte\mapsto Church(\Phi_{R^{(\tte)}})$ is partial computable
with a $(\bbbx\to\bbbx)$-oracle Turing machine having inputs in
$\words\times\bbbx$.
\hfill{$\Box$}
%
%%%%%%%%%%%%%%%%%%%%%%%%%%%%%%%%%%%%%%%%%%%%%%%%%%%%%%%%%%%%%%%
\subsection{Some examples of effectively continuous functionals}
\label{ss:examples}
%%%%%%%%%%%%%%%%%%%%%%%%%%%%%%%%%%%%%%%%%%%%%%%%%%%%%%%%%%%%%%%
%
For future use in sections
\S\ref{ss:syntaxChurch}-\ref{ss:characterizeChurch}, let's get the
following examples of effectively continuous functionals.
\begin{proposition}\label{p:examples}
If $\varphi:\words\to\bbbn$ is partial $A$-recursive and
$S\subseteq\words$ is $\Pi^{0,A}_2$ then there exists an
$A$-effectively continuous functional
$$\Phi:\words\to(\bbbx\to\bbbx)^{\bbbx\to\bbbx}$$
such that, for all $\ttp$,
\medskip\\
$\begin{array}{crcl}
(*)\hspace{1cm}&\ttp\in S\cap domain(\varphi)&\Rightarrow&
\Phi(\ttp)=It^{(\varphi(\ttp))}_\bbbx
\\
(**)\hspace{1cm}&\ttp\notin S\cap domain(\varphi)&\Rightarrow&
\Phi(\ttp)\mbox{ is not an iterator}
\end{array}$
\end{proposition}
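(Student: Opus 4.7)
The plan is to build $\Phi$ as $\Phi_R$ for an $A$-r.e. functional relation $R \subseteq \words \times Fin(\bbbx\to\bbbx) \times \bbbx \times \bbbx$ whose $\ttp$-slice $R^{(\ttp)}$ witnesses $\Phi(\ttp) = It^{(\varphi(\ttp))}_\bbbx$ exactly when $\ttp \in S \cap domain(\varphi)$, via Prop.\ref{p:tool1}. Since $S$ is $\Pi^{0,A}_2$, I fix an $A$-recursive predicate $R_0$ with $S = \{\ttp : \forall u\, \exists v\, R_0(\ttp,u,v)\}$, and I fix a recursive enumeration $(\alpha_k,\ttx_k)_{k\in\bbbn}$ of $Fin(\bbbx\to\bbbx) \times \bbbx$.

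I declare $(\ttp,\alpha',\ttx_k,\tty) \in R$ whenever there exist $n,k$ such that (i) $\varphi(\ttp)$ converges to $n$, (ii) $\alpha_k^{(n)}(\ttx_k)$ is defined and equals $\tty$, (iii) $\alpha' = \alpha_k \segment \{\alpha_k^{(i)}(\ttx_k) : 0\le i < n\}$, and (iv) for every $u \le k$ there is some $v$ with $R_0(\ttp,u,v)$. Each clause is $\Sigma^{0,A}_1$ or decidable, so $R$ is $A$-r.e.; moreover $R^{(\ttp)}$ is automatically functional since every triple records an actual value of $\alpha_k^{(n)}(\ttx_k)$. Currying then produces an $A$-effectively continuous functional $\Phi : \words \to (\bbbx\to\bbbx)^{\bbbx\to\bbbx}$ with $\Phi(\ttp) = \Phi_{R^{(\ttp)}}$.

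I then verify $(*)$ and $(**)$ by cases. If $\ttp \in S \cap domain(\varphi)$, clause (iv) eventually holds for every $k$, so for every $(\alpha,\ttx)$ with $\alpha^{(n)}(\ttx)$ defined the pair $(\alpha\segment\{\alpha^{(i)}(\ttx):0\le i<n\},\ttx)$ appears as some $(\alpha_{k_0},\ttx_{k_0})$ and the induced triple enters $R^{(\ttp)}$; this gives the forward half of condition iii of Prop.\ref{p:tool1}, while clauses (ii)--(iii) give the backward half, so $\Phi(\ttp) = It^{(\varphi(\ttp))}_\bbbx$, yielding $(*)$. If $\varphi(\ttp)$ diverges, $R^{(\ttp)}$ is empty and $\Phi(\ttp)$ is the functional that returns the nowhere-defined function everywhere, which is not an iterator since $It^{(m)}_\bbbx(Id_\bbbx) = Id_\bbbx$ is total for every $m$. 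If $\varphi(\ttp) = n$ but $\ttp \notin S$, let $u_0$ be least with $\forall v\, \neg R_0(\ttp,u_0,v)$; clause (iv) then holds only for $k < u_0$, so $R^{(\ttp)}$ contains only finitely many triples. Each such triple still records a correct value of $It^{(n)}_\bbbx$, so $\Phi_{R^{(\ttp)}}$ is a \emph{strict} restriction of $It^{(n)}_\bbbx$, and Prop.\ref{p:tool2} forces $\Phi(\ttp)$ to not be an iterator, yielding $(**)$.

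The only mildly subtle point is the ``strict restriction'' claim in the last case; once one notes that $\alpha_k^{(n)}(\ttx_k)$ is defined for infinitely many indices $k$ (take $\alpha_k$ to encode a long enough initial segment of the successor function on $\bbbn$ and $\ttx_k=0$), while only finitely many such triples lie in $R^{(\ttp)}$, the strictness is automatic.
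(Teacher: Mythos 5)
Your construction is essentially the paper's: the paper also gates the canonical iterator triples (its set ${\cal S}_n$, your clause (iii)) by the $\Pi^{0,A}_2$ structure of $S$ to form a uniformly $A$-r.e.\ functional relation $\rho_\ttp$, and then concludes via Prop.\ref{p:tool1}/Prop.\ref{p:tool2} exactly as you do; the only cosmetic difference is that the paper admits the $u$-th canonical triple when $\exists v\,(\ttp,u,v)\in\sigma$, whereas your prefix condition (iv) makes $R^{(\ttp)}$ finite when $\ttp\notin S$.

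One small repair in your last parenthetical: the pairs $(\alpha_k,\ttx_k)$ given by successor segments with $\ttx_k=0$ all collapse, after the clause-(iii) restriction, to the \emph{single} canonical triple $(\mathrm{Succ}\segment\{0,\dots,n-1\},0,n)$, so ``infinitely many such $k$'' does not by itself exhibit a missing required triple. Instead, vary the base point: there are infinitely many canonical triples with pairwise distinct second components $\ttx$, so the finite set $R^{(\ttp)}$ omits one, and condition iii of Prop.\ref{p:tool1} then rules out $\Phi_{R^{(\ttp)}}=It^{(n)}_\bbbx$ (equivalently: $\Phi_{R^{(\ttp)}}(Id_\bbbx)$ is defined at only finitely many points while $It^{(n)}_\bbbx(Id_\bbbx)=Id_\bbbx$ is total); Prop.\ref{p:tool2} then gives $(**)$ as you intended.
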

\begin{proof}
We consider the sole case $A=\emptyset$, relativization being
straightforward.\\
Let $S=\{\tte:\forall u\ \exists v\ (\tte,u,v)\in\sigma\}$ where
$\sigma$ is a recursive subset of $\words\times\bbbn\times\bbbn$.
We construct a total recursive function $g:\words\to\words$ such
that, for all $\ttp$, $W_{g(\ttp)}$ is functional and
\begin{eqnarray*}
\ttp\in S\cap domain(\varphi)&\Rightarrow&
\Phi_{W_{g(\ttp)}}=It^{(\varphi(\ttp))}_\bbbx
\\
\ttp\notin S\cap domain(\varphi)&\Rightarrow&
\Phi_{W_{g(\ttp)}}\mbox{ is not an iterator}
\end{eqnarray*}
Let
\begin{eqnarray*}
{\cal S}_n&=&\{(\alpha,\ttx,\tty):\alpha\in Fin(\bbbx\to\bbbx)
\ \wedge\ \alpha^{(n)}(\ttx)\mbox{ is defined}
\ \wedge\ \tty=\alpha^{(n)}
\\
&&\hspace{5cm}\wedge\ domain(\alpha)=\{\alpha^{(i)}:i\leq n\}\}
\end{eqnarray*}

Let $\gamma:\bbbn^2\to\bigcup_{n\in\bbbn}{\cal S}_n$ be a
total recursive function such that, for all $n$,
$u\mapsto\gamma(n,u)$ is a bijection $\bbbn\to{\cal S}_n$.
Set
$$\rho_\tte=\{\gamma(\varphi(\tte),u):
\varphi(\tte)\mbox{ is defined}\ \wedge\ 
                      \exists v\ (\tte,u,v)\in\sigma\}$$
Clearly, $\rho_\tte$ is functional. Also, the construction of the
$\rho_\tte$'s is effective and the parametrization property yields
a total recursive function $g:\words\to\words$
such that $\rho_\tte=W_{g(\tte)}$.
\\
If $\varphi(\tte)$ is not defined then $\rho_\tte=\emptyset$ so that
$\Phi_{\rho_\tte}$ is the constant functional which maps any function
to the nowhere defined function. In particular, $\Phi_{\rho_\tte}$
is not an iterator.
\\
Suppose $\varphi(\tte)$ is defined.
Condition $iii$ of Prop.\ref{p:tool1} and the definition of
$\rho_\tte$ show that
\begin{eqnarray*}
\Phi_{\rho_\tte}\mbox{ is an iterator}
&\Leftrightarrow& \Phi_{\rho_\tte}=It^{(\varphi(n))}_\bbbx
\\
&\Leftrightarrow& \rho_\tte\supseteq
range(u\mapsto\gamma(\varphi(n),u))
\\
&\Leftrightarrow& \forall u\ \exists v\ (\tte,u,v)\in\sigma
\\
&\Leftrightarrow& \tte\in S
\end{eqnarray*}
Since $\rho_\tte=W_{g(\tte)}$, the functional
$\Phi:\tte\mapsto\Phi_{\rho_\tte}$ is  effectively continuous.
Clearly, it satisfies $(*)$ and $(**)$.
\end{proof}
%
%%%%%%%%%%%%%%%%%%%%%%%%%%%%%%%%%%%%%%%%%%%%%%%%%%%%%%%%%%%%%%%
\subsection{Syntactical complexity of Church representation}
\label{ss:syntaxChurch}
%%%%%%%%%%%%%%%%%%%%%%%%%%%%%%%%%%%%%%%%%%%%%%%%%%%%%%%%%%%%%%%
%
\begin{proposition}[Syntactical complexity]
\label{p:complexEffChurch}
The family
$$\{domain(\varphi):\varphi\in \Church^\bbbn_X\circ
A\tiret\EffCont^{\words\to((\bbbx\to\bbbx)\to(\bbbx\to\bbbx))}\}$$
is exactly the family of $\Pi^{0,A}_2$ subsets of $\words$.
\medskip\\
Thus, any universal function for
$\Church^\bbbn_X\circ
A\tiret\EffCont^{\words\to((\bbbx\to\bbbx)\to(\bbbx\to\bbbx))}$
has $\Pi^{0,A}_2$-complete domain.
\end{proposition}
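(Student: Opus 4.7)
The plan is to establish the $\Pi^{0,A}_2$ characterization by proving the two inclusions separately and then to derive the completeness statement. For the upper bound, any $\varphi$ in the family is of the form $\Church^\bbbn_\bbbx \circ \Phi$ with $\Phi : \words \to ((\bbbx \to \bbbx) \to (\bbbx \to \bbbx))$ being $A$-effectively continuous; writing $\Phi(\ttp) = \Phi_{R^{(\ttp)}}$ for an $A$-r.e.\ functional relation $R$, one has $\ttp \in domain(\varphi)$ iff $\Phi_{R^{(\ttp)}}$ is an iterator. The naive formulation ``$\exists n\ \Phi_{R^{(\ttp)}} = It^{(n)}_\bbbx$'' is only $\Sigma^{0,A}_3$, so I extract the candidate $n$ effectively: fixing a recursive bijection $\bbbx \simeq \bbbn$ to transport the successor $Succ$ and the element $0$, I set $N(\ttp) := \Phi(\ttp)(Succ)(0)$. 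By Prop.~\ref{p:apply} (applied with the constant function $\ttp \mapsto Succ$), $N$ is partial $A$-recursive, and if $\Phi(\ttp) = It^{(n)}_\bbbx$ then $N(\ttp) = Succ^{(n)}(0) = n$. Since at most one $n$ can satisfy $\Phi_{R^{(\ttp)}} = It^{(n)}_\bbbx$ (Prop.~\ref{p:injectiveIt}), $\ttp \in domain(\varphi)$ is equivalent to the conjunction of $(A)$: ``$N(\ttp)$ is defined'', and $(B)$: ``$\forall n\ (N(\ttp) = n \Rightarrow \Phi_{R^{(\ttp)}} = It^{(n)}_\bbbx)$''.

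By Prop.~\ref{p:tool1}(iii), ``$\Phi_{R^{(\ttp)}} = It^{(n)}_\bbbx$'' unfolds as the conjunction of a universal $\forall \alpha \in Fin(\bbbx\to\bbbx)\ \forall \ttx$ of an implication whose hypothesis is $A$-decidable and conclusion is $\Sigma^{0,A}_1$, which is $\Pi^{0,A}_2$, and a universal $\forall \alpha, \ttx, \tty$ of an implication whose hypothesis is $\Sigma^{0,A}_1$ and conclusion is $A$-decidable, which is $\Pi^{0,A}_1$; together this is $\Pi^{0,A}_2$ uniformly in $(\ttp, n)$. Since ``$N(\ttp) = n$'' is $\Sigma^{0,A}_1$ in $(\ttp, n)$, the body of $(B)$ is an implication from $\Sigma^{0,A}_1$ to $\Pi^{0,A}_2$, hence $\Pi^{0,A}_2$, and so $(B)$ is $\Pi^{0,A}_2$ while $(A)$ is $\Sigma^{0,A}_1$. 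The absorption $\Sigma^{0,A}_1 \wedge \Pi^{0,A}_2 \subseteq \Pi^{0,A}_2$ then finishes the upper bound: writing $(A) = \exists s\, R(\ttp,s)$ and $(B) = \forall u\, \exists v\, S(\ttp, u, v)$, since $(A)$ does not depend on $u$, one has $(A) \wedge (B) \equiv \forall u\, \exists (s, v)\, (R(\ttp,s) \wedge S(\ttp, u, v))$, which is $\Pi^{0,A}_2$.

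For the lower bound, given any $\Pi^{0,A}_2$ set $S \subseteq \words$, I apply Prop.~\ref{p:examples} with the constant recursive function $\ttp \mapsto 0$: this produces an $A$-effectively continuous $\Phi$ with $\Phi(\ttp) = It^{(0)}_\bbbx$ for $\ttp \in S$ and $\Phi(\ttp)$ not an iterator otherwise, so $\Church^\bbbn_\bbbx \circ \Phi$ lies in the family and has domain exactly $S$. Completeness is then immediate: for any universal $U$ for the family, the upper bound gives $domain(U) \in \Pi^{0,A}_2$; for hardness, the lower-bound construction supplies, for each $\Pi^{0,A}_2$ set $S$, a $\psi$ in the family with $domain(\psi) = S$, so writing $\psi = U \circ comp_U(\tte, \cdot)$ by universality, $\ttp \mapsto comp_U(\tte, \ttp)$ is an ($A$-recursive) many-one reduction of $S$ to $domain(U)$.

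The main technical obstacle is the combination of the witness extraction for $n$ via the test input $Succ$ with the absorption $\Sigma^{0,A}_1 \wedge \Pi^{0,A}_2 \subseteq \Pi^{0,A}_2$; without these, the naive formulation ``$\exists n\ \Phi_{R^{(\ttp)}} = It^{(n)}_\bbbx$'' gives only a $\Sigma^{0,A}_3$ upper bound.
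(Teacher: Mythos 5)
Your proof is correct and follows essentially the same route as the paper: the lower bound is Prop.\ref{p:examples} (the paper likewise invokes it directly), and the upper bound extracts the candidate iteration count by a partial $A$-recursive function and then checks ``$=It^{(n)}_\bbbx$'' via the $\Pi^{0,A}_2$ condition iii of Prop.\ref{p:tool1}, using the absorption $\Sigma^{0,A}_1\wedge\Pi^{0,A}_2\subseteq\Pi^{0,A}_2$. The only (inessential) difference is how the candidate is obtained: you evaluate $\Phi(\ttp)(Succ)(0)$ via Prop.\ref{p:apply} (the trick the paper uses in Thm.\ref{thm:Church}), whereas the paper uses the partial recursive $\lambda$ of Prop.\ref{p:tool3} applied to an r.e.\ code for $R^{(\ttp)}$.
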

\begin{proof}
To simplify notations, we only consider the case $A=\emptyset$.
Relativization being straightforward.
\\
1. Prop.\ref{p:examples} insures that every $\Pi^0_2$ set is
the domain of $Church^\bbbn_\bbbx\circ\Phi$ for some effectively
continuous functional $\Phi$.
\medskip\\
2. Conversely, we prove that every function in
$\Church^\bbbn_X\circ
\EffCont^{\words\to((\bbbx\to\bbbx)\to(\bbbx\to\bbbx))}$
has $\Pi^0_2$ domain.
\\
Suppose $\Phi:(\words\times(\bbbx\to\bbbx))\to(\bbbx\to\bbbx)$
is effectively continuous and let
$R\subseteq \words\times Fin(\bbbx\to\bbbx)\times\bbbx\times\bbbx$
be a functional r.e. set such that $\Phi=\Phi_R$.
For $\tte\in\words$, let
$R^\tte=\{(\alpha,\ttx,\tty):(\tte,\alpha,\ttx,\tty)\in R\}$.
Then
$$domain(\Church^\bbbn_X\circ\Phi)=\{\tte:\Phi_{R^\tte}
\mbox{ is an iterator}\}$$
Now, an r.e. code for the functional relation $R^\tte$ is
given by a total recursive function $h:\words\to\words$.
Applying Prop.\ref{p:tool2}, point 2, the partial recursive
function $\lambda\circ h$ is such that if $\Phi_{R^\tte}$
is an iterator then $\Phi_{R^\tte}=It^{(\lambda(h(\tte)))}_\bbbx$.
\\
Thus, $\Phi_{R^\tte}$ is an iterator if and only if
\begin{enumerate}
\item[a.]
$\lambda(h(\tte))$ is convergent,
\item[b.]
condition $iii$ of Prop.\ref{p:tool1} with $n=\lambda(h(\tte))$ holds.
\end{enumerate}
Condition a is $\Sigma^0_1$ and condition b is $\Pi^0_2$.
Thus, $domain(\Church^\bbbn_X\circ\Phi)$ is $\Pi^0_2$.
\end{proof}
%
%
%%%%%%%%%%%%%%%%%%%%%%%%%%%%%%%%%%%%%%%%%%%%%%%%%%%%%%%%%%%%%%%
\subsection{Characterization of the $\Church$ representation system}
\label{ss:characterizeChurch}
%%%%%%%%%%%%%%%%%%%%%%%%%%%%%%%%%%%%%%%%%%%%%%%%%%%%%%%%%%%%%%%
%
\begin{theorem}\label{thm:Church}
Let's denote $\PR[A,\words\to\bbbn]\segment\Pi^{0,A}_2$ the
family of restrictions to $\Pi^{0,A}_2$ subsets of
partial $A$-recursive functions $\words\to\bbbn$.
\\ Let $\bbbx$ be some basic set and $A\subseteq\bbbn$ be some
oracle.
\medskip\\
{\bf 1.}\ \ \
$\Church\circ
    A\tiret\EffCont^{\words\to((\bbbx\to\bbbx)\to(\bbbx\to\bbbx))}
=\PR[A,\words\to\bbbn]\segment\Pi^{0,A}_2$
\medskip\\
{\bf 2.}\ \ \
$K^\bbbn_{\Church\circ 
A\tiret\EffCont^{\words\to((\bbbx\to\bbbx)\to(\bbbx\to\bbbx))}}
\eqct K^A$
\medskip\\
We shall simply write $K^{\bbbn,A}_{\Church}$, or
$K^{\bbbn}_{\Church}$ when $A=\emptyset$.
\end{theorem}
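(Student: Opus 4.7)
The plan is to reduce everything to part 1, since part 2 will follow immediately from the fact that the class $\mathcal{F}:=\PR[A,\words\to\bbbn]\segment\Pi^{0,A}_2$ both contains all partial $A$-recursive functions and consists entirely of restrictions of such. For part 1 I prove the two inclusions separately; the key technical ingredients are Prop.\ref{p:examples} (to code a partial $A$-recursive function restricted to a $\Pi^{0,A}_2$ set as a Church iterator) and Prop.\ref{p:tool3}, point 2 (to extract the iterator index from an $A$-r.e. code of a functional relation).

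For the inclusion $\supseteq$, take an arbitrary $f=\varphi\segment S$ with $\varphi:\words\to\bbbn$ partial $A$-recursive and $S\subseteq\words$ a $\Pi^{0,A}_2$ set. Applying Prop.\ref{p:examples} to $\varphi$ and $S$ produces an $A$-effectively continuous $\Phi:\words\to(\bbbx\to\bbbx)^{\bbbx\to\bbbx}$ such that $\Phi(\ttp)=It^{(\varphi(\ttp))}_\bbbx$ when $\ttp\in S\cap domain(\varphi)$ and $\Phi(\ttp)$ is not an iterator otherwise. Hence $\Church\circ\Phi$ is defined exactly on $S\cap domain(\varphi)$ and agrees with $\varphi$ there, so $\Church\circ\Phi=\varphi\segment S=f$.

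For the converse $\subseteq$, let $\Phi\in A\tiret\EffCont^{\words\to((\bbbx\to\bbbx)\to(\bbbx\to\bbbx))}$. By Prop.\ref{p:complexEffChurch}, $domain(\Church\circ\Phi)$ is $\Pi^{0,A}_2$, so it suffices to exhibit a partial $A$-recursive function agreeing with $\Church\circ\Phi$ on that domain. Write $\Phi=\Phi_R$ for a functional $A$-r.e. $R\subseteq\words\times Fin(\bbbx\to\bbbx)\times\bbbx\times\bbbx$, and let $h:\words\to\words$ be a total recursive function yielding (by parametrization) $A$-r.e. codes $h(\ttp)$ for the slices $R^{(\ttp)}=\{(\alpha,\ttx,\tty):(\ttp,\alpha,\ttx,\tty)\in R\}$. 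The relativized Prop.\ref{p:tool3}, point 2, then provides a partial $A$-recursive $\lambda:\words\to\bbbn$ such that, whenever $\Phi_{R^{(\ttp)}}$ is an iterator, $\lambda(h(\ttp))$ is defined and equals its index, i.e.\ equals $(\Church\circ\Phi)(\ttp)$. So $\lambda\circ h\in\PR[A,\words\to\bbbn]$ agrees with $\Church\circ\Phi$ on $domain(\Church\circ\Phi)$, whence $\Church\circ\Phi\in\mathcal{F}$.

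Part 2 reads off from part 1 and the Invariance Theorem. Since $\words$ is trivially $\Pi^{0,A}_2$, every $V\in\PR[A,\words\to\bbbn]$ lies in $\mathcal{F}$; taking $V$ universal for $\PR[A,\words\to\bbbn]$, Thm.\ref{thm:invar} gives $K^\bbbn_\mathcal{F}\leqct K^\bbbn_V = K^A$. Conversely, a good universal $U\in\mathcal{F}$ is, by part 1, a restriction of some partial $A$-recursive $\tilde U:\words\to\bbbn$; Rk.\ref{rk:Kphi}, point 2, then gives $K^\bbbn_{\tilde U}\leq K^\bbbn_U=K^\bbbn_\mathcal{F}$, while the usual invariance theorem for $\PR[A,\words\to\bbbn]$ yields $K^A\leqct K^\bbbn_{\tilde U}$. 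The main subtlety is the $\subseteq$ direction of part 1: $\lambda\circ h$ may be defined on points outside $domain(\Church\circ\Phi)$ and even take arbitrary values there, but this is harmless since those spurious values are discarded by the $\Pi^{0,A}_2$ restriction, which is precisely the strength of the hypothesis in Prop.\ref{p:tool3}, point 2.
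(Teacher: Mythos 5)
Your proof is correct, and its skeleton matches the paper's: Prop.\ref{p:examples} gives the inclusion $\supseteq$, Prop.\ref{p:complexEffChurch} gives the $\Pi^{0,A}_2$ bound on the domain, and part 2 is deduced exactly as in the paper (every partial $A$-recursive function lies in the class because $\words$ itself is $\Pi^{0,A}_2$, and conversely every member of the class agrees on its domain with a partial $A$-recursive function, so Rk.\ref{rk:Kphi}, point 2, together with the Invariance Theorem \ref{thm:invar} yields both inequalities $\leqct$ and $\geqct$). The one point where you genuinely diverge is the production of a partial $A$-recursive function matching $\Church\circ\Phi$: the paper argues semantically, observing that $(It^{(n)}_\bbbn(Succ))(0)=n$ and invoking Prop.\ref{p:apply} to conclude that $\tte\mapsto(\Phi(\tte)(Succ))(0)$ is a partial $A$-recursive \emph{extension} of $\Church\circ\Phi$, whereas you extract the iterator index syntactically as $\lambda\circ h$ from the relativized Prop.\ref{p:tool3}, point 2, applied to the slices $R^{(\ttp)}$ (which are indeed functional, so the hypothesis is met); this is legitimate and is in fact the very device the paper itself uses inside the proof of Prop.\ref{p:complexEffChurch}. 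The trade-off is minor: the paper's evaluation trick gives an actual extension with no case distinction and avoids re-parametrizing $R$, while your route recycles machinery already needed for the domain computation, at the cost of the caveat you correctly flag, namely that $\lambda\circ h$ may converge with spurious values off $domain(\Church\circ\Phi)$ --- harmless, since part 1 only asks that $\Church\circ\Phi$ be the restriction of some partial $A$-recursive function to a $\Pi^{0,A}_2$ set. Both arguments are complete.
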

\begin{proof}
1A. First, we prove that, for any $A$-effectively continuous
functional $\Phi:\words\to(\bbbx\to\bbbx)^{\bbbx\to\bbbx}$,
the function $\Church\circ\Phi:\words\to\bbbn$ has a partial
$A$-recursive extension.
We reduce to the case $\bbbx=\bbbn$.
\\
Let $Succ:\bbbn\to\bbbn$ be the successor function.
Observe that, for all $n\in\bbbn$,
$$(It^{(n)}_\bbbn(Succ))(0)=n$$
Thus, if $Church(\Phi(\tte)$ is defined then
$Church(\Phi(\tte))=(\Phi(\tte)(Succ))(0)$.
Applying Prop.\ref{p:apply}, we see that
$\tte\mapsto(\Phi(\tte)(Succ))(0)$ is a partial $A$-recursive
extension of $\Church\circ\Phi:\words\to\bbbn$.
\medskip\\
1B. Prop.\ref{p:complexEffChurch} insures that
$\Church\circ\Phi:\words\to\bbbn$ has $\Pi^{0,A}_2$ domain.
Together with point 1A, this insures that
$\Church\circ\Phi:\words\to\bbbn$ is the restriction of a partial
$A$-recursive function to a $\Pi^{0,A}_2$ set.
This proves the inclusion
$$\Church\circ
    A\tiret\EffCont^{\words\to((\bbbx\to\bbbx)\to(\bbbx\to\bbbx))}
\subseteq \PR[A,\words\to\bbbn]\segment\Pi^{0,A}_2$$
1C. The converse inclusion is Prop.\ref{p:examples}.
\medskip\\
2. Inclusion
$\PR[A,\words\to\bbbn]\subseteq\Church\circ
A\tiret\EffCont^{\words\to((\bbbx\to\bbbx)\to(\bbbx\to\bbbx))}$
yields the inequality 
$K^\bbbn_{\Church\circ 
A\tiret\EffCont^{\words\to((\bbbx\to\bbbx)\to(\bbbx\to\bbbx))}}
\leqct K^A$.
\medskip\\
Consider a function $\phi\in \Church\circ
A\tiret\EffCont^{\words\to((\bbbx\to\bbbx)\to(\bbbx\to\bbbx))}$.
Let $\widehat{\phi}$ be a partial $A$-recursive extension of $\phi$.
Then $K_\phi\geq K_{\widehat{\phi}}$.
This proves inequality
$K^\bbbn_{\Church\circ 
A\tiret\EffCont^{\words\to((\bbbx\to\bbbx)\to(\bbbx\to\bbbx))}}
\geqct K^A$.
\end{proof}
%
%
%%%%%%%%%%%%%%%%%%%%%%%%%%%%%%%%%%%%%%%%%%%%%%%%%%%%%%%%%%%%%%%
\subsection{Characterization of the $\Delta\Church$ self-enumerated
systems}  \label{ss:DeltaKChurch}
%%%%%%%%%%%%%%%%%%%%%%%%%%%%%%%%%%%%%%%%%%%%%%%%%%%%%%%%%%%%%%%

\begin{theorem}\label{thm:DeltaChurch}
Let $\bbbx$ be some basic set and $A\subseteq\bbbn$ be some oracle.
\medskip\\
{\bf 1.}\ \ \
$\Delta(\Church\circ
    A\tiret\EffCont^{\words\to((\bbbx\to\bbbx)\to(\bbbx\to\bbbx))})
=\PR[A,\words\to\bbbz]\segment\Pi^{0,A}_2$
\medskip\\
{\bf 2.}\ \ \
$K^\bbbz_{\Delta(\Church\circ
    A\tiret\EffCont^{\words\to((\bbbx\to\bbbx)\to(\bbbx\to\bbbx))})}
\eqct K^A_\bbbz$
\medskip\\
We shall simply write $K^{\bbbz,A}_{\Delta\Church}$, or
$K^{\bbbz}_{\Delta\Church}$ when $A=\emptyset$.
\end{theorem}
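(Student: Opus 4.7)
The plan is to derive both statements directly from Theorem \ref{thm:Church} and Proposition \ref{p:deltaPR} by applying the $\Delta$ operator, mirroring the strategy used in Theorem \ref{thm:DeltaCard}.

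For point 1, I would begin from the characterization
$$\Church\circ A\tiret\EffCont^{\words\to((\bbbx\to\bbbx)\to(\bbbx\to\bbbx))} = \PR[A,\words\to\bbbn]\segment\Pi^{0,A}_2$$
proved in Theorem \ref{thm:Church}. Applying the $\Delta$ operation of Definition \ref{def:Delta} to both sides reduces the task to the elementary identity
$$\Delta(\PR[A,\words\to\bbbn]\segment\Pi^{0,A}_2) = \PR[A,\words\to\bbbz]\segment\Pi^{0,A}_2.$$
For the inclusion $\subseteq$, if $F_i = \widehat{F_i}\segment S_i$ with $\widehat{F_i}\in\PR[A,\words\to\bbbn]$ and $S_i\in\Pi^{0,A}_2$, then
$F_1-F_2 = (\widehat{F_1}-\widehat{F_2})\segment(S_1\cap S_2)$,
where the first factor lies in $\PR[A,\words\to\bbbz]$ by Proposition \ref{p:deltaPR} and the second factor is $\Pi^{0,A}_2$ by closure of that class under finite intersection. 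For the reverse inclusion, given $G = \widehat{G}\segment S$ with $\widehat{G}\in\PR[A,\words\to\bbbz]$ and $S\in\Pi^{0,A}_2$, I would split $\widehat{G}$ into positive and negative parts $\widehat{G^{\pm}}(\ttp) = \max(\pm\widehat{G}(\ttp),0)$, defined precisely on $\mathrm{domain}(\widehat{G})$; both are partial $A$-recursive $\words\to\bbbn$, so $G$ appears as the difference $(\widehat{G^{+}}\segment S) - (\widehat{G^{-}}\segment S)$, placing it in the left-hand family.

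For point 2, I would imitate the two-sided argument of Theorem \ref{thm:Church} point 2. Since $\words$ itself is trivially $\Pi^{0,A}_2$, every $G\in\PR[A,\words\to\bbbz]$ belongs to $\PR[A,\words\to\bbbz]\segment\Pi^{0,A}_2 = \Delta(\Church\circ\cdots)$ via point 1, so any good universal function for the latter system provides a Kolmogorov ceiling and yields $K^\bbbz_{\Delta(\Church\circ\cdots)} \leqct K^A_\bbbz$. Conversely, every $\phi$ in $\Delta(\Church\circ\cdots)$ is a restriction of a partial $A$-recursive extension $\widehat{\phi}\in\PR[A,\words\to\bbbz]$, so Remark \ref{rk:Kphi} point 2 gives $K^\bbbz_{\widehat{\phi}} \leq K^\bbbz_{\phi}$, and then the usual Invariance Theorem yields $K^A_\bbbz \leqct K^\bbbz_{\widehat{\phi}} \leq K^\bbbz_{\phi}$, hence $K^A_\bbbz \leqct K^\bbbz_{\Delta(\Church\circ\cdots)}$.

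I do not expect any serious obstacle here: the entire proof rests on the already-proved Theorem \ref{thm:Church}, on the stability of $\Pi^{0,A}_2$ under finite intersection, and on the $\bbbn$-to-$\bbbz$ passage of Proposition \ref{p:deltaPR}. The one point worth a careful check is the splitting $\widehat{G} = \widehat{G^{+}} - \widehat{G^{-}}$ into $\bbbn$-valued partial $A$-recursive pieces that share the original domain, but this is routine.
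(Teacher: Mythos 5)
Your proposal is correct and follows essentially the same route as the paper: the paper's proof of point 1 is exactly the observation that $\Delta(\PR[A,\words\to\bbbn]\segment\Pi^{0,A}_2)=\PR[A,\words\to\bbbz]\segment\Pi^{0,A}_2$ combined with Theorem \ref{thm:Church}, and its point 2 is the same two-sided argument you reproduce from the proof of Theorem \ref{thm:Church}. You merely spell out the details (intersection of $\Pi^{0,A}_2$ sets, splitting into positive and negative parts) that the paper leaves implicit, and these checks are sound.
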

\begin{proof}
1. Observe that $\Delta(\PR[A,\words\to\bbbn]\segment\Pi^{0,A}_2)
=\PR[A,\words\to\bbbz]\segment\Pi^{0,A}_2$ and apply
Thm.\ref{thm:Church}.
\medskip\\
2. Argue as in point 2 of the proof of Thm.\ref{thm:Church}.
\end{proof}
%
%
%
%%%%%%%%%%%%%%%%%%%%%%%%%%%%%%%%%%%%%%%%%%%%%%%%%%%%%%%%%%%%%%%
\subsection{Functional representations of $\bbbz$}
\label{ss:churchZ}
%%%%%%%%%%%%%%%%%%%%%%%%%%%%%%%%%%%%%%%%%%%%%%%%%%%%%%%%%%%%%%%
%
Specific to Church representation, there is another approach
for an extension to $\bbbz$ : {\em positive and negative iterations}
of injective functions over some infinite set $X$. Formally,
I.e., letting $X\stackrel{1-1}{\to}X$ denote the family of injective
functions, consider the $\bbbz$-iterator functional
$$It^\bbbz_X:\bbbz
\to(X\stackrel{1-1}{\to}X)^{X\stackrel{1-1}{\to}X}$$
such that, for $n\in\bbbn$, $It^\bbbz_X(n)(f)=f^{(n)}$
and $It^\bbbz_X(-n)(f)=It^\bbbz_X(n)(f^{-1})$.
\\
Effectivization can be done as in \S\ref{ss:effectiveChurch}.
Thm.\ref{thm:allchurchequal}, Prop.\ref{p:complexEffChurch} and
Thm.\ref{thm:Church} go through the $\bbbz$ context.
%

%
%%%%%%%%%%%%%%%%%%%%%%%%%%%%%%%%%%%%%%%%%%%%%%%%%%%%%%%%%%%%%%%
%%%%%%%%%%%%%%%%%%%%%%%%%%%%%%%%%%%%%%%%%%%%%%%%%%%%%%%%%%%%%%%
%%%%%%%%%%%%%%%%%%%%%%%%%%%%%%%%%%%%%%%%%%%%%%%%%%%%%%%%%%%%%%%
%%%%%%%%%%%%%%%%%%%%%%%%%%%%%%%%%%%%%%%%%%%%%%%%%%%%%%%%%%%%%%%
\section{Conclusion}\label{s:conclusion}
%%%%%%%%%%%%%%%%%%%%%%%%%%%%%%%%%%%%%%%%%%%%%%%%%%%%%%%%%%%%%%%
%%%%%%%%%%%%%%%%%%%%%%%%%%%%%%%%%%%%%%%%%%%%%%%%%%%%%%%%%%%%%%%
%%%%%%%%%%%%%%%%%%%%%%%%%%%%%%%%%%%%%%%%%%%%%%%%%%%%%%%%%%%%%%%
%%%%%%%%%%%%%%%%%%%%%%%%%%%%%%%%%%%%%%%%%%%%%%%%%%%%%%%%%%%%%%%
%
%
We have characterized Kolmogorov complexities associated to some
set theoretical representations of $\bbbn$ in terms of the
Kolmogorov complexities associated to oracular and/or infinite
computations (Thm.\ref{thm:A}).
As a corollary, we got a hierarchy result (Thm.\ref{thm:B}).
\medskip\\
These results can be improved in two directions.
\\
First, one can consider higher order (higher than type 2)
effectivizations of set theoretical representations of $\bbbn$.
This is the contents of a forthcoming continuation of this paper.
\\
Second, using the results of our paper \cite{ferbusgrigoOrder},
the hierarchy result Thm.\ref{thm:B} can be improved with finer
orderings than $\infct$.
These orderings $\lless[{\cal C},{\cal D}]{\cal F}$ are
such that $f\lless[{\cal C},{\cal D}]{\cal F}g$ if and only if
\begin{enumerate}
\item
$f\leqct g$
\item
For every infinite set $X\in{\cal C}$ and every total monotone
increasing function $\phi\in{\cal F}$ there exists an infinite set
$Y\in{\cal D}$ such that
\\\centerline{$Y\subseteq\{z\in X:f(z)<\phi(g(x))\}$}
\item
The above property is effective: relative to standard enumerations
of ${\cal C},{\cal D},{\cal F}$, a code for $Y$ can be recursively
computed from codes for $X$ and $\phi$.
\end{enumerate}
Thm.\ref{thm:B} can be restated in the following improved form.
\begin{theorem}\label{thm:Abis}
Denote $\minpr[]$ (resp. $\minprjump[A]$) the family of functions
$\bbbn\to\bbbn$ which are infima of partial recursive
(resp.partial $A$-recursive) sequences of functions $\bbbn\to\bbbn$
(cf. Rk.\ref{rk:minPR}).
Then
\medskip\\
$\log \ggreater[\Sigma^0_1,\Sigma^0_1]{\PR[]}
\begin{array}{c}
    K_{\Church}^\bbbn\\
    \eqct\\
    K_{\Church}^\bbbz\segment\bbbn\\
    \eqct\\
    K_{\Delta \Church}^\bbbz\segment\bbbn
\end{array}
\ggreater[\Sigma^0_1\cup\Pi^0_1,\Delta^0_2]{\minpr[]}
K_{\card}^\bbbn
\ggreater[\Sigma^0_2,\Sigma^0_2]{\PR[\emptyset']}
K_{\Delta \card}^\bbbz\segment\bbbn$

\hfill{$\ggreater[\Sigma^0_2\cup\Pi^0_2,\Delta^0_3]
{\minprjump[\emptyset']}
K_{\indexx}^\bbbn
\ggreater[\Sigma^0_3,\Sigma^0_3]{\PR[\emptyset'']}
K_{\Delta \indexx}^\bbbz\segment\bbbn$}
\end{theorem}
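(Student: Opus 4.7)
The plan is to first apply Thm.\ref{thm:A} in order to replace each of the set-theoretical complexities by its standard normal form, so that the claim becomes an assertion purely about the usual Kolmogorov quantities $\log$, $K$, $\kmax[]$, $K^{\emptyset'}$, $\kmax[\emptyset']$ and $K^{\emptyset''}$. Each $\ggreater$ in the hierarchy then corresponds to one of five separation assertions, namely $K \lless[\Sigma^0_1,\Sigma^0_1]{\PR[]} \log$, then $\kmax[] \lless[\Sigma^0_1\cup\Pi^0_1,\Delta^0_2]{\minpr[]} K$, next $K^{\emptyset'} \lless[\Sigma^0_2,\Sigma^0_2]{\PR[\emptyset']} \kmax[]$, then $\kmax[\emptyset'] \lless[\Sigma^0_2\cup\Pi^0_2,\Delta^0_3]{\minprjump[\emptyset']} K^{\emptyset'}$, and finally $K^{\emptyset''} \lless[\Sigma^0_3,\Sigma^0_3]{\PR[\emptyset'']} \kmax[\emptyset']$. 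The first-order ``$\leqct$'' halves of these inequalities are already given by Thm.\ref{thm:B}, so only the refinement with the bracketed classes and test-function family is new.

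These five statements are instances of just two base separations, each applied at oracle levels $\emptyset$, $\emptyset'$, $\emptyset''$. The first base separation, $K \lless[\Sigma^0_1,\Sigma^0_1]{\PR[]} \log$, is the classical Kolmogorov observation: from any r.e. infinite $X$ and partial recursive increasing $\phi$ one effectively enumerates an r.e. infinite $Y \subseteq \{z\in X : K(z) < \phi(\log z)\}$, by choosing elements of $X$ whose positions in an enumeration are highly compressible binary words. The second base separation, $\kmax[] \lless[\Sigma^0_1\cup\Pi^0_1,\Delta^0_2]{\minpr[]} K$, compares the complexity arising from infinite computations (Prop.\ref{p:Turing}) with the standard one; the gap is witnessed by integers whose shortest $\kmax[]$-program (via an infinite computation) is drastically shorter than any halting program, the target class $\Delta^0_2$ reflects that membership in the witnessing set is decided by asking whether an infinite computation stabilizes, and the $\minpr[]$ test functions are the natural ``dual'' to $\kmax[]$ since they probe limit behaviour of partial recursive sequences. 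Both of these statements are exactly of the form treated by the ordering results of \cite{ferbusgrigoOrder}.

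The three remaining separations are obtained by relativizing these two base cases to $\emptyset'$ and $\emptyset''$: each jump moves the bracketed class pair one level up in the arithmetic hierarchy and replaces $\PR[]$ (resp.\ $\minpr[]$) by $\PR[\emptyset^{(k)}]$ (resp.\ $\minprjump[\emptyset^{(k)}]$). The principal obstacle is to verify the effectivity clause (3) in the definition of $\lless[{\cal C},{\cal D}]{\cal F}$: the ``compressibility witnesses'' produced by the arguments of \cite{ferbusgrigoOrder} must depend recursively on codes for $X$ and $\phi$, and this uniformity must be preserved under relativization to the jump oracles. Once the uniform versions of the two base cases are secured, the remaining separations follow mechanically, and what is left is a small bookkeeping exercise to match the Thm.\ref{thm:A} normal forms with the hypotheses of the relevant statements in \cite{ferbusgrigoOrder}.
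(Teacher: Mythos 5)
Your overall strategy --- use Thm.\ref{thm:A} to replace the set-theoretical complexities by $K$, $\kmax[]$, $K^{\emptyset'}$, $\kmax[\emptyset']$, $K^{\emptyset''}$, then import the refined separations of \cite{ferbusgrigoOrder} and check the uniformity clause (condition 3 of the ordering defined in \S\ref{s:conclusion}) under relativization --- is exactly what the paper intends: Thm.\ref{thm:Abis} is stated in the conclusion without proof, as an application of \cite{ferbusgrigoOrder} to Thm.\ref{thm:B}. The genuine gap is in your claim that the five links reduce to \emph{two} base separations relativized at levels $\emptyset,\emptyset',\emptyset''$; with that accounting, your plan does not establish the third and fifth links of the chain, i.e.\ precisely the links $K_{\card}^\bbbn\supct K_{\Delta\card}^\bbbz\segment\bbbn$ and $K_{\indexx}^\bbbn\supct K_{\Delta\indexx}^\bbbz\segment\bbbn$.

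Concretely, the five links are (i) $K\lless[\Sigma^0_1,\Sigma^0_1]{\PR[]}\log$, (ii) $\kmax[]\lless[\Sigma^0_1\cup\Pi^0_1,\Delta^0_2]{\minpr[]}K$, (iii) $K^{\emptyset'}\lless[\Sigma^0_2,\Sigma^0_2]{\PR[\emptyset']}\kmax[]$, (iv) $\kmax[\emptyset']\lless[\Sigma^0_2\cup\Pi^0_2,\Delta^0_3]{\minprjump[\emptyset']}K^{\emptyset'}$, (v) $K^{\emptyset''}\lless[\Sigma^0_3,\Sigma^0_3]{\PR[\emptyset'']}\kmax[\emptyset']$. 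Relativizing (ii) to $\emptyset'$ does give (iv). But relativizing (i) to $\emptyset'$ gives only $K^{\emptyset'}\lless[\Sigma^0_2,\Sigma^0_2]{\PR[\emptyset']}\log$, which is strictly weaker than (iii): since $\kmax[]\leqct K\infct\log$ and the test functions $\phi$ are monotone increasing, the set $\{z\in X:K^{\emptyset'}(z)<\phi(\kmax[](z))\}$ demanded by condition 2 is contained (up to an additive shift of $\phi$) in $\{z\in X:K^{\emptyset'}(z)<\phi(\log z)\}$, so an infinite $Y$ inside the latter says nothing about the former; in general $f\lless[{\cal C},{\cal D}]{\cal F}g_1$ together with $g_1\leqct g_2$ yields $f\lless[{\cal C},{\cal D}]{\cal F}g_2$, never the converse. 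The same remark applies to (v), which is the relativization to $\emptyset'$ of (iii), not of (i) or (ii). So you need a \emph{third} base separation, namely the refined form of $\kmax[A]\supct K^{A'}$ (Prop.\ref{p:degrees}, from \cite{becherchaitindaicz,ferbusgrigoKmaxKmin}, refined in \cite{ferbusgrigoOrder}), taken at $A=\emptyset$ for (iii) and at $A=\emptyset'$ for (v); note also that nothing is applied ``at oracle level $\emptyset''$'' --- that level only appears as the jump inside this third family. With the list of base cases corrected to three, of types ($\log$ vs $K$), ($K^A$ vs $\kmax[A]$) and ($\kmax[A]$ vs $K^{A'}$), the rest of your plan (Thm.\ref{thm:A} bookkeeping plus uniformity of the witnesses under relativization) is the intended argument.
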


%\nocite{*}
% \bibliographystyle{plain}
% \bibliography{Kolmo}
%

\end{document}